\definecolor{darkgreen}{rgb}{0,0.75,0}
\definecolor{darkred}{rgb}{0.75,0,0}
\definecolor{darkmagenta}{rgb}{0.5,0,0.5}
\spnewtheorem{thm}{Theorem}[section]{\bfseries}{\itshape}
\spnewtheorem{conj}[thm]{Conjecture}{\bfseries}{\itshape}
\spnewtheorem{cor}[thm]{Corollary}{\bfseries}{\itshape}
\spnewtheorem{lem}[thm]{Lemma}{\bfseries}{\itshape}
\spnewtheorem{prop}[thm]{Proposition}{\bfseries}{\itshape}
\spnewtheorem{dfn}[thm]{Definition}{\bfseries}{\rmfamily}
\spnewtheorem{ass}[thm]{Assumption}{\bfseries}{\rmfamily}
\spnewtheorem{exmp}[thm]{Example}{\bfseries}{\rmfamily}
\spnewtheorem{prb}[thm]{Problem}{\bfseries}{\rmfamily}
\spnewtheorem{rmk}[thm]{Remark}{\bfseries}{\rmfamily}
\spnewtheorem*{ntn}{Notation}{\bfseries}{\rmfamily}
\numberwithin{equation}{section}
\def\@makefnmark{\hbox{(\@textsuperscript{\normalfont\@thefnmark})}}
\newcommand{\mr}[1]{\texttt{\href{http://www.ams.org/mathscinet-getitem?mr=#1}{MR#1}}}
\newcommand{\arxiv}[1]{\texttt{\href{http://arxiv.org/abs/#1}{arXiv:#1}}}
\newcommand{\refmet}{d}
\newcommand{\refmeas}{m}
\newcommand{\one}{\mathds{1}} 
\newcommand{\diam}{\mathop{\operatorname{diam}}}
\newcommand{\id}{\mathop{\operatorname{id}}}
\newcommand{\unif}{\mathop{\operatorname{unif}}}
\newcommand{\supp}{\mathop{\operatorname{supp}}}
\renewcommand{\ackname}{Acknowledgements.}%
\providecommand{\ack}[1]{\par\addvspace\baselineskip
\noindent\ackname\enspace\ignorespaces#1}%
\def\subjclassname{\textup{2020} \textit{Mathematics Subject Classification:}}%
\providecommand{\subjclass}[1]{\par\addvspace\baselineskip
\noindent\subjclassname\enspace\ignorespaces#1}%
\begin{document}
\mainmatter
\title{\bfseries On singularity of energy measures for symmetric diffusions with full off-diagonal heat kernel estimates II: Some borderline examples}
\titlerunning{On singularity of energy measures for diffusions II: Borderline examples}

\author{\begin{picture}(0,0)\put(-139,96){Version of November 4, 2021}\end{picture}Naotaka Kajino}
\authorrunning{N. Kajino}
\tocauthor{David Berger, Franziska K\"uhn and Ren\'e L. Schilling}
\institute{Research Institute for Mathematical Sciences, Kyoto University\\
\email{nkajino@kurims.kyoto-u.ac.jp}}

\maketitle

\begin{abstract}
We present a concrete family of fractals, which we call the
\emph{(two-dimensional) thin scale irregular Sierpi\'{n}ski gaskets} and each of
which is equipped with a canonical strongly local regular symmetric Dirichlet form.
We prove that any fractal $K$ in this family satisfies the full off-diagonal heat
kernel estimates with some space-time scale function $\Psi_{K}$ and the singularity
of the associated energy measures with respect to the canonical volume measure
(uniform distribution) on $K$, and also that the decay rate of $r^{-2}\Psi_{K}(r)$
to $0$ as $r\downarrow 0$ can be made arbitrarily slow by suitable choices of $K$.
These results together support the energy measure singularity dichotomy conjecture
[\emph{Ann.\ Probab.}\ \textbf{48} (2020), no.~6, 2920--2951, Conjecture 2.15]
stating that, if the full off-diagonal heat kernel estimates with space-time scale
function $\Psi$ satisfying $\lim_{r\downarrow 0}r^{-2}\Psi(r)=0$ hold for a strongly
local regular symmetric Dirichlet space with complete metric, then the associated energy
measures are singular with respect to the reference measure of the Dirichlet space.
\keywords{Thin scale irregular Sierpi\'{n}ski gasket, singularity of energy measure, sub-Gaussian heat kernel estimate}
\subjclass{Primary 28A80, 31C25, 60G30; secondary 31E05, 35K08, 60J60}
\ack{The author would like to thank Martin T.\ Barlow for his valuable suggestion
in \cite{Bar19} of the family of fractals studied in this paper as possible examples
to examine the validity of the energy measure singularity dichotomy conjecture
(Conjecture \ref{conj:HK-singular} below).
The author was supported in part by JSPS KAKENHI Grant Number JP18H01123.}
\end{abstract}

\section{Introduction}\label{sec:intro}

This paper is a follow-up of the author's recent joint work \cite{KM} with Mathav Murugan
on singularity of energy measures associated with a strongly local regular symmetric
Dirichlet space $(K,\refmet,\refmeas,\mathcal{E},\mathcal{F})$ satisfying full
off-diagonal heat kernel estimates. The \emph{$\mathcal{E}$-energy measure}
$\mu_{\langle u\rangle}$ of $u\in\mathcal{F}$ is a Borel measure on $K$ which plays,
in the theory of regular symmetric Dirichlet forms as presented in \cite{FOT,CF}, the
same roles as the classical energy integral measure $|\nabla u|^{2}\,dx$ on $\mathbb{R}^{N}$.
It is defined for $u\in\mathcal{F}\cap L^{\infty}(K,\refmeas)$ as the unique
Borel measure on $K$ such that
\begin{equation}\label{eq:EnergyMeas-intro}
\int_{K}f\,d\mu_{\langle u\rangle}=\mathcal{E}(u,fu)-\frac{1}{2}\mathcal{E}(u^{2},f)
	\qquad\textrm{for any $f\in\mathcal{F}\cap\mathcal{C}_{\mathrm{c}}(K)$,}
\end{equation}
where $\mathcal{C}_{\mathrm{c}}(K)$ denotes the space of $\mathbb{R}$-valued
continuous functions on $K$ with compact supports, and then for $u\in \mathcal{F}$ by
$\mu_{\langle u\rangle}(A):=\lim_{n\to\infty}\mu_{\langle(-n)\vee(u\wedge n)\rangle}(A)$
for each Borel subset $A$ of $K$; see
\cite[Theorem 1.4.2-(ii),(iii), (3.2.13), (3.2.14) and (3.2.15)]{FOT}
for the details of this definition.

The main results of \cite{KM} concern the singularity and the absolute continuity of
the $\mathcal{E}$-energy measures $\mu_{\langle u\rangle}$ with respect to the reference
measure $\refmeas$. While $\mu_{\langle u\rangle}$ is easily identified as
$\langle\nabla u,\nabla u\rangle_{x}\,d\refmeas(x)$ if
$\mathcal{E}=\int_{K}\langle\nabla\cdot,\nabla\cdot\rangle_{x}\,d\refmeas(x)$
for some linear differential operator $\nabla$ satisfying the Leibniz rule and some measurable field
$\langle\cdot,\cdot\rangle_{x}$ of non-negative definite symmetric bilinear forms,
there is no simple expression of $\mu_{\langle u\rangle}$ and the nature of
$\mu_{\langle u\rangle}$ is a deep mystery when $K$ is a fractal. The question of
whether $\mu_{\langle u\rangle}$ is singular with respect to $\refmeas$ is probably
the most fundamental one toward better understanding of $\mu_{\langle u\rangle}$ in
such cases, had been answered affirmatively for essentially all known examples of
self-similar Dirichlet forms on self-similar fractals in \cite{Kus89,Kus93,BST,Hin05,HN},
but had been studied only under the assumption of exact self-similarity until \cite{KM}.
As the main results of \cite{KM}, it has been now proved that the $\mathcal{E}$-energy
measures $\mu_{\langle u\rangle}$ are singular or absolutely continuous with respect to
$\refmeas$ according to whether the behavior of the associated heat kernel $p_{t}(x,y)$
in infinitesimal scale is \emph{``sufficiently sub-Gaussian''} or \emph{``Gaussian''},
as stated in the following theorem.
Recall that a family $\{p_{t}\}_{t\in(0,\infty)}$ of $[-\infty,\infty]$-valued
Borel measurable functions on $K\times K$ is called a \emph{heat kernel} of
$(K,\refmet,\refmeas,\mathcal{E},\mathcal{F})$ if and only if
the symmetric Markovian semigroup $\{T_{t}\}_{t\in(0,\infty)}$ on $L^{2}(K,\refmeas)$
associated with $(\mathcal{E},\mathcal{F})$ is expressed as
$T_{t}u=\int_{K}p_{t}(\cdot,y)u(y)\,d\refmeas(y)$ $\refmeas$-a.e.\ for any $t\in(0,\infty)$
and any $u\in L^{2}(K,\refmeas)$. We set $\diam(K,\refmet):=\sup_{x,y\in K}\refmet(x,y)$
and $B_{\refmet}(x,r):=\{y\in K\mid \refmet(x,y)<r\}$ for $(x,r)\in K\times(0,\infty)$.
\begin{thm}[A simplification of {\cite[Theorem 2.13]{KM}}]\label{thm:HK-singular}
Let $(K,\refmet,\refmeas,\mathcal{E},\mathcal{F})$ be a \emph{metric measure Dirichlet space},
i.e., a strongly local regular symmetric Dirichlet space with $(K,\refmet)$ complete and
$K$ containing at least two elements, so that $\diam(K,\refmet)\in(0,\infty]$.
Let $\Psi\colon[0,\infty)\to[0,\infty)$ be a homeomorphism satisfying
\begin{equation}\label{eq:Psi-ass}
c_{\Psi}^{-1}\biggl(\frac{R}{r}\biggr)^{\beta_{0}}\leq\frac{\Psi(R)}{\Psi(r)}
	\leq c_{\Psi}\biggl(\frac{R}{r}\biggr)^{\beta_{1}}
	\quad\textrm{for any $r,R\in(0,\infty)$ with $r\leq R$}
\end{equation}
for some $c_{\Psi},\beta_{0},\beta_{1}\in[1,\infty)$ with $1<\beta_{0}\leq\beta_{1}$, and define
$\Phi_{\Psi}\colon[0,\infty)\to[0,\infty)$ by $\Phi_{\Psi}(s):=\sup_{r\in(0,\infty)}(s/r-1/\Psi(r))$.
Suppose further that $(K,\refmet,\refmeas,\mathcal{E},\mathcal{F})$ satisfies the
\emph{full off-diagonal heat kernel estimates} \hypertarget{fHKE}{$\textup{fHKE}(\Psi)$},
i.e., that there exist a heat kernel $\{p_{t}\}_{t\in(0,\infty)}$ of
$(K,\refmet,\refmeas,\mathcal{E},\mathcal{F})$ and $c_{1},c_{2},c_{3},c_{4}\in(0,\infty)$ such that
\begin{equation}\tag*{$\textup{fHKE}(\Psi)$}\label{eq:fHKE}
\frac{c_{1}\exp\bigl(-c_{2}t\Phi_{\Psi}(\refmet(x,y)/t)\bigr)}{m\bigl(B_{\refmet}(x,\Psi^{-1}(t))\bigr)}\leq p_{t}(x,y)
	\leq\frac{c_{3}\exp\bigl(-c_{4}t\Phi_{\Psi}(\refmet(x,y)/t)\bigr)}{m\bigl(B_{\refmet}(x,\Psi^{-1}(t))\bigr)}
\end{equation}
for $\refmeas$-a.e.\ $x,y\in K$ for each $t\in(0,\infty)$. Then the following hold:
\begin{enumerate}[label=\textup{(\arabic*)},align=left,leftmargin=*]
\item\label{it:case-sing}\textup{(\ref{eq:fHKE} with ``sufficiently sub-Gaussian'' $\Psi$ implies singularity)} If
	\begin{equation}\label{eq:case-sing}
	\liminf_{\lambda\to\infty}\liminf_{r\downarrow 0}\frac{\lambda^{2}\Psi(r/\lambda)}{\Psi(r)}=0,
	\end{equation}
	then $\mu_{\langle u\rangle}$ is singular with respect to $\refmeas$ for any $u\in\mathcal{F}$.
\item\label{it:case-ac}\textup{(\ref{eq:fHKE} with ``Gaussian'' $\Psi$ implies absolute continuity)} If
	\begin{equation}\label{eq:case-ac}
	\limsup_{r\downarrow 0}\frac{\Psi (r)}{r^{2}}>0,
	\end{equation}
	then $\refmeas(A)=0$ if and only if $\sup_{u\in\mathcal{F}}\mu_{\langle u\rangle}(A)=0$
	for each Borel subset $A$ of $K$, thus in particular $\mu_{\langle u\rangle}$ is
	absolutely continuous with respect to $\refmeas$ for any $u\in\mathcal{F}$, and
	there exist $r_{1}\in(0,\diam(K,\refmet))$ and $c_{5}\in[1,\infty)$ such that
	\begin{equation}\label{eq:Psi-Gauss}
	c_{5}^{-1}r^{2}\leq\Psi(r)\leq c_{5}r^{2}\qquad\textrm{for any $r\in(0,r_{1})$.}
	\end{equation}
\end{enumerate}
\end{thm}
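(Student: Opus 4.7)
The statement is a simplification of \cite[Theorem 2.13]{KM}, and I would prove the two parts independently since they rest on quite different mechanisms. Throughout, I will freely use the standard consequences of \eqref{eq:fHKE}: volume doubling for $\refmeas$, a scaled Poincar\'e inequality $\textup{PI}(\Psi)$, a cutoff Sobolev inequality, and the matching capacity estimate $\on{Cap}\bigl(B_{\refmet}(x,r), B_{\refmet}(x,2r)\bigr) \asymp \refmeas\bigl(B_{\refmet}(x,r)\bigr)/\Psi(r)$.

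For part \ref{it:case-sing} the plan is to follow a Kusuoka--Hino style template. Fix $u \in \mathcal{F}$ and, using \eqref{eq:case-sing}, for each $\varepsilon > 0$ pick a scale factor $\lambda > 0$ and a sequence $r_{n} \downarrow 0$ with $\lambda^{2}\Psi(r_{n}/\lambda) \leq \varepsilon\,\Psi(r_{n})$. At each scale $r_{n}$ I would cover $K$ by a bounded-overlap family $\{B_{\refmet}(x_{i}, r_{n})\}_{i}$, equip each ball with a cutoff $\chi_{i}$ supported in $B_{\refmet}(x_{i}, r_{n})$ and identically $1$ on the inner ball $B_{\refmet}(x_{i}, r_{n}/\lambda)$, and, via the Leibniz and chain rules for energy measures recorded in \cite{FOT}, compare $\mu_{\langle u\rangle}$ on the inner balls with the capacities of the surrounding annuli. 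The hypothesis \eqref{eq:case-sing} is precisely what forces the bulk of $\mu_{\langle u\rangle}$ onto the union $U_{n}$ of the annular shells, whose total $\refmeas$-measure is controlled by $\varepsilon$ times a volume-doubling factor and is therefore arbitrarily small. A Borel--Cantelli argument then exhibits a Borel set of full $\refmeas$-measure on which $\mu_{\langle u\rangle}$ vanishes, yielding the singularity.

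For part \ref{it:case-ac} the plan is first to upgrade \eqref{eq:case-ac} to the two-sided estimate \eqref{eq:Psi-Gauss}, and then to conclude by the absolute-continuity side of \cite{KM}. The lower bound $c_{5}^{-1} r^{2} \leq \Psi(r)$ for small $r$ is immediate from \eqref{eq:case-ac} combined with \eqref{eq:Psi-ass}: a single scale $r_{*}$ with $\Psi(r_{*}) \geq c\,r_{*}^{2}$ is propagated to all smaller $r$ using the exponent $\beta_{1}$. The matching upper bound $\Psi(r) \leq c_{5}\,r^{2}$ is more substantive: I would derive it from the second-moment identity $\int_{K} \refmet(x,y)^{2}\,p_{t}(x,y)\,d\refmeas(y) \asymp \Psi^{-1}(t)^{2}$ for small $t$, which follows from \eqref{eq:fHKE} by Gaussian-type tail integration, so that super-quadratic growth of $\Psi$ along any subsequence would contradict the lower heat-kernel bound in \eqref{eq:fHKE}. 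Once \eqref{eq:Psi-Gauss} is in hand, \eqref{eq:fHKE} reduces to a genuine Aronson-type Gaussian estimate, the classical $1$-Poincar\'e inequality holds, and the quantitative characterisation ``$\refmeas(A) = 0 \Leftrightarrow \sup_{u \in \mathcal{F}}\mu_{\langle u\rangle}(A) = 0$'' is extracted by a Cheeger-type differentiation argument producing a measurable gradient field that realises each $\mu_{\langle u\rangle}$ as an absolutely continuous measure with respect to $\refmeas$.

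The main obstacle, as expected, lies in part \ref{it:case-sing}: one must arrange the $\mu_{\langle u\rangle}$-null set of full $\refmeas$-measure to be produced from a single sequence of covers that is valid for \emph{every} $u \in \mathcal{F}$, not merely on a dense subclass where the cutoff calculations are transparent. This is handled by combining the closability of $\mathcal{E}$, the continuity of $u \mapsto \mu_{\langle u\rangle}$ under $\mathcal{E}$-norm convergence \cite{FOT}, and a standard truncation reducing the problem to bounded $u$.
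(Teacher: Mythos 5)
Before anything else, note what you are being compared against: this paper does not prove Theorem \ref{thm:HK-singular} at all. It is imported verbatim from \cite[Theorem 2.13]{KM}, and the only argument supplied here is Remark \ref{rmk:HK-singular}-\ref{it:HK-singular-assump}, which checks that \ref{eq:fHKE} yields the hypotheses ($\textup{VD}$, $\textup{PI}(\Psi)$, $\textup{CS}(\Psi)$, chain condition) under which \cite[Theorem 2.13]{KM} applies. Judged as a self-contained argument, your sketch has genuine gaps at both of its crucial points. For part \ref{it:case-sing}, the entire analytic content is hidden in the sentence asserting that \eqref{eq:case-sing} ``forces the bulk of $\mu_{\langle u\rangle}$ onto the union of the annular shells, whose total $\refmeas$-measure is controlled by $\varepsilon$ times a volume-doubling factor''. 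Neither half of this is substantiated, and the second half is false as stated: if the balls $B_{\refmet}(x_{i},r_{n})$ cover $K$ with bounded overlap, then (using reverse doubling) the inner balls $B_{\refmet}(x_{i},r_{n}/\lambda)$ carry only a fraction of order $\lambda^{-\alpha'}$ of the total mass, so the union of the shells $B_{\refmet}(x_{i},r_{n})\setminus B_{\refmet}(x_{i},r_{n}/\lambda)$ has $\refmeas$-measure close to $\refmeas(K)$, not of order $\varepsilon$. More importantly, no mechanism is given that converts smallness of $\lambda^{2}\Psi(r/\lambda)/\Psi(r)$ into a concentration statement for the energy measure of an \emph{arbitrary} $u\in\mathcal{F}$: comparing $\mu_{\langle u\rangle}$ of inner balls with annulus capacities produces bounds of the form $\refmeas(B)/\Psi(r)$, but nothing in the sketch yields the definite, scale-by-scale defect between $\mu_{\langle u\rangle}$ and $\refmeas$ that singularity requires, nor is it said to which events Borel--Cantelli is applied. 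This is exactly the hard part of the proof in \cite{KM}, which goes through quantitative energy-comparison estimates for harmonic/caloric functions and an iteration over the scales supplied by \eqref{eq:case-sing}; it cannot be compressed to ``Leibniz rule $+$ annulus capacities $+$ Borel--Cantelli''.

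For part \ref{it:case-ac}, the derivation of \eqref{eq:Psi-Gauss} is flawed. The lower bound is \emph{not} immediate from \eqref{eq:case-ac} and \eqref{eq:Psi-ass}: propagating a good scale $r_{*}$ downward gives, for $r\leq r_{*}$, only $\Psi(r)\geq c_{\Psi}^{-1}\Psi(r_{*})(r/r_{*})^{\beta_{1}}\geq c\,r^{2}(r/r_{*})^{\beta_{1}-2}$, which degenerates as $r\downarrow 0$ whenever $\beta_{1}>2$; even with a whole sequence $r_{k}\downarrow 0$ of good scales the bound is lost in the gaps between consecutive $r_{k}$ unless their ratios are controlled, and \eqref{eq:case-ac} gives no such control (a $\Psi$ equal to $r^{2}$ along a sparse sequence but much smaller in between is compatible with \eqref{eq:Psi-ass} when $\beta_{0}<2<\beta_{1}$). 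So \eqref{eq:Psi-Gauss} genuinely uses the heat kernel estimates, not just \eqref{eq:Psi-ass}; in fact the roles are the reverse of what you claim: the upper bound $\Psi(r)\leq c_{5}r^{2}$ is the one that holds under \ref{eq:fHKE} irrespective of \eqref{eq:case-ac} (the ``walk dimension at least $2$'' phenomenon), while the lower bound is where \eqref{eq:case-ac} and the heat-kernel analysis must enter. Your second-moment route does not close this either: \ref{eq:fHKE} gives $\int_{K}\refmet(x,y)^{2}p_{t}(x,y)\,d\refmeas(y)\asymp\Psi^{-1}(t)^{2}$, but to conclude $\Psi(r)\geq c\,r^{2}$ one would need $\Psi^{-1}(t)\lesssim t^{1/2}$, i.e.\ a bound of order $t$ for that moment, which \ref{eq:fHKE} alone does not provide, and the hypothesis \eqref{eq:case-ac} never enters your computation. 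The closing step via Cheeger-type differentiation under Gaussian scaling is indeed the spirit of the argument in \cite{KM} (through the $2$-Poincar\'{e} inequality, not the ``classical $1$-Poincar\'{e} inequality'' you invoke), but it only becomes available after \eqref{eq:Psi-Gauss} has actually been established.
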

\begin{rmk}\label{rmk:HK-singular}
Let $\Psi\colon[0,\infty)\to[0,\infty)$ be a homeomorphism satisfying \eqref{eq:Psi-ass}
for some $c_{\Psi},\beta_{0},\beta_{1}\in[1,\infty)$ with $1<\beta_{0}\leq\beta_{1}$,
and let $(K,\refmet,\refmeas,\mathcal{E},\mathcal{F})$ be a metric measure Dirichlet
space satisfying \ref{eq:fHKE}.
\begin{enumerate}[label=\textup{(\arabic*)},align=left,leftmargin=*]
\item\label{it:HK-singular-assump}It is known that in this situation
	$(K,\refmet,\refmeas,\mathcal{E},\mathcal{F})$ satisfies the assumptions of
	\cite[Theorem 2.13]{KM}, namely $\textup{VD}$, $\textup{PI}(\Psi)$,
	$\textup{CS}(\Psi)$ and the chain condition for $(K,\refmet)$.
	Indeed, $\textup{VD}$ follows in the same way as \cite[Proof of Lemma 5.1-(i)]{BGK}
	by integrating the lower inequality in \ref{eq:fHKE} on $B_{\refmet}(x,2\Psi^{-1}(t))$
	with respect to $\refmeas$ and applying the upper bound on $\Phi_{\Psi}(R,t):=t\Phi_{\Psi}(R/t)$
	in \cite[(5.13)]{GK}, \eqref{eq:Psi-ass} and the inequality
	$\int_{B_{\refmet}(x,2\Psi^{-1}(t))}p_{t}(x,y)\,d\refmeas(y)\leq\int_{K}p_{t}(x,y)\,d\refmeas(y)\leq 1$
	for $\refmeas$-a.e.\ $x\in K$. Then $\textup{VD}$ and \ref{eq:fHKE} imply
	$\textup{PI}(\Psi)$ and $\textup{CS}(\Psi)$ by the results in \cite{BB04,BBK,AB,GHL}
	as summarized in \cite[Theorem 3.2]{Lie} and \cite[Theorem 2.8 and Remark 2.9]{KM},
	and \ref{eq:fHKE} also implies the chain condition for $(K,\refmet)$ by \cite[Theorem 2.11]{Mur}.
\item\label{it:fHKE-stable-Psi}\emph{If $\Psi_{0}\colon[0,\infty)\to[0,\infty)$ is a homeomorphism and
	$\Psi_{0}(r)/\Psi(r)\in[c_{0}^{-1},c_{0}]$ for any $r\in(0,\infty)$ for some $c_{0}\in[1,\infty)$, then
	$(K,\refmet,\refmeas,\mathcal{E},\mathcal{F})$ satisfies \hyperlink{fHKE}{$\textup{fHKE}(\Psi_{0})$}.}
	Indeed, this is immediate from \ref{eq:fHKE}, $\textup{VD}$, which is implied by \ref{eq:fHKE}
	as noted in \ref{it:HK-singular-assump} above, and the elementary observation based on
	\eqref{eq:Psi-ass} that
	$\Phi_{\Psi_{0}}(s)/\Phi_{\Psi}(s)\in\bigl[(c_{0}c_{\Psi})^{-\frac{1}{\beta_{0}-1}},(c_{0}c_{\Psi})^{\frac{1}{\beta_{0}-1}}\bigr]$
	for any $s\in(0,\infty)$.
\end{enumerate}
\end{rmk}

Note that, if $\Psi(r)=r^{\beta}$ for any $r\in[0,\infty)$ for some $\beta\in(1,\infty)$,
then $\Phi(s)=\beta^{-\frac{\beta}{\beta-1}}(\beta-1)s^{\frac{\beta}{\beta-1}}$
for any $s\in[0,\infty)$, so that \ref{eq:fHKE} with this $\Psi$ is the
typical form of heat kernel estimates known to hold widely; see, e.g.,
\cite{Stu95a,Stu96,SC,Gri} and references therein for the studies on the case of
$\beta=2$ and \cite{BP,Kum,FHK,BB92,BB99} for known results with $\beta>2$
for self-similar fractals. For this class of $\Psi$, the classification by
\eqref{eq:case-sing} and \eqref{eq:case-ac} gives a complete dichotomy between
$\beta>2$ and $\beta\leq 2$, with the latter identified further as $\beta=2$
by \eqref{eq:Psi-Gauss}. On the other hand, \eqref{eq:case-sing} and \eqref{eq:case-ac}
do not give a complete classification of general $\Psi$ since there are examples of
$\Psi$, like $\Psi(r)=r^{2}/\log(e-1+r^{-1})$, satisfying \eqref{eq:Psi-ass} but
neither \eqref{eq:case-sing} nor \eqref{eq:case-ac}, and it is not clear under \ref{eq:fHKE}
with such $\Psi$ whether the $\mathcal{E}$-energy measures $\mu_{\langle u\rangle}$
are singular or absolutely continuous with respect to the reference measure $\refmeas$.
In view of Theorem \ref{thm:HK-singular}, one might expect the following conjecture to hold.
\begin{conj}[Energy measure singularity dichotomy; a simplification of {\cite[Conjecture 2.15]{KM}}]\label{conj:HK-singular}
Theorem \textup{\ref{thm:HK-singular}}-\ref{it:case-sing} with \eqref{eq:case-sing} replaced by 
\begin{equation}\label{eq:case-nonGauss}
\lim_{r\downarrow 0}\frac{\Psi(r)}{r^{2}}=0
\end{equation}
\textup{(\ref{eq:fHKE} with ``however weakly sub-Gaussian'' $\Psi$ implies singularity)} holds.
\end{conj}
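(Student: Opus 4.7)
The plan is to deduce singularity under the weaker hypothesis \eqref{eq:case-nonGauss} by refining the proof of Theorem \ref{thm:HK-singular}-\ref{it:case-sing} into a genuinely multiscale argument. Writing $g(r):=\Psi(r)/r^{2}$, condition \eqref{eq:case-sing} is easily seen to be equivalent to $\liminf_{\lambda\to\infty}\liminf_{r\downarrow 0}g(r/\lambda)/g(r)=0$, while \eqref{eq:case-nonGauss} says only $g(r)\to 0$. The gap is precisely the regime (exemplified by $g(r)=1/\log(e-1+r^{-1})$) in which $g$ decays so regularly and slowly that $g(r/\lambda)/g(r)\to 1$ for every fixed $\lambda>1$. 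So the task is to replace a single-scale sub-Gaussian comparison between $r$ and $r/\lambda$ by a cumulative comparison across a long geometric chain of scales.

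First I would revisit the proof of Theorem \ref{thm:HK-singular}-\ref{it:case-sing} in \cite{KM} to extract a precise single-scale estimate, schematically of the form ``a capacity-type quantity controlling $\mu_{\langle u\rangle}$ on annuli of scale $r$ is bounded by $g(r/\lambda)/g(r)$ times the analogous quantity at scale $r/\lambda$'', together with the Borel--Cantelli step that converts such single-scale estimates into a $\refmeas$-null set carrying every $\mu_{\langle u\rangle}$. The hope is that iterating this estimate along a chain $r_{0}>r_{1}>\dots>r_{N}$ telescopes into a bound governed by $g(r_{N})/g(r_{0})$, which tends to $0$ under \eqref{eq:case-nonGauss} alone, thereby recovering the quantitative gain that \eqref{eq:case-sing} would have produced in a single step. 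A complementary angle is to try to reduce the general case to Theorem \ref{thm:HK-singular}-\ref{it:case-sing} by a \emph{nonmultiplicative} change of $\Psi$ via a change of the metric $\refmet$ or a time change, since Remark \ref{rmk:HK-singular}-\ref{it:fHKE-stable-Psi} already rules out purely multiplicative perturbations.

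The hard part, and presumably the reason the conjecture remains open, is that in the borderline logarithmic regime every single-scale estimate extracted from \ref{eq:fHKE} is essentially indistinguishable from its Gaussian counterpart, so the telescoping above faces a critical obstacle: iterating a Borel--Cantelli argument requires the ``bad sets'' at successive scales $r_{j}$ and $r_{j+1}$ to nest or overlap in a quantitatively controlled way, and \ref{eq:fHKE} alone gives no information about such geometric compatibility. Any proof of the conjecture will have to exploit the accumulation of tiny sub-Gaussian deviations across infinitely many scales simultaneously, and it is not a priori clear that \ref{eq:fHKE} by itself encodes enough structure to detect this accumulation at the level of energy measures rather than only at the level of the heat kernel. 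This is why verifying singularity by hand on explicit borderline examples, as the present paper does, is a natural and arguably necessary intermediate step toward a full proof.
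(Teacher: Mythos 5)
There is a genuine gap, and in fact the gap is unavoidable at present: the statement you are addressing is Conjecture \ref{conj:HK-singular}, which this paper does \emph{not} prove and which remains open. Your text is a strategy outline rather than a proof, and you concede as much in your last paragraph. The concrete reason your telescoping plan cannot be carried out as described is quantitative: every single-scale estimate one can extract from \ref{eq:fHKE} (or from the intermediate conditions $\textup{PI}(\Psi)$, $\textup{CS}(\Psi)$ used in \cite{KM}) holds only up to fixed multiplicative constants, so iterating it along a chain of $N$ geometric scales $r_{0}>r_{1}>\dots>r_{N}$ produces an error factor of order $c^{N}$ for some $c>1$. Under \eqref{eq:case-sing} one can choose $\lambda$ so large that a \emph{single} step already beats the constant, which is exactly how the proof of Theorem \ref{thm:HK-singular}-\ref{it:case-sing} in \cite{KM} works; under \eqref{eq:case-nonGauss} alone the per-step gain $g(r_{j+1})/g(r_{j})$ can be $1+o(1)$, and the cumulative gain $g(r_{N})/g(r_{0})\to 0$ is then swamped by $c^{N}$, so the telescoped bound carries no information. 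Your alternative suggestion of absorbing $\Psi$ by a change of metric or time change runs into Remark \ref{rmk:HK-singular}-\ref{it:fHKE-stable-Psi} only for multiplicative perturbations, as you note, but a genuine change of metric alters $\refmet$, $\refmeas$ and the ball structure simultaneously, and no argument is given that the hypotheses of Theorem \ref{thm:HK-singular}-\ref{it:case-sing} can be restored in the new gauge.

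For comparison, what the paper actually does is precisely the ``intermediate step'' you mention at the end: it constructs, for every prescribed decay rate of $\Psi(r)/r^{2}$ allowed by \eqref{eq:case-nonGauss}, a concrete metric measure Dirichlet space (a thin scale irregular Sierpi\'{n}ski gasket) satisfying \hyperlink{fHKE}{$\textup{fHKE}(\Psi_{\bm{l}})$} with $\Psi_{\bm{l}}$ comparable to the given $\Psi$ (Theorem \ref{thm:tsisg-fHKE}, Theorem \ref{thm:realize-given-Psi}, Proposition \ref{prop:tsisg-Psi-arbitrarily-slow}) and proves singularity of all energy measures for these spaces directly (Theorem \ref{thm:tsisg-sing}), via the martingale singularity criterion of \cite[Theorem 4.1]{Hin05} applied to harmonic functions on the cell filtration rather than via any heat-kernel comparison. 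This supports the conjecture but does not prove it, and your proposal does not close that distance.
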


As announced already in \cite[Remark 2.14]{KM}, this paper is aimed at
giving a clear evidence that Conjecture \ref{conj:HK-singular} should be true,
by presenting concrete examples of metric measure Dirichlet spaces satisfying
both the singularity of the energy measures and \ref{eq:fHKE} for some $\Psi$,
\emph{whose decay rate at $0$ can be made arbitrarily close to $r^{2}$}.
Their state spaces are certain fractals, which we call the
\emph{(two-dimensional) thin scale irregular Sierpi\'{n}ski gaskets}
(see Figure \ref{fig:thinSISG} below), obtained by modifying the construction of the scale
irregular (or homogeneous random) Sierpi\'{n}ski gaskets studied in \cite{Ham92,BH,Ham00}
(see also \cite[Chapter 24]{Kig12}) so as to make them look very much like one-dimensional
frames in infinitesimal scale.
An arbitrarily slow decay rate of $\Psi(r)/r^{2}$ as $r\downarrow 0$ can be then realized
by choosing suitably the parameters defining the fractal to make its infinitesimal geometry
arbitrarily close to being one-dimensional, which is an idea suggested to the author by
Martin T.\ Barlow in \cite{Bar19}. An important point here is to allow \emph{infinitely}
many patterns of cell subdivisions to be present in the construction of the fractal,
in contrast to that of the usual scale irregular Sierpi\'{n}ski
gaskets considered in \cite{Ham92,BH,Ham00,Kig12}, each of which involves only
finitely many patterns of cell subdivisions and typically falls within the scope of
Theorem \ref{thm:HK-singular}-\ref{it:case-sing} as illustrated in \cite[Section 5]{KM}.
We remark that the singularity of the energy measures has been proved also in \cite{HY}
for a class of (two-dimensional) spatially inhomogeneous Sierpi\'{n}ski gaskets,
which typically do not satisfy the volume doubling property $\textup{VD}$
and are thereby beyond the scope of \cite[Theorem 2.13]{KM}.

The rest of this paper is organized as follows. In Section \ref{sec:tsisg}
we define the thin scale irregular Sierpi\'{n}ski gaskets and construct the canonical
Dirichlet forms (resistance forms) on them, and we verify in Section \ref{sec:tsisg-fHKE}
that they satisfy \ref{eq:fHKE} with $\Psi$ explicit in terms of their defining parameters
(Theorem \ref{thm:tsisg-fHKE}). In Section~\ref{sec:tsisg-sing} we prove the singularity
of the energy measures for the canonical Dirichlet form on \emph{any} thin scale irregular
Sierpi\'{n}ski gasket (Theorem \ref{thm:tsisg-sing}), and Section \ref{sec:realize-given-Psi}
is devoted to stating and proving our last main result that an arbitrarily slow decay rate
of $\Psi(r)/r^{2}$ can be realized by some thin scale irregular Sierpi\'{n}ski gasket
(Theorem \ref{thm:realize-given-Psi} and Proposition \ref{prop:tsisg-Psi-arbitrarily-slow}).
\begin{ntn}
In this paper, we adopt the following notation and conventions.
\begin{enumerate}[label=\textup{(\arabic*)},align=left,leftmargin=*]
\item The symbols $\subset$ and $\supset$ for set inclusion
	\emph{allow} the case of the equality.
\item $\mathbb{N}:=\{n\in\mathbb{Z}\mid n>0\}$, i.e., $0\not\in\mathbb{N}$.
\item The cardinality (the number of elements) of a set $A$ is denoted by $\#A$.
\item We set $a\vee b:=\max\{a,b\}$, $a\wedge b:=\min\{a,b\}$, $a^{+}:=a\vee 0$,
	$a^{-}:=-(a\wedge 0)$ and $\lfloor a\rfloor:=\max\{n\in\mathbb{Z}\mid n\leq a\}$
	for $a,b\in\mathbb{R}$, and we use the same notation also for $\mathbb{R}$-valued
	functions and equivalence classes of them. All numerical functions
	in this paper are assumed to be $[-\infty,\infty]$-valued.
\item The Euclidean inner product and norm on $\mathbb{R}^{2}$ are denoted by
	$\langle\cdot,\cdot\rangle$ and $|\cdot|$, respectively.
\item Let $K$ be a non-empty set. We define $\id_{K}\colon K\to K$ by $\id_{K}(x):=x$,
	$\one_{A}=\one_{A}^{K}\in\mathbb{R}^{K}$ for $A\subset K$ by
	$\one_{A}(x):=\one_{A}^{K}(x):=\bigl\{\begin{smallmatrix}1 & \textrm{if $x\in A$,}\\ 0 & \textrm{if $x\not\in A$,}\end{smallmatrix}$
	set $\one_{x}:=\one_{x}^{K}:=\one_{\{x\}}$ for $x\in K$
	and $\|u\|_{\sup}:=\|u\|_{\sup,K}:=\sup_{x\in K}|u(x)|$ for $u\colon K\to\mathbb{R}$.
\item Let $K$ be a topological space.
	We set $\mathcal{C}(K):=\{u\in\mathbb{R}^{K}\mid\textrm{$u$ is continuous}\}$, and
	the closure of $K\setminus u^{-1}(0)$ in $K$ is denoted by $\supp_{K}[u]$ for each
	$u\in\mathcal{C}(K)$. The Borel $\sigma$-algebra of $K$ is denoted by $\mathscr{B}(K)$.
\item Let $(K,\refmet)$ be a metric space. We set
	$B_{\refmet}(x,r):=\{y\in K\mid \refmet(x,y)<r\}$ for $(x,r)\in K\times(0,\infty)$.
\item Let $(K,\mathscr{B})$ be a measurable space and let $\mu,\nu$ be measures on
	$(K,\mathscr{B})$. We write $\nu \ll \mu$ and $\nu \perp \mu$ to mean that
	$\nu$ is absolutely continuous and singular, respectively, with respect to $\mu$.
\end{enumerate}
\end{ntn}
%
\section{The examples: Thin scale irregular Sierpi\'{n}ski gaskets}\label{sec:tsisg}
In this section, we introduce the (two-dimensional) thin scale irregular Sierpi\'{n}ski
gaskets, and construct the canonical Dirichlet forms (resistance forms) on them
by applying the standard method developed in \cite[Chapters 2 and 3]{Kig01}.
We closely follow \cite[Section 5]{KM} for the presentation of this section.
\begin{figure}[b]\centering
\includegraphics[height=69pt]{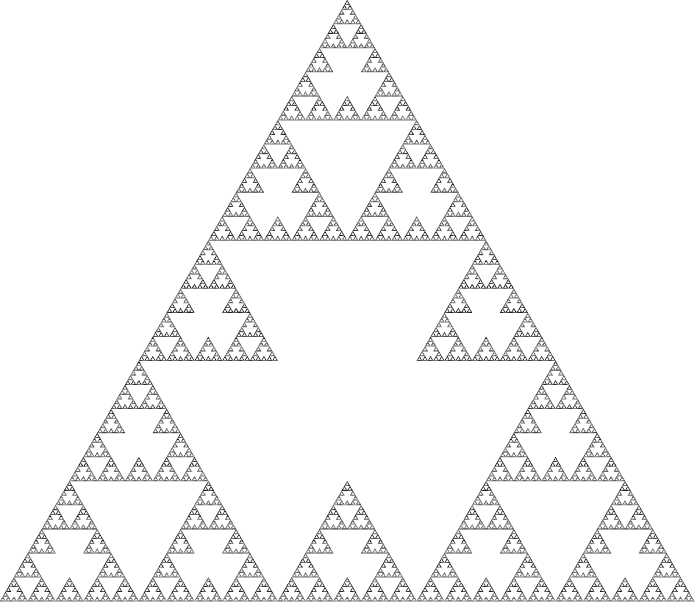}\quad
\includegraphics[height=69pt]{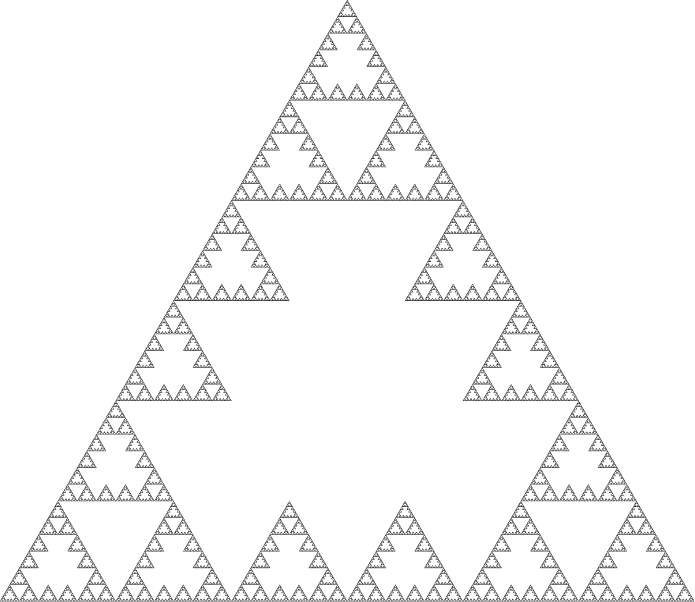}\quad
\includegraphics[height=69pt]{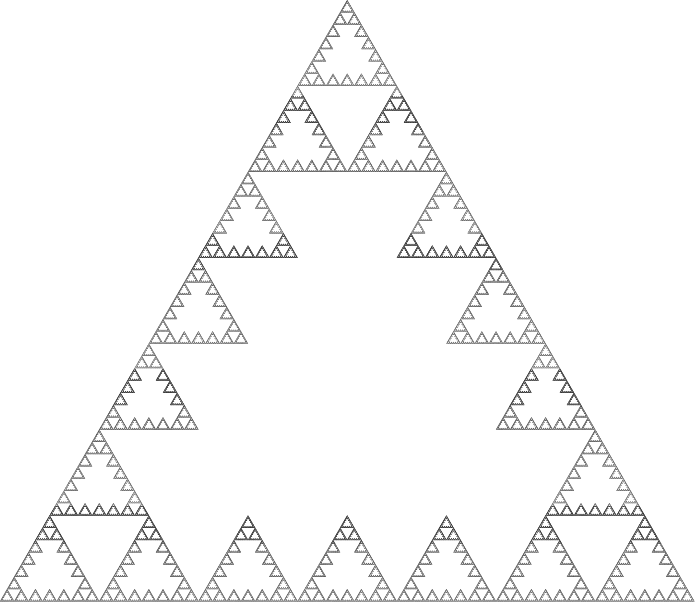}\quad
\includegraphics[height=69pt]{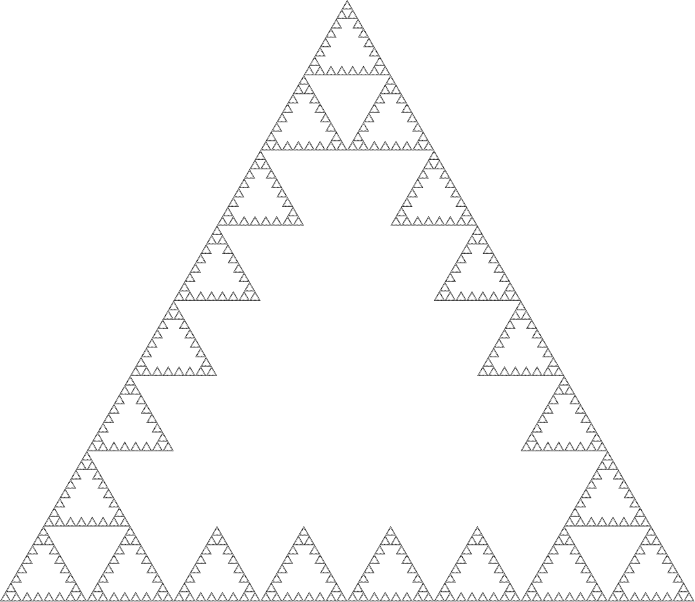}
\caption{The level-$l$ (self-similar) thin Sierpi\'{n}ski gaskets $K^{l}$ ($l=5,6,7,8$)}
\label{fig:thinSGs}
\end{figure}

To start with, the thin scale irregular Sierpi\'{n}ski gaskets are defined as follows.%
\begin{dfn}[Thin scale irregular Sierpi\'{n}ski gasket]\label{dfn:tsisg}
Let $q_{0},q_{1},q_{2}\in\mathbb{R}^{2}$ satisfy $|q_{j}-q_{k}|=1$ for any $j,k\in\{0,1,2\}$
with $j\not=k$, so that the convex hull $\triangle$ of $V_{0}:=\{q_{0},q_{1},q_{2}\}$
in $\mathbb{R}^{2}$ is a closed equilateral triangle with side length $1$.
For each $l\in\mathbb{N}\setminus\{1,2,3,4\}$, we set
\begin{equation}\label{eq:tlsg}
S_{l}:=\bigl\{(i_{1},i_{2})\in(\mathbb{N}\cup\{0\})^{2}
	\bigm|\textrm{$i_{1}+i_{2}\leq l-1$, $i_{1}i_{2}(l-1-i_{1}-i_{2})=0$}\bigr\},
\end{equation}
and for each $i=(i_{1},i_{2})\in S_{l}$ set
$q^{l}_{i}:=q_{0}+\sum_{k=1}^{2}(i_{k}/l)(q_{k}-q_{0})$ and define
$f^{l}_{i}\colon\mathbb{R}^{2}\to\mathbb{R}^{2}$ by $f^{l}_{i}(x):=q^{l}_{i}+l^{-1}(x-q_{0})$.
Let $\bm{l}=(l_{n})_{n=1}^{\infty}\in(\mathbb{N}\setminus\{1,2,3,4\})^{\mathbb{N}}$,
set $W^{\bm{l}}_{n}:=\prod_{k=1}^{n}S_{l_{k}}$ for $n\in\mathbb{N}\cup\{0\}$,
$W^{\bm{l}}_{*}:=\bigcup_{n=0}^{\infty}W^{\bm{l}}_{n}$, $|w|:=n$ and
$f^{\bm{l}}_{w}:=f^{l_{1}}_{w_{1}}\circ\cdots\circ f^{l_{n}}_{w_{n}}$
for $n\in\mathbb{N}\cup\{0\}$ and $w=w_{1}\ldots w_{n}\in W^{\bm{l}}_{n}$,
where $W^{\bm{l}}_{0}$ is defined as the singleton $\{\emptyset\}$ of the empty word
$\emptyset$ and $f^{\bm{l}}_{\emptyset}:=\id_{\mathbb{R}^{2}}$.
Noting that $\bigl\{\bigcup_{w\in W^{\bm{l}}_{n}}f^{\bm{l}}_{w}(\triangle)\bigr\}_{n=0}^{\infty}$
is a strictly decreasing sequence of non-empty compact subsets of $\triangle$, we define the
\emph{(two-dimensional) level-$\bm{l}$ thin scale irregular Sierpi\'{n}ski gasket}
$K^{\bm{l}}$ as the non-empty compact subset of $\triangle$ given by
\begin{equation}\label{eq:tsisg}
K^{\bm{l}}:=\bigcap_{n=0}^{\infty}\bigcup_{w\in W^{\bm{l}}_{n}}f^{\bm{l}}_{w}(\triangle)
\end{equation}
(see Figure \ref{fig:thinSISG}), and set $K^{\bm{l}}_{w}:=K^{\bm{l}}\cap f^{\bm{l}}_{w}(\triangle)$
and $F^{\bm{l}}_{w}:=f^{\bm{l}}_{w}|_{K^{\bm{l}^{|w|}}}$ for $w\in W^{\bm{l}}_{*}$,
where $\bm{l}^{k}:=(l_{n+k})_{n=1}^{\infty}$ for $k\in\mathbb{N}\cup\{0\}$.
We also set $V^{\bm{l}}_{n}:=\bigcup_{w\in W^{\bm{l}}_{n}}f^{\bm{l}}_{w}(V_{0})$ for
$n\in\mathbb{N}\cup\{0\}$ and $V^{\bm{l}}_{*}:=\bigcup_{n=0}^{\infty}V^{\bm{l}}_{n}$,
so that $V^{\bm{l}}_{0}=V_{0}$, $\{V^{\bm{l}}_{n}\}_{n=0}^{\infty}$ is a strictly increasing
sequence of finite subsets of $K^{\bm{l}}$, and $V^{\bm{l}}_{*}$ is dense in $K^{\bm{l}}$.

In particular, for each $l\in\mathbb{N}\setminus\{1,2,3,4\}$ we let
$\bm{l}_{l}:=(l)_{n=1}^{\infty}$ denote the constant sequence with value $l$, set
$K^{l}:=K^{\bm{l}_{l}}$ and $V^{l}_{n}:=V^{\bm{l}_{l}}_{n}$ for $n\in\mathbb{N}\cup\{0\}$,
and call $K^{l}$ the \emph{(two-dimensional) level-$l$ thin Sierpi\'{n}ski gasket}, which
is exactly self-similar in the sense that $K^{l}=\bigcup_{i\in S_{l}}f^{l}_{i}(K^{l})$
(see Figure \ref{fig:thinSGs} and, e.g., \cite[Section 1.1]{Kig01}).
\end{dfn}
%
\begin{figure}[t]\centering
\includegraphics[height=300pt]{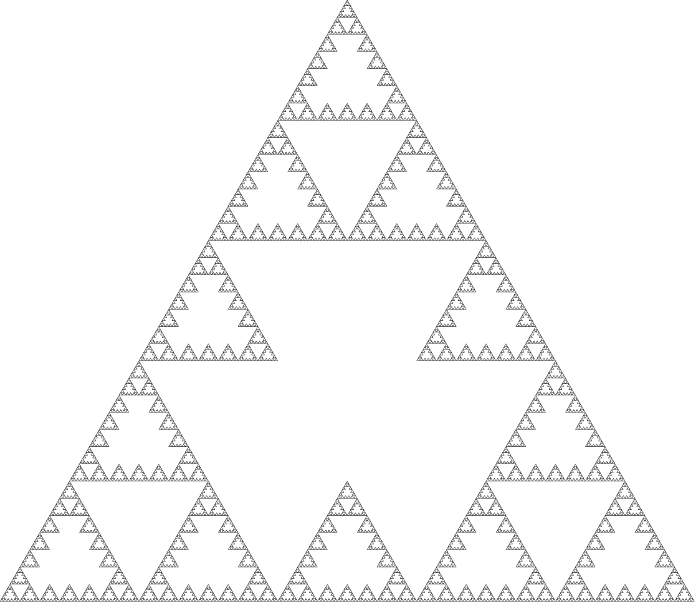}
\caption{A level-$\bm{l}$ thin scale irregular Sierpi\'{n}ski gasket $K^{\bm{l}}$ ($\bm{l}=(5,7,6,12,\ldots)$)}
\label{fig:thinSISG}
\end{figure}

We fix an arbitrary $\bm{l}=(l_{n})_{n=1}^{\infty}\in(\mathbb{N}\setminus\{1,2,3,4\})^{\mathbb{N}}$
in the rest of this section. The following proposition is immediate from Definition \ref{dfn:tsisg}.
\begin{prop}\label{prop:tsisg}
\begin{enumerate}[label=\textup{(\arabic*)},align=left,leftmargin=*]
\item\label{it:tsisg-boundary}Let $w=w_{1}\ldots w_{|w|},v=v_{1}\ldots v_{|v|}\in W^{\bm{l}}_{*}\setminus\{\emptyset\}$
	satisfy $w_{k}\not=v_{k}$ for some $k\in\{1,\ldots,|w|\wedge|v|\}$. Then
	$\#(K^{\bm{l}}_{w}\cap K^{\bm{l}}_{v})\leq 1$ and
	\begin{equation}\label{eq:tsisg-boundary}
	f^{\bm{l}}_{w}(\triangle)\cap f^{\bm{l}}_{v}(\triangle)
		=K^{\bm{l}}_{w}\cap K^{\bm{l}}_{v}
		=F^{\bm{l}}_{w}(V_{0})\cap F^{\bm{l}}_{v}(V_{0}).
	\end{equation}
\item\label{it:tsisg-map-cells}$K^{\bm{l}}=\bigcup_{w\in W^{\bm{l}}_{n}}K^{\bm{l}}_{w}$ for any $n\in\mathbb{N}\cup\{0\}$,
	and $F^{\bm{l}}_{w}(K^{\bm{l}^{|w|}})=K^{\bm{l}}_{w}$ for any $w\in W^{\bm{l}}_{*}$.
\item\label{it:tsisg-map-vertices}$V^{\bm{l}}_{n+k}=\bigcup_{w\in W^{\bm{l}}_{n}}F^{\bm{l}}_{w}(V^{\bm{l}^{n}}_{k})$
	and $V^{\bm{l}}_{*}=\bigcup_{w\in W^{\bm{l}}_{n}}F^{\bm{l}}_{w}(V^{\bm{l}^{n}}_{*})$
	for any $n,k\in\mathbb{N}\cup\{0\}$.
\end{enumerate}
\end{prop}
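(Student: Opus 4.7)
The plan is to chase through the definitions, with one genuinely geometric observation at the single-level scale doing most of the work. The underlying identity that everything reduces to is the concatenation rule $f^{\bm{l}}_w \circ f^{\bm{l}^{|w|}}_v = f^{\bm{l}}_{wv}$ for $w \in W^{\bm{l}}_*$ and $v \in W^{\bm{l}^{|w|}}_*$, which is built into the definition of $f^{\bm{l}}_w$.

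First I will record the following single-level geometric observation: for any $l \in \mathbb{N} \setminus \{1,2,3,4\}$ and any distinct $i,j \in S_l$, the sub-triangles $f^l_i(\triangle)$ and $f^l_j(\triangle)$ meet in at most one point, and any such point lies in $f^l_i(V_0) \cap f^l_j(V_0)$. This is because each $f^l_i$ is a positively oriented contraction by $1/l$, so all small triangles in the resulting grid have the same orientation as $\triangle$; two same-oriented translated copies in a triangular grid can share at most a vertex, which is automatically a vertex of each.

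For part \ref{it:tsisg-boundary}, I will let $k$ be the smallest index with $w_k \neq v_k$, apply $f^{\bm{l}}_{w_1\ldots w_{k-1}}$ to the single-level observation at $l = l_k$, and conclude that $f^{\bm{l}}_{w_1\ldots w_k}(\triangle) \cap f^{\bm{l}}_{v_1\ldots v_k}(\triangle)$ is either empty or a singleton $\{p\}$, with $p$ of the form $f^{\bm{l}}_{w_1\ldots w_{k-1}}(q)$ for some $q$ that is a vertex of $\triangle$ shared by the two level-$k$ subcells. Since any vertex of $\triangle$ belongs to $V^{\bm{l}^m}_n$ for every $n,m$ (it is fixed by a suitable single-step subdivision map, and this iterates), such a $p$ lies in $F^{\bm{l}}_w(V_0) \cap F^{\bm{l}}_v(V_0) \subseteq K^{\bm{l}}_w \cap K^{\bm{l}}_v$. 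The chain (\ref{eq:tsisg-boundary}) then follows from the trivial inclusions $F^{\bm{l}}_w(V_0) \cap F^{\bm{l}}_v(V_0) \subseteq K^{\bm{l}}_w \cap K^{\bm{l}}_v \subseteq f^{\bm{l}}_w(\triangle) \cap f^{\bm{l}}_v(\triangle)$ combined with the reverse chain of inclusions into $\{p\}$ just established, which simultaneously gives $\#(K^{\bm{l}}_w \cap K^{\bm{l}}_v) \leq 1$.

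Parts \ref{it:tsisg-map-cells} and \ref{it:tsisg-map-vertices} are pure bookkeeping. The first identity of \ref{it:tsisg-map-cells} is $K^{\bm{l}} = K^{\bm{l}} \cap \bigcup_{w \in W^{\bm{l}}_n} f^{\bm{l}}_w(\triangle)$, valid because $K^{\bm{l}}$ is contained in this union by (\ref{eq:tsisg}); the second identity $F^{\bm{l}}_w(K^{\bm{l}^{|w|}}) = K^{\bm{l}}_w$ follows by pushing the homeomorphism $f^{\bm{l}}_w$ through the intersection defining $K^{\bm{l}^{|w|}}$, applying the concatenation rule to rewrite $f^{\bm{l}}_w \circ f^{\bm{l}^{|w|}}_v = f^{\bm{l}}_{wv}$, and noting that as $v$ ranges over $W^{\bm{l}^{|w|}}_n$ the words $wv$ exhaust the level-$(|w|+n)$ descendants of $w$. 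Part \ref{it:tsisg-map-vertices} is the same union-over-descendants argument applied to $V_0$ in place of $\triangle$, followed by taking a further union over $k$ for the second identity. The only step that is not entirely formal is the single-level geometric observation; everything after that is just the concatenation rule and the persistence of $V_0$ under refinements.
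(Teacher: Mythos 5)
Your overall route is the natural one, and since the paper gives no proof of this proposition (it is declared immediate from Definition \ref{dfn:tsisg}), the only issue is whether your argument is airtight; at one specific point it is not. In part \ref{it:tsisg-boundary}, after reducing to the smallest index $k$ with $w_k\not=v_k$, you conclude \emph{unconditionally} that the unique candidate intersection point $p$ of the two level-$k$ cells lies in $F^{\bm{l}}_{w}(V_{0})\cap F^{\bm{l}}_{v}(V_{0})$. That is false in general when $|w|>k$: the suffix $w_{k+1}\ldots w_{|w|}$ may steer $f^{\bm{l}}_{w}(\triangle)$ away from $p$, in which case $p\notin f^{\bm{l}}_{w}(\triangle)\supset F^{\bm{l}}_{w}(V_{0})$; the three sets in \eqref{eq:tsisg-boundary} are then all empty and the identity holds trivially, but your chain of inclusions through $\{p\}$ breaks. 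What is needed is the conditional statement: if $p\in f^{\bm{l}}_{w}(\triangle)\cap f^{\bm{l}}_{v}(\triangle)$, then $p\in F^{\bm{l}}_{w}(V_{0})\cap F^{\bm{l}}_{v}(V_{0})$. Moreover, the justification you offer --- that every corner of $\triangle$ belongs to $V^{\bm{l}^{m}}_{n}$ for all $n,m$ --- is weaker than what this requires: writing $p=f^{\bm{l}}_{w_{1}\ldots w_{k}}(q')$ with $q'\in V_{0}$, you must show that $q'$, which lies in the particular descendant cell $f^{\bm{l}^{k}}_{w_{k+1}\ldots w_{|w|}}(\triangle)$, is a vertex image of \emph{that} cell; membership in the global vertex set $V^{\bm{l}^{k}}_{|w|-k}$ does not give this by itself. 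The correct lemma, which your parenthetical gestures at but which should be stated and proved, is: a corner $q_{j}$ of $\triangle$ lies in $f^{l}_{i}(\triangle)$ only for the corner index $i$ with $f^{l}_{i}(q_{j})=q_{j}$, and hence, by induction on the word length, $q_{j}\in f^{\bm{l}^{k}}_{u}(\triangle)$ forces $q_{j}=f^{\bm{l}^{k}}_{u}(q_{j})\in f^{\bm{l}^{k}}_{u}(V_{0})$. With that corner-persistence lemma in hand, your argument for \ref{it:tsisg-boundary} closes.

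The same point resurfaces in part \ref{it:tsisg-map-cells}, which is not pure bookkeeping: pushing $f^{\bm{l}}_{w}$ through the defining intersection only yields $f^{\bm{l}}_{w}(K^{\bm{l}^{|w|}})=\bigcap_{n}\bigcup_{v\in W^{\bm{l}^{|w|}}_{n}}f^{\bm{l}}_{wv}(\triangle)$, and the inclusion $K^{\bm{l}}_{w}\subset F^{\bm{l}}_{w}(K^{\bm{l}^{|w|}})$ still has to be argued, since a point $x\in K^{\bm{l}}\cap f^{\bm{l}}_{w}(\triangle)$ is, at each deep level, a priori covered only by cells that are not descendants of $w$; one handles this exactly as above, showing via the single-level observation that such an $x$ is a common vertex and via corner persistence that it also lies in a corner-descendant of $w$. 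Part \ref{it:tsisg-map-vertices} is indeed purely formal. In short: right approach, correct key geometric fact at the single-level scale, but the corner-persistence step must be formulated as a statement about the specific descendant cell and invoked conditionally, both in \ref{it:tsisg-boundary} and in the inclusion $K^{\bm{l}}_{w}\subset F^{\bm{l}}_{w}(K^{\bm{l}^{|w|}})$ of \ref{it:tsisg-map-cells}.
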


In exactly the same way as in \cite{Ham92,BH,Ham00} (see also \cite[Part 4]{Kig12}),
we can define a canonical strongly local regular symmetric Dirichlet space
$(K^{\bm{l}},\refmet_{\bm{l}},\refmeas_{\bm{l}},\mathcal{E}^{\bm{l}},\mathcal{F}_{\bm{l}})$
over $K^{\bm{l}}$. First, the metric $\refmet_{\bm{l}}$ on $K^{\bm{l}}$ is defined as follows.
\begin{dfn}\label{dfn:tsisg-metric}
We define $\refmet_{\bm{l}}\colon K^{\bm{l}}\times K^{\bm{l}}\to[0,\infty]$ by
\begin{equation}\label{eq:tsisg-metric}
\refmet_{\bm{l}}(x,y):=\inf\{\ell_{\mathbb{R}^{2}}(\gamma)\mid
	\textrm{$\gamma\colon[0,1]\to K^{\bm{l}}$, $\gamma$ is continuous, $\gamma(0)=x$, $\gamma(1)=y$}\},
\end{equation}
where $\ell_{\mathbb{R}^{2}}(\gamma)$ denotes the Euclidean length of $\gamma$, i.e., the total
variation of the $\mathbb{R}^{2}$-valued map $\gamma$ with respect to the Euclidean norm $|\cdot|$.
We also set $L^{\bm{l}}_{n}:=l_{1}\cdots l_{n}$ ($L^{\bm{l}}_{0}:=1$) for $n\in\mathbb{N}\cup\{0\}$.
\end{dfn}
\begin{prop}\label{prop:tsisg-metric}
$\refmet_{\bm{l}}$ is a metric on $K^{\bm{l}}$, and it is \emph{geodesic},
i.e., for any $x,y\in K^{\bm{l}}$ there exists $\gamma\colon[0,1]\to K^{\bm{l}}$
such that $\gamma(0)=x$, $\gamma(1)=y$ and $\refmet_{\bm{l}}(\gamma(s),\gamma(t))=|s-t|\refmet_{\bm{l}}(x,y)$
for any $s,t\in[0,1]$. Moreover,
\begin{equation}\label{eq:tsisg-metric-geodesic}
|x-y|\leq \refmet_{\bm{l}}(x,y)\leq 6|x-y|\qquad\textrm{for any $x,y\in K^{\bm{l}}$.}
\end{equation}
\end{prop}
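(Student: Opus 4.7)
The plan is to prove the four claims in order, with the upper bound in \eqref{eq:tsisg-metric-geodesic} carrying the bulk of the work. The metric axioms are immediate from \eqref{eq:tsisg-metric}: symmetry by reversing paths, triangle inequality by concatenation and reparametrization (neither operation changes $\ell_{\mathbb{R}^{2}}$), and $\refmet_{\bm{l}}(x,x)=0$ by the constant path. The lower bound $|x-y|\leq\refmet_{\bm{l}}(x,y)$ is the elementary estimate $\ell_{\mathbb{R}^{2}}(\gamma)\geq|\gamma(0)-\gamma(1)|$ (total variation dominates the chord), and it also forces $\refmet_{\bm{l}}(x,y)>0$ for $x\neq y$, confirming the metric property modulo finiteness (which the upper bound supplies).

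The structural input driving the upper bound is $\partial\triangle\subset K^{\bm{l}}$, and hence $f^{\bm{l}}_{w}(\partial\triangle)\subset K^{\bm{l}}_{w}$ for every $w\in W^{\bm{l}}_{*}$ via Proposition~\ref{prop:tsisg}-\ref{it:tsisg-map-cells}. To see the containment, observe that each side of $\triangle$ is tiled by its row of $l_{1}$ boundary cells of $X_{1}:=\bigcup_{w\in W^{\bm{l}}_{1}}f^{\bm{l}}_{w}(\triangle)$ (for example $[q_{0},q_{1}]=\bigcup_{i=0}^{l_{1}-1}f^{l_{1}}_{(i,0)}([q_{0},q_{1}])$, from the explicit form of $S_{l_{1}}$), and iterating yields $\partial\triangle\subset\bigcap_{n\geq 0}\bigcup_{w\in W^{\bm{l}}_{n}}f^{\bm{l}}_{w}(\triangle)=K^{\bm{l}}$. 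The resulting union $\bigcup_{n\geq 0}\bigcup_{|w|=n}f^{\bm{l}}_{w}(\partial\triangle)\subset K^{\bm{l}}$ of nested triangular boundaries is a dense, piecewise linear ``skeleton'' along which I would construct rectifiable connecting paths, and on which the shortest-arc bound on an equilateral triangle gives $\refmet_{\bm{l}}(p,q)\leq 2|p-q|$ for any $p,q\in\partial\triangle$.

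For the upper bound itself, given $x\neq y$ in $K^{\bm{l}}$ I would let $N$ be the largest integer with $\{x,y\}\subset K^{\bm{l}}_{w}$ for some $w\in W^{\bm{l}}_{N}$; this is finite because cell diameters $L^{\bm{l}}_{n}^{-1}$ shrink to $0$ and distinct cells at the same level meet in at most one point by Proposition~\ref{prop:tsisg}-\ref{it:tsisg-boundary}. Applying $(F^{\bm{l}}_{w})^{-1}$, which scales both $|\cdot|$ and the intrinsic metric inside the cell by $L^{\bm{l}}_{N}^{-1}$, reduces matters to $N=0$ with $x\in K^{\bm{l}}_{i}, y\in K^{\bm{l}}_{j}$ for distinct $i,j\in S_{l_{1}}$. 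Setting $C:=\sup_{\bm{l}'}\sup_{x\neq y\in K^{\bm{l}'}}\refmet_{\bm{l}'}(x,y)/|x-y|$, the same scaling yields the within-cell bound $\refmet_{\bm{l}}(x,y)\leq C|x-y|$ whenever $x,y$ share a common cell. In the reduced case I would build the connecting path in three pieces: a descent from $x$ inside $K^{\bm{l}}_{i}$ to a suitably chosen vertex $p_{x}\in\partial f^{\bm{l}}_{i}(\triangle)$, a traversal $p_{x}\to p_{y}$ through the level-$1$ skeleton $\partial\triangle\cup\bigcup_{k\in S_{l_{1}}}\partial f^{\bm{l}}_{k}(\triangle)$ with length explicitly controlled by $|p_{x}-p_{y}|$, and a symmetric ascent to $y\in K^{\bm{l}}_{j}$. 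A case-split on whether $i,j$ are adjacent in the cyclic adjacency graph of the $3l_{1}-3$ boundary cells of $S_{l_{1}}$, followed by optimizing over the choice of $p_{x},p_{y}$, yields a self-consistent inequality on $C$ which I expect to give $C\leq 6$.

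The main obstacle is the bookkeeping in this last step, in particular the delicate case where $x,y$ lie near a common shared vertex $v$ of adjacent level-$1$ cells; there a short computation using the $60^{\circ}$ wedges at $v$ gives $|x-v|+|v-y|\leq 2|x-y|$, so the crude ``through $v$'' bound $\refmet_{\bm{l}}(x,y)\leq C(|x-v|+|v-y|)$ produces only ratio $\leq 2C$, which must be reconciled with the target $C\leq 6$. Finally, the geodesic assertion follows by a standard argument: \eqref{eq:tsisg-metric-geodesic} identifies the topologies of $\refmet_{\bm{l}}$ and $|\cdot|$ on $K^{\bm{l}}$, making $(K^{\bm{l}},\refmet_{\bm{l}})$ compact, and since $\refmet_{\bm{l}}$ is an intrinsic length metric by construction, a Hopf--Rinow/Arzel\`a--Ascoli argument applied to near-minimizing arc-length-parametrized paths supplies a constant-speed geodesic between any two points.
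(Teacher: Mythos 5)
Your treatment of the metric axioms, the lower bound, and the geodesic property (compactness plus Arzel\`a--Ascoli applied to near-minimizing arc-length parametrized paths) matches the paper's argument and is fine. The gap is in the upper bound, which is the substance of \eqref{eq:tsisg-metric-geodesic}. Your scheme defines $C:=\sup_{\bm{l}'}\sup_{x\neq y}\refmet_{\bm{l}'}(x,y)/|x-y|$ and tries to close a ``self-consistent inequality'' on $C$, but $C$ is not known to be finite a priori (that is exactly what is being proved), and in the decisive case --- $x$ and $y$ in adjacent level-$1$ cells near their common vertex $v$ --- your own accounting only yields $\refmet_{\bm{l}}(x,y)\leq C(|x-v|+|v-y|)\leq 2C|x-y|$, i.e.\ $C\leq 2C$. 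That inequality is vacuous: it neither forces $C<\infty$ nor anything like $C\leq 6$, and you acknowledge this without resolving it. So as written the argument does not establish the upper bound, and no amount of optimizing the skeleton traversal fixes the factor-$2$ loss incurred by splitting $|x-y|$ at $v$ via the triangle inequality.

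The missing idea is to replace the within-cell bound $\refmet_{\bm{l}}(x,p)\leq C|x-p|$ at a cell corner $p$ by an \emph{absolute} estimate against a projection: the paper first proves the explicit cell-diameter bound \eqref{eq:tsisg-metric-diam-cell}, $\max_{k}\refmet_{\bm{l}}(F^{\bm{l}}_{w}(q_{k}),x)\leq\sum_{n>|w|}(\tfrac{3}{2}l_{n}-\tfrac{5}{2})/L^{\bm{l}}_{n}<\tfrac{3}{2}/L^{\bm{l}}_{|w|}$ (summable because $l_{n}\geq 5$), and deduces from the thin-gasket geometry the key inequality \eqref{eq:tsisg-metric-geodesic-proof}, $\refmet_{\bm{l}}(F^{\bm{l}}_{w}(q_{k}),x)\leq 5|\langle x-F^{\bm{l}}_{w}(q_{k}),e_{k,j}\rangle|$, with a fixed numerical constant and no reference to the unknown $C$. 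When the two subcells containing $x$ and $y$ meet at $q_{x,y}$, the two legs are then bounded by projections onto the \emph{same} unit vector $e_{k,j}$, and these add to $5|\langle x-y,e_{k,j}\rangle|\leq 5|x-y|$ --- this is precisely how the paper avoids the factor-$2$ loss at a shared vertex. The remaining case, where the subcells containing $x$ and $y$ do not meet, is handled not by your skeleton-routing-plus-recursion but by a direct chain count: if $n_{1}$ cells of level $n_{0}$ are needed to connect them, then $L^{\bm{l}}_{n_{0}}\refmet_{\bm{l}}(x,y)\leq n_{1}+2$ while $\tfrac{2}{\sqrt{3}}L^{\bm{l}}_{n_{0}}|x-y|\geq(\tfrac{1}{2}l_{n_{0}}-1)\wedge\lfloor\tfrac{1}{2}n_{1}\rfloor$, giving the ratio bound $\tfrac{10}{\sqrt{3}}<6$. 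To repair your proof you would need to supply an analogue of \eqref{eq:tsisg-metric-geodesic-proof} (or some other non-circular estimate that is additive across a shared vertex); without it the approach fails at exactly the point you flagged.
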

\begin{proof}
This proof is similar to \cite[Proof of Lemma 2.4]{BH}, but some additional argument is
required to take care of the possible unboundedness of $\bm{l}=(l_{n})_{n=1}^{\infty}$.
It is immediate from \eqref{eq:tsisg-metric} that $|x-y|\leq \refmet_{\bm{l}}(x,y)<\infty$
for any $x,y\in K^{\bm{l}}$ and thereby that $\refmet_{\bm{l}}$ is a metric on $K^{\bm{l}}$,
which is also geodesic by \cite[Proposition 2.5.19]{BBI}; indeed, the infimum in
\eqref{eq:tsisg-metric} is easily seen to be attained for each $x,y\in K$, by
choosing a sequence $\{\gamma_{n}\}_{n=1}^{\infty}$ of continuous maps as in \eqref{eq:tsisg-metric}
with $\lim_{n\to\infty}\ell_{\mathbb{R}^{2}}(\gamma_{n})=\refmet_{\bm{l}}(x,y)$,
reparameterizing them by arc length on the basis of \cite[Proposition 2.5.9]{BBI},
and applying to them the Arzel\`{a}--Ascoli theorem \cite[Theorem 2.5.14]{BBI}
and the lower semi-continuity \cite[Proposition 2.3.4-(iv)]{BBI} of
$\ell_{\mathbb{R}^{2}}$ with respect to pointwise convergence.

Thus it remains to prove the upper inequality in \eqref{eq:tsisg-metric-geodesic}
for any $x,y\in K^{\bm{l}}$ with $x\not=y$. First, for any $w\in W^{\bm{l}}_{*}$
and any $x\in K^{\bm{l}}_{w}$, we easily see that
\begin{equation}\label{eq:tsisg-metric-diam-cell}
\max_{k\in\{0,1,2\}}\refmet_{\bm{l}}(F^{\bm{l}}_{w}(q_{k}),x)
	\leq\sum_{n=|w|+1}^{\infty}\frac{\frac{3}{2}l_{n}-\frac{5}{2}}{L^{\bm{l}}_{n}}
	\leq\frac{\frac{3}{2}l_{|w|+1}-\frac{5}{2}+\sum_{n=0}^{\infty}\frac{3}{2}(\frac{1}{5})^{n}}{L^{\bm{l}}_{|w|}l_{|w|+1}}
	<\frac{\frac{3}{2}}{L^{\bm{l}}_{|w|}},
\end{equation}
from which it further follows that for any $j,k\in\{0,1,2\}$ with $j\not=k$,
\begin{equation}\label{eq:tsisg-metric-geodesic-proof}
\refmet_{\bm{l}}(F^{\bm{l}}_{w}(q_{k}),x)\leq 5|\langle x-F^{\bm{l}}_{w}(q_{k}),e_{k,j}\rangle|,
\end{equation}
where $e_{k,j}:=q_{j}-q_{k}$. Now let $x,y\in K^{\bm{l}}$ satisfy $x\not=y$ and set
$n_{0}:=\min\{n\in\mathbb{N}\mid\textrm{$\{x,y\}\not\subset K^{\bm{l}}_{w}$ for any $w\in W^{\bm{l}}_{n}$}\}$,
so that $x,y\in K^{\bm{l}}_{w}$ for a unique $w\in W^{\bm{l}}_{n_{0}-1}$ by
Proposition \ref{prop:tsisg}-\ref{it:tsisg-boundary}.
If $K^{\bm{l}}_{wi_{x}}\cap K^{\bm{l}}_{wi_{y}}\not=\emptyset$ for some
$i_{x},i_{y}\in S_{l_{n_{0}}}$ with $x\in K^{\bm{l}}_{wi_{x}}$ and $y\in K^{\bm{l}}_{wi_{y}}$,
then $i_{x}\not=i_{y}$ by the definition of $n_{0}$,
$q_{x,y}=F^{\bm{l}}_{wi_{x}}(q_{k})=F^{\bm{l}}_{wi_{y}}(q_{j})$
for the unique element $q_{x,y}$ of $K^{\bm{l}}_{wi_{x}}\cap K^{\bm{l}}_{wi_{y}}$
and some $j,k\in\{0,1,2\}$ with $j\not=k$ by Proposition \ref{prop:tsisg}-\ref{it:tsisg-boundary},
and from \eqref{eq:tsisg-metric-geodesic-proof} we obtain
\begin{align*}
\refmet_{\bm{l}}(x,y)&\leq \refmet_{\bm{l}}(x,q_{x,y})+\refmet_{\bm{l}}(q_{x,y},y) \\
&\leq 5|\langle x-q_{x,y},e_{k,j}\rangle|+5|\langle q_{x,y}-y,e_{k,j}\rangle|
	=5|\langle x-y,e_{k,j}\rangle|
	\leq 5|x-y|.
\end{align*}
On the other hand, if $K^{\bm{l}}_{wi_{x}}\cap K^{\bm{l}}_{wi_{y}}=\emptyset$ for any
$i_{x},i_{y}\in S_{l_{k}}$ with $x\in K^{\bm{l}}_{wi_{x}}$ and $y\in K^{\bm{l}}_{wi_{y}}$,
then setting
\begin{equation*}
n_{1}:=\min\biggl\{n\in\mathbb{N}\biggm|
	\begin{minipage}{245pt}
	there exists $\{i_{k}\}_{k=0}^{n}\subset S_{l_{n_{0}}}$ such that
	$x\in K^{\bm{l}}_{wi_{0}}$, $y\in K^{\bm{l}}_{wi_{n}}$ and
	$K^{\bm{l}}_{wi_{k-1}}\cap K^{\bm{l}}_{wi_{k}}\not=\emptyset$
	for any $k\in\{1,\ldots,n\}$
	\end{minipage}
	\biggr\},
\end{equation*}
we have $2\leq n_{1}\leq\frac{3}{2}l_{n_{0}}-\frac{5}{2}$,
$L^{\bm{l}}_{n_{0}}\refmet_{\bm{l}}(x,y)\leq\frac{3}{2}+(n_{1}-1)+\frac{3}{2}
	=n_{1}+2\leq\frac{3}{2}l_{n_{0}}$ by \eqref{eq:tsisg-metric-diam-cell},
$\frac{2}{\sqrt{3}}L^{\bm{l}}_{n_{0}}|x-y|\geq(\frac{1}{2}l_{n_{0}}-1)\wedge\lfloor\frac{1}{2}n_{1}\rfloor$,
and thus $\refmet_{\bm{l}}(x,y)/|x-y|\leq\frac{10}{\sqrt{3}}<6$.
\qed\end{proof}

Next, the canonical volume measure $\refmeas_{\bm{l}}$ on $K^{\bm{l}}$ is defined as follows.
\begin{dfn}\label{dfn:tsisg-measure}
We define $\refmeas_{\bm{l}}$ as the unique Borel measure on $K^{\bm{l}}$ such that
\begin{equation}\label{eq:tsisg-measure}
\refmeas_{\bm{l}}(K^{\bm{l}}_{w})=\frac{1}{M^{\bm{l}}_{|w|}}
	\qquad\textrm{for any $w\in W^{\bm{l}}_{*}$,}
\end{equation}
where $M^{\bm{l}}_{n}:=(\# S_{l_{1}})\cdots(\# S_{l_{n}})=\prod_{k=1}^{n}(3l_{k}-3)$
($M^{\bm{l}}_{0}:=1$) for $n\in\mathbb{N}\cup\{0\}$.
\end{dfn}
The measure $\refmeas_{\bm{l}}$ can be considered as the ``uniform distribution on $K^{\bm{l}}$''.
Its uniqueness stated in Definition \ref{dfn:tsisg-measure} is immediate from the
Dynkin class theorem (see, e.g., \cite[Appendixes, Theorem 4.2]{EK}).
It is also easily seen to be obtained as
$\refmeas_{\bm{l}}=\bigl(\prod_{n=1}^{\infty}\unif(S_{l_{n}})\bigr)(\pi_{\bm{l}}^{-1}(\cdot))$,
where $\unif(S_{l_{n}})$ denotes the uniform distribution on $S_{l_{n}}$,
$\prod_{n=1}^{\infty}\unif(S_{l_{n}})$ their product probability measure on $\prod_{n=1}^{\infty}S_{l_{n}}$
(see, e.g., \cite[Theorem 8.2.2]{Dud} for its unique existence) and
$\pi_{\bm{l}}\colon\prod_{n=1}^{\infty}S_{l_{n}}\to K^{\bm{l}}$ the continuous surjection given by
$\{\pi_{\bm{l}}((\omega_{n})_{n=1}^{\infty})\}:=\bigcap_{n=1}^{\infty}K^{\bm{l}}_{\omega_{1}\ldots\omega_{n}}$.

Now we turn to the construction of the canonical Dirichlet form (resistance form)
$(\mathcal{E}^{\bm{l}},\mathcal{F}_{\bm{l}})$ on $K^{\bm{l}}$, which is achieved by
taking the ``inductive limit'' of a certain canonical sequence of discrete Dirichlet
forms on the finite sets $\{V^{\bm{l}}_{n}\}_{n=0}^{\infty}$ via the standard
method presented in \cite[Chapters 2 and 3]{Kig01} (see also \cite[Sections 6 and 7]{Bar98}).
The whole construction is based on the following definition and lemma.%
\begin{dfn}\label{dfn:tsisg-form-0}
Recalling that $V^{\bm{l}}_{0}=V_{0}$, we define a non-negative definite symmetric bilinear form
$\mathcal{E}^{0}\colon\mathbb{R}^{V_{0}}\times\mathbb{R}^{V_{0}}\to\mathbb{R}$
on $\mathbb{R}^{V_{0}}=\mathbb{R}^{V^{\bm{l}}_{0}}$ by
\begin{equation}\label{eq:tsisg-form-0}
\mathcal{E}^{0}(u,v)
	:=\frac{1}{2}\sum_{j,k=0}^{2}(u(q_{j})-u(q_{k}))(v(q_{j})-v(q_{k})),
	\qquad u,v\in\mathbb{R}^{V_{0}},
\end{equation}
and set $r_{l}:=(\frac{2}{3}l+\frac{1}{9})^{-1}$ for each $l\in\mathbb{N}\setminus\{1,2,3,4\}$.
\end{dfn}
The value of $r_{l}$ is specifically chosen in order for the following lemma to hold.
\begin{lem}\label{lem:tsisg-rl}
Let $l\in\mathbb{N}\setminus\{1,2,3,4\}$. Then for any $u\in\mathbb{R}^{V_{0}}$,
\begin{equation}\label{eq:tsisg-rl}
\min\Biggl\{\sum_{i\in S_{l}}\mathcal{E}^{0}\bigl(v\circ F^{l}_{i}|_{V_{0}},v\circ F^{l}_{i}|_{V_{0}}\bigr)
	\Biggm|\textrm{$v\in\mathbb{R}^{V^{l}_{1}}$, $v|_{V_{0}}=u$}\Biggr\}
	=r_{l}\mathcal{E}^{0}(u,u).
\end{equation}
\end{lem}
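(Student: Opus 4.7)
The plan is a symmetry reduction followed by an explicit electrical-network computation. Define the quadratic form $T$ on $\mathbb{R}^{V_{0}}$ by letting $T(u)$ be the infimum in \eqref{eq:tsisg-rl}. Each summand $\mathcal{E}^{0}(v\circ F^{l}_{i}|_{V_{0}},v\circ F^{l}_{i}|_{V_{0}})$ equals the sum of squared differences of $v$ over the three edges of the small triangle $F^{l}_{i}(V_{0})$, so the objective is the Dirichlet energy of $v$ on the finite graph $V^{l}_{1}$ with unit conductance on every such edge; it is non-negative, quadratic, and strictly convex modulo constants (by connectedness of $V^{l}_{1}$), so the infimum is attained uniquely by the harmonic extension of $u$ and $T$ is a well-defined non-negative quadratic form annihilating constants. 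The key observation is that $K^{l}$ carries the full $D_{3}$-symmetry (the permutation group of $V_{0}$ acts by isometries of $\mathbb{R}^{2}$ preserving the cell family $\{F^{l}_{i}\}_{i\in S_{l}}$), so $T$ is $D_{3}$-invariant; the space of $D_{3}$-invariant non-negative quadratic forms on $\mathbb{R}^{V_{0}}$ that vanish on constants is one-dimensional and spanned by $\mathcal{E}^{0}$, hence $T=r\cdot\mathcal{E}^{0}$ for some $r\in[0,\infty)$ and the lemma reduces to computing $r=r_{l}$ for a single non-constant $u$.

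I would take $u(q_{0})=1$, $u(q_{1})=u(q_{2})=0$, so that $\mathcal{E}^{0}(u,u)=2$, and compute $T(u)$ via two circuit reductions. First, for each $k$ with $2\le k\le l-3$, the upward vertex $q^{l}_{(k,1)}$ of the triangle $(k,0)$ lies in no other element of $S_{l}$, so the standard $\Delta$-$Y$-equivalence collapses that triangle to a single edge of conductance $3/2$ between its two base vertices on the corresponding side of $\triangle$, and applying this along each side reduces its "middle" to a chain of $l-4$ such edges. Second, near each corner $q_{j}$ the sub-network consisting of the corner triangle and its two "first non-corner" neighbours has three external and three internal vertices, with the two non-corner triangles sharing a single interior vertex (e.g., $q^{l}_{(1,1)}$ near $q_{0}$); I would Schur-complement against the internal vertices, organizing the calculation using the internal $\mathbb{Z}_{2}$ symmetry of this sub-network, to show that it is equivalent to a triangle with conductance $3/5$ on each of the three edges between the external vertices. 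After these reductions the network has nine vertices and twelve edges with manifest $D_{3}$ symmetry; invoking the $\mathbb{Z}_{2}$ symmetry fixing $q_{0}$ collapses the harmonic extension to three unknown potentials satisfying a $3\times 3$ linear system, and solving and substituting back yields $T(u)=18/(6l+1)$, whence $r_{l}=9/(6l+1)=(\tfrac{2l}{3}+\tfrac{1}{9})^{-1}$.

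The main obstacle is the corner reduction: treating the three sides as independent chains of unit-conductance triangles meeting only at $V_{0}$ would give the incorrect answer $r_{l}=3/(2l)$ (each chain has effective conductance $(3/2)/l$, yielding a delta network with that conductance). The additive term $\tfrac{1}{9}$ in the formula $r_{l}^{-1}=\tfrac{2l}{3}+\tfrac{1}{9}$ is precisely the correction contributed by the three shared vertices $q^{l}_{(1,1)}$, $q^{l}_{(l-2,1)}$, $q^{l}_{(1,l-2)}$, and the Schur-complement verification of the $3/5$ conductance at each corner sub-network is the only point in the argument where a genuine (if elementary) computation cannot be bypassed.
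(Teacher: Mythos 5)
Your proposal is correct and follows essentially the same route as the paper, whose proof is exactly the direct $\mathrm{\Delta}$--Y circuit calculation you outline (symmetry reduction to a single constant plus network reductions). The intermediate claims all check out: each middle cell traces to an edge of conductance $3/2$, each corner cluster of three triangles traces to an equilateral triangle of conductance $3/5$, and the reduced network then gives effective edge conductance $r_{l}=9/(6l+1)=(\tfrac{2}{3}l+\tfrac{1}{9})^{-1}$ on $V_{0}$, as required.
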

\begin{proof}
This is immediate from a direct calculation using the $\mathrm{\Delta}$--Y transform
(see, e.g., \cite[Lemma 2.1.15]{Kig01}).
\qed\end{proof}

We would like to define a bilinear form $\mathcal{E}^{\bm{l},n}$ on
$\mathbb{R}^{V^{\bm{l}}_{n}}$ for each $n\in\mathbb{N}$ as the sum of the copies of
\eqref{eq:tsisg-form-0} on $\{F^{\bm{l}}_{w}(V_{0})\}_{w\in W^{\bm{l}}_{n}}$ and then
to take their limit as $n\to\infty$, which is enabled by introducing the scaling factors
$R^{\bm{l}}_{n}$ suggested by Lemma \ref{lem:tsisg-rl} as in the following definition.
\begin{dfn}\label{dfn:tsisg-form-ln}
For each $n\in\mathbb{N}\cup\{0\}$, we define a non-negative definite symmetric bilinear form
$\mathcal{E}^{\bm{l},n}\colon\mathbb{R}^{V^{\bm{l}}_{n}}\times\mathbb{R}^{V^{\bm{l}}_{n}}\to\mathbb{R}$
on $\mathbb{R}^{V^{\bm{l}}_{n}}$ by
\begin{equation}\label{eq:tsisg-form-ln}
\mathcal{E}^{\bm{l},n}(u,v)
	:=\frac{1}{R^{\bm{l}}_{n}}\sum_{w\in W^{\bm{l}}_{n}}\mathcal{E}^{0}\bigl(u\circ F^{\bm{l}}_{w}|_{V_{0}},v\circ F^{\bm{l}}_{w}|_{V_{0}}\bigr),
	\qquad u,v\in\mathbb{R}^{V^{\bm{l}}_{n}},
\end{equation}
where $R^{\bm{l}}_{n}:=r_{l_{1}}\cdots r_{l_{n}}=\prod_{k=1}^{n}(\frac{2}{3}l_{k}+\frac{1}{9})^{-1}$
($R^{\bm{l}}_{0}:=1$), so that $\mathcal{E}^{\bm{l},0}=\mathcal{E}^{0}$.
\end{dfn}
\begin{prop}\label{prop:tsisg-form-compatible}
The sequence $\{\mathcal{E}^{\bm{l},n}\}_{n=0}^{\infty}$ of forms is \emph{compatible}, i.e.,
for any $n,k\in\mathbb{N}\cup\{0\}$ and any $u\in\mathbb{R}^{V^{\bm{l}}_{n}}$,
\begin{equation}\label{eq:tsisg-form-compatible}
\min\bigl\{\mathcal{E}^{\bm{l},n+k}(v,v)\bigm|\textrm{$v\in\mathbb{R}^{V^{\bm{l}}_{n+k}}$, $v|_{V^{\bm{l}}_{n}}=u$}\bigr\}
	=\mathcal{E}^{\bm{l},n}(u,u).
\end{equation}
\end{prop}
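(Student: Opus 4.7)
The plan is to reduce to the case $k=1$ by induction on $k$, and then exploit that distinct $n$-cells $K^{\bm{l}}_{w}$ meet only at boundary vertices already contained in $V^{\bm{l}}_{n}$; this decouples the minimisation into $\#W^{\bm{l}}_{n}$ independent one-step problems, each of which is precisely Lemma~\ref{lem:tsisg-rl} applied with $l=l_{n+1}$.

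The induction on $k$ is routine: the case $k=0$ is trivial, and for $k\ge 0$ I would write
\[
\min_{v|_{V^{\bm{l}}_{n}}=u}\mathcal{E}^{\bm{l},n+k+1}(v,v)
=\min_{u'|_{V^{\bm{l}}_{n}}=u}\min_{v|_{V^{\bm{l}}_{n+k}}=u'}\mathcal{E}^{\bm{l},n+k+1}(v,v)
=\min_{u'|_{V^{\bm{l}}_{n}}=u}\mathcal{E}^{\bm{l},n+k}(u',u')
=\mathcal{E}^{\bm{l},n}(u,u),
\]
where the second equality is the $k=1$ case applied at level $n+k$ and the third is the inductive hypothesis.

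For $k=1$, using $W^{\bm{l}}_{n+1}=W^{\bm{l}}_{n}\times S_{l_{n+1}}$ and $F^{\bm{l}}_{wi}=F^{\bm{l}}_{w}\circ f^{l_{n+1}}_{i}$ on $V_{0}$, I rewrite
\[
\mathcal{E}^{\bm{l},n+1}(v,v)=\frac{1}{R^{\bm{l}}_{n+1}}\sum_{w\in W^{\bm{l}}_{n}}\sum_{i\in S_{l_{n+1}}}\mathcal{E}^{0}\bigl(v_{w}\circ f^{l_{n+1}}_{i}|_{V_{0}},\,v_{w}\circ f^{l_{n+1}}_{i}|_{V_{0}}\bigr),
\]
where $v_{w}:=v\circ F^{\bm{l}}_{w}|_{V^{l_{n+1}}_{1}}$, noting that $V^{\bm{l}^{n}}_{1}=V^{l_{n+1}}_{1}$ as subsets of $\triangle$. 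The constraint $v|_{V^{\bm{l}}_{n}}=u$ forces $v_{w}|_{V_{0}}=u\circ F^{\bm{l}}_{w}|_{V_{0}}$, while the values of $v_{w}$ on $V^{l_{n+1}}_{1}\setminus V_{0}$ are otherwise arbitrary. The crucial point is that these free values can be chosen independently for different $w$: by Proposition~\ref{prop:tsisg}-\ref{it:tsisg-boundary}, any point of $K^{\bm{l}}_{w}\cap K^{\bm{l}}_{w'}$ with $w\ne w'$ in $W^{\bm{l}}_{n}$ lies in $F^{\bm{l}}_{w}(V_{0})\cap F^{\bm{l}}_{w'}(V_{0})\subset V^{\bm{l}}_{n}$, so injectivity of the similitude $F^{\bm{l}}_{w}$ implies that $F^{\bm{l}}_{w}(V^{l_{n+1}}_{1}\setminus V_{0})$ is disjoint both from $V^{\bm{l}}_{n}$ and from $F^{\bm{l}}_{w'}(V^{l_{n+1}}_{1}\setminus V_{0})$ when $w\ne w'$.

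Consequently the minimisation factorises over $w$, and applying Lemma~\ref{lem:tsisg-rl} with $l=l_{n+1}$ to each cell together with $R^{\bm{l}}_{n+1}=r_{l_{n+1}}R^{\bm{l}}_{n}$ yields
\[
\min_{v|_{V^{\bm{l}}_{n}}=u}\mathcal{E}^{\bm{l},n+1}(v,v)=\frac{r_{l_{n+1}}}{R^{\bm{l}}_{n+1}}\sum_{w\in W^{\bm{l}}_{n}}\mathcal{E}^{0}\bigl(u\circ F^{\bm{l}}_{w}|_{V_{0}},\,u\circ F^{\bm{l}}_{w}|_{V_{0}}\bigr)=\mathcal{E}^{\bm{l},n}(u,u).
\]
The only delicate point is the geometric independence of interior level-$(n+1)$ vertices across different $n$-cells, which is exactly what Proposition~\ref{prop:tsisg}-\ref{it:tsisg-boundary} provides; beyond that the argument is mechanical bookkeeping of the resistance scaling $r_{l_{k}}$, which was engineered in Definition~\ref{dfn:tsisg-form-0} precisely to make Lemma~\ref{lem:tsisg-rl} fit.
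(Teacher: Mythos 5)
Your proposal is correct and takes essentially the same approach as the paper, whose proof simply states that the claim is immediate from an induction on $k$ based on Lemma~\ref{lem:tsisg-rl}; your write-up fills in precisely the details that proof leaves implicit, namely the reduction to the case $k=1$ and the decoupling of the one-step minimisation over the $n$-cells via Proposition~\ref{prop:tsisg}-\ref{it:tsisg-boundary}. No gaps.
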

\begin{proof}
This is immediate from an induction on $k$ based on Lemma \ref{lem:tsisg-rl}.
\qed\end{proof}
Proposition \ref{prop:tsisg-form-compatible} allows us to take the ``inductive limit'' of
$\{\mathcal{E}^{\bm{l},n}\}_{n=0}^{\infty}$ as in the following definition. Note that
$\{\mathcal{E}^{\bm{l},n}(u|_{V^{\bm{l}}_{n}},u|_{V^{\bm{l}}_{n}})\}_{n=0}^{\infty}\subset[0,\infty)$
is non-decreasing by \eqref{eq:tsisg-form-compatible} and hence has a limit in $[0,\infty]$
for any $u\in\mathbb{R}^{V^{\bm{l}}_{*}}$.
\begin{dfn}\label{dfn:tsisg-RF-Vstar}
We define a linear subspace $\mathcal{F}_{\bm{l}}$ of $\mathbb{R}^{V^{\bm{l}}_{*}}$
and a non-negative definite symmetric bilinear form
$\mathcal{E}^{\bm{l}}\colon\mathcal{F}_{\bm{l}}\times\mathcal{F}_{\bm{l}}\to\mathbb{R}$
on $\mathcal{F}_{\bm{l}}$ by
\begin{align}\label{eq:tsisg-RF-domain-Vstar}
\mathcal{F}_{\bm{l}}&:=\Bigl\{u\in\mathbb{R}^{V^{\bm{l}}_{*}}\Bigm|\lim_{n\to\infty}\mathcal{E}^{\bm{l},n}(u|_{V^{\bm{l}}_{n}},u|_{V^{\bm{l}}_{n}})<\infty\Bigr\},\\
\mathcal{E}^{\bm{l}}(u,v)&:=\lim_{n\to\infty}\mathcal{E}^{\bm{l},n}(u|_{V^{\bm{l}}_{n}},v|_{V^{\bm{l}}_{n}})\in\mathbb{R},
	\quad u,v\in\mathcal{F}_{\bm{l}}.
\label{eq:tsisg-RF-form}
\end{align}
\end{dfn}
Then applying \cite[Lemma 2.2.2, Proposition 2.2.4, Lemma 2.2.5 and Theorem 2.2.6]{Kig01}
on the basis of Proposition \ref{prop:tsisg-form-compatible}, we obtain the following proposition.
See \cite[Definition 2.3.1]{Kig01} or \cite[Definition 3.1]{Kig12} for the notion of resistance forms.%
\begin{prop}\label{prop:tsisg-RF}
$(\mathcal{E}^{\bm{l}},\mathcal{F}_{\bm{l}})$ is a \emph{resistance form} on $V^{\bm{l}}_{*}$,
i.e., the following hold:
\begin{enumerate}[label=\textup{(RF\arabic*)},align=left,leftmargin=*]
\item\label{it:RF1}$\{u\in\mathcal{F}_{\bm{l}}\mid\mathcal{E}^{\bm{l}}(u,u)=0\}=\mathbb{R}\one_{V^{\bm{l}}_{*}}$.
\item\label{it:RF2}$(\mathcal{F}_{\bm{l}}/\mathbb{R}\one_{V^{\bm{l}}_{*}},\mathcal{E}^{\bm{l}})$ is a Hilbert space.
\item\label{it:RF3}$\{u|_{V}\mid u\in\mathcal{F}_{\bm{l}}\}=\mathbb{R}^{V}$ for any non-empty finite subset $V$ of $V^{\bm{l}}_{*}$.
\item\label{it:RF4}$R_{\mathcal{E}^{\bm{l}}}(x,y):=\sup\biggl\{\dfrac{|u(x)-u(y)|^{2}}{\mathcal{E}^{\bm{l}}(u,u)}\biggm|u\in\mathcal{F}_{\bm{l}}\setminus\mathbb{R}\one_{V^{\bm{l}}_{*}}\biggr\}<\infty$
	for any $x,y\in V^{\bm{l}}_{*}$.
\item\label{it:RF5}$u^{+}\wedge 1\in\mathcal{F}_{\bm{l}}$ and
	$\mathcal{E}^{\bm{l}}(u^{+}\wedge 1,u^{+}\wedge 1)\leq\mathcal{E}^{\bm{l}}(u,u)$
	for any $u\in\mathcal{F}_{\bm{l}}$.
\end{enumerate}
Moreover, $R_{\mathcal{E}^{\bm{l}}}\colon V^{\bm{l}}_{*}\times V^{\bm{l}}_{*}\to[0,\infty)$
is a metric on $V^{\bm{l}}_{*}$, called the \emph{resistance metric} of
$(\mathcal{E}^{\bm{l}},\mathcal{F}_{\bm{l}})$, and
for any $u\in\mathcal{F}_{\bm{l}}$ and any $x,y\in V^{\bm{l}}_{*}$,
\begin{equation}\label{eq:tsisg-RF-Hoelder}
|u(x)-u(y)|^{2}\leq R_{\mathcal{E}^{\bm{l}}}(x,y)\mathcal{E}^{\bm{l}}(u,u).
\end{equation}
\end{prop}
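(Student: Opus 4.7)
The plan is to realize $(\mathcal{E}^{\bm{l}},\mathcal{F}_{\bm{l}})$ as the inductive limit of the compatible sequence $\{\mathcal{E}^{\bm{l},n}\}_{n=0}^{\infty}$ and directly invoke the standard theorems from \cite[Chapter 2]{Kig01} cited in the statement. The substantive analytic input --- compatibility --- has already been established in Proposition \ref{prop:tsisg-form-compatible}, so what remains is to check that each $\mathcal{E}^{\bm{l},n}$ fits Kigami's general framework and to translate the abstract conclusions.

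First I would verify that $\mathcal{E}^{0}$ from \eqref{eq:tsisg-form-0} is itself a resistance form on the finite set $V_{0}$: it is a non-negative definite symmetric bilinear form on $\mathbb{R}^{V_{0}}$, its kernel is exactly $\mathbb{R}\one_{V_{0}}$ since the underlying complete graph on $V_{0}$ is connected, and the Markov property is immediate from the termwise inequality $(t^{+}\wedge 1 - s^{+}\wedge 1)^{2}\leq (t-s)^{2}$. Since $\mathcal{E}^{\bm{l},n}$ is a positive linear combination of pullbacks of $\mathcal{E}^{0}$ under the maps $F^{\bm{l}}_{w}|_{V_{0}}\colon V_{0}\to V^{\bm{l}}_{n}$ for $w\in W^{\bm{l}}_{n}$, and $V^{\bm{l}}_{n}$ is connected through the chain structure recorded in Proposition \ref{prop:tsisg}-\ref{it:tsisg-boundary}, each $\mathcal{E}^{\bm{l},n}$ is likewise a resistance form on $V^{\bm{l}}_{n}$ with kernel $\mathbb{R}\one_{V^{\bm{l}}_{n}}$.

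With Proposition \ref{prop:tsisg-form-compatible} supplying compatibility, \cite[Lemma 2.2.2]{Kig01} yields that $\mathcal{F}_{\bm{l}}$ is a linear subspace of $\mathbb{R}^{V^{\bm{l}}_{*}}$ and that $\mathcal{E}^{\bm{l}}$ is a well-defined non-negative definite symmetric bilinear form; from this, \ref{it:RF1} is immediate because $\mathcal{E}^{\bm{l}}(u,u)=0$ forces $\mathcal{E}^{\bm{l},n}(u|_{V^{\bm{l}}_{n}},u|_{V^{\bm{l}}_{n}})=0$ for every $n$, whence $u$ is constant on each $V^{\bm{l}}_{n}$ and thus on $V^{\bm{l}}_{*}$. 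Property \ref{it:RF3} is obtained via \cite[Proposition 2.2.4]{Kig01} by harmonic extension to all finer levels, \ref{it:RF4} via \cite[Lemma 2.2.5]{Kig01} using that the finite-level resistances $R_{\mathcal{E}^{\bm{l},n}}(x,y)$ are non-decreasing in $n$ by the minimization characterization \eqref{eq:tsisg-form-compatible} and converge to $R_{\mathcal{E}^{\bm{l}}}(x,y)<\infty$, and \ref{it:RF2} together with \ref{it:RF5} follow from \cite[Theorem 2.2.6]{Kig01}, the Markov property being inherited from the finite-level forms through the minimizers in \eqref{eq:tsisg-form-compatible}. The metric property of $R_{\mathcal{E}^{\bm{l}}}$ on $V^{\bm{l}}_{*}$ and the bound \eqref{eq:tsisg-RF-Hoelder} are then the standard consequences of \ref{it:RF1}--\ref{it:RF4}, as in \cite[Section 2.3]{Kig01}.

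The main substantive difficulty underlying this proposition was already cleared upstream: the precise choice of scaling $r_{l}$ ensuring compatibility was made in Lemma \ref{lem:tsisg-rl} via the $\mathrm{\Delta}$--Y transform, and the inductive step was carried out in Proposition \ref{prop:tsisg-form-compatible}. What remains in the present proof is merely a verbatim invocation of Kigami's general inductive-limit machinery, with no new obstacle beyond bookkeeping.
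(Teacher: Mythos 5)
Your proposal is correct and follows essentially the same route as the paper, which likewise obtains the proposition by applying \cite[Lemma 2.2.2, Proposition 2.2.4, Lemma 2.2.5 and Theorem 2.2.6]{Kig01} to the compatible sequence $\{\mathcal{E}^{\bm{l},n}\}_{n=0}^{\infty}$ furnished by Proposition \ref{prop:tsisg-form-compatible}; your additional check that each $\mathcal{E}^{\bm{l},n}$ is a (connected) finite-set resistance form is exactly the implicit hypothesis of that machinery.
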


Recalling Proposition \ref{prop:tsisg}-\ref{it:tsisg-map-vertices}, we also see
from the above construction that the following (non-exact) self-similarity of
$(\mathcal{E}^{\bm{l}},\mathcal{F}_{\bm{l}})$ holds.
\begin{prop}\label{prop:tsisg-RF-self-sim}
Let $n\in\mathbb{N}\cup\{0\}$. Then
\begin{align}\label{eq:tsisg-RF-self-sim-domain}
\mathcal{F}_{\bm{l}}&=\bigl\{u\in\mathbb{R}^{V^{\bm{l}}_{*}}\bigm|\textrm{$u\circ F^{\bm{l}}_{w}|_{V^{\bm{l}^{n}}_{*}}\in\mathcal{F}_{\bm{l}^{n}}$ for any $w\in W^{\bm{l}}_{n}$}\bigr\},\\
\mathcal{E}^{\bm{l}}(u,v)&=\frac{1}{R^{\bm{l}}_{n}}\sum_{w\in W^{\bm{l}}_{n}}\mathcal{E}^{\bm{l}^{n}}\bigl(u\circ F^{\bm{l}}_{w}|_{V^{\bm{l}^{n}}_{*}},v\circ F^{\bm{l}}_{w}|_{V^{\bm{l}^{n}}_{*}}\bigr)
	\quad\textrm{for any $u,v\in\mathcal{F}_{\bm{l}}$.}
\label{eq:tsisg-RF-self-sim-form}
\end{align}
\end{prop}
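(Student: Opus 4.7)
The plan is to deduce both identities from their finite-level analogues and then pass to the limit $k\to\infty$. The starting observation is the product bijection $W^{\bm{l}}_{n+k}\cong W^{\bm{l}}_{n}\times W^{\bm{l}^{n}}_{k}$, together with the factorization $f^{\bm{l}}_{w_{1}\ldots w_{n+k}}=f^{\bm{l}}_{w_{1}\ldots w_{n}}\circ f^{\bm{l}^{n}}_{w_{n+1}\ldots w_{n+k}}$ and the multiplicative identity $R^{\bm{l}}_{n+k}=R^{\bm{l}}_{n}R^{\bm{l}^{n}}_{k}$, both immediate from Definitions \ref{dfn:tsisg} and \ref{dfn:tsisg-form-ln}. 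Substituting these into \eqref{eq:tsisg-form-ln} and regrouping the sum (using Proposition \ref{prop:tsisg}-\ref{it:tsisg-map-vertices} to make the restrictions well defined) yields the finite-level self-similarity
\[
\mathcal{E}^{\bm{l},n+k}(u,u)=\frac{1}{R^{\bm{l}}_{n}}\sum_{w\in W^{\bm{l}}_{n}}\mathcal{E}^{\bm{l}^{n},k}\bigl(u\circ F^{\bm{l}}_{w}|_{V^{\bm{l}^{n}}_{k}},\,u\circ F^{\bm{l}}_{w}|_{V^{\bm{l}^{n}}_{k}}\bigr)
\]
for any $u\in\mathbb{R}^{V^{\bm{l}}_{n+k}}$.

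Specializing to $u=u_{0}|_{V^{\bm{l}}_{n+k}}$ for arbitrary $u_{0}\in\mathbb{R}^{V^{\bm{l}}_{*}}$ and letting $k\to\infty$, I invoke the monotonicity supplied by Proposition \ref{prop:tsisg-form-compatible} applied to $\mathcal{E}^{\bm{l}^{n}}$: each summand $\mathcal{E}^{\bm{l}^{n},k}\bigl(u_{0}\circ F^{\bm{l}}_{w}|_{V^{\bm{l}^{n}}_{k}},u_{0}\circ F^{\bm{l}}_{w}|_{V^{\bm{l}^{n}}_{k}}\bigr)$ is non-decreasing in $k$, and since the outer sum over $w\in W^{\bm{l}}_{n}$ is finite, the limit of the left-hand side is finite if and only if the limit of each summand on the right is finite. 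This is exactly \eqref{eq:tsisg-RF-self-sim-domain}, and it simultaneously yields the quadratic version $\mathcal{E}^{\bm{l}}(u_{0},u_{0})=\frac{1}{R^{\bm{l}}_{n}}\sum_{w\in W^{\bm{l}}_{n}}\mathcal{E}^{\bm{l}^{n}}\bigl(u_{0}\circ F^{\bm{l}}_{w}|_{V^{\bm{l}^{n}}_{*}},u_{0}\circ F^{\bm{l}}_{w}|_{V^{\bm{l}^{n}}_{*}}\bigr)$ of \eqref{eq:tsisg-RF-self-sim-form} for every $u_{0}\in\mathcal{F}_{\bm{l}}$. The bilinear case of \eqref{eq:tsisg-RF-self-sim-form} then follows from the quadratic case by the usual polarization identity, since all three of $\mathcal{E}^{\bm{l}}$, $\mathcal{E}^{\bm{l}^{n}}$, and the right-hand side of \eqref{eq:tsisg-RF-self-sim-form} are symmetric bilinear in $(u,v)$.

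I do not foresee a genuine analytic obstacle here: the whole argument is formal bookkeeping built on the compatibility Proposition \ref{prop:tsisg-form-compatible}. The only mild delicacy is the index unpacking at the first step, namely the correct identification of the product structure $W^{\bm{l}}_{n+k}\cong W^{\bm{l}}_{n}\times W^{\bm{l}^{n}}_{k}$ and of the corresponding image sets $F^{\bm{l}}_{w}(V^{\bm{l}^{n}}_{k})$ via Proposition \ref{prop:tsisg}-\ref{it:tsisg-map-vertices}, after which the regrouping of \eqref{eq:tsisg-form-ln} and the monotone passage to the limit are essentially mechanical.
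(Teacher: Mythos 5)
Your argument is correct and is essentially the paper's own proof: the paper likewise establishes the finite-level identity $\mathcal{E}^{\bm{l},n+k}(u,v)=\frac{1}{R^{\bm{l}}_{n}}\sum_{w\in W^{\bm{l}}_{n}}\mathcal{E}^{\bm{l}^{n},k}\bigl(u\circ F^{\bm{l}}_{w}|_{V^{\bm{l}^{n}}_{k}},v\circ F^{\bm{l}}_{w}|_{V^{\bm{l}^{n}}_{k}}\bigr)$ from Proposition \ref{prop:tsisg}-\ref{it:tsisg-map-vertices} and \eqref{eq:tsisg-form-ln}, and then lets $k\to\infty$ using \eqref{eq:tsisg-RF-domain-Vstar} and \eqref{eq:tsisg-RF-form}. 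Your monotonicity-plus-polarization bookkeeping is just a spelled-out version of the paper's ``immediately yields'' step.
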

\begin{proof}
It follows from Proposition \ref{prop:tsisg}-\ref{it:tsisg-map-vertices} and
\eqref{eq:tsisg-form-ln} that for each $n,k\in\mathbb{N}\cup\{0\}$,
\begin{equation*}
\mathcal{E}^{\bm{l},n+k}(u,v)
	=\frac{1}{R^{\bm{l}}_{n}}\sum_{w\in W^{\bm{l}}_{n}}\mathcal{E}^{\bm{l}^{n},k}\bigl(u\circ F^{\bm{l}}_{w}|_{V^{\bm{l}^{n}}_{k}},v\circ F^{\bm{l}}_{w}|_{V^{\bm{l}^{n}}_{k}}\bigr)
	\quad\textrm{for any $u,v\in\mathbb{R}^{V^{\bm{l}}_{n+k}}$,}
\end{equation*}
which together with \eqref{eq:tsisg-RF-domain-Vstar} and \eqref{eq:tsisg-RF-form}
immediately yields \eqref{eq:tsisg-RF-self-sim-domain} and \eqref{eq:tsisg-RF-self-sim-form}.
\qed\end{proof}
\begin{lem}\label{lem:tsisg-RM-contraction}
For any $w\in W^{\bm{l}}_{*}$ and any $x,y\in V^{\bm{l}^{|w|}}_{*}$,
\begin{equation}\label{eq:tsisg-RM-contraction}
R_{\mathcal{E}^{\bm{l}}}(F^{\bm{l}}_{w}(x),F^{\bm{l}}_{w}(y))
	\leq R^{\bm{l}}_{|w|}R_{\mathcal{E}^{\bm{l}^{|w|}}}(x,y).
\end{equation}
\end{lem}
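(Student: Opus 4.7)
The plan is to combine the variational characterization of the resistance metric in \ref{it:RF4} with the non-exact self-similarity identity of Proposition~\ref{prop:tsisg-RF-self-sim}. Fix $w\in W^{\bm{l}}_{*}$ and $x,y\in V^{\bm{l}^{|w|}}_{*}$. If $x=y$, both sides of \eqref{eq:tsisg-RM-contraction} are $0$; otherwise it suffices, by \ref{it:RF4}, to bound $|u(F^{\bm{l}}_{w}(x))-u(F^{\bm{l}}_{w}(y))|^{2}/\mathcal{E}^{\bm{l}}(u,u)$ uniformly for $u\in\mathcal{F}_{\bm{l}}\setminus\mathbb{R}\one_{V^{\bm{l}}_{*}}$.

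First I would pull back such a $u$ along the map $F^{\bm{l}}_{w}$: set $\tilde{u}:=u\circ F^{\bm{l}}_{w}|_{V^{\bm{l}^{|w|}}_{*}}$. By \eqref{eq:tsisg-RF-self-sim-domain} this function lies in $\mathcal{F}_{\bm{l}^{|w|}}$, so applying the fundamental Hölder-type inequality \eqref{eq:tsisg-RF-Hoelder} for the resistance form $(\mathcal{E}^{\bm{l}^{|w|}},\mathcal{F}_{\bm{l}^{|w|}})$ to $\tilde{u}$ at the two points $x,y$ gives
\begin{equation*}
|u(F^{\bm{l}}_{w}(x))-u(F^{\bm{l}}_{w}(y))|^{2}
   =|\tilde{u}(x)-\tilde{u}(y)|^{2}
   \leq R_{\mathcal{E}^{\bm{l}^{|w|}}}(x,y)\,\mathcal{E}^{\bm{l}^{|w|}}(\tilde{u},\tilde{u}).
\end{equation*}

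Next I would use the self-similarity identity \eqref{eq:tsisg-RF-self-sim-form}: it expresses $\mathcal{E}^{\bm{l}}(u,u)$ as $(R^{\bm{l}}_{|w|})^{-1}$ times a sum over $v\in W^{\bm{l}}_{|w|}$ of non-negative terms, one of which (namely $v=w$) equals $\mathcal{E}^{\bm{l}^{|w|}}(\tilde{u},\tilde{u})$. Dropping the other terms yields the one-line estimate
\begin{equation*}
\mathcal{E}^{\bm{l}^{|w|}}(\tilde{u},\tilde{u})\leq R^{\bm{l}}_{|w|}\,\mathcal{E}^{\bm{l}}(u,u).
\end{equation*}
Chaining this with the previous display and dividing through by $\mathcal{E}^{\bm{l}}(u,u)>0$ gives the desired ratio bound, and taking the supremum over $u$ concludes \eqref{eq:tsisg-RM-contraction} via \ref{it:RF4}.

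There is no substantive obstacle here; both displayed inequalities are direct consequences of results already established. The only point requiring a moment of care is checking that $\tilde{u}\in\mathcal{F}_{\bm{l}^{|w|}}$ so that \eqref{eq:tsisg-RF-Hoelder} is applicable, but this is exactly the content of \eqref{eq:tsisg-RF-self-sim-domain}. The argument is a standard ``scaling'' passage of the resistance metric under a similitude-type map, enabled by the compatible scaling factor $R^{\bm{l}}_{|w|}$ built into Definition~\ref{dfn:tsisg-form-ln}.
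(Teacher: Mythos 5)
Your proposal is correct and is essentially the paper's own argument: the paper's proof simply cites Proposition \ref{prop:tsisg-RF-self-sim} and Proposition \ref{prop:tsisg-RF}-\ref{it:RF4}, and your two displayed inequalities (pulling back $u$ via $F^{\bm{l}}_{w}$, bounding the increment by \eqref{eq:tsisg-RF-Hoelder} for $(\mathcal{E}^{\bm{l}^{|w|}},\mathcal{F}_{\bm{l}^{|w|}})$, and dropping the nonnegative terms $v\neq w$ in \eqref{eq:tsisg-RF-self-sim-form}) are exactly the unpacking of that one-line proof.
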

\begin{proof}
This is immediate from Proposition \ref{prop:tsisg-RF-self-sim} and
Proposition \ref{prop:tsisg-RF}-\ref{it:RF4}.
\qed\end{proof}

Later we will use the following definition and proposition several times.
\begin{dfn}\label{dfn:tsisg-harmonic-Vn}
Let $h\in\mathbb{R}^{V^{\bm{l}}_{*}}$ and $n\in\mathbb{N}\cup\{0\}$.
We say that $h$ is \emph{$\mathcal{E}^{\bm{l}}$-harmonic} off $V^{\bm{l}}_{n}$
if and only if $h\in\mathcal{F}_{\bm{l}}$ and
\begin{equation}\label{eq:tsisg-harmonic-Vn}
\begin{split}
\mathcal{E}^{\bm{l}}(h,h)&=\min\{\mathcal{E}^{\bm{l}}(v,v)\mid\textrm{$v\in\mathcal{F}_{\bm{l}}$, $v|_{V^{\bm{l}}_{n}}=h|_{V^{\bm{l}}_{n}}$}\}, \\
\textrm{or equivalently,}\quad
	\mathcal{E}^{\bm{l}}(h,v)&=0
	\quad\textrm{for any $v\in\mathcal{F}_{\bm{l}}$ with $v|_{V^{\bm{l}}_{n}}=0$.}
\end{split}
\end{equation}
We set
$\mathcal{H}_{\bm{l},n}:=\{h\in\mathbb{R}^{V^{\bm{l}}_{*}}\mid
	\textrm{$h$ is $\mathcal{E}^{\bm{l}}$-harmonic off $V^{\bm{l}}_{n}$}\}$,
which is a linear subspace of $\mathcal{F}_{\bm{l}}$.
\end{dfn}
\begin{prop}\label{prop:tsisg-harmonic-Vn}
Let $n\in\mathbb{N}\cup\{0\}$. Then for each $h\in\mathbb{R}^{V^{\bm{l}}_{*}}$, the
following four conditions \ref{it:tsisg-harmonic-Vn}, \ref{it:tsisg-harmonic-Vn-linear-eq},
\ref{it:tsisg-harmonic-Vn-trace} and \ref{it:tsisg-harmonic-Vn-self-sim} are equivalent
to each other and imply \ref{it:tsisg-harmonic-Vn-maximum-principle} below:
\begin{enumerate}[label=\textup{(\arabic*)},align=left,leftmargin=*]
\item\label{it:tsisg-harmonic-Vn}$h\in\mathcal{H}_{\bm{l},n}$.
\item\label{it:tsisg-harmonic-Vn-linear-eq}$\sum_{y\in V^{\bm{l}}_{n+k},\,L^{\bm{l}}_{n+k}\refmet_{\bm{l}}(x,y)=1}(h(y)-h(x))=0$
	for any $k\in\mathbb{N}$ and any $x\in V^{\bm{l}}_{n+k}\setminus V^{\bm{l}}_{n}$.
\item\label{it:tsisg-harmonic-Vn-trace}$h\in\mathcal{F}_{\bm{l}}$ and
	$\mathcal{E}^{\bm{l}}(h,h)=\mathcal{E}^{\bm{l},n}(h|_{V^{\bm{l}}_{n}},h|_{V^{\bm{l}}_{n}})$.
\item\label{it:tsisg-harmonic-Vn-self-sim}$h\circ F^{\bm{l}}_{w}|_{V^{\bm{l}^{n}}_{*}}\in\mathcal{H}_{\bm{l}^{n},0}$
	for any $w\in W^{\bm{l}}_{n}$.
\item\label{it:tsisg-harmonic-Vn-maximum-principle}\textup{(Maximum principle)}
	For any $w\in W^{\bm{l}}_{n}$ and any $x\in F^{\bm{l}}_{w}(V^{\bm{l}^{n}}_{*})$,
	\begin{equation}\label{eq:tsisg-harmonic-Vn-maximum-principle}
	\min_{q\in F^{\bm{l}}_{w}(V_{0})}h(q)\leq h(x)\leq\max_{q\in F^{\bm{l}}_{w}(V_{0})}h(q).
	\end{equation}
\end{enumerate}
Also, for each $u\in\mathbb{R}^{V^{\bm{l}}_{n}}$ there exists a unique
$h^{\bm{l}}_{n}(u)\in\mathcal{H}_{\bm{l},n}$ with $h^{\bm{l}}_{n}(u)|_{V^{\bm{l}}_{n}}=u$,
and the map $h^{\bm{l}}_{n}\colon\mathbb{R}^{V^{\bm{l}}_{n}}\to\mathcal{H}_{\bm{l},n}$
is a linear isomorphism.
\end{prop}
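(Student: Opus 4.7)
The plan is to prove the equivalences in the order $\ref{it:tsisg-harmonic-Vn}\Leftrightarrow\ref{it:tsisg-harmonic-Vn-trace}$, then $\ref{it:tsisg-harmonic-Vn}\Leftrightarrow\ref{it:tsisg-harmonic-Vn-self-sim}$, then $\ref{it:tsisg-harmonic-Vn-trace}\Leftrightarrow\ref{it:tsisg-harmonic-Vn-linear-eq}$, and afterward derive \ref{it:tsisg-harmonic-Vn-maximum-principle} and the existence/uniqueness/linearity of $h^{\bm{l}}_{n}$.

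First I would establish the identity $\inf\{\mathcal{E}^{\bm{l}}(v,v)\mid v\in\mathcal{F}_{\bm{l}},\ v|_{V^{\bm{l}}_{n}}=u\}=\mathcal{E}^{\bm{l},n}(u,u)$ for every $u\in\mathbb{R}^{V^{\bm{l}}_{n}}$. The lower bound follows because, by Proposition \ref{prop:tsisg-form-compatible}, each $\mathcal{E}^{\bm{l},n+k}(v|_{V^{\bm{l}}_{n+k}},v|_{V^{\bm{l}}_{n+k}})\geq\mathcal{E}^{\bm{l},n}(u,u)$ for $v$ satisfying the constraint; passing to the limit in \eqref{eq:tsisg-RF-form} preserves this. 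For the reverse direction, the minimizers supplied iteratively by \eqref{eq:tsisg-form-compatible} assemble into a compatible family on $V^{\bm{l}}_{*}$ whose energy equals $\mathcal{E}^{\bm{l},n}(u,u)$. Armed with this identity, \eqref{eq:tsisg-harmonic-Vn} becomes exactly condition \ref{it:tsisg-harmonic-Vn-trace}. The equivalence $\ref{it:tsisg-harmonic-Vn}\Leftrightarrow\ref{it:tsisg-harmonic-Vn-self-sim}$ is a direct consequence of the decomposition \eqref{eq:tsisg-RF-self-sim-form}: since $V^{\bm{l}}_{n}=\bigcup_{w\in W^{\bm{l}}_{n}}F^{\bm{l}}_{w}(V_{0})$, extensions of a fixed $h|_{V^{\bm{l}}_{n}}$ decouple into independent per-cell extensions of $h\circ F^{\bm{l}}_{w}|_{V_{0}}$, and the total energy is the corresponding sum, so the global minimum is attained if and only if each summand is.

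For $\ref{it:tsisg-harmonic-Vn-trace}\Leftrightarrow\ref{it:tsisg-harmonic-Vn-linear-eq}$, I combine the trace identity above with monotonicity to see that \ref{it:tsisg-harmonic-Vn-trace} is equivalent to: for every $k\in\mathbb{N}$, $h|_{V^{\bm{l}}_{n+k}}$ is a minimizer of the finite-dimensional strictly convex quadratic $v\mapsto\mathcal{E}^{\bm{l},n+k}(v,v)$ subject to $v|_{V^{\bm{l}}_{n}}=h|_{V^{\bm{l}}_{n}}$. The first-order condition at an interior point $x\in V^{\bm{l}}_{n+k}\setminus V^{\bm{l}}_{n}$ is $\partial_{v(x)}\mathcal{E}^{\bm{l},n+k}(v,v)=0$. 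A direct expansion of \eqref{eq:tsisg-form-ln} via \eqref{eq:tsisg-form-0} gives this partial derivative as a positive multiple of $\sum_{w,q}(h(x)-h(q))$, where the sum runs over pairs $(w,q)$ with $w\in W^{\bm{l}}_{n+k}$, $x\in F^{\bm{l}}_{w}(V_{0})$, and $q\in F^{\bm{l}}_{w}(V_{0})\setminus\{x\}$. A short geometric check using Proposition \ref{prop:tsisg}-\ref{it:tsisg-boundary} and \eqref{eq:tsisg-metric-geodesic} identifies the set of neighbors $q$ arising in this way with exactly $\{y\in V^{\bm{l}}_{n+k}\mid L^{\bm{l}}_{n+k}\refmet_{\bm{l}}(x,y)=1\}$, each neighbor counted once: vertices in distinct elementary triangles meeting at $x$ are distinct by \ref{it:tsisg-boundary}, and any $y$ at $\refmet_{\bm{l}}$-distance $1/L^{\bm{l}}_{n+k}$ from $x$ must lie in a common cell with $x$ since the geodesic then coincides with a segment contained in $\overline{F^{\bm{l}}_{w}(V_{0})}$. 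This yields \ref{it:tsisg-harmonic-Vn-linear-eq} and conversely.

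For the maximum principle, by \ref{it:tsisg-harmonic-Vn-self-sim} it suffices to take $n=0$ and $w=\emptyset$. Set $a:=\min_{V_{0}}h$ and $b:=\max_{V_{0}}h$; if $a=b$ then condition \ref{it:tsisg-harmonic-Vn-trace} gives $\mathcal{E}^{\bm{l}}(h,h)=\mathcal{E}^{0}(a\one_{V_{0}},a\one_{V_{0}})=0$, whence $h\in\mathbb{R}\one_{V^{\bm{l}}_{*}}$ by \ref{it:RF1}. Otherwise set $\tilde h:=(h\vee a)\wedge b$; two applications of \ref{it:RF5} (to $(h-a)/(b-a)$ and to $1-(\tilde h-a)/(b-a)$) give $\tilde h\in\mathcal{F}_{\bm{l}}$ with $\mathcal{E}^{\bm{l}}(\tilde h,\tilde h)\leq\mathcal{E}^{\bm{l}}(h,h)$ and $\tilde h|_{V_{0}}=h|_{V_{0}}$. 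Since $h$ is itself a minimizer, a standard parallelogram computation (using that $\mathcal{E}^{\bm{l}}(h,v)=0$ for any $v\in\mathcal{F}_{\bm{l}}$ with $v|_{V_{0}}=0$) forces $\mathcal{E}^{\bm{l}}(\tilde h-h,\tilde h-h)=0$, hence $\tilde h-h\in\mathbb{R}\one$ by \ref{it:RF1}, and therefore $\tilde h=h$. Finally, given $u\in\mathbb{R}^{V^{\bm{l}}_{n}}$, property \ref{it:RF3} furnishes some extension in $\mathcal{F}_{\bm{l}}$; the unique minimizer over the closed affine subspace of such extensions exists by orthogonal projection in the Hilbert space of \ref{it:RF2}, and uniqueness of $h^{\bm{l}}_{n}(u)$ follows by the same kernel-of-$\mathcal{E}^{\bm{l}}$ argument combined with \ref{it:RF1}; linearity of $h^{\bm{l}}_{n}$ is then immediate from the linearity of the system \ref{it:tsisg-harmonic-Vn-linear-eq} together with the boundary prescription.

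The main obstacle I anticipate is the bookkeeping at the first-order condition in the equivalence $\ref{it:tsisg-harmonic-Vn-trace}\Leftrightarrow\ref{it:tsisg-harmonic-Vn-linear-eq}$, namely the geometric identification of the neighbor set of $x\in V^{\bm{l}}_{n+k}\setminus V^{\bm{l}}_{n}$ with $\{y\mid L^{\bm{l}}_{n+k}\refmet_{\bm{l}}(x,y)=1\}$ together with the verification that each such $y$ appears with coefficient exactly one in $\partial_{v(x)}\mathcal{E}^{\bm{l},n+k}(v,v)$; this depends on the intersection statement \eqref{eq:tsisg-boundary} to prevent double-counting across the (at most two) elementary triangles of scale $L^{\bm{l}}_{n+k}$ meeting at $x$.
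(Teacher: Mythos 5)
Your proposal is correct, but it is substantially more self-contained than the paper's proof, which disposes of most of the statement by citation: the paper gets the equivalence of \ref{it:tsisg-harmonic-Vn}, \ref{it:tsisg-harmonic-Vn-linear-eq}, \ref{it:tsisg-harmonic-Vn-trace} and the assertions about $h^{\bm{l}}_{n}$ from Proposition \ref{prop:tsisg-form-compatible} together with \cite[Lemma 2.2.2]{Kig01}, proves \ref{it:tsisg-harmonic-Vn-trace}$\Leftrightarrow$\ref{it:tsisg-harmonic-Vn-self-sim} by the same self-similar decomposition you use (via \eqref{eq:tsisg-RF-self-sim-domain}, \eqref{eq:tsisg-RF-self-sim-form} and the per-cell inequality $\mathcal{E}^{\bm{l}^{n}}(u,u)\geq\mathcal{E}^{0}(u|_{V_{0}},u|_{V_{0}})$), and obtains the maximum principle from \cite[Lemma 2.2.3]{Kig01}, i.e.\ from the discrete maximum principle for harmonic functions attached to a compatible sequence. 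You instead reprove the content of Kigami's lemmas: the trace identity and the iterative construction of the harmonic extension reproduce Lemma 2.2.2, the first-order-condition derivation of \ref{it:tsisg-harmonic-Vn-linear-eq} makes explicit what the paper leaves implicit in the phrase ``and \eqref{eq:tsisg-form-ln}'', and your maximum principle argument---truncation $\tilde h=(h\vee a)\wedge b$ via \ref{it:RF5}, then $\mathcal{E}^{\bm{l}}(\tilde h-h,\tilde h-h)=0$ from minimality and \ref{it:RF1}---is a genuinely different (Markov-property based) route from the graph-theoretic maximum principle the paper invokes; it buys independence from the cited lemma at the cost of length, and note that a single application of \ref{it:RF5} to $(h-a)/(b-a)$ already produces $\tilde h$.

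The one place where your argument is thinner than it should be is exactly the point you flagged: the identification of $\{y\in V^{\bm{l}}_{n+k}\mid L^{\bm{l}}_{n+k}\refmet_{\bm{l}}(x,y)=1\}$ with the set of vertices sharing a level-$(n+k)$ cell with $x$. ``The geodesic coincides with a segment'' is not by itself a justification; what is actually needed is (i) every edge of every cell $f^{\bm{l}}_{w}(\triangle)$ is contained in $K^{\bm{l}}$ (an easy induction from \eqref{eq:tlsg}, since $S_{l}$ retains all cells along the three edges of $\triangle$), so that two vertices of a common cell are at $\refmet_{\bm{l}}$-distance exactly $1/L^{\bm{l}}_{n+k}$, and (ii) a path from $x$ to a vertex not sharing a cell with $x$ must first exit the union of cells containing $x$, and by \eqref{eq:tsisg-boundary} it can only do so through some $z\in F^{\bm{l}}_{w}(V_{0})\setminus\{x\}$ with $x\in F^{\bm{l}}_{w}(V_{0})$, forcing its length to exceed $1/L^{\bm{l}}_{n+k}$ strictly. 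Both facts are available from Definition \ref{dfn:tsisg} and Propositions \ref{prop:tsisg} and \ref{prop:tsisg-metric}, so this is a matter of writing out a short lemma rather than a gap in the approach; the no-double-counting point you make via \ref{it:tsisg-boundary} is right as stated.
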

\begin{proof}
The assertions for $h^{\bm{l}}_{n}$ and the equivalence of \ref{it:tsisg-harmonic-Vn},
\ref{it:tsisg-harmonic-Vn-linear-eq} and \ref{it:tsisg-harmonic-Vn-trace} follow from
Proposition \ref{prop:tsisg-form-compatible}, \cite[Lemma 2.2.2]{Kig01} and \eqref{eq:tsisg-form-ln}.
Moreover, noting that $\mathcal{E}^{\bm{l}^{n}}(u,u)\geq\mathcal{E}^{0}(u|_{V_{0}},u|_{V_{0}})$
for any $u\in\mathcal{F}_{\bm{l}^{n}}$, we easily see from \eqref{eq:tsisg-RF-self-sim-domain},
\eqref{eq:tsisg-RF-self-sim-form} and \eqref{eq:tsisg-form-ln} that
\ref{it:tsisg-harmonic-Vn-trace} holds if and only if
$h\circ F^{\bm{l}}_{w}|_{V^{\bm{l}^{n}}_{*}}\in\mathcal{F}_{\bm{l}^{n}}$ and
$\mathcal{E}^{\bm{l}^{n}}\bigl(h\circ F^{\bm{l}}_{w}|_{V^{\bm{l}^{n}}_{*}},h\circ F^{\bm{l}}_{w}|_{V^{\bm{l}^{n}}_{*}}\bigr)
	=\mathcal{E}^{0}\bigl(h\circ F^{\bm{l}}_{w}|_{V_{0}},h\circ F^{\bm{l}}_{w}|_{V_{0}}\bigr)$
for any $w\in W^{\bm{l}}_{n}$, which is equivalent to \ref{it:tsisg-harmonic-Vn-self-sim}
by the equivalence of \ref{it:tsisg-harmonic-Vn-trace} and \ref{it:tsisg-harmonic-Vn}
with $h\circ F^{\bm{l}}_{w}|_{V^{\bm{l}^{n}}_{*}},\bm{l}^{n},0$ in place of $h,\bm{l},n$.
Lastly, \ref{it:tsisg-harmonic-Vn-self-sim} implies \ref{it:tsisg-harmonic-Vn-maximum-principle}
by \cite[Lemma 2.2.3]{Kig01} applied to $h\circ F^{\bm{l}}_{w}|_{V^{\bm{l}^{n}}_{*}}$
for each $w\in W^{\bm{l}}_{n}$.
\qed\end{proof}

Note that at this stage the domain $\mathcal{F}_{\bm{l}}$ of $\mathcal{E}^{\bm{l}}$
is only a linear subspace of $\mathbb{R}^{V^{\bm{l}}_{*}}$, unlike that of a regular
symmetric Dirichlet form on $L^{2}(K^{\bm{l}},\refmeas_{\bm{l}})$, which is a linear
subspace of $L^{2}(K^{\bm{l}},\refmeas_{\bm{l}})$ including a dense subalgebra of
$(\mathcal{C}(K^{\bm{l}}),\|\cdot\|_{\sup})$. As the last step of the construction
of the canonical Dirichlet form on $K^{\bm{l}}$, we now fill this gap by proving that
$\id_{V^{\bm{l}}_{*}}\colon(V^{\bm{l}}_{*},\refmet_{\bm{l}}|_{V^{\bm{l}}_{*}\times V^{\bm{l}}_{*}})
	\to(V^{\bm{l}}_{*},R_{\mathcal{E}^{\bm{l}}})$
is uniformly continuous with uniformly continuous inverse and consequently that each
$u\in\mathcal{F}_{\bm{l}}$ uniquely extends to an element of $\mathcal{C}(K^{\bm{l}})$
by virtue of \eqref{eq:tsisg-RF-Hoelder}.
\begin{prop}\label{prop:tsisg-RM-completion}
For any $x,y\in V^{\bm{l}}_{*}$ and any $n\in\mathbb{N}$, the following hold:
\begin{enumerate}[label=\textup{(\arabic*)},align=left,leftmargin=*]
\item\label{it:tsisg-unifcont-Euc-RM}If $\refmet_{\bm{l}}(x,y)<1/L^{\bm{l}}_{n}$,
	then $R_{\mathcal{E}^{\bm{l}}}(x,y)\leq 4R^{\bm{l}}_{n}$.
\item\label{it:tsisg-unifcont-RM-Euc}If $R_{\mathcal{E}^{\bm{l}}}(x,y)<\frac{1}{6}R^{\bm{l}}_{n}$,
	then $\refmet_{\bm{l}}(x,y)\leq 3/L^{\bm{l}}_{n}$.
\end{enumerate}
In particular, $R_{\mathcal{E}^{\bm{l}}}$ uniquely extends to
$\overline{R}_{\mathcal{E}^{\bm{l}}}\in\mathcal{C}(K^{\bm{l}}\times K^{\bm{l}})$,
$\overline{R}_{\mathcal{E}^{\bm{l}}}$ is a metric on $K^{\bm{l}}$ compatible with
the original (Euclidean) topology of $K^{\bm{l}}$, and
$\bigl((K^{\bm{l}},\overline{R}_{\mathcal{E}^{\bm{l}}}),\id_{V^{\bm{l}}_{*}}\bigr)$
is the completion of $(V^{\bm{l}}_{*},R_{\mathcal{E}^{\bm{l}}})$.
\end{prop}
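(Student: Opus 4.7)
The plan is to prove (1) and (2) by matching the $\refmet_{\bm{l}}$-scale $1/L^{\bm{l}}_{n}$ against the resistance scale $R^{\bm{l}}_{n}$ via the level-$n$ cell decomposition and Lemma \ref{lem:tsisg-RM-contraction}, and then to deduce the completion assertion from a standard uniform-continuity extension argument. The one preliminary estimate I expect to need is a uniform bound
\[
C_{0}:=\sup_{\bm{l}'\in(\mathbb{N}\setminus\{1,2,3,4\})^{\mathbb{N}}}\ \sup_{x\in V^{\bm{l}'}_{*},\,q\in V_{0}}R_{\mathcal{E}^{\bm{l}'}}(x,q)<2,
\]
which should follow from a self-similar recursion $C_{\bm{l}'}\leq r_{l'_{1}}C_{\bm{l}'^{1}}+c_{0}$, where $c_{0}$ bounds the worst-case series resistance between a vertex of $V^{\bm{l}'}_{1}$ and the nearest vertex of $V_{0}$ along a necklace of at most $l'_{1}-1$ level-$1$ cells on one side of $\triangle$ (each giving resistance $\tfrac{2}{3}r_{l'_{1}}$, since the trace of $\mathcal{E}^{\bm{l}'}$ onto $V_{0}$ equals $\mathcal{E}^{0}$ by Proposition \ref{prop:tsisg-form-compatible}, hence $R_{\mathcal{E}^{\bm{l}'}}$ restricted to $V_{0}\times V_{0}$ gives $2/3$ between distinct vertices). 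The uniform bound $r_{l}\leq 9/31$ for $l\geq 5$ makes $\sum_{n\geq 0}R^{\bm{l}'}_{n}$ a convergent geometric series bounded by $31/22$, yielding $C_{0}<2$.

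For (1): the hypothesis $\refmet_{\bm{l}}(x,y)<1/L^{\bm{l}}_{n}$ and $|x-y|\leq\refmet_{\bm{l}}(x,y)$ force $x$ and $y$ to lie in (at most two) level-$n$ cells $K^{\bm{l}}_{w}\ni x$ and $K^{\bm{l}}_{v}\ni y$ with $K^{\bm{l}}_{w}\cap K^{\bm{l}}_{v}\neq\emptyset$, since any two non-adjacent level-$n$ cells are separated in Euclidean distance by at least $1/L^{\bm{l}}_{n}$. Taking $w=v$ in the same-cell case, pick $p\in K^{\bm{l}}_{w}\cap K^{\bm{l}}_{v}$ (a vertex) and write $p=F^{\bm{l}}_{w}(q_{j})=F^{\bm{l}}_{v}(q_{k})$, $x=F^{\bm{l}}_{w}(x')$, $y=F^{\bm{l}}_{v}(y')$ with $x',y'\in V^{\bm{l}^{n}}_{*}$. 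The triangle inequality and Lemma \ref{lem:tsisg-RM-contraction} then give
\[
R_{\mathcal{E}^{\bm{l}}}(x,y)\leq R^{\bm{l}}_{n}\bigl(R_{\mathcal{E}^{\bm{l}^{n}}}(x',q_{j})+R_{\mathcal{E}^{\bm{l}^{n}}}(q_{k},y')\bigr)\leq 2C_{0}R^{\bm{l}}_{n}<4R^{\bm{l}}_{n}.
\]

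For (2): arguing the contrapositive, assume $\refmet_{\bm{l}}(x,y)>3/L^{\bm{l}}_{n}$. By \eqref{eq:tsisg-metric-diam-cell}, the $\refmet_{\bm{l}}$-distance between any two points of two level-$n$ cells sharing a vertex is at most $3/L^{\bm{l}}_{n}$, so every pair $w_{x},w_{y}\in W^{\bm{l}}_{n}$ with $x\in K^{\bm{l}}_{w_{x}}$, $y\in K^{\bm{l}}_{w_{y}}$ must satisfy $K^{\bm{l}}_{w_{x}}\cap K^{\bm{l}}_{w_{y}}=\emptyset$. Fix one such $w_{x}$, define $\tilde u:=\one_{F^{\bm{l}}_{w_{x}}(V_{0})}^{V^{\bm{l}}_{n}}\in\mathbb{R}^{V^{\bm{l}}_{n}}$, and set $u:=h^{\bm{l}}_{n}(\tilde u)\in\mathcal{H}_{\bm{l},n}$. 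The maximum principle in Proposition \ref{prop:tsisg-harmonic-Vn}\ref{it:tsisg-harmonic-Vn-maximum-principle}, applied cell-by-cell, yields $u\equiv 1$ on $K^{\bm{l}}_{w_{x}}\cap V^{\bm{l}}_{*}$ and $u\equiv 0$ on $K^{\bm{l}}_{w}\cap V^{\bm{l}}_{*}$ for every $w\in W^{\bm{l}}_{n}$ with $K^{\bm{l}}_{w}\cap K^{\bm{l}}_{w_{x}}=\emptyset$, so $u(x)=1$ and $u(y)=0$. Combining \eqref{eq:tsisg-form-ln} with Proposition \ref{prop:tsisg-harmonic-Vn}\ref{it:tsisg-harmonic-Vn-trace} and noting that only the at most three cells $w\in W^{\bm{l}}_{n}$ sharing a vertex with $w_{x}$ contribute a non-zero term, each equal to the energy $\mathcal{E}^{0}$ of a $(1,0,0)$-pattern on $V_{0}$, namely $2$, one obtains $\mathcal{E}^{\bm{l}}(u,u)\leq 6/R^{\bm{l}}_{n}$, so Proposition \ref{prop:tsisg-RF}\ref{it:RF4} forces $R_{\mathcal{E}^{\bm{l}}}(x,y)\geq R^{\bm{l}}_{n}/6$.

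Since $R^{\bm{l}}_{n},1/L^{\bm{l}}_{n}\to 0$ as $n\to\infty$, parts (1) and (2) together deliver two-sided moduli of uniform continuity between $\refmet_{\bm{l}}|_{V^{\bm{l}}_{*}\times V^{\bm{l}}_{*}}$ and $R_{\mathcal{E}^{\bm{l}}}$; the standard extension-by-uniform-continuity procedure, applied inside the compact metric space $(K^{\bm{l}},\refmet_{\bm{l}})$ in which $V^{\bm{l}}_{*}$ is dense, then produces a unique $\overline{R}_{\mathcal{E}^{\bm{l}}}\in\mathcal{C}(K^{\bm{l}}\times K^{\bm{l}})$ extending $R_{\mathcal{E}^{\bm{l}}}$, and passage to the limit preserves the metric axioms. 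The topological compatibility and the identification of $\bigl((K^{\bm{l}},\overline{R}_{\mathcal{E}^{\bm{l}}}),\id_{V^{\bm{l}}_{*}}\bigr)$ as the completion of $(V^{\bm{l}}_{*},R_{\mathcal{E}^{\bm{l}}})$ follow from the two-sided uniform continuity. The principal difficulty I anticipate is the bookkeeping for the preliminary bound $C_{0}<2$, which must balance the linear growth in $l'_{1}$ of the necklace length against the geometric decay of $R^{\bm{l}'}_{n}$, uniformly over all admissible sequences $\bm{l}'$.
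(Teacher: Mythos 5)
Your part (2) and the concluding extension step are essentially the paper's own argument (the paper uses $h^{\bm{l}}_{n}(\one_{F^{\bm{l}}_{w}(V_{0})})$, the count $\#\Lambda_{n,w}\leq 4$ and the same energy computation), and the architecture of your part (1) --- triangle inequality through a shared vertex plus a uniform resistance-diameter bound for cells --- also matches the paper, whose \eqref{eq:tsisg-RM-diam} and \eqref{eq:tsisg-RM-diam-cell} play exactly the role of your $C_{0}<2$. Two steps of your sketch, however, do not hold as written.

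First, the geometric claim justifying part (1), that any two disjoint level-$n$ cells are separated in Euclidean distance by at least $1/L^{\bm{l}}_{n}$, is false. Already at level $1$ the cells indexed by $(1,0)$ and $(0,2)$ are disjoint, yet the apex of the former, $q_{0}+\bigl(\tfrac{3}{2l_{1}},\tfrac{\sqrt{3}}{2l_{1}}\bigr)$ in coordinates with $q_{0}$ at the origin, lies at Euclidean distance $\tfrac{\sqrt{3}}{2l_{1}}<\tfrac{1}{l_{1}}$ from the bottom edge of the latter. The implication you actually need --- that $\refmet_{\bm{l}}(x,y)<1/L^{\bm{l}}_{n}$ forces $x,y$ to lie in two intersecting level-$n$ cells --- is true, but passing to $|x-y|\leq\refmet_{\bm{l}}(x,y)$ discards exactly the information that makes it true; the correct route is through the geodesic metric: by \eqref{eq:tsisg-boundary}, a path from $x\in K^{\bm{l}}_{w}$ to a point outside the union of the cells meeting $K^{\bm{l}}_{w}$ must traverse some level-$n$ cell between two distinct vertices of it and hence has length at least $1/L^{\bm{l}}_{n}$ (this is how the paper argues, in quantitative form in Proposition \ref{prop:Lambda-nwk-Unwk}-\ref{it:Unwk-out}).

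Second, the preliminary bound $C_{0}<2$ is not delivered by the bookkeeping you propose, and you need it to get the stated constant $4$ in (1). As written, the recursion $C_{\bm{l}'}\leq r_{l'_{1}}C_{\bm{l}'^{1}}+c_{0}$ with $c_{0}$ the resistance to the \emph{nearest} vertex of $V_{0}$ controls only $\min_{q\in V_{0}}R_{\mathcal{E}^{\bm{l}'}}(x,q)$, not the supremum over $q\in V_{0}$ in the definition of $C_{0}$; repairing this costs an additional $2/3$ (since $R_{\mathcal{E}^{\bm{l}'}}(q_{j},q_{k})=2/3$), and then with your crude necklace length $l'_{1}-1$ the per-level contribution is bounded only by $\tfrac{2}{3}(l-1)r_{l}+\tfrac{2}{3}<\tfrac{5}{3}$, so iterating with the uniform ratio $r_{l}\leq\tfrac{9}{31}$ gives $C_{0}\leq\tfrac{5}{3}\cdot\tfrac{31}{22}\approx 2.35$, i.e.\ only $R_{\mathcal{E}^{\bm{l}}}(x,y)\lesssim 4.7\,R^{\bm{l}}_{n}$ instead of $4R^{\bm{l}}_{n}$. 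Reaching $C_{0}<2$ requires sharper counting, e.g.\ at most $\lceil l/2\rceil$ hops of resistance $\tfrac{2}{3}r_{l}$ to the nearest corner plus $2/3$, or, as in the paper's \eqref{eq:tsisg-RM-diam}, at most $\tfrac{3l_{n}-5}{2}$ hops per level with the $n=1$ term kept exact rather than bounded by a supremum over $l$. You flagged this as the anticipated difficulty, and it is indeed where the sketch falls short of the stated constants; the qualitative conclusions (extension of $R_{\mathcal{E}^{\bm{l}}}$, compatibility of topologies, completion) would survive with any finite constant, but the proposition as stated asserts the constant $4$.
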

\begin{proof}
We essentially follow \cite[Chapter 22]{Kig12}, but the possible unboundedness of
$\bm{l}=(l_{n})_{n=1}^{\infty}$ requires some additional care. First, since
\begin{equation*}
R_{\mathcal{E}^{\bm{l}}}(q_{j},q_{k})
	=\bigl(\min\{\mathcal{E}^{0}(u,u)\mid\textrm{$u\in\mathbb{R}^{V_{0}}$, $u(q_{j})=1$, $u(q_{k})=0$}\}\bigr)^{-1}
	=\frac{2}{3}
\end{equation*}
for any $j,k\in\{0,1,2\}$ with $j\not=k$ by \cite[(2.2.3) and Lemma 2.2.5]{Kig01},
it follows from Lemma \ref{lem:tsisg-RM-contraction} that
for any $w\in W^{\bm{l}}_{*}$ and any $j,k\in\{0,1,2\}$,
\begin{equation}\label{eq:tsisg-RM-contraction-cell-vertices}
R_{\mathcal{E}^{\bm{l}}}(F^{\bm{l}}_{w}(q_{j}),F^{\bm{l}}_{w}(q_{k}))
	\leq\frac{2}{3}R^{\bm{l}}_{|w|}.
\end{equation}
Recalling that $R_{\mathcal{E}^{\bm{l}}}$ is a metric on $V^{\bm{l}}_{*}$ as stated in
Proposition \ref{prop:tsisg-RF}, we easily see from \eqref{eq:tsisg-RM-contraction-cell-vertices}
and the triangle inequality for $R_{\mathcal{E}^{\bm{l}}}$ that for any $x\in V^{\bm{l}}_{*}$,
\begin{equation}\label{eq:tsisg-RM-diam}
\max_{k\in\{0,1,2\}}R_{\mathcal{E}^{\bm{l}}}(q_{k},x)
	\leq\sum_{n=1}^{\infty}\frac{3l_{n}-5}{2}\cdot\frac{2}{3}R^{\bm{l}}_{n}
	\leq\frac{l_{1}-\frac{5}{3}+\sum_{n=2}^{\infty}\frac{3}{2}(\frac{9}{31})^{n-2}}{\frac{2}{3}l_{1}+\frac{1}{9}}
	<2,
\end{equation}
which together with Lemma \ref{lem:tsisg-RM-contraction} further implies that
for any $w\in W^{\bm{l}}_{*}$ and any $x\in F^{\bm{l}}_{w}(V^{\bm{l}^{|w|}}_{*})$,
\begin{equation}\label{eq:tsisg-RM-diam-cell}
\max_{k\in\{0,1,2\}}R_{\mathcal{E}^{\bm{l}}}(F^{\bm{l}}_{w}(q_{k}),x)<2R^{\bm{l}}_{|w|}.
\end{equation}

To see \ref{it:tsisg-unifcont-Euc-RM} and \ref{it:tsisg-unifcont-RM-Euc},
let $x,y\in V^{\bm{l}}_{*}$, $n\in\mathbb{N}$,
choose $w\in W^{\bm{l}}_{n}$ so that $x\in K^{\bm{l}}_{w}$, and set
$\Lambda_{n,w}:=\{v\in W^{\bm{l}}_{n}\mid K^{\bm{l}}_{w}\cap K^{\bm{l}}_{v}\not=\emptyset\}$
and $U_{n,w}:=\bigcup_{v\in\Lambda_{n,w}}K^{\bm{l}}_{v}$. It holds that
\begin{equation}\label{eq:tsisg-unifcont-cell}
\textrm{if $y\in U_{n,w}$, then}\qquad
	\refmet_{\bm{l}}(x,y)<\frac{3}{L^{\bm{l}}_{n}}
	\qquad\textrm{and}\qquad
	R_{\mathcal{E}^{\bm{l}}}(x,y)<4R^{\bm{l}}_{n}
\end{equation}
by \eqref{eq:tsisg-boundary}, the triangle inequality for $\refmet_{\bm{l}}$ and
$R_{\mathcal{E}^{\bm{l}}}$, \eqref{eq:tsisg-metric-diam-cell} and \eqref{eq:tsisg-RM-diam-cell}.
On the other hand, if $y\not\in U_{n,w}$, then clearly $\refmet_{\bm{l}}(x,y)\geq 1/L^{\bm{l}}_{n}$
by \eqref{eq:tsisg-boundary} and \eqref{eq:tsisg-metric}, and
recalling Proposition \ref{prop:tsisg-harmonic-Vn} and setting
$h_{n,w}:=h^{\bm{l}}_{n}(\one_{F^{\bm{l}}_{w}(V_{0})})$, we have
$h_{n,w}|_{F^{\bm{l}}_{w}(V^{\bm{l}^{n}}_{*})}=1$,
$h_{n,w}|_{F^{\bm{l}}_{v}(V^{\bm{l}^{n}}_{*})}=0$ for any
$v\in W^{\bm{l}}_{n}\setminus\Lambda_{n,w}$,
$\mathcal{E}^{\bm{l}}(h_{n,w},h_{n,w})
	=\mathcal{E}^{\bm{l},n}(\one_{F^{\bm{l}}_{w}(V_{0})},\one_{F^{\bm{l}}_{w}(V_{0})})$,
and therefore
\begin{equation*}
R_{\mathcal{E}^{\bm{l}}}(x,y)
	\geq\frac{|h_{n,w}(x)-h_{n,w}(y)|^{2}}{\mathcal{E}^{\bm{l}}(h_{n,w},h_{n,w})}
	=\frac{1}{\mathcal{E}^{\bm{l},n}(\one_{F^{\bm{l}}_{w}(V_{0})},\one_{F^{\bm{l}}_{w}(V_{0})})}
	=\frac{\frac{1}{2}R^{\bm{l}}_{n}}{\#\Lambda_{n,w}-1}
	\geq\frac{R^{\bm{l}}_{n}}{6}
\end{equation*}
by Proposition \ref{prop:tsisg-RF}-\ref{it:RF4}, \eqref{eq:tsisg-form-ln},
\eqref{eq:tsisg-form-0}, Proposition \ref{prop:tsisg}-\ref{it:tsisg-boundary} and
$\#\Lambda_{n,w}\leq 4$. It follows that, if either $\refmet_{\bm{l}}(x,y)<1/L^{\bm{l}}_{n}$
or $R_{\mathcal{E}^{\bm{l}}}(x,y)<\frac{1}{6}R^{\bm{l}}_{n}$, then
$y\in U_{n,w}$, hence $\refmet_{\bm{l}}(x,y)<3/L^{\bm{l}}_{n}$ and
$R_{\mathcal{E}^{\bm{l}}}(x,y)<4R^{\bm{l}}_{n}$ by \eqref{eq:tsisg-unifcont-cell},
proving \ref{it:tsisg-unifcont-Euc-RM} and \ref{it:tsisg-unifcont-RM-Euc}, which in turn
immediately imply the existence and the stated properties of $\overline{R}_{\mathcal{E}^{\bm{l}}}$.
\qed\end{proof}
\begin{dfn}\label{dfn:tsisg-RF}
Throughout the rest of this paper, we identify $\mathcal{F}_{\bm{l}}$
with the linear subspace of $\mathcal{C}(K^{\bm{l}})$ given by
\begin{equation}\label{eq:tsisg-RF-domain}
\{u\in\mathcal{C}(K^{\bm{l}})\mid u|_{V^{\bm{l}}_{*}}\in\mathcal{F}_{\bm{l}}\}
	=\Bigl\{u\in\mathcal{C}(K^{\bm{l}})\Bigm|\lim_{n\to\infty}\mathcal{E}^{\bm{l},n}(u|_{V^{\bm{l}}_{n}},u|_{V^{\bm{l}}_{n}})<\infty\Bigr\}
\end{equation}
through the mapping $u\mapsto u|_{V^{\bm{l}}_{*}}$, which is a linear isomorphism from
\eqref{eq:tsisg-RF-domain} to $\mathcal{F}_{\bm{l}}$ since each $u\in\mathcal{F}_{\bm{l}}$
uniquely extends to an element of $\mathcal{C}(K^{\bm{l}})$
by Proposition \ref{prop:tsisg-RM-completion} and \eqref{eq:tsisg-RF-Hoelder}.
The pair $(\mathcal{E}^{\bm{l}},\mathcal{F}_{\bm{l}})$ is then called
the \emph{canonical resistance form} on $K^{\bm{l}}$.
\end{dfn}
\begin{thm}\label{thm:tsisg-RF}
\begin{enumerate}[label=\textup{(\arabic*)},align=left,leftmargin=*]
\item\label{it:tsisg-RF}$(\mathcal{E}^{\bm{l}},\mathcal{F}_{\bm{l}})$ is a resistance
	form on $K^{\bm{l}}$ with resistance metric $\overline{R}_{\mathcal{E}^{\bm{l}}}$,
	which is hereafter denoted as $R_{\mathcal{E}^{\bm{l}}}$ for simplicity of the notation.
\item\label{it:tsisg-RF-regular}$(\mathcal{E}^{\bm{l}},\mathcal{F}_{\bm{l}})$ is \emph{regular}, i.e.,
	$\mathcal{F}_{\bm{l}}$ is a dense subalgebra of $(\mathcal{C}(K^{\bm{l}}),\|\cdot\|_{\sup})$.
\item\label{it:tsisg-RF-strongly-local}$(\mathcal{E}^{\bm{l}},\mathcal{F}_{\bm{l}})$ is
	\emph{strongly local}, i.e., $\mathcal{E}^{\bm{l}}(u,v)=0$ for any $u,v\in\mathcal{F}_{\bm{l}}$
	that satisfy $\supp_{K}[u-a\one_{K^{\bm{l}}}]\cap\supp_{K}[v]=\emptyset$ for some $a\in\mathbb{R}$.
\end{enumerate}
\end{thm}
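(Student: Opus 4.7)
The strategy is to address the three parts in order, using the work already done in Propositions~\ref{prop:tsisg-RF}--\ref{prop:tsisg-RM-completion}. For (1), the key observation is that Proposition~\ref{prop:tsisg-RF} already establishes the resistance-form axioms on $V^{\bm{l}}_{*}$, and Proposition~\ref{prop:tsisg-RM-completion} identifies $(K^{\bm{l}},\overline{R}_{\mathcal{E}^{\bm{l}}})$ as the metric completion of $(V^{\bm{l}}_{*},R_{\mathcal{E}^{\bm{l}}})$. Moreover, \eqref{eq:tsisg-RF-Hoelder} makes every $u\in\mathcal{F}_{\bm{l}}$ uniformly continuous on $V^{\bm{l}}_{*}$ with respect to $R_{\mathcal{E}^{\bm{l}}}$, so that it extends uniquely to an element of $\mathcal{C}(K^{\bm{l}})$, which is precisely the identification employed in Definition~\ref{dfn:tsisg-RF}. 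At this point I would invoke the general extension theorem for resistance forms (\cite[Theorem~2.3.10]{Kig01}; see also \cite[Theorem~3.13]{Kig12}) to promote $(\mathcal{E}^{\bm{l}},\mathcal{F}_{\bm{l}})$ to a resistance form on all of $K^{\bm{l}}$ with resistance metric $\overline{R}_{\mathcal{E}^{\bm{l}}}$. This step, which calls on the heaviest external input, is where I expect the main obstacle.

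For (2), I first verify that $\mathcal{F}_{\bm{l}}$ is closed under multiplication. For $u,v\in\mathcal{F}_{\bm{l}}$, the elementary identity
\begin{equation*}
(uv)(q_{j})-(uv)(q_{k})=u(q_{j})\bigl(v(q_{j})-v(q_{k})\bigr)+v(q_{k})\bigl(u(q_{j})-u(q_{k})\bigr),
\end{equation*}
applied at each $F^{\bm{l}}_{w}(V_{0})$ for $w\in W^{\bm{l}}_{n}$ and summed, yields
\begin{equation*}
\mathcal{E}^{\bm{l},n}(uv,uv)\leq 2\|u\|_{\sup}^{2}\mathcal{E}^{\bm{l},n}(v,v)+2\|v\|_{\sup}^{2}\mathcal{E}^{\bm{l},n}(u,u),
\end{equation*}
whose right-hand side is bounded as $n\to\infty$; hence $uv\in\mathcal{F}_{\bm{l}}$. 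Since $\one_{K^{\bm{l}}}\in\mathcal{F}_{\bm{l}}$ trivially, and since (1) supplies, through axiom \ref{it:RF3} on $K^{\bm{l}}$, a function in $\mathcal{F}_{\bm{l}}$ separating any two distinct points of $K^{\bm{l}}$, the Stone--Weierstrass theorem then yields density of $\mathcal{F}_{\bm{l}}$ in $(\mathcal{C}(K^{\bm{l}}),\|\cdot\|_{\sup})$.

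For (3), the plan is to combine compactness of $K^{\bm{l}}$ with the self-similar decomposition \eqref{eq:tsisg-RF-self-sim-form}. Given $u,v\in\mathcal{F}_{\bm{l}}$ and $a\in\mathbb{R}$ with $\supp_{K^{\bm{l}}}[u-a\one_{K^{\bm{l}}}]\cap\supp_{K^{\bm{l}}}[v]=\emptyset$, these two closed subsets of the compact set $K^{\bm{l}}$ lie at positive Euclidean distance $\delta$. Choosing $n$ with $1/L^{\bm{l}}_{n}<\delta$ forces each cell $K^{\bm{l}}_{w}$, $w\in W^{\bm{l}}_{n}$, to meet at most one of the two supports, since its Euclidean diameter is at most $1/L^{\bm{l}}_{n}$. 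In the first alternative, $u\circ F^{\bm{l}}_{w}|_{V^{\bm{l}^{n}}_{*}}\equiv a$, and the corresponding summand in \eqref{eq:tsisg-RF-self-sim-form} vanishes by \ref{it:RF1} applied to $(\mathcal{E}^{\bm{l}^{n}},\mathcal{F}_{\bm{l}^{n}})$; in the second, $v\circ F^{\bm{l}}_{w}|_{V^{\bm{l}^{n}}_{*}}\equiv 0$ and the summand vanishes by bilinearity. Summing over $w$ then yields $\mathcal{E}^{\bm{l}}(u,v)=0$.
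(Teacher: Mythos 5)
Your proposal is correct and follows essentially the same route as the paper: part \ref{it:tsisg-RF} is obtained, exactly as in the paper, from Propositions \ref{prop:tsisg-RF} and \ref{prop:tsisg-RM-completion}, Definition \ref{dfn:tsisg-RF} and the completion theorem for resistance forms \cite[Theorem 2.3.10]{Kig01}, and part \ref{it:tsisg-RF-strongly-local} is the paper's argument via the decomposition \eqref{eq:tsisg-RF-self-sim-form} over cells of a sufficiently deep level, with cells missing $\supp_{K}[u-a\one_{K^{\bm{l}}}]$ contributing zero because constants have zero energy and cells missing $\supp_{K}[v]$ contributing zero by bilinearity. The only divergence is in part \ref{it:tsisg-RF-regular}: where the paper simply cites compactness of $K^{\bm{l}}$ together with \cite[Corollary 6.4 and Lemma 6.5]{Kig12}, you unpack that citation into a self-contained argument, proving the algebra property from the Leibniz-type estimate $\mathcal{E}^{\bm{l},n}(uv,uv)\leq 2\|u\|_{\sup}^{2}\mathcal{E}^{\bm{l},n}(v,v)+2\|v\|_{\sup}^{2}\mathcal{E}^{\bm{l},n}(u,u)$ (legitimate since $u,v$ are bounded on the compact $K^{\bm{l}}$) and getting density from \ref{it:RF3} on $K^{\bm{l}}$ (available once \ref{it:tsisg-RF} is proved) plus Stone--Weierstrass; this buys independence from the cited general results at the cost of a slightly longer proof, and is a faithful substitute for what those results provide.
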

\begin{proof}
\ref{it:tsisg-RF} follows from Propositions \ref{prop:tsisg-RF}, \ref{prop:tsisg-RM-completion},
Definition \ref{dfn:tsisg-RF}, \cite[Lemma 2.3.9 and Theorem 2.3.10]{Kig01},
\ref{it:tsisg-RF-regular} from \ref{it:tsisg-RF}, the compactness of
$(K^{\bm{l}},R_{\mathcal{E}^{\bm{l}}})$, \cite[Corollary 6.4 and Lemma 6.5]{Kig12},
and \ref{it:tsisg-RF-strongly-local} from $\one_{K^{\bm{l}}}\in\mathcal{F}_{\bm{l}}$,
$\mathcal{E}^{\bm{l}}(\one_{K^{\bm{l}}},\one_{K^{\bm{l}}})=0$ and \eqref{eq:tsisg-RF-self-sim-form}.
\qed\end{proof}
\begin{rmk}\label{rmk:tsisg-RF}
\begin{enumerate}[label=\textup{(\arabic*)},align=left,leftmargin=*]
\item\label{it:thm:tsisg-RF}To be explicit, Theorem \ref{thm:tsisg-RF}-\ref{it:tsisg-RF} means the following:
	
	\emph{Proposition \textup{\ref{prop:tsisg-RF}}-\ref{it:RF1},\ref{it:RF2},\ref{it:RF3},\ref{it:RF5}
	with $K^{\bm{l}}$ in place of $V^{\bm{l}}_{*}$ hold and
	$\overline{R}_{\mathcal{E}^{\bm{l}}}(x,y)=\sup\bigl\{|u(x)-u(y)|^{2}/\mathcal{E}^{\bm{l}}(u,u)
		\bigm|u\in\mathcal{F}_{\bm{l}}\setminus\mathbb{R}\one_{K^{\bm{l}}}\bigr\}$
	for any $x,y\in K^{\bm{l}}$.}
\item\label{it:tsisg-RF-drop-Vstar}Under the conventions introduced in
	Definition \ref{dfn:tsisg-RF} and Theorem \ref{thm:tsisg-RF}-\ref{it:tsisg-RF},
	we easily get the following, \emph{which we will utilize below without further notice}:
	\begin{itemize}[align=left,leftmargin=*]
	\item\eqref{eq:tsisg-RF-Hoelder} for any $u\in\mathcal{F}_{\bm{l}}$ and any $x,y\in K^{\bm{l}}$;
	\item Proposition \ref{prop:tsisg-RF-self-sim} with $\mathcal{C}(K^{\bm{l}}),F^{\bm{l}}_{w}$
		in place of $\mathbb{R}^{V^{\bm{l}}_{*}},F^{\bm{l}}_{w}|_{V^{\bm{l}^{n}}_{*}}$;
	\item Lemma \ref{lem:tsisg-RM-contraction} with $K^{\bm{l}^{|w|}}$ in place of $V^{\bm{l}^{|w|}}_{*}$;
	\item Proposition \ref{prop:tsisg-harmonic-Vn} with $\mathcal{C}(K^{\bm{l}}),F^{\bm{l}}_{w},K^{\bm{l}}_{w}$
		in place of $\mathbb{R}^{V^{\bm{l}}_{*}},F^{\bm{l}}_{w}|_{V^{\bm{l}^{n}}_{*}},F^{\bm{l}}_{w}(V^{\bm{l}^{n}}_{*})$;
	\item Proposition \ref{prop:tsisg-RM-completion}-\ref{it:tsisg-unifcont-Euc-RM},\ref{it:tsisg-unifcont-RM-Euc}
		for any $x,y\in K^{\bm{l}}$ and any $n\in\mathbb{N}$;
	\item\eqref{eq:tsisg-RM-diam-cell} for any $w\in W^{\bm{l}}_{*}$ and any $x\in K^{\bm{l}}_{w}$.
	\end{itemize}
\end{enumerate}
\end{rmk}

Finally, we can now consider $(\mathcal{E}^{\bm{l}},\mathcal{F}_{\bm{l}})$
as an irreducible, strongly local, regular symmetric Dirichlet form
over $K^{\bm{l}}$ as follows. See \cite[Sections 1.1 and 1.6]{FOT} or
\cite[Sections 1.1, 1.3 and 2.1]{CF} for the definitions of the relevant notions.
\begin{thm}\label{thm:tsisg-RF-DF}
Let $\mu$ be a \emph{Radon measure} on $K^{\bm{l}}$ with \emph{full support},
i.e., a Borel measure on $K^{\bm{l}}$ with $\mu(K^{\bm{l}})<\infty$
and $\mu(K^{\bm{l}}_{w})>0$ for any $w\in W^{\bm{l}}_{*}$.
Then $(\mathcal{E}^{\bm{l}},\mathcal{F}_{\bm{l}})$ is an irreducible,
strongly local regular symmetric Dirichlet form on $L^{2}(K^{\bm{l}},\mu)$.
\end{thm}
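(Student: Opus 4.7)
My plan is to invoke Kigami's standard conversion of a regular resistance form on a compact metrizable space into a regular symmetric Dirichlet form on $L^{2}$ of any Radon measure with full support (see, e.g., \cite[Theorem 9.4]{Kig12}, or the finitely ramified predecessor in \cite[Chapter 3]{Kig01}). All the hypotheses of this theorem have already been assembled: by Theorem \ref{thm:tsisg-RF} together with Proposition \ref{prop:tsisg-RM-completion}, the resistance metric $R_{\mathcal{E}^{\bm{l}}}$ is compatible with the original topology of the Euclidean-compact set $K^{\bm{l}}$, so $(K^{\bm{l}},R_{\mathcal{E}^{\bm{l}}})$ is compact, and $(\mathcal{E}^{\bm{l}},\mathcal{F}_{\bm{l}})$ is a regular resistance form on $K^{\bm{l}}$; the assumption on $\mu$ supplies the remaining finiteness $\mu(K^{\bm{l}})<\infty$ and full support. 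Since $\mathcal{F}_{\bm{l}}\subset\mathcal{C}(K^{\bm{l}})\subset L^{\infty}(K^{\bm{l}},\mu)\subset L^{2}(K^{\bm{l}},\mu)$ with dense image in $L^{2}(K^{\bm{l}},\mu)$ (using Theorem \ref{thm:tsisg-RF}-\ref{it:tsisg-RF-regular} and $\mathcal{C}(K^{\bm{l}})$ being dense in $L^{2}(K^{\bm{l}},\mu)$), the form $(\mathcal{E}^{\bm{l}},\mathcal{F}_{\bm{l}})$ is densely defined on $L^{2}(K^{\bm{l}},\mu)$.

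From this setup I read off each required property. Closedness of $(\mathcal{E}^{\bm{l}},\mathcal{F}_{\bm{l}})$ on $L^{2}(K^{\bm{l}},\mu)$ comes from the Hilbert-space structure of $(\mathcal{F}_{\bm{l}}/\mathbb{R}\one_{K^{\bm{l}}},\mathcal{E}^{\bm{l}})$ given by \ref{it:RF2}, coupled with the continuous inclusion $\mathcal{F}_{\bm{l}}\hookrightarrow\mathcal{C}(K^{\bm{l}})\hookrightarrow L^{2}(K^{\bm{l}},\mu)$ from \eqref{eq:tsisg-RF-Hoelder} and $\mu(K^{\bm{l}})<\infty$: an $\mathcal{E}_{1}^{\mu}$-Cauchy sequence splits into the sum of its uniformly bounded constant parts (controlled in $L^{2}$) and a sequence Cauchy in the $\mathcal{E}^{\bm{l}}$-seminorm modulo constants (controlled via \ref{it:RF2}), yielding convergence of both parts in $\mathcal{F}_{\bm{l}}$. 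The Markov (unit-contraction) property is exactly \ref{it:RF5} (as extended to $K^{\bm{l}}$ in Remark \ref{rmk:tsisg-RF}-\ref{it:thm:tsisg-RF}); regularity as a Dirichlet form is Theorem \ref{thm:tsisg-RF}-\ref{it:tsisg-RF-regular} together with $\mathcal{C}_{c}(K^{\bm{l}})=\mathcal{C}(K^{\bm{l}})$ by compactness; strong locality is Theorem \ref{thm:tsisg-RF}-\ref{it:tsisg-RF-strongly-local}.

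The remaining point is irreducibility. By \ref{it:RF1} (again extended to $K^{\bm{l}}$ via Remark \ref{rmk:tsisg-RF}-\ref{it:thm:tsisg-RF}), any $u\in\mathcal{F}_{\bm{l}}$ with $\mathcal{E}^{\bm{l}}(u,u)=0$ is a constant on $K^{\bm{l}}$. Since $\mu(K^{\bm{l}})<\infty$ and $\one_{K^{\bm{l}}}\in\mathcal{F}_{\bm{l}}$, the form is recurrent and its extended Dirichlet space coincides with $\mathcal{F}_{\bm{l}}$, so the standard characterization of irreducibility for strongly local regular symmetric Dirichlet forms via ``$\mathcal{E}(u,u)=0\Rightarrow u$ is constant on the extended domain'' (see, e.g., \cite[Theorem 4.6.6]{FOT} or \cite[Theorem 2.1.11]{CF}) applies. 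There is no substantive obstacle in the argument: every non-trivial ingredient has been established in the preceding results of this section, and the proof is just a pedantic unpacking of definitions.
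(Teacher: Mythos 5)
Your proposal is correct and follows essentially the same route as the paper's own (very terse) proof: density of $\mathcal{F}_{\bm{l}}$ in $L^{2}(K^{\bm{l}},\mu)$ via density of $\mathcal{C}(K^{\bm{l}})$ and regularity, closedness from Proposition \ref{prop:tsisg-RF}-\ref{it:RF2} combined with \eqref{eq:tsisg-RF-Hoelder}, the Markov property from \ref{it:RF5}, strong locality from Theorem \ref{thm:tsisg-RF}-\ref{it:tsisg-RF-strongly-local}, and irreducibility from \ref{it:RF1} together with \cite[Theorem 2.1.11]{CF}. Your added framing via Kigami's general resistance-form-to-Dirichlet-form theorem and the explicit recurrence/extended-domain remark are consistent elaborations rather than a genuinely different argument.
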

\begin{proof}
Since $\mathcal{C}(K^{\bm{l}})$ is dense in $L^{2}(K^{\bm{l}},\mu)$ by \cite[Theorem 3.14]{Rud},
$\mathcal{F}_{\bm{l}}$ is also dense in $L^{2}(K^{\bm{l}},\mu)$ by Theorem \ref{thm:tsisg-RF}-\ref{it:tsisg-RF-regular},
and then $(\mathcal{E}^{\bm{l}},\mathcal{F}_{\bm{l}})$ is a regular symmetric Dirichlet
form on $L^{2}(K^{\bm{l}},\mu)$ by Proposition \ref{prop:tsisg-RF}-\ref{it:RF2},
\eqref{eq:tsisg-RF-Hoelder}, Proposition \ref{prop:tsisg-RF}-\ref{it:RF5}
and Theorem \ref{thm:tsisg-RF}-\ref{it:tsisg-RF-regular},
strongly local by Theorem \ref{thm:tsisg-RF}-\ref{it:tsisg-RF-strongly-local}, and
irreducible by Proposition \ref{prop:tsisg-RF}-\ref{it:RF1} and \cite[Theorem 2.1.11]{CF}.
\qed\end{proof}
%
\section{Space-time scale function $\Psi_{\bm{l}}$ and \protect\hyperlink{fHKE}{$\textup{fHKE}(\Psi_{\bm{l}})$}}\label{sec:tsisg-fHKE}
In this section, we continue to fix an arbitrary
$\bm{l}=(l_{n})_{n=1}^{\infty}\in(\mathbb{N}\setminus\{1,2,3,4\})^{\mathbb{N}}$,
define a space-time scale function $\Psi_{\bm{l}}$ explicitly in terms of
$\bm{l}=(l_{n})_{n=1}^{\infty}$, and show that 
$(K^{\bm{l}},\refmet_{\bm{l}},\refmeas_{\bm{l}},\mathcal{E}^{\bm{l}},\mathcal{F}_{\bm{l}})$
satisfies \hyperlink{fHKE}{$\textup{fHKE}(\Psi_{\bm{l}})$}. First, $\Psi_{\bm{l}}$
is defined in a way analogous to \cite[(5.11)]{KM} for the usual scale irregular
Sierpi\'{n}ski gaskets but modified so as to take the ``asymptotically
one-dimensional'' nature of $K^{\bm{l}}$ into account, as follows.%
\begin{dfn}\label{dfn:tsisg-Psi}
We define a homeomorphism $\Psi_{\bm{l}}\colon[0,\infty)\to[0,\infty)$ by
\begin{align}
\Psi_{\bm{l}}(s)
	&:=\biggl(\frac{1}{M^{\bm{l}}_{n}}
			+\frac{s-1/L^{\bm{l}}_{n}}{1/L^{\bm{l}}_{n-1}-1/L^{\bm{l}}_{n}}
				\biggl(\frac{1}{M^{\bm{l}}_{n-1}}-\frac{1}{M^{\bm{l}}_{n}}\biggr)\biggr) \notag \\
	&\mspace{127.4mu}\cdot\biggl(R^{\bm{l}}_{n}
			+\frac{s-1/L^{\bm{l}}_{n}}{1/L^{\bm{l}}_{n-1}-1/L^{\bm{l}}_{n}}
				(R^{\bm{l}}_{n-1}-R^{\bm{l}}_{n})\biggr) \notag \\
&=\frac{1}{T^{\bm{l}}_{n}}\biggl(1+\frac{3l_{n}-4}{l_{n}-1}(L^{\bm{l}}_{n}s-1)\biggr)
	\biggl(1+\frac{\frac{2}{3}l_{n}-\frac{8}{9}}{l_{n}-1}(L^{\bm{l}}_{n}s-1)\biggr)
\label{eq:tsisg-Psi}
\end{align}
for $n\in\mathbb{N}$ and $s\in[1/L^{\bm{l}}_{n},1/L^{\bm{l}}_{n-1}]$ and
$\Psi_{\bm{l}}(s):=s^{\beta_{\bm{l},0}}$ for $s\in\{0\}\cup[1,\infty)$, where
$T^{\bm{l}}_{n}:=M^{\bm{l}}_{n}/R^{\bm{l}}_{n}
	=(\# S_{l_{1}}/r_{l_{1}})\cdots(\# S_{l_{n}}/r_{l_{n}})
	=\prod_{k=1}^{n}(2l_{k}^{2}-\frac{5}{3}l_{k}-\frac{1}{3})$
($T^{\bm{l}}_{0}:=1$) and $\beta_{\bm{l},0}:=\inf_{n\in\mathbb{N}}\beta_{l_{n}}$ with
$\beta_{l}:=\log_{l}(\# S_{l}/r_{l})=\log_{l}(2l^{2}-\frac{5}{3}l-\frac{1}{3})\in(2,2+\log_{5}2)$
for $l\in\mathbb{N}\setminus\{1,2,3,4\}$; note that $\{\beta_{l}\}_{l=5}^{\infty}$
is strictly decreasing and converges to $2$. We also set
$\beta_{\bm{l},1}:=\max_{n\in\mathbb{N}}\beta_{l_{n}}$, so that
$2\leq\beta_{\bm{l},0}\leq\beta_{\bm{l},1}\leq\beta_{5}<2+\log_{5}2$.
\end{dfn}
\begin{lem}\label{lem:tsisg-Psi}
$\Psi_{\bm{l}}$ satisfies \eqref{eq:Psi-ass} with $c_{\Psi}=81$,
$\beta_{0}=\beta_{\bm{l},0}$ and $\beta_{1}=\beta_{\bm{l},1}$.
\end{lem}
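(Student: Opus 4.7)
The plan is to compare $\Psi_{\bm{l}}$ with a piecewise pure-power homeomorphism $\tilde\Psi_{\bm{l}}$ that interpolates between the nodal values $\Psi_{\bm{l}}(1/L^{\bm{l}}_n) = 1/T^{\bm{l}}_n$ by a single power function on each subinterval, and for which \eqref{eq:Psi-ass} is obvious with constant $1$. The statement for $\Psi_{\bm{l}}$ will then follow as soon as I show that the ratio $\Psi_{\bm{l}}/\tilde\Psi_{\bm{l}}$ is bounded above and below by absolute constants independent of $\bm{l}$ and $n$.

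The main technical step is a uniform local sandwich on each interval $[1/L^{\bm{l}}_n, 1/L^{\bm{l}}_{n-1}]$. Substituting $t := L^{\bm{l}}_n s \in [1, l_n]$, the second line of \eqref{eq:tsisg-Psi} reads $\Psi_{\bm{l}}(s) = g_n(t)/T^{\bm{l}}_n$ with $g_n(t) := (1+A_n(t-1))(1+B_n(t-1))$, where $A_n := (3l_n-4)/(l_n-1)$ and $B_n := (\tfrac{2}{3}l_n-\tfrac{8}{9})/(l_n-1)$. A direct calculation gives $B_n = (2/9)A_n$, and $l_n \geq 5$ forces $A_n \in [11/4, 3)$, $B_n \in [11/18, 2/3)$, so $A_n \geq 1 \geq B_n$. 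The elementary identities $1+A_n(t-1) = A_n t-(A_n-1)$ and $1+B_n(t-1) = B_n t+(1-B_n)$ give, for $t \geq 1$, the sandwiches $t \leq 1+A_n(t-1) \leq A_n t$ and $B_n t \leq 1+B_n(t-1) \leq t$, whence $B_n t^2 \leq g_n(t) \leq A_n t^2$ on $[1, l_n]$. Because $l_n^{\beta_{l_n}} = 2l_n^2 - (5/3)l_n - 1/3 \in [l_n^2, 2l_n^2]$, one has $l_n^{\beta_{l_n}-2} \in [1, 2]$ and hence $t^{\beta_{l_n}-2} \in [1, 2]$ on $[1, l_n]$, so combining these bounds yields
\[ \tfrac{11}{36}\, t^{\beta_{l_n}} \;\leq\; g_n(t) \;\leq\; 3\, t^{\beta_{l_n}}, \qquad t \in [1, l_n], \ n \in \mathbb{N}. \]

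With this in hand I define $\tilde\Psi_{\bm{l}}(s) := (L^{\bm{l}}_n s)^{\beta_{l_n}}/T^{\bm{l}}_n$ for $s \in [1/L^{\bm{l}}_n, 1/L^{\bm{l}}_{n-1}]$ and $\tilde\Psi_{\bm{l}}(s) := s^{\beta_{\bm{l},0}}$ for $s \in \{0\} \cup [1, \infty)$. The two neighbouring formulas agree at every node (both equal $1/T^{\bm{l}}_n$ at $1/L^{\bm{l}}_n$, and both equal $1$ at $s = 1$), making $\tilde\Psi_{\bm{l}}$ a homeomorphism of $[0, \infty)$, and the previous display yields $\Psi_{\bm{l}}(s)/\tilde\Psi_{\bm{l}}(s) \in [11/36, 3]$ for every $s \in (0, \infty)$. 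Verifying \eqref{eq:Psi-ass} for $\tilde\Psi_{\bm{l}}$ with $c_\Psi = 1$ is routine: on a single subinterval $\tilde\Psi_{\bm{l}}(R)/\tilde\Psi_{\bm{l}}(r) = (R/r)^{\beta}$ for some $\beta \in \{\beta_{l_n}, \beta_{\bm{l},0}\} \subset [\beta_{\bm{l},0}, \beta_{\bm{l},1}]$, and $R/r \geq 1$ gives the two-sided bound; for $r, R$ in different subintervals one telescopes through the intermediate nodes, each factor having the same form with base contributing multiplicatively to $R/r$. Putting everything together, for $r \leq R$,
\[ \tfrac{11}{108}\,(R/r)^{\beta_{\bm{l},0}} \;\leq\; \frac{\Psi_{\bm{l}}(R)}{\Psi_{\bm{l}}(r)} \;=\; \frac{\Psi_{\bm{l}}(R)/\tilde\Psi_{\bm{l}}(R)}{\Psi_{\bm{l}}(r)/\tilde\Psi_{\bm{l}}(r)} \cdot \frac{\tilde\Psi_{\bm{l}}(R)}{\tilde\Psi_{\bm{l}}(r)} \;\leq\; \tfrac{108}{11}\,(R/r)^{\beta_{\bm{l},1}}, \]
which is well within the claimed bound $c_\Psi = 81$. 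The only delicate point is that the sandwich $g_n(t) \asymp t^{\beta_{l_n}}$ must be uniform as $l_n$ ranges over the unbounded set $\{5, 6, 7, \ldots\}$, but the two-sided control of $A_n$, $B_n$, and $l_n^{\beta_{l_n}-2}$ does not deteriorate either at $l_n = 5$ or as $l_n \to \infty$, so the same absolute constants work on every subinterval.
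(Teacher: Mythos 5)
Your proof is correct, and it reaches the conclusion with the sharper constant $\tfrac{108}{11}\leq 81$, which of course suffices since \eqref{eq:Psi-ass} only becomes weaker when $c_{\Psi}$ is enlarged. The route is a genuine repackaging of the paper's argument rather than a copy of it. The paper proves the lemma by a three-case analysis: the exact power identity for $r\geq 1$; a two-sided bound $\tfrac{2}{9}(R/r)^{2}\leq\Psi_{\bm{l}}(R)/\Psi_{\bm{l}}(r)\leq\tfrac{9}{2}(R/r)^{2}$ when $r,R$ lie in one subinterval $[1/L^{\bm{l}}_{n},1/L^{\bm{l}}_{n-1}]$, converted to the exponents $\beta_{\bm{l},0},\beta_{\bm{l},1}$ via $l_{n}^{\beta_{l_{n}}-2}<2$; and, for $r,R$ in different subintervals, a splitting of the ratio into three factors through the nodes, with the middle factor computed exactly as $T^{\bm{l}}_{k}/T^{\bm{l}}_{j}=\prod_{n=j+1}^{k}l_{n}^{\beta_{l_{n}}}$. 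You instead build the piecewise pure-power interpolant $\widetilde{\Psi}_{\bm{l}}$, verify \eqref{eq:Psi-ass} for it with constant $1$, and prove a single global two-sided comparison $\Psi_{\bm{l}}/\widetilde{\Psi}_{\bm{l}}\in[\tfrac{11}{36},3]$. The underlying estimates coincide: your sandwich $B_{n}t^{2}\leq g_{n}(t)\leq A_{n}t^{2}$ with $B_{n}/A_{n}=\tfrac{2}{9}$ is exactly the source of the paper's within-cell constants, and the continuity of $\widetilde{\Psi}_{\bm{l}}$ at the nodes encodes precisely the identity $T^{\bm{l}}_{n}=T^{\bm{l}}_{n-1}l_{n}^{\beta_{l_{n}}}$ that drives the paper's middle-factor telescoping (it would be worth one explicit line that $l_{n}^{\beta_{l_{n}}}=\#S_{l_{n}}/r_{l_{n}}=T^{\bm{l}}_{n}/T^{\bm{l}}_{n-1}$, which is immediate from the definition of $\beta_{l}$). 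What your organization buys is the elimination of the case analysis—once the reference homeomorphism is in place, all positions of $r\leq R$ are treated uniformly by the slope argument for a piecewise power function—plus the slightly better constant; what the paper's direct splitting buys is that no auxiliary function needs to be introduced or checked to be a homeomorphism. Your uniformity concern at the end is correctly resolved: the bounds on $A_{n}$, $B_{n}$ and $l_{n}^{\beta_{l_{n}}-2}$ are indeed uniform over $l_{n}\in\{5,6,7,\ldots\}$.
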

\begin{proof}
Let $r,R\in(0,\infty)$ satisfy $r\leq R$. If $r\geq 1$, then
$\Psi_{\bm{l}}(R)/\Psi_{\bm{l}}(r)=(R/r)^{\beta_{\bm{l},0}}\leq(R/r)^{\beta_{\bm{l},1}}$.
Next, if $r,R\in[1/L^{\bm{l}}_{n},1/L^{\bm{l}}_{n-1}]$
for some $n\in\mathbb{N}$, then we easily see from \eqref{eq:tsisg-Psi},
$1\leq L^{\bm{l}}_{n}r\leq L^{\bm{l}}_{n}R\leq l_{n}$ and
$l_{n}^{\beta_{l_{n}}-2}=2-\frac{5}{3}l_{n}^{-1}-\frac{1}{3}l_{n}^{-2}<2$ that
\begin{equation}\label{eq:lem:tsisg-Psi-proof}
\frac{1}{9}\biggl(\frac{R}{r}\biggr)^{\beta_{\bm{l},0}}
	<\frac{2}{9}l_{n}^{2-\beta_{l_{n}}}\biggl(\frac{R}{r}\biggr)^{\beta_{l_{n}}}
	\leq\frac{2}{9}\biggl(\frac{R}{r}\biggr)^{2}
	\leq\frac{\Psi_{\bm{l}}(R)}{\Psi_{\bm{l}}(r)}
	\leq\frac{9}{2}\biggl(\frac{R}{r}\biggr)^{2}
	\leq\frac{9}{2}\biggl(\frac{R}{r}\biggr)^{\beta_{\bm{l},1}}.
\end{equation}
Lastly, if $r<1$ and no such $n\in\mathbb{N}$ exists, then we can choose
$j,k\in\mathbb{N}\cup\{0\}$ with $j\leq k$ so that $r\in[1/L^{\bm{l}}_{k+1},1/L^{\bm{l}}_{k})$
and $R\in[1/L^{\bm{l}}_{j},1/L^{\bm{l}}_{j-1})$, where $1/L^{\bm{l}}_{-1}:=\infty$, and by
\eqref{eq:tsisg-Psi} and the definitions of $\beta_{\bm{l},0}$ and $\beta_{\bm{l},1}$ we have
\begin{equation*}
\frac{\Psi_{\bm{l}}(1/L^{\bm{l}}_{j})}{\Psi_{\bm{l}}(1/L^{\bm{l}}_{k})}
	=\frac{T^{\bm{l}}_{k}}{T^{\bm{l}}_{j}}
	=\prod_{n=j+1}^{k}\frac{\# S_{l_{n}}}{r_{l_{n}}}
	=\prod_{n=j+1}^{k}l_{n}^{\beta_{l_{n}}}
	\in\biggl[\biggl(\frac{1/L^{\bm{l}}_{j}}{1/L^{\bm{l}}_{k}}\biggr)^{\beta_{\bm{l},0}},
		\biggl(\frac{1/L^{\bm{l}}_{j}}{1/L^{\bm{l}}_{k}}\biggr)^{\beta_{\bm{l},1}}\biggr],
\end{equation*}
which together with \eqref{eq:lem:tsisg-Psi-proof} and the equality
\begin{equation*}
\frac{\Psi_{\bm{l}}(R)}{\Psi_{\bm{l}}(r)}
	=\frac{\Psi_{\bm{l}}(1/L^{\bm{l}}_{k})}{\Psi_{\bm{l}}(r)}
		\frac{\Psi_{\bm{l}}(1/L^{\bm{l}}_{j})}{\Psi_{\bm{l}}(1/L^{\bm{l}}_{k})}
		\frac{\Psi_{\bm{l}}(R)}{\Psi_{\bm{l}}(1/L^{\bm{l}}_{j})}
\end{equation*}
immediately yields \eqref{eq:Psi-ass} for $\Psi_{\bm{l}}$ with
$c_{\Psi}=81$, $\beta_{0}=\beta_{\bm{l},0}$ and $\beta_{1}=\beta_{\bm{l},1}$.
\qed\end{proof}

The main result of this section is the following theorem.
\begin{thm}\label{thm:tsisg-fHKE}
$(K^{\bm{l}},\refmet_{\bm{l}},\refmeas_{\bm{l}},\mathcal{E}^{\bm{l}},\mathcal{F}_{\bm{l}})$
satisfies \hyperlink{fHKE}{$\textup{fHKE}(\Psi_{\bm{l}})$}.
\end{thm}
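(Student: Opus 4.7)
The plan is to adapt the strategy of \cite[Section 5]{KM} for the usual (bounded) scale irregular Sierpi\'{n}ski gaskets to the present setting, the main new feature being that every estimate must be made uniformly in the possibly unbounded sequence $\bm{l}$. Concretely, I will verify the volume doubling property $\textup{VD}$, the Poincar\'{e} inequality $\textup{PI}(\Psi_{\bm{l}})$ and the cutoff Sobolev inequality $\textup{CS}(\Psi_{\bm{l}})$ for $(K^{\bm{l}}, \refmet_{\bm{l}}, \refmeas_{\bm{l}}, \mathcal{E}^{\bm{l}}, \mathcal{F}_{\bm{l}})$, and then conclude \hyperlink{fHKE}{$\textup{fHKE}(\Psi_{\bm{l}})$} by the standard characterization summarized in \cite[Theorem 3.2]{Lie} and \cite[Theorem 2.8 and Remark 2.9]{KM}.

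The first step is the two-sided volume bound $\refmeas_{\bm{l}}(B_{\refmet_{\bm{l}}}(x,r)) \asymp V_{\bm{l}}(r)$ with constants independent of $x$ and $\bm{l}$, where $V_{\bm{l}}(r)$ denotes the first factor on the right-hand side of \eqref{eq:tsisg-Psi}. The key geometric input is that, thanks to the condition $i_{1} i_{2}(l - 1 - i_{1} - i_{2}) = 0$ in \eqref{eq:tlsg}, each level-$(n-1)$ cell of $K^{\bm{l}}$ is a thin triangular frame made of $3 l_{n} - 3$ level-$n$ subcells glued along a one-dimensional boundary graph. Consequently, for $r \in [1/L^{\bm{l}}_{n}, 1/L^{\bm{l}}_{n-1}]$, the ball $B_{\refmet_{\bm{l}}}(x,r)$ meets $\asymp L^{\bm{l}}_{n} r$ level-$n$ cells (each of $\refmeas_{\bm{l}}$-measure $1/M^{\bm{l}}_{n}$), giving total volume comparable to the linear interpolation $V_{\bm{l}}(r)$ between $1/M^{\bm{l}}_{n}$ and $1/M^{\bm{l}}_{n-1}$. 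Volume doubling then follows from polynomial bounds on $V_{\bm{l}}$ proved in analogy with Lemma \ref{lem:tsisg-Psi}.

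For $\textup{PI}(\Psi_{\bm{l}})$ and $\textup{CS}(\Psi_{\bm{l}})$ I would exploit the resistance form structure. Writing $R_{\bm{l}}(r)$ for the second factor on the right-hand side of \eqref{eq:tsisg-Psi}, Proposition \ref{prop:tsisg-RM-completion} yields $R_{\mathcal{E}^{\bm{l}}}(x,y) \asymp R_{\bm{l}}(\refmet_{\bm{l}}(x,y))$, so $\Psi_{\bm{l}}(r) \asymp \refmeas_{\bm{l}}(B_{\refmet_{\bm{l}}}(x,r)) \, R_{\bm{l}}(r)$, which is the correct space-time scaling. The Poincar\'{e} inequality then follows from \eqref{eq:tsisg-RF-Hoelder} by a standard averaging argument over a ball (cf.\ \cite[Proof of Lemma 5.6]{KM}). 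For the cutoff Sobolev inequality, I would build an explicit cutoff function on each pair of concentric balls as the $\mathcal{E}^{\bm{l}}$-harmonic extension (Proposition \ref{prop:tsisg-harmonic-Vn}) of suitably chosen piecewise linear data on $V^{\bm{l}}_{n}$ for the level $n$ matching the inner radius, computing its energy and $L^{\infty}(\refmeas_{\bm{l}})$ norm by means of the self-similarity identity \eqref{eq:tsisg-RF-self-sim-form}, in analogy with \cite[Proof of Lemma 5.7]{KM}.

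The main obstacle throughout is uniformity in $\bm{l}$. This uniformity ultimately rests on two facts: the resistance scaling factor $r_{l} = (\tfrac{2}{3} l + \tfrac{1}{9})^{-1}$ is explicit with a uniform asymptotic form (Lemma \ref{lem:tsisg-rl}), and the local combinatorial connectivity of each thin level-$l$ subdivision is uniformly bounded in $l$ (each level-$n$ cell has at most four level-$n$ neighbors, as already used in the proof of Proposition \ref{prop:tsisg-RM-completion}). With all constants in $\textup{VD}$, $\textup{PI}(\Psi_{\bm{l}})$ and $\textup{CS}(\Psi_{\bm{l}})$ thus depending only on the absolute bounds $2 \leq \beta_{\bm{l},0} \leq \beta_{\bm{l},1} \leq \beta_{5}$ of Definition \ref{dfn:tsisg-Psi}, the theorem follows from the stability theorem cited above.
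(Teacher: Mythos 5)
Your strategy (verify $\textup{VD}$, $\textup{PI}(\Psi_{\bm{l}})$ and $\textup{CS}(\Psi_{\bm{l}})$ and then invoke the stability theorems) is genuinely different from the paper's: the paper never touches $\textup{PI}$/$\textup{CS}$ and instead stays inside Kigami's resistance-form framework, deducing \hyperlink{fHKE}{$\textup{fHKE}(\Psi_{\bm{l}})$} from \cite[Theorem 15.10]{Kig12} after checking the two metric--measure--resistance compatibility conditions $\textup{(DM1)}_{\Psi_{\bm{l}},\refmet_{\bm{l}}}$ and $\textup{(DM2)}_{\Psi_{\bm{l}},\refmet_{\bm{l}}}$, i.e.\ two-sided estimates of $\refmeas_{\bm{l}}(B_{\refmet_{\bm{l}}}(x,s))$ and of $R_{\mathcal{E}^{\bm{l}}}(x,y)$ in terms of $\Psi^{\mathrm{M}}_{\bm{l}}$ and $\Psi^{\mathrm{R}}_{\bm{l}}$ (Propositions \ref{prop:tsisg-DM2} and \ref{prop:tsisg-DM1}). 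In the compact resistance-form (recurrent) setting these volume and resistance estimates are exactly what is needed, and they are also the real content hidden behind your $\textup{PI}$/$\textup{CS}$ plan, so your route is not unreasonable in principle; but as written it has a concrete gap.

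The gap is your assertion that Proposition \ref{prop:tsisg-RM-completion} already yields $R_{\mathcal{E}^{\bm{l}}}(x,y)\asymp\Psi^{\mathrm{R}}_{\bm{l}}(\refmet_{\bm{l}}(x,y))$. That proposition compares the two metrics only at the cell scales $1/L^{\bm{l}}_{n}$ versus $R^{\bm{l}}_{n}$; since $l_{n}$ may be unbounded, the gap between consecutive scales $1/L^{\bm{l}}_{n}$ and $1/L^{\bm{l}}_{n-1}=l_{n}/L^{\bm{l}}_{n}$ is unbounded, so these bounds give comparability only up to factors of order $l_{n}$ at intermediate radii, which is precisely what must be avoided. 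The paper handles this by working at the intermediate scales $k/L^{\bm{l}}_{n}$, $k\in\{1,\ldots,l_{n}\}$, through the chains $\Lambda^{(k)}_{n,w}$ of Definition \ref{dfn:Lambda-nwk-Unwk} and the harmonic test functions decreasing in $k$ steps in Proposition \ref{prop:Lambda-nwk-Unwk}, yielding $\frac{1}{12}kR^{\bm{l}}_{n}\leq R_{\mathcal{E}^{\bm{l}}}(x,y)\leq(\frac{2}{3}k+\frac{10}{3})R^{\bm{l}}_{n}$ and the matching ball-volume bounds of Proposition \ref{prop:tsisg-DM2}; without this refinement neither your resistance comparison nor the uniform-in-$\bm{l}$ constants in $\textup{PI}(\Psi_{\bm{l}})$ and, above all, in $\textup{CS}(\Psi_{\bm{l}})$ (whose cutoff functions must have energy controlled at every intermediate scale) are justified. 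A smaller point: $\textup{VD}+\textup{PI}+\textup{CS}$ by themselves give the parabolic Harnack inequality and hence only a near-diagonal lower bound; the full off-diagonal lower bound in \ref{eq:fHKE} additionally requires the chain condition and a chaining argument. Here the chain condition is free because $\refmet_{\bm{l}}$ is geodesic (Proposition \ref{prop:tsisg-metric}), but your proposal should say so explicitly, since the characterizations you cite are used in the paper only for the converse direction (\ref{eq:fHKE} implies $\textup{PI}$ and $\textup{CS}$).
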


The rest of this section is devoted to the proof of Theorem \ref{thm:tsisg-fHKE}.
We will conclude it from \cite[Theorem 15.10]{Kig12} by proving that
$(K^{\bm{l}},\refmet_{\bm{l}},\refmeas_{\bm{l}},\mathcal{E}^{\bm{l}},\mathcal{F}_{\bm{l}})$
satisfies the conditions $\textup{(DM1)}_{\Psi_{\bm{l}},\refmet_{\bm{l}}}$ and
$\textup{(DM2)}_{\Psi_{\bm{l}},\refmet_{\bm{l}}}$ defined in \cite[Definition 15.9-(3),(4)]{Kig12},
which are the central assumptions in \cite[Theorem 15.10]{Kig12}. A similar argument is
given in \cite[Chapter 24]{Kig12} for a large class of scale irregular Sierpi\'{n}ski gaskets,
but the possible unboundedness of $\bm{l}=(l_{n})_{n=1}^{\infty}$ requires some additional care.

The core of the proof of Theorem \ref{thm:tsisg-fHKE} is to establish the following proposition,
which is an extension of (the proof of) Proposition \ref{prop:tsisg-RM-completion}
to the case where $n\in\mathbb{N}$, $k\in\{1,\ldots,l_{n}\}$
and either $\refmet_{\bm{l}}(x,y)<k/L^{\bm{l}}_{n}$ or
$R_{\mathcal{E}^{\bm{l}}}(x,y)<\frac{1}{12}kR^{\bm{l}}_{n}$.
\begin{dfn}\label{dfn:Lambda-nwk-Unwk}
Let $n\in\mathbb{N}$ and $w\in W^{\bm{l}}_{n}$. For each $k\in\{0,\ldots,l_{n}\}$, we define
\begin{equation}\label{eq:Lambda-nwk}
\Lambda_{n,w}^{(k)}:=\biggl\{v\in W^{\bm{l}}_{n}\biggm|
	\begin{minipage}{228pt}
	there exists $\{v^{(j)}\}_{j=0}^{k}\subset W^{\bm{l}}_{n}$ such that $v^{(0)}=w$, $v^{(k)}=v$
	and $K^{\bm{l}}_{v^{(j-1)}}\cap K^{\bm{l}}_{v^{(j)}}\not=\emptyset$ for any $j\in\{1,\ldots,k\}$
	\end{minipage}
	\biggr\}
\end{equation}
($\Lambda_{n,w}^{(0)}:=\{w\}$) and $U_{n,w}^{(k)}:=\bigcup_{v\in\Lambda_{n,w}^{(k)}}K^{\bm{l}}_{v}$,
so that $2k+1\leq\#\Lambda_{n,w}^{(k)}\leq(6k)\vee 1$.
\end{dfn}
\begin{prop}\label{prop:Lambda-nwk-Unwk}
Let $n\in\mathbb{N}$, $w\in W^{\bm{l}}_{n}$, $x\in K^{\bm{l}}_{w}$ and $k\in\{1,\ldots,l_{n}\}$.
\begin{enumerate}[label=\textup{(\arabic*)},align=left,leftmargin=*]
\item\label{it:Unwk-in}If $y\in U_{n,w}^{(k)}$, then
	$\refmet_{\bm{l}}(x,y)<(k+2)/L^{\bm{l}}_{n}$ and
	$R_{\mathcal{E}^{\bm{l}}}(x,y)<(\frac{2}{3}k+\frac{10}{3})R^{\bm{l}}_{n}$.
\item\label{it:Unwk-out}If $y\in K^{\bm{l}}\setminus U_{n,w}^{(k)}$, then
	$\refmet_{\bm{l}}(x,y)\geq k/L^{\bm{l}}_{n}$ and
	$R_{\mathcal{E}^{\bm{l}}}(x,y)\geq\frac{1}{12}kR^{\bm{l}}_{n}$.
\item\label{it:Unwk-geodesic-adapted}$B_{\refmet_{\bm{l}}}(x,k/L^{\bm{l}}_{n})\subset U_{n,w}^{(k)}\subset B_{\refmet_{\bm{l}}}(x,(k+2)/L^{\bm{l}}_{n})$.
\item\label{it:Unwk-RM-adapted}$B_{R_{\mathcal{E}^{\bm{l}}}}(x,\frac{1}{12}kR^{\bm{l}}_{n})\subset U_{n,w}^{(k)}\subset B_{R_{\mathcal{E}^{\bm{l}}}}(x,(\frac{2}{3}k+\frac{10}{3})R^{\bm{l}}_{n})$.
\end{enumerate}
\end{prop}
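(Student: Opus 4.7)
The plan is to prove parts \ref{it:Unwk-in} and \ref{it:Unwk-out} directly and to observe that \ref{it:Unwk-geodesic-adapted} and \ref{it:Unwk-RM-adapted} are their immediate logical reformulations. For \ref{it:Unwk-in}, I would pick a chain $w=v^{(0)},v^{(1)},\ldots,v^{(k)}\in W^{\bm{l}}_{n}$ witnessing $v^{(k)}\in\Lambda_{n,w}^{(k)}$ with $y\in K^{\bm{l}}_{v^{(k)}}$, together with shared vertices $p_{j}\in K^{\bm{l}}_{v^{(j-1)}}\cap K^{\bm{l}}_{v^{(j)}}$ for $j=1,\ldots,k$, and telescope through the triangle inequality. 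The first and last terms $\refmet_{\bm{l}}(x,p_{1})$ and $\refmet_{\bm{l}}(p_{k},y)$ are each strictly less than $\tfrac{3/2}{L^{\bm{l}}_{n}}$ by \eqref{eq:tsisg-metric-diam-cell}, while each middle term $\refmet_{\bm{l}}(p_{j-1},p_{j})$ equals $1/L^{\bm{l}}_{n}$ because $p_{j-1}$ and $p_{j}$ are two corners of the single cell $K^{\bm{l}}_{v^{(j-1)}}$ and the Euclidean segment joining them is a side of $f^{\bm{l}}_{v^{(j-1)}}(\triangle)$ and therefore lies in $K^{\bm{l}}$. Summing gives $\refmet_{\bm{l}}(x,y)<(k+2)/L^{\bm{l}}_{n}$. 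The resistance inequality is proved in exactly the same way, invoking \eqref{eq:tsisg-RM-diam-cell} for the two end-terms ($<2R^{\bm{l}}_{n}$) and \eqref{eq:tsisg-RM-contraction-cell-vertices} for each middle term ($\leq\tfrac{2}{3}R^{\bm{l}}_{n}$), totaling $<(\tfrac{2}{3}k+\tfrac{10}{3})R^{\bm{l}}_{n}$.

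For the metric half of \ref{it:Unwk-out}, I would fix an arbitrary continuous $\gamma\colon[0,1]\to K^{\bm{l}}$ with $\gamma(0)=x$ and $\gamma(1)=y$, and define exit times $\tau_{j}:=\inf\{t\in[0,1]\mid\gamma(t)\notin U_{n,w}^{(j)}\}$ for $j=0,1,\ldots,k$; each $\tau_{j}\leq 1$ because $\gamma(1)\notin U_{n,w}^{(k)}$. A standard continuity argument together with Proposition \ref{prop:tsisg}-\ref{it:tsisg-boundary} shows that $p_{j}:=\gamma(\tau_{j})\in V^{\bm{l}}_{n}$ is shared between some $u_{j}\in\Lambda_{n,w}^{(j)}$ and some $b_{j}\in W^{\bm{l}}_{n}\setminus\Lambda_{n,w}^{(j)}$ (where $b_{j}$ is extracted along a subsequence from the cells containing $\gamma(t)$ for $t>\tau_{j}$ close to $\tau_{j}$). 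The crucial point is $p_{j}\neq p_{j+1}$: otherwise $u_{j}$ and $b_{j+1}$ would be adjacent cells whose cell-graph distances to $w$ are $\leq j$ and $\geq j+2$ respectively, contradicting the obvious fact that adjacent cells differ in cell-graph distance to $w$ by at most $1$. Since $V^{\bm{l}}_{n}$ sits inside a triangular lattice of spacing $1/L^{\bm{l}}_{n}$, distinct elements have Euclidean distance $\geq 1/L^{\bm{l}}_{n}$, hence $\ell_{\mathbb{R}^{2}}(\gamma)\geq\sum_{j=0}^{k-1}|p_{j}-p_{j+1}|\geq k/L^{\bm{l}}_{n}$, and taking the infimum over $\gamma$ yields $\refmet_{\bm{l}}(x,y)\geq k/L^{\bm{l}}_{n}$.

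For the resistance half of \ref{it:Unwk-out}, I would use a harmonic test function. Let $d_{G}$ denote graph distance in the cell-adjacency graph on $W^{\bm{l}}_{n}$, define $g\in\mathbb{R}^{V^{\bm{l}}_{n}}$ by $g(p):=\min\{d_{G}(w,v)\mid v\in W^{\bm{l}}_{n},\,p\in F^{\bm{l}}_{v}(V_{0})\}\wedge k$, and set $h:=h^{\bm{l}}_{n}(g)$. The $1$-Lipschitz property of $d_{G}$ on the cell graph implies that on every cell $u$ the three corner values of $g$ lie in $\{(d_{G}(w,u)-1)\wedge k,\,d_{G}(w,u)\wedge k\}$, so Proposition \ref{prop:tsisg-harmonic-Vn}-\ref{it:tsisg-harmonic-Vn-maximum-principle} forces $h\equiv 0$ on $K^{\bm{l}}_{w}$ (hence $h(x)=0$) and $h\equiv k$ on $K^{\bm{l}}_{v}$ for every $v\notin\Lambda_{n,w}^{(k)}$ (hence $h(y)=k$). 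Only cells $u$ with $d_{G}(w,u)\leq k$ contribute to $\mathcal{E}^{\bm{l},n}(g,g)$, each contributing at most $2$ (its two distinct corner values differ by at most $1$), so $\mathcal{E}^{\bm{l}}(h,h)=\mathcal{E}^{\bm{l},n}(g,g)\leq 2\#\Lambda_{n,w}^{(k)}/R^{\bm{l}}_{n}\leq 12k/R^{\bm{l}}_{n}$, and Proposition \ref{prop:tsisg-RF}-\ref{it:RF4} gives $R_{\mathcal{E}^{\bm{l}}}(x,y)\geq k^{2}/(12k/R^{\bm{l}}_{n})=\tfrac{1}{12}kR^{\bm{l}}_{n}$. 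The main obstacle I expect is the careful identification of the exit vertices $p_{j}$ and the graph-distance rigidity argument forcing $p_{j}\neq p_{j+1}$, which is what makes the metric estimate in \ref{it:Unwk-out} sharp at the constant $1$.
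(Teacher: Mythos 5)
Your proposal is correct and follows essentially the same route as the paper: the chain/triangle-inequality bound for part (1), a crossing-count lower bound on path length plus the harmonic extension of the truncated cell-graph distance (with the maximum principle and $\#\Lambda_{n,w}^{(k)}\leq 6k$) for part (2), and parts (3)--(4) as reformulations of (1)--(2). The only cosmetic difference is that in the metric half of (2) the paper invokes the geodesic from Proposition \ref{prop:tsisg-metric} and counts its passages through $V^{\bm{l}}_{n}$, whereas you run the same crossing argument for an arbitrary continuous path via exit times from the nested sets $U_{n,w}^{(j)}$, which is equally valid.
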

\begin{proof}
\ref{it:Unwk-in} is immediate from \eqref{eq:tsisg-boundary},
the triangle inequality for $\refmet_{\bm{l}}$ and $R_{\mathcal{E}^{\bm{l}}}$,
\eqref{eq:tsisg-metric-diam-cell}, \eqref{eq:tsisg-RM-contraction-cell-vertices}
and \eqref{eq:tsisg-RM-diam-cell}. To see \ref{it:Unwk-out}, assume that
$y\not\in U_{n,w}^{(k)}$. For $\refmet_{\bm{l}}(x,y)$, by Proposition \ref{prop:tsisg-metric}
we can take $\gamma\colon[0,1]\to K^{\bm{l}}$ such that $\gamma(0)=x$, $\gamma(1)=y$ and
$\refmet_{\bm{l}}(\gamma(s),\gamma(t))=|s-t|\refmet_{\bm{l}}(x,y)$ for any $s,t\in[0,1]$,
and it then follows from \eqref{eq:tsisg-boundary} and $y\not\in U_{n,w}^{(k)}$ that
$\#(\gamma^{-1}(V^{\bm{l}}_{n})\cap(0,1))\geq k+1$, which yields
$\refmet_{\bm{l}}(x,y)=\ell_{\mathbb{R}^{2}}(\gamma)\geq k/L^{\bm{l}}_{n}$.
For $R_{\mathcal{E}^{\bm{l}}}(x,y)$, recalling Proposition \ref{prop:tsisg-harmonic-Vn},
define $u\in\mathbb{R}^{V^{\bm{l}}_{n}}$ by
\begin{equation}\label{eq:Unwk-out-proof}
u(z):=
	\begin{cases}
	1 & \textrm{if $z\in K^{\bm{l}}_{w}=U_{n,w}^{(0)}$,} \\
	1-\frac{j}{k} & \textrm{if $j\in\{1,\ldots,k\}$ and $z\in U_{n,w}^{(j)}\setminus U_{n,w}^{(j-1)}$,} \\
	0 & \textrm{if $z\in K^{\bm{l}}\setminus U_{n,w}^{(k)}$}
	\end{cases}
\end{equation}
for each $z\in V^{\bm{l}}_{n}$ and set $h_{n,w}^{(k)}:=h^{\bm{l}}_{n}(u)$, so that
$\mathcal{E}^{\bm{l}}(h_{n,w}^{(k)},h_{n,w}^{(k)})=\mathcal{E}^{\bm{l},n}(u,u)$,
$h_{n,w}^{(k)}|_{K^{\bm{l}}_{w}}=1$, $h_{n,w}^{(k)}|_{K^{\bm{l}}_{v}}=0$
for any $v\in W^{\bm{l}}_{n}\setminus\Lambda_{n,w}^{(k)}$,
and $u(F^{\bm{l}}_{v}(V_{0}))\subset\{1-\frac{j-1}{k},1-\frac{j}{k}\}$ for any
$j\in\{1,\ldots,k\}$ and any $v\in\Lambda_{n,w}^{(j)}\setminus\Lambda_{n,w}^{(j-1)}$.
Then combining these properties with Proposition \ref{prop:tsisg-RF}-\ref{it:RF4},
\eqref{eq:tsisg-form-ln}, \eqref{eq:tsisg-form-0} and $\#\Lambda_{n,w}^{(k)}\leq 6k$, we obtain
\begin{equation*}
R_{\mathcal{E}^{\bm{l}}}(x,y)
	\geq\frac{|h_{n,w}^{(k)}(x)-h_{n,w}^{(k)}(y)|^{2}}{\mathcal{E}^{\bm{l}}(h_{n,w}^{(k)},h_{n,w}^{(k)})}
	=\frac{1}{\mathcal{E}^{\bm{l},n}(u,u)}
	\geq\frac{\frac{1}{2}k^{2}R^{\bm{l}}_{n}}{\#\Lambda_{n,w}^{(k)}-1}
	\geq\frac{kR^{\bm{l}}_{n}}{12},
\end{equation*}
which proves \ref{it:Unwk-out}. Lastly, we also get \ref{it:Unwk-geodesic-adapted} and
\ref{it:Unwk-RM-adapted} since the conjunction of \ref{it:Unwk-geodesic-adapted} and
\ref{it:Unwk-RM-adapted} is clearly equivalent to that of \ref{it:Unwk-in} and \ref{it:Unwk-out}.
\qed\end{proof}

As an easy consequence of Proposition \ref{prop:Lambda-nwk-Unwk}, we further obtain the
following proposition, which contains $\textup{(DM2)}_{\Psi_{\bm{l}},\refmet_{\bm{l}}}$
as defined in \cite[Definition 15.9-(4)]{Kig12}.
\begin{prop}\label{prop:tsisg-DM2}
Let $x,y\in K^{\bm{l}}$, $s\in(0,\infty)$, $n\in\mathbb{N}$ and $k\in\{2,\ldots,l_{n}\}$.
\begin{enumerate}[label=\textup{(\arabic*)},align=left,leftmargin=*]
\item\label{it:tsisg-DM2-Psi-measure}If $s\in[(k-1)/L^{\bm{l}}_{n},k/L^{\bm{l}}_{n}]$, then
	\begin{equation}\label{eq:tsisg-DM2-Psi-measure}
	\frac{1}{18}\frac{k^{2}}{T^{\bm{l}}_{n}}
		\leq\Psi_{\bm{l}}(s)\leq\frac{9}{2}\frac{k^{2}}{T^{\bm{l}}_{n}}
	\mspace{28mu}\textrm{and}\mspace{28mu}
	\frac{7}{36}\frac{k}{M^{\bm{l}}_{n}}
		\leq\refmeas_{\bm{l}}(B_{\refmet_{\bm{l}}}(x,s))
		\leq 6\frac{k}{M^{\bm{l}}_{n}},
	\end{equation}
	whereas if $s\in[1,3]$, then $1\leq\Psi_{\bm{l}}(s)\leq 14$ and
	$\frac{7}{12}\leq\refmeas_{\bm{l}}(B_{\refmet_{\bm{l}}}(x,s))\leq 1$.
\item\label{it:tsisg-DM2-RM}If $\refmet_{\bm{l}}(x,y)\in[(k-1)/L^{\bm{l}}_{n},k/L^{\bm{l}}_{n})$, then
	\begin{equation}\label{eq:tsisg-DM2-RM}
	\frac{1}{48}kR^{\bm{l}}_{n}\leq R_{\mathcal{E}^{\bm{l}}}(x,y)\leq\frac{7}{3}kR^{\bm{l}}_{n},
	\end{equation}
	whereas $\refmet_{\bm{l}}(x,y)<3$, $R_{\mathcal{E}^{\bm{l}}}(x,y)<4$, and if
	$\refmet_{\bm{l}}(x,y)\geq 1$ then $R_{\mathcal{E}^{\bm{l}}}(x,y)\geq\frac{1}{14}$.
\item\label{it:tsisg-DM2}If $x\not=y$, then
	\begin{equation}\label{eq:tsisg-DM2}
	6^{-4}\frac{\Psi_{\bm{l}}(\refmet_{\bm{l}}(x,y))}{\refmeas_{\bm{l}}\bigl(B_{\refmet_{\bm{l}}}(x,\refmet_{\bm{l}}(x,y))\bigr)}
		\leq R_{\mathcal{E}^{\bm{l}}}(x,y)
		\leq 2^{8}\frac{\Psi_{\bm{l}}(\refmet_{\bm{l}}(x,y))}{\refmeas_{\bm{l}}\bigl(B_{\refmet_{\bm{l}}}(x,\refmet_{\bm{l}}(x,y))\bigr)}.
	\end{equation}
\end{enumerate}
\end{prop}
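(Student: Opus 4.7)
The plan is to prove the three parts of Proposition \ref{prop:tsisg-DM2} sequentially, using Proposition \ref{prop:Lambda-nwk-Unwk} as the central tool and handling the small-$k$ cases $k \in \{2, 3\}$ by descending to level $n+1$.

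For part \ref{it:tsisg-DM2-Psi-measure}, the $\Psi_{\bm{l}}$ estimates follow by direct substitution into \eqref{eq:tsisg-Psi}: on $s \in [(k-1)/L^{\bm{l}}_{n}, k/L^{\bm{l}}_{n}]$ we have $L^{\bm{l}}_{n} s - 1 \in [k-2, k-1]$, and bounding the two linear factors in the product form of $\Psi_{\bm{l}}(s) T^{\bm{l}}_{n}$ using $l_{n} \geq 5$ and $k \geq 2$ is elementary. For the measure estimate, fix $w \in W^{\bm{l}}_{n}$ with $x \in K^{\bm{l}}_{w}$. The upper bound is immediate from $B_{\refmet_{\bm{l}}}(x,s) \subset U_{n,w}^{(k)}$ (Proposition \ref{prop:Lambda-nwk-Unwk}-\ref{it:Unwk-geodesic-adapted}) together with $\#\Lambda_{n,w}^{(k)} \leq 6k$. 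The lower bound for $k \geq 4$ uses $U_{n,w}^{(k-3)} \subset B_{\refmet_{\bm{l}}}(x, (k-1)/L^{\bm{l}}_{n}) \subset B_{\refmet_{\bm{l}}}(x,s)$ and $\#\Lambda_{n,w}^{(k-3)} \geq 2k-5 \geq 7k/36$. For $k \in \{2, 3\}$ I descend: picking $w' \in W^{\bm{l}}_{n+1}$ with $x \in K^{\bm{l}}_{w'}$, the inclusion $U_{n+1,w'}^{(l_{n+1}-2)} \subset B_{\refmet_{\bm{l}}}(x, 1/L^{\bm{l}}_{n}) \subset B_{\refmet_{\bm{l}}}(x, s)$ combined with $(2 l_{n+1} - 3)/(3 l_{n+1} - 3) \geq 7/12$ (valid for $l_{n+1} \geq 5$) yields the bound, tight at $k = 3$ and $l_{n+1} = 5$. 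The sub-case $s \in [1, 3]$ is handled the same way with level $1$.

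For part \ref{it:tsisg-DM2-RM}, fix $w \in W^{\bm{l}}_{n}$ with $x \in K^{\bm{l}}_{w}$. The upper bound comes from $y \in B_{\refmet_{\bm{l}}}(x, k/L^{\bm{l}}_{n}) \subset U_{n,w}^{(k)}$ and Proposition \ref{prop:Lambda-nwk-Unwk}-\ref{it:Unwk-in}, giving $R_{\mathcal{E}^{\bm{l}}}(x,y) < (\frac{2}{3}k + \frac{10}{3}) R^{\bm{l}}_{n} \leq \frac{7}{3} k R^{\bm{l}}_{n}$ thanks to $k \geq 2$. For $k \geq 4$ the lower bound follows from $y \notin B_{\refmet_{\bm{l}}}(x, (k-1)/L^{\bm{l}}_{n}) \supset U_{n,w}^{(k-3)}$ via Proposition \ref{prop:Lambda-nwk-Unwk}-\ref{it:Unwk-out}. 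For $k = 2$ I descend and observe $y \notin U_{n+1, w'}^{(l_{n+1}-2)}$, yielding $R_{\mathcal{E}^{\bm{l}}}(x,y) \geq (l_{n+1}-2) R^{\bm{l}}_{n+1}/12$; for $k = 3$ the inclusion $U_{n+1, w'}^{(l_{n+1})} \subset B_{\refmet_{\bm{l}}}(x, (l_{n+1}+2)/L^{\bm{l}}_{n+1}) \subset B_{\refmet_{\bm{l}}}(x, 2/L^{\bm{l}}_{n})$ forces $y \notin U_{n+1, w'}^{(l_{n+1})}$, yielding $R_{\mathcal{E}^{\bm{l}}}(x,y) \geq l_{n+1} R^{\bm{l}}_{n+1}/12$. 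In both small-$k$ cases the identity $R^{\bm{l}}_{n+1}/R^{\bm{l}}_{n} = (\frac{2}{3} l_{n+1} + \frac{1}{9})^{-1}$ and $l_{n+1} \geq 5$ deliver the required constants. The diameter bounds $\refmet_{\bm{l}}(x,y) < 3$ and $R_{\mathcal{E}^{\bm{l}}}(x,y) < 4$ follow from \eqref{eq:tsisg-metric-diam-cell} and \eqref{eq:tsisg-RM-diam-cell} applied with $|w| = 0$ and the triangle inequality, while the bound $R_{\mathcal{E}^{\bm{l}}}(x,y) \geq 1/14$ when $\refmet_{\bm{l}}(x,y) \geq 1$ follows from Proposition \ref{prop:Lambda-nwk-Unwk}-\ref{it:Unwk-out} at level $1$ with $k = l_{1} - 2$.

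Part \ref{it:tsisg-DM2} is then bookkeeping. Given $x \neq y$ with $\refmet_{\bm{l}}(x,y) \in [(k-1)/L^{\bm{l}}_{n}, k/L^{\bm{l}}_{n})$ (unique $n \in \mathbb{N}$, $k \in \{2, \ldots, l_{n}\}$), combining parts \ref{it:tsisg-DM2-Psi-measure} and \ref{it:tsisg-DM2-RM} via $T^{\bm{l}}_{n} = M^{\bm{l}}_{n}/R^{\bm{l}}_{n}$ makes the common factor $k R^{\bm{l}}_{n}$ cancel in the ratio $R_{\mathcal{E}^{\bm{l}}}(x,y) \refmeas_{\bm{l}}(B_{\refmet_{\bm{l}}}(x, \refmet_{\bm{l}}(x,y)))/\Psi_{\bm{l}}(\refmet_{\bm{l}}(x,y))$, leaving a numerical constant that sits comfortably inside $[6^{-4}, 2^{8}]$; the case $\refmet_{\bm{l}}(x,y) \in [1, 3)$ is handled analogously using the auxiliary bounds in \ref{it:tsisg-DM2-Psi-measure} and \ref{it:tsisg-DM2-RM}. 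The main obstacle is the small-$k$ analysis: for $k \in \{2, 3\}$ the direct level-$n$ applications of Proposition \ref{prop:Lambda-nwk-Unwk}-\ref{it:Unwk-out} are either outside its stated range $k \in \{1, \ldots, l_{n}\}$ or give insufficient constants, forcing the descent to level $n+1$ and the explicit exploitation of $l_{n+1} \geq 5$.
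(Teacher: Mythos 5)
Your proposal is correct and follows essentially the same route as the paper's proof: direct estimation of $\Psi_{\bm{l}}$ on each scale block, Proposition \ref{prop:Lambda-nwk-Unwk} for the ball-measure and resistance bounds with descent to level $n+1$ for $k\in\{2,3\}$, and straightforward combination of parts \ref{it:tsisg-DM2-Psi-measure} and \ref{it:tsisg-DM2-RM} for \ref{it:tsisg-DM2}. The only minor deviation is your separate, slightly sharper treatment of $k=3$ in part \ref{it:tsisg-DM2-RM} using $k'=l_{n+1}$ at level $n+1$, whereas the paper handles $k\in\{2,3\}$ uniformly from $\refmet_{\bm{l}}(x,y)\geq 1/L^{\bm{l}}_{n}$ with $k'=l_{n+1}-2$, giving $R_{\mathcal{E}^{\bm{l}}}(x,y)\geq\frac{9}{124}R^{\bm{l}}_{n}$; both versions yield the required constants.
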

\begin{proof}
\begin{enumerate}[label=\textup{(\arabic*)},align=left,leftmargin=*]
\item Assume that $s\in[(k-1)/L^{\bm{l}}_{n},k/L^{\bm{l}}_{n}]$.
	By \eqref{eq:tsisg-Psi} and \eqref{eq:lem:tsisg-Psi-proof}  we have
	\begin{equation}\label{eq:tsisg-DM2-Psi}
	\begin{split}
	\frac{1}{18}\frac{k^{2}}{T^{\bm{l}}_{n}}
		\leq\frac{2}{9}\frac{(k-1)^{2}}{T^{\bm{l}}_{n}}
		=\frac{2}{9}(k-1)^{2}\Psi_{\bm{l}}(1/L^{\bm{l}}_{n})
		&\leq\Psi_{\bm{l}}((k-1)/L^{\bm{l}}_{n}) \\
	\leq\Psi_{\bm{l}}(s)
		\leq\Psi_{\bm{l}}(k/L^{\bm{l}}_{n})
		&\leq\frac{9}{2}k^{2}\Psi_{\bm{l}}(1/L^{\bm{l}}_{n})
		=\frac{9}{2}\frac{k^{2}}{T^{\bm{l}}_{n}}.
	\end{split}
	\end{equation}
	For $\refmeas_{\bm{l}}(B_{\refmet_{\bm{l}}}(x,s))$,
	choosing $w\in W^{\bm{l}}_{n}$ so that $x\in K^{\bm{l}}_{w}$, we see from
	Proposition \ref{prop:Lambda-nwk-Unwk}-\ref{it:Unwk-geodesic-adapted},
	\eqref{eq:tsisg-measure} and $2j+1\leq\#\Lambda_{n,w}^{(j)}\leq 6j$
	for $j\in\{1,\ldots,l_{n}\}$ that
	\begin{equation}\label{eq:tsisg-DM2-measure1}
	\refmeas_{\bm{l}}(B_{\refmet_{\bm{l}}}(x,s))
		\leq \refmeas_{\bm{l}}(B_{\refmet_{\bm{l}}}(x,k/L^{\bm{l}}_{n}))
		\leq \refmeas_{\bm{l}}(U_{n,w}^{(k)})
		=\frac{\#\Lambda_{n,w}^{(k)}}{M^{\bm{l}}_{n}}
		\leq 6\frac{k}{M^{\bm{l}}_{n}}
	\end{equation}
	and that, provided $k\geq 4$,
	\begin{equation}\label{eq:tsisg-DM2-measure2}
	\refmeas_{\bm{l}}(B_{\refmet_{\bm{l}}}(x,s))
		\geq \refmeas_{\bm{l}}(B_{\refmet_{\bm{l}}}(x,(k-1)/L^{\bm{l}}_{n}))
		\geq \refmeas_{\bm{l}}(U_{n,w}^{(k-3)})
		=\frac{\#\Lambda_{n,w}^{(k-3)}}{M^{\bm{l}}_{n}}
		\geq\frac{2k-5}{M^{\bm{l}}_{n}}.
	\end{equation}
	If $k\in\{2,3\}$, then choosing $v\in W^{\bm{l}}_{n+1}$ so that $x\in K^{\bm{l}}_{v}$,
	by Proposition \ref{prop:Lambda-nwk-Unwk}-\ref{it:Unwk-geodesic-adapted},
	\eqref{eq:tsisg-measure} and $\#\Lambda_{n+1,v}^{(l_{n+1}-2)}\geq 2l_{n+1}-3$ we get
	\begin{equation}\label{eq:tsisg-DM2-measure3}
	\begin{split}
	\refmeas_{\bm{l}}(B_{\refmet_{\bm{l}}}(x,s))
		&\geq \refmeas_{\bm{l}}(B_{\refmet_{\bm{l}}}(x,1/L^{\bm{l}}_{n}))
		= \refmeas_{\bm{l}}(B_{\refmet_{\bm{l}}}(x,l_{n+1}/L^{\bm{l}}_{n+1})) \\
	&\geq \refmeas_{\bm{l}}(U_{n+1,v}^{(l_{n+1}-2)})
		=\frac{\#\Lambda_{n+1,v}^{(l_{n+1}-2)}}{M^{\bm{l}}_{n+1}}
		\geq\frac{2l_{n+1}-3}{(3l_{n+1}-3)M^{\bm{l}}_{n}}
		\geq\frac{7}{12}\frac{1}{M^{\bm{l}}_{n}}.
	\end{split}
	\end{equation}
	\eqref{eq:tsisg-DM2-Psi}, \eqref{eq:tsisg-DM2-measure1}, \eqref{eq:tsisg-DM2-measure2}
	and \eqref{eq:tsisg-DM2-measure3} together yield \eqref{eq:tsisg-DM2-Psi-measure}.
	
	On the other hand, if $s\in[1,3]$, then
	$\Psi_{\bm{l}}(s)=s^{\beta_{\bm{l},0}}\in[1,3^{\beta_{5}}]\subset[1,14]$ and
	$1=\refmeas_{\bm{l}}(K^{\bm{l}})
		\geq\refmeas_{\bm{l}}(B_{\refmet_{\bm{l}}}(x,s))
		\geq\refmeas_{\bm{l}}(B_{\refmet_{\bm{l}}}(x,1))
		\geq\frac{7}{12}$
	by \eqref{eq:tsisg-DM2-measure3} with $n=0$.
\item Assume that $\refmet_{\bm{l}}(x,y)\in[(k-1)/L^{\bm{l}}_{n},k/L^{\bm{l}}_{n})$. Then
	by Proposition \ref{prop:Lambda-nwk-Unwk}-\ref{it:Unwk-geodesic-adapted},\ref{it:Unwk-RM-adapted},
	it follows from $\refmet_{\bm{l}}(x,y)<k/L^{\bm{l}}_{n}$ that
	$R_{\mathcal{E}^{\bm{l}}}(x,y)\leq(\frac{2}{3}k+\frac{10}{3})R^{\bm{l}}_{n}\leq\frac{7}{3}kR^{\bm{l}}_{n}$,
	from $\refmet_{\bm{l}}(x,y)\geq(k-1)/L^{\bm{l}}_{n}$ that
	$R_{\mathcal{E}^{\bm{l}}}(x,y)\geq\frac{1}{12}(k-3)R^{\bm{l}}_{n}\geq\frac{1}{48}kR^{\bm{l}}_{n}$
	provided $k\geq 4$, and from $\refmet_{\bm{l}}(x,y)\geq 1/L^{\bm{l}}_{n}=l_{n+1}/L^{\bm{l}}_{n+1}$
	that, provided $k\in\{2,3\}$,
	\begin{equation}\label{eq:tsisg-DM2-RM-proof}
	R_{\mathcal{E}^{\bm{l}}}(x,y)
		\geq\frac{1}{12}(l_{n+1}-2)R^{\bm{l}}_{n+1}
		=\frac{1}{12}\frac{l_{n+1}-2}{\frac{2}{3}l_{n+1}+\frac{1}{9}}R^{\bm{l}}_{n}
		\geq\frac{9}{124}R^{\bm{l}}_{n}
		>\frac{kR^{\bm{l}}_{n}}{48},
	\end{equation}
	proving \eqref{eq:tsisg-DM2-RM}.
	
	On the other hand, $\refmet_{\bm{l}}(x,y)<3$ by \eqref{eq:tsisg-metric-diam-cell},
	$R_{\mathcal{E}^{\bm{l}}}(x,y)<4$ by \eqref{eq:tsisg-RM-diam-cell},
	and if $\refmet_{\bm{l}}(x,y)\geq 1$ then
	$R_{\mathcal{E}^{\bm{l}}}(x,y)\geq\frac{9}{124}>\frac{1}{14}$
	by \eqref{eq:tsisg-DM2-RM-proof} with $n=0$.
\item This is immediate from \ref{it:tsisg-DM2-Psi-measure} and \ref{it:tsisg-DM2-RM}.
\qed\end{enumerate}
\end{proof}

We need the following definition and lemma for the proof of the other condition
$\textup{(DM1)}_{\Psi_{\bm{l}},\refmet_{\bm{l}}}$ required to apply \cite[Theorem 15.10]{Kig12}.
\begin{dfn}\label{dfn:tsisg-PsiMR}
We define homeomorphisms $\Psi^{\mathrm{M}}_{\bm{l}},\Psi^{\mathrm{R}}_{\bm{l}}\colon[0,\infty)\to[0,\infty)$ by
\begin{align}
\Psi^{\mathrm{M}}_{\bm{l}}(s)
	&:=\frac{1}{M^{\bm{l}}_{n}}
		+\frac{(s-1/L^{\bm{l}}_{n})(1/M^{\bm{l}}_{n-1}-1/M^{\bm{l}}_{n})}{1/L^{\bm{l}}_{n-1}-1/L^{\bm{l}}_{n}}
	=\frac{1}{M^{\bm{l}}_{n}}\biggl(1+\frac{3l_{n}-4}{l_{n}-1}(L^{\bm{l}}_{n}s-1)\biggr), \notag \\
\Psi^{\mathrm{R}}_{\bm{l}}(s)
	&:=R^{\bm{l}}_{n}
		+\frac{(s-1/L^{\bm{l}}_{n})(R^{\bm{l}}_{n-1}-R^{\bm{l}}_{n})}{1/L^{\bm{l}}_{n-1}-1/L^{\bm{l}}_{n}}
	=R^{\bm{l}}_{n}\biggl(1+\frac{\frac{2}{3}l_{n}-\frac{8}{9}}{l_{n}-1}(L^{\bm{l}}_{n}s-1)\biggr)
\label{eq:tsisg-PsiMR}
\end{align}
for $n\in\mathbb{N}$ and $s\in[1/L^{\bm{l}}_{n},1/L^{\bm{l}}_{n-1}]$ and
$\Psi^{\mathrm{M}}_{\bm{l}}(s):=s^{\beta^{\mathrm{M}}_{\bm{l},0}}$ and
$\Psi^{\mathrm{R}}_{\bm{l}}(s):=s^{\beta^{\mathrm{R}}_{\bm{l},1}}$
for $s\in\{0\}\cup[1,\infty)$, where
$\beta^{\mathrm{M}}_{\bm{l},0}:=\inf_{n\in\mathbb{N}}\log_{l_{n}}\# S_{l_{n}}$ and
$\beta^{\mathrm{R}}_{\bm{l},1}:=-\inf_{n\in\mathbb{N}}\log_{l_{n}}r_{l_{n}}$
(note that $\{\log_{l}\# S_{l}\}_{l=5}^{\infty}$ and $\{\log_{l}r_{l}\}_{l=5}^{\infty}$
are strictly decreasing), so that
$\Psi_{\bm{l}}=\Psi^{\mathrm{M}}_{\bm{l}}\Psi^{\mathrm{R}}_{\bm{l}}$. We also set
$\beta^{\mathrm{M}}_{\bm{l},1}:=\max_{n\in\mathbb{N}}\log_{l_{n}}\# S_{l_{n}}$ and
$\beta^{\mathrm{R}}_{\bm{l},0}:=-\max_{n\in\mathbb{N}}\log_{l_{n}}r_{l_{n}}$.
\end{dfn}
\begin{lem}\label{lem:tsisg-PsiMR}
\begin{enumerate}[label=\textup{(\arabic*)},align=left,leftmargin=*]
\item\label{it:tsisg-PsiM}$\Psi^{\mathrm{M}}_{\bm{l}}$ satisfies \eqref{eq:Psi-ass} with $c_{\Psi}=81$,
	$\beta_{0}=\beta^{\mathrm{M}}_{\bm{l},0}$ and $\beta_{1}=\beta^{\mathrm{M}}_{\bm{l},1}$.
\item\label{it:tsisg-PsiR}$\Psi^{\mathrm{R}}_{\bm{l}}$ satisfies \eqref{eq:Psi-ass} with $c_{\Psi}=6$,
	$\beta_{0}=\beta^{\mathrm{R}}_{\bm{l},0}$ and $\beta_{1}=\beta^{\mathrm{R}}_{\bm{l},1}$.
\end{enumerate}
\end{lem}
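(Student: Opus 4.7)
The plan is to imitate the three-case analysis used in the proof of Lemma~\ref{lem:tsisg-Psi}, but applied separately to each factor $\Psi^{\mathrm{X}}_{\bm{l}}$ (where $\mathrm{X}\in\{\mathrm{M},\mathrm{R}\}$). Both $\Psi^{\mathrm{M}}_{\bm{l}}$ and $\Psi^{\mathrm{R}}_{\bm{l}}$ are piecewise linear on $[1/L^{\bm{l}}_{n},1/L^{\bm{l}}_{n-1}]$ (interpolating $\{1/M^{\bm{l}}_{n}\}$ and $\{R^{\bm{l}}_{n}\}$ respectively), so the structure of the argument is essentially identical. Fix $r,R\in(0,\infty)$ with $r\leq R$.

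First I would handle the case $r\geq 1$: by definition $\Psi^{\mathrm{M}}_{\bm{l}}(R)/\Psi^{\mathrm{M}}_{\bm{l}}(r)=(R/r)^{\beta^{\mathrm{M}}_{\bm{l},0}}$ and $\Psi^{\mathrm{R}}_{\bm{l}}(R)/\Psi^{\mathrm{R}}_{\bm{l}}(r)=(R/r)^{\beta^{\mathrm{R}}_{\bm{l},1}}$, so \eqref{eq:Psi-ass} is immediate once I verify that $\beta^{\mathrm{M}}_{\bm{l},0}\leq\beta^{\mathrm{M}}_{\bm{l},1}$ and $\beta^{\mathrm{R}}_{\bm{l},0}\leq\beta^{\mathrm{R}}_{\bm{l},1}$ (which hold by $\inf\leq\sup$). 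Next, if $r,R\in[1/L^{\bm{l}}_{n},1/L^{\bm{l}}_{n-1}]$ for some $n$, I would derive the analog of \eqref{eq:lem:tsisg-Psi-proof}: from the explicit linear forms in \eqref{eq:tsisg-PsiMR} one sees that on that interval
\begin{equation*}
\Psi^{\mathrm{M}}_{\bm{l}}(s)\in\bigl[M^{\bm{l}}_{n}{}^{-1},\,(3l_{n}-3)M^{\bm{l}}_{n}{}^{-1}\bigr],\qquad
\Psi^{\mathrm{R}}_{\bm{l}}(s)\in\bigl[R^{\bm{l}}_{n},\,r_{l_{n}}^{-1}R^{\bm{l}}_{n}\bigr],
\end{equation*}
while $R/r\in[1,l_{n}]$. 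Comparing maxima with minima and using $l_{n}\geq 5$ gives bounds of the form
$c^{-1}(R/r)^{2}\leq\Psi^{\mathrm{X}}_{\bm{l}}(R)/\Psi^{\mathrm{X}}_{\bm{l}}(r)\leq c(R/r)^{2}$
with small explicit constants $c$ (at most $9$ for $\mathrm{X}=\mathrm{M}$ and at most $\tfrac{2}{3}$ for $\mathrm{X}=\mathrm{R}$), then I would sandwich these against $(R/r)^{\beta^{\mathrm{X}}_{\bm{l},0}}$ and $(R/r)^{\beta^{\mathrm{X}}_{\bm{l},1}}$ using $R/r\in[1,l_{n}]$ and the trivial inequalities $(R/r)^{\alpha_{1}-\alpha_{2}}\in[1,l_{n}^{|\alpha_{1}-\alpha_{2}|}]$.

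For the third case, when $r<1$ and $r,R$ lie in distinct dyadic intervals $[1/L^{\bm{l}}_{k+1},1/L^{\bm{l}}_{k})$ and $[1/L^{\bm{l}}_{j},1/L^{\bm{l}}_{j-1})$ with $j\leq k$, I would exploit the key identities at the breakpoints: $\Psi^{\mathrm{M}}_{\bm{l}}(1/L^{\bm{l}}_{k})/\Psi^{\mathrm{M}}_{\bm{l}}(1/L^{\bm{l}}_{j})=M^{\bm{l}}_{j}/M^{\bm{l}}_{k}=\prod_{n=j+1}^{k}(3l_{n}-3)=\prod_{n=j+1}^{k}l_{n}^{\log_{l_{n}}\# S_{l_{n}}}$, and the analogous telescoping $\Psi^{\mathrm{R}}_{\bm{l}}(1/L^{\bm{l}}_{k})/\Psi^{\mathrm{R}}_{\bm{l}}(1/L^{\bm{l}}_{j})=\prod_{n=j+1}^{k}l_{n}^{-\log_{l_{n}}r_{l_{n}}}$. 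By the definitions of $\beta^{\mathrm{M}}_{\bm{l},0},\beta^{\mathrm{M}}_{\bm{l},1}$ and $\beta^{\mathrm{R}}_{\bm{l},0},\beta^{\mathrm{R}}_{\bm{l},1}$, these products lie between $(L^{\bm{l}}_{k}/L^{\bm{l}}_{j})^{\beta^{\mathrm{X}}_{\bm{l},0}}$ and $(L^{\bm{l}}_{k}/L^{\bm{l}}_{j})^{\beta^{\mathrm{X}}_{\bm{l},1}}$. Decomposing
\begin{equation*}
\frac{\Psi^{\mathrm{X}}_{\bm{l}}(R)}{\Psi^{\mathrm{X}}_{\bm{l}}(r)}
=\frac{\Psi^{\mathrm{X}}_{\bm{l}}(1/L^{\bm{l}}_{k})}{\Psi^{\mathrm{X}}_{\bm{l}}(r)}\cdot
\frac{\Psi^{\mathrm{X}}_{\bm{l}}(1/L^{\bm{l}}_{j})}{\Psi^{\mathrm{X}}_{\bm{l}}(1/L^{\bm{l}}_{k})}\cdot
\frac{\Psi^{\mathrm{X}}_{\bm{l}}(R)}{\Psi^{\mathrm{X}}_{\bm{l}}(1/L^{\bm{l}}_{j})}
\end{equation*}
and applying the single-interval estimate to the outer factors combines to yield \eqref{eq:Psi-ass} for $\Psi^{\mathrm{X}}_{\bm{l}}$ with the stated exponents.

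The main obstacle will be pinning down the constants $c_{\Psi}=81$ for $\Psi^{\mathrm{M}}_{\bm{l}}$ and $c_{\Psi}=6$ for $\Psi^{\mathrm{R}}_{\bm{l}}$: since the factors multiply to give $\Psi_{\bm{l}}$ with $c_{\Psi}=81$ in Lemma~\ref{lem:tsisg-Psi}, the two constants here must essentially split $81$, and the difference reflects the fact that $\Psi^{\mathrm{R}}_{\bm{l}}$ varies by a smaller multiplicative factor (roughly $r_{l_{n}}^{-1}\approx \tfrac{2}{3}l_{n}$) than $\Psi^{\mathrm{M}}_{\bm{l}}$ (roughly $3l_{n}$) on each dyadic interval. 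Apart from this bookkeeping, no new ideas beyond those of Lemma~\ref{lem:tsisg-Psi} are needed.
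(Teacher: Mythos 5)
Your overall skeleton --- the case $r\geq 1$, the case where $r,R$ lie in one scale interval $[1/L^{\bm{l}}_{n},1/L^{\bm{l}}_{n-1}]$, and the telescoping decomposition through the breakpoints $1/L^{\bm{l}}_{k},1/L^{\bm{l}}_{j}$ --- is exactly the paper's route (its proof of this lemma just says it is the same as that of Lemma \ref{lem:tsisg-Psi}), and your first and third cases are essentially fine apart from an inverted ratio (with $j\leq k$ one has $\Psi^{\mathrm{M}}_{\bm{l}}(1/L^{\bm{l}}_{j})/\Psi^{\mathrm{M}}_{\bm{l}}(1/L^{\bm{l}}_{k})=M^{\bm{l}}_{k}/M^{\bm{l}}_{j}=\prod_{n=j+1}^{k}(3l_{n}-3)$, not the reciprocal orientation you wrote). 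The genuine problem is your single-interval estimate, i.e.\ the replacement for \eqref{eq:lem:tsisg-Psi-proof}. On $[1/L^{\bm{l}}_{n},1/L^{\bm{l}}_{n-1}]$ each factor is an \emph{affine} function of $s$ which grows by a total factor $3l_{n}-3$ (for $\Psi^{\mathrm{M}}_{\bm{l}}$) resp.\ $r_{l_{n}}^{-1}=\frac{2}{3}l_{n}+\frac{1}{9}$ (for $\Psi^{\mathrm{R}}_{\bm{l}}$) while $R/r$ ranges up to $l_{n}$; so the ratio is comparable to $(R/r)^{1}$, not $(R/r)^{2}$. Your claimed bound $c^{-1}(R/r)^{2}\leq\Psi^{\mathrm{X}}_{\bm{l}}(R)/\Psi^{\mathrm{X}}_{\bm{l}}(r)\leq c(R/r)^{2}$ with an $n$-independent $c$ is false: for $r=1/L^{\bm{l}}_{n}$, $R=1/L^{\bm{l}}_{n-1}$ one gets $\Psi^{\mathrm{M}}_{\bm{l}}(R)/\Psi^{\mathrm{M}}_{\bm{l}}(r)=3l_{n}-3$ against $(R/r)^{2}=l_{n}^{2}$, so the lower bound would force $c\geq l_{n}^{2}/(3l_{n}-3)$, which is unbounded since $\bm{l}$ may be unbounded (and a two-sided constant ``at most $\frac{2}{3}$'' cannot occur at all). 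Nor does the subsequent sandwich via $(R/r)^{\alpha_{1}-\alpha_{2}}\leq l_{n}^{|\alpha_{1}-\alpha_{2}|}$ repair this: e.g.\ $2-\beta^{\mathrm{M}}_{\bm{l},1}\geq 2-\log_{5}12>0$ is bounded away from $0$, so $l_{n}^{2-\beta^{\mathrm{M}}_{\bm{l},1}}$ is again unbounded.

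What the adaptation of \eqref{eq:lem:tsisg-Psi-proof} actually needs is the exponent-$1$ comparison together with the identities that make the error factors uniform in $l_{n}$. From \eqref{eq:tsisg-PsiMR}, writing $t=L^{\bm{l}}_{n}s\in[1,l_{n}]$, one checks that $\Psi^{\mathrm{M}}_{\bm{l}}(s)M^{\bm{l}}_{n}/t\in[1,3)$ and $\Psi^{\mathrm{R}}_{\bm{l}}(s)/(R^{\bm{l}}_{n}t)\in(\frac{2}{3},1]$, whence $\frac{1}{3}(R/r)\leq\Psi^{\mathrm{M}}_{\bm{l}}(R)/\Psi^{\mathrm{M}}_{\bm{l}}(r)\leq 3(R/r)$ and $\frac{2}{3}(R/r)\leq\Psi^{\mathrm{R}}_{\bm{l}}(R)/\Psi^{\mathrm{R}}_{\bm{l}}(r)\leq\frac{3}{2}(R/r)$. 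Then, since $\beta^{\mathrm{M}}_{\bm{l},0}>1$, the lower bound for $\Psi^{\mathrm{M}}_{\bm{l}}$ follows from $(R/r)^{\beta^{\mathrm{M}}_{\bm{l},0}-1}\leq l_{n}^{\log_{l_{n}}\#S_{l_{n}}-1}=(3l_{n}-3)/l_{n}<3$ and the upper bound from $(R/r)\leq(R/r)^{\beta^{\mathrm{M}}_{\bm{l},1}}$; for $\Psi^{\mathrm{R}}_{\bm{l}}$ the slack sits on the opposite side, since $\beta^{\mathrm{R}}_{\bm{l},1}<1$: the upper bound uses $(R/r)^{1-\beta^{\mathrm{R}}_{\bm{l},1}}\leq l_{n}^{1+\log_{l_{n}}r_{l_{n}}}=l_{n}r_{l_{n}}<\frac{3}{2}$ and the lower bound uses $(R/r)^{\beta^{\mathrm{R}}_{\bm{l},0}}\leq R/r$. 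This asymmetry (slack on the lower side for $\Psi^{\mathrm{M}}_{\bm{l}}$, on the upper side for $\Psi^{\mathrm{R}}_{\bm{l}}$) is the point your plan misses, and it is where the constants $81$ and $6$ come from once the three-piece decomposition multiplies the per-interval constants; they are not a ``splitting of $81$'' between the two factors.
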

\begin{proof}
These are proved in exactly the same way as Lemma \ref{lem:tsisg-Psi}.
\qed\end{proof}

Finally, $\textup{(DM1)}_{\Psi_{\bm{l}},\refmet_{\bm{l}}}$ as defined in
\cite[Definition 15.9-(3)]{Kig12} is deduced as follows.%
\begin{prop}\label{prop:tsisg-DM1}
Let $x,y\in K^{\bm{l}}$ and $s\in(0,3]$. Then
\begin{gather}\label{eq:tsisg-measure-PsiM}
\frac{1}{16}\Psi^{\mathrm{M}}_{\bm{l}}(s)
	\leq \refmeas_{\bm{l}}(B_{\refmet_{\bm{l}}}(x,s))
	\leq 12\Psi^{\mathrm{M}}_{\bm{l}}(s),
	\mspace{10mu}
	\frac{1}{12}\Psi^{\mathrm{R}}_{\bm{l}}(s)
	\leq\frac{\Psi_{\bm{l}}(s)}{\refmeas_{\bm{l}}(B_{\refmet_{\bm{l}}}(x,s))}
	\leq 16\Psi^{\mathrm{R}}_{\bm{l}}(s), \\
2^{-14}\Psi^{\mathrm{R}}_{\bm{l}}(\refmet_{\bm{l}}(x,y))
	\leq R_{\mathcal{E}^{\bm{l}}}(x,y)
	\leq 2^{12}\Psi^{\mathrm{R}}_{\bm{l}}(\refmet_{\bm{l}}(x,y)).
\label{eq:tsisg-RM-qs}
\end{gather}
In particular, if $x\not=y$, then for any $\lambda\in(0,1]$,
\begin{equation}\label{eq:tsisg-DM1}
\frac{6^{-4}\lambda^{\beta^{\mathrm{R}}_{\bm{l},1}}\Psi_{\bm{l}}(\refmet_{\bm{l}}(x,y))}{\refmeas_{\bm{l}}\bigl(B_{\refmet_{\bm{l}}}(x,\refmet_{\bm{l}}(x,y))\bigr)}
	\leq\frac{\Psi_{\bm{l}}(\lambda \refmet_{\bm{l}}(x,y))}{\refmeas_{\bm{l}}\bigl(B_{\refmet_{\bm{l}}}(x,\lambda \refmet_{\bm{l}}(x,y))\bigr)}
	\leq\frac{6^{4}\lambda^{\beta^{\mathrm{R}}_{\bm{l},0}}\Psi_{\bm{l}}(\refmet_{\bm{l}}(x,y))}{\refmeas_{\bm{l}}\bigl(B_{\refmet_{\bm{l}}}(x,\refmet_{\bm{l}}(x,y))\bigr)}.
\end{equation}
\end{prop}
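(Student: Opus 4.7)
The plan is to deduce everything from Proposition \ref{prop:tsisg-DM2} together with the piecewise-linear definitions \eqref{eq:tsisg-PsiMR} of $\Psi^{\mathrm{M}}_{\bm{l}}$ and $\Psi^{\mathrm{R}}_{\bm{l}}$ and the factorization $\Psi_{\bm{l}}=\Psi^{\mathrm{M}}_{\bm{l}}\Psi^{\mathrm{R}}_{\bm{l}}$, so that all the work reduces to direct numerical comparisons on each scale $[1/L^{\bm{l}}_{n},1/L^{\bm{l}}_{n-1}]$.

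First I would establish the scale-by-scale comparisons
\[
\tfrac{1}{6}\tfrac{k}{M^{\bm{l}}_{n}}\le\Psi^{\mathrm{M}}_{\bm{l}}(s)\le 3\tfrac{k}{M^{\bm{l}}_{n}},\qquad
\tfrac{1}{2}kR^{\bm{l}}_{n}\le\Psi^{\mathrm{R}}_{\bm{l}}(s)\le kR^{\bm{l}}_{n}
\]
(up to universal constants; the precise numbers do not matter) whenever $n\in\mathbb{N}$, $k\in\{2,\ldots,l_{n}\}$ and $s\in[(k-1)/L^{\bm{l}}_{n},k/L^{\bm{l}}_{n}]$, by evaluating the explicit affine expressions in \eqref{eq:tsisg-PsiMR} at the two endpoints and using monotonicity. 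The analogous comparison $\tfrac{1}{18}k^{2}/T^{\bm{l}}_{n}\le\Psi_{\bm{l}}(s)\le\tfrac{9}{2}k^{2}/T^{\bm{l}}_{n}$ is already in \eqref{eq:tsisg-DM2-Psi-measure}, and the factorization $\Psi_{\bm{l}}=\Psi^{\mathrm{M}}_{\bm{l}}\Psi^{\mathrm{R}}_{\bm{l}}$ is then consistent with these. For the ``large scale'' range $s\in[1,3]$ the required bounds follow directly from the power-law definitions $\Psi^{\mathrm{M}}_{\bm{l}}(s)=s^{\beta^{\mathrm{M}}_{\bm{l},0}}$ and $\Psi^{\mathrm{R}}_{\bm{l}}(s)=s^{\beta^{\mathrm{R}}_{\bm{l},1}}$ and the final clauses of Proposition \ref{prop:tsisg-DM2}-\ref{it:tsisg-DM2-Psi-measure},\ref{it:tsisg-DM2-RM}.

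Second, I would combine these comparisons with Proposition \ref{prop:tsisg-DM2}-\ref{it:tsisg-DM2-Psi-measure}. The volume bound $\tfrac{7}{36}k/M^{\bm{l}}_{n}\le\refmeas_{\bm{l}}(B_{\refmet_{\bm{l}}}(x,s))\le 6k/M^{\bm{l}}_{n}$ immediately yields the first two-sided inequality in \eqref{eq:tsisg-measure-PsiM}, and then dividing $\Psi_{\bm{l}}(s)=\Psi^{\mathrm{M}}_{\bm{l}}(s)\Psi^{\mathrm{R}}_{\bm{l}}(s)$ by $\refmeas_{\bm{l}}(B_{\refmet_{\bm{l}}}(x,s))\asymp\Psi^{\mathrm{M}}_{\bm{l}}(s)$ produces the second two-sided inequality. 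Likewise, \eqref{eq:tsisg-RM-qs} is obtained by combining Proposition \ref{prop:tsisg-DM2}-\ref{it:tsisg-DM2-RM} (which gives $R_{\mathcal{E}^{\bm{l}}}(x,y)\asymp kR^{\bm{l}}_{n}$) with $\Psi^{\mathrm{R}}_{\bm{l}}(\refmet_{\bm{l}}(x,y))\asymp kR^{\bm{l}}_{n}$ on the same scale, with the borderline cases $\refmet_{\bm{l}}(x,y)\in[1,3)$ handled by the trailing statements of that part.

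Finally, for \eqref{eq:tsisg-DM1} I would rewrite the second inequality in \eqref{eq:tsisg-measure-PsiM} as $\Psi_{\bm{l}}(s)/\refmeas_{\bm{l}}(B_{\refmet_{\bm{l}}}(x,s))\asymp\Psi^{\mathrm{R}}_{\bm{l}}(s)$ and then apply the doubling-type inequality \eqref{eq:Psi-ass} for $\Psi^{\mathrm{R}}_{\bm{l}}$ provided by Lemma \ref{lem:tsisg-PsiMR}-\ref{it:tsisg-PsiR} (with $r=\lambda\refmet_{\bm{l}}(x,y)$ and $R=\refmet_{\bm{l}}(x,y)$), which converts the ratio to $\lambda^{\beta^{\mathrm{R}}_{\bm{l},0}}$ on one side and $\lambda^{\beta^{\mathrm{R}}_{\bm{l},1}}$ on the other. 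The only genuinely delicate point is tracking the multiplicative constants carefully enough to achieve $6^{4}$ (not, say, $6^{5}$) in the final bound; this is routine bookkeeping but is the main obstacle since every intermediate comparison contributes a constant, so I would organize the proof as a single chain of inequalities with the powers of $6$ isolated at the very end, combining the $6^{-1}$ from \eqref{eq:tsisg-measure-PsiM}, the $6$ from Lemma \ref{lem:tsisg-PsiMR}-\ref{it:tsisg-PsiR}, and a further $6^{-1}$ from the reverse of \eqref{eq:tsisg-measure-PsiM} applied at scale $\lambda\refmet_{\bm{l}}(x,y)$.
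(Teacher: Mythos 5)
Your proposal is correct and takes essentially the same route as the paper: the paper likewise reads off $\frac{1}{2}k/M^{\bm{l}}_{n}\le\Psi^{\mathrm{M}}_{\bm{l}}(s)\le 3k/M^{\bm{l}}_{n}$ from \eqref{eq:tsisg-PsiMR} (and $\Psi^{\mathrm{M}}_{\bm{l}}(s)\in[1,6]$ for $s\in[1,3]$), combines this with Proposition \ref{prop:tsisg-DM2}-\ref{it:tsisg-DM2-Psi-measure} and $\Psi_{\bm{l}}=\Psi^{\mathrm{M}}_{\bm{l}}\Psi^{\mathrm{R}}_{\bm{l}}$ to obtain \eqref{eq:tsisg-measure-PsiM}, and then gets \eqref{eq:tsisg-RM-qs} from Proposition \ref{prop:tsisg-DM2} and \eqref{eq:tsisg-DM1} from Lemma \ref{lem:tsisg-PsiMR}-\ref{it:tsisg-PsiR} together with $\refmet_{\bm{l}}(x,y)<3$. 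The one point to tighten is your placeholder lower bound $\frac{1}{6}k/M^{\bm{l}}_{n}\le\Psi^{\mathrm{M}}_{\bm{l}}(s)$: you need the sharper $\frac{1}{2}k/M^{\bm{l}}_{n}$ so that \eqref{eq:tsisg-measure-PsiM} comes out with the constants $\frac{1}{12}$ and $16$, making the final chain for \eqref{eq:tsisg-DM1} contribute $16\cdot 6\cdot 12=1152\le 6^{4}$ rather than a product exceeding $6^{4}$.
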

\begin{proof}
If $n\in\mathbb{N}$, $k\in\{2,\ldots,l_{n}\}$ and $s\in[(k-1)/L^{\bm{l}}_{n},kL^{\bm{l}}_{n}]$ then we easily
see from \eqref{eq:tsisg-PsiMR} that $\frac{1}{2}k/M^{\bm{l}}_{n}\leq\Psi^{\mathrm{M}}_{\bm{l}}(s)\leq 3k/M^{\bm{l}}_{n}$,
and if $s\in[1,3]$ then $\Psi^{\mathrm{M}}_{\bm{l}}(s)=s^{\beta^{\mathrm{M}}_{\bm{l},0}}\in[1,3^{\log_{5}\# S_{5}}]\subset[1,6]$.
These facts, Proposition \ref{prop:tsisg-DM2}-\ref{it:tsisg-DM2-Psi-measure} and
$\Psi_{\bm{l}}=\Psi^{\mathrm{M}}_{\bm{l}}\Psi^{\mathrm{R}}_{\bm{l}}$ together imply
\eqref{eq:tsisg-measure-PsiM}, which in turn in combination with
Proposition \ref{prop:tsisg-DM2}-\ref{it:tsisg-DM2} and
Lemma \ref{lem:tsisg-PsiMR}-\ref{it:tsisg-PsiR}, respectively, yields
\eqref{eq:tsisg-RM-qs} and \eqref{eq:tsisg-DM1} since
$\refmet_{\bm{l}}(x,y)\in[0,3)$ by \eqref{eq:tsisg-metric-diam-cell}.
\qed\end{proof}
\begin{proof}[\hspace*{-1.45pt}of Theorem \textup{\ref{thm:tsisg-fHKE}}]
By Propositions \ref{prop:tsisg-metric}, \ref{prop:tsisg-RM-completion} and Theorem \ref{thm:tsisg-RF},
$(\mathcal{E}^{\bm{l}},\mathcal{F}_{\bm{l}})$ is a strongly local regular resistance form on
$K^{\bm{l}}$ whose resistance metric $R_{\mathcal{E}^{\bm{l}}}$ gives the same topology
as the geodesic metric $\refmet_{\bm{l}}$. We also have \textup{(ACC)} as defined in
\cite[Definition 7.4]{Kig12} by \cite[Proposition 7.6]{Kig12},
$\textup{(DM1)}_{\Psi_{\bm{l}},\refmet_{\bm{l}}}$ by \eqref{eq:tsisg-DM1} and
$\beta^{\mathrm{R}}_{\bm{l},0}>0$, and $\textup{(DM2)}_{\Psi_{\bm{l}},\refmet_{\bm{l}}}$
by Proposition \ref{prop:tsisg-DM2}-\ref{it:tsisg-DM2}.
Thus \cite[Theorem 15.10, Cases 1 and~2]{Kig12} are applicable to
$(K^{\bm{l}},\refmet_{\bm{l}},\refmeas_{\bm{l}},\mathcal{E}^{\bm{l}},\mathcal{F}_{\bm{l}})$
and imply that it satisfies \hyperlink{fHKE}{$\textup{fHKE}(\Psi_{\bm{l}})$}.
\qed\end{proof}
%
\section{Singularity of the energy measures}\label{sec:tsisg-sing}
As in the previous two sections, we fix an arbitrary
$\bm{l}=(l_{n})_{n=1}^{\infty}\in(\mathbb{N}\setminus\{1,2,3,4\})^{\mathbb{N}}$ throughout
this section. We first recall the definition of the $\mathcal{E}^{\bm{l}}$-energy measures.%
\begin{dfn}[$\mathcal{E}^{\bm{l}}$-energy measure; {\cite[(3.2.14)]{FOT}}]\label{dfn:EnergyMeas}
Let $u\in\mathcal{F}_{\bm{l}}$. We define the \emph{$\mathcal{E}^{\bm{l}}$-energy measure}
$\mu^{\bm{l}}_{\langle u\rangle}$ of $u$ as the unique Borel measure on $K^{\bm{l}}$ such that
\begin{equation}\label{eq:EnergyMeas}
\int_{K^{\bm{l}}}f\,d\mu^{\bm{l}}_{\langle u\rangle}
	=\mathcal{E}^{\bm{l}}(u,fu)-\frac{1}{2}\mathcal{E}^{\bm{l}}(u^{2},f)
	\qquad\textrm{for any $f\in\mathcal{F}_{\bm{l}}$;}
\end{equation}
since $\mathcal{F}_{\bm{l}}$ is a dense subalgebra of $(\mathcal{C}(K^{\bm{l}}),\|\cdot\|_{\sup})$
by Theorem \ref{thm:tsisg-RF}-\ref{it:tsisg-RF-regular} and
\begin{equation}\label{eq:EnergyMeas-exists}
0\leq\mathcal{E}^{\bm{l}}(u,f^{+}u)-\frac{1}{2}\mathcal{E}^{\bm{l}}(u^{2},f^{+})
	\leq\|f\|_{\sup}\mathcal{E}^{\bm{l}}(u,u)
	\qquad\textrm{for any $f\in\mathcal{F}_{\bm{l}}$}
\end{equation}
by \eqref{eq:tsisg-form-0}, \eqref{eq:tsisg-form-ln} and \eqref{eq:tsisg-RF-form},
such $\mu^{\bm{l}}_{\langle u\rangle}$ exists and is unique by the
Riesz(--Markov--Kakutani) representation theorem (see, e.g., \cite[Theorems 2.14 and 2.18]{Rud}).
\end{dfn}
Proposition \ref{prop:tsisg-RF-self-sim} yields the following alternative
characterization of $\mu^{\bm{l}}_{\langle u\rangle}$.
\begin{prop}\label{prop:EnergyMeas-cell}
Let $u\in\mathcal{F}_{\bm{l}}$. Then $\mu^{\bm{l}}_{\langle u\rangle}(\{x\})=0$
for any $x\in K^{\bm{l}}$. Moreover, $\mu^{\bm{l}}_{\langle u\rangle}$
is the unique Borel measure on $K^{\bm{l}}$ such that
\begin{equation}\label{eq:EnergyMeas-cell}
\mu^{\bm{l}}_{\langle u\rangle}(K^{\bm{l}}_{w})
	=\frac{1}{R^{\bm{l}}_{|w|}}\mathcal{E}^{\bm{l}^{|w|}}(u\circ F^{\bm{l}}_{w},u\circ F^{\bm{l}}_{w})
	\qquad\textrm{for any $w\in W^{\bm{l}}_{*}$.}
\end{equation}
\end{prop}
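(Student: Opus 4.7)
The plan is to combine a self-similar decomposition of $\mu^{\bm{l}}_{\langle u\rangle}$ with an atomless claim to read off \eqref{eq:EnergyMeas-cell}, then conclude uniqueness via a Dynkin $\pi$--$\lambda$ argument. First, I pair \eqref{eq:EnergyMeas} with Proposition~\ref{prop:tsisg-RF-self-sim} (extended to $\mathcal{C}(K^{\bm{l}})$ as in Remark~\ref{rmk:tsisg-RF}-\ref{it:tsisg-RF-drop-Vstar}): for any $f\in\mathcal{F}_{\bm{l}}$, using $(fu)\circ F^{\bm{l}}_{w}=(f\circ F^{\bm{l}}_{w})(u\circ F^{\bm{l}}_{w})$ and $u^{2}\circ F^{\bm{l}}_{w}=(u\circ F^{\bm{l}}_{w})^{2}$, every summand in the level-$n$ expansion of the right-hand side of \eqref{eq:EnergyMeas} is exactly the defining pairing of $\mu^{\bm{l}^{n}}_{\langle u\circ F^{\bm{l}}_{w}\rangle}$ against $f\circ F^{\bm{l}}_{w}\in\mathcal{F}_{\bm{l}^{n}}$. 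Change of variables, the density of $\mathcal{F}_{\bm{l}}$ in $\mathcal{C}(K^{\bm{l}})$ (Theorem~\ref{thm:tsisg-RF}-\ref{it:tsisg-RF-regular}), and the Riesz representation theorem then upgrade this to the Borel measure identity
\[
\mu^{\bm{l}}_{\langle u\rangle} \;=\; \sum_{w\in W^{\bm{l}}_{n}} \frac{1}{R^{\bm{l}}_{n}}(F^{\bm{l}}_{w})_{*}\mu^{\bm{l}^{n}}_{\langle u\circ F^{\bm{l}}_{w}\rangle}
\qquad\text{for every } n\in\mathbb{N}\cup\{0\}.
\]

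For the atomless claim, I would use the strong locality of $(\mathcal{E}^{\bm{l}},\mathcal{F}_{\bm{l}})$ (Theorem~\ref{thm:tsisg-RF}-\ref{it:tsisg-RF-strongly-local}) together with the standard chain rule for energy measures of regular Dirichlet forms (see, e.g., \cite[Chapter~3]{FOT}). Writing $v:=u-a\one_{K^{\bm{l}}}$ for $a\in\mathbb{R}$, the truncations $(v-\epsilon)^{+}$ and $(-v-\epsilon)^{+}$ have disjoint closed supports $\{v\ge\epsilon\}$ and $\{v\le-\epsilon\}$, so strong locality gives $\mathcal{E}^{\bm{l}}((v-\epsilon)^{+},(-v-\epsilon)^{+})=0$; letting $\epsilon\downarrow 0$ via uniform convergence and Markov contractivity yields $\mathcal{E}^{\bm{l}}(v^{+},v^{-})=0$. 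The chain rule then identifies $\mathcal{E}^{\bm{l}}(v^{+},v^{+})=\mu^{\bm{l}}_{\langle u\rangle}(\{u>a\})$ and $\mathcal{E}^{\bm{l}}(v^{-},v^{-})=\mu^{\bm{l}}_{\langle u\rangle}(\{u<a\})$, so $\mathcal{E}^{\bm{l}}(u,u)=\mathcal{E}^{\bm{l}}(v,v)=\mathcal{E}^{\bm{l}}(v^{+},v^{+})+\mathcal{E}^{\bm{l}}(v^{-},v^{-})=\mu^{\bm{l}}_{\langle u\rangle}(\{u\ne a\})$, forcing $\mu^{\bm{l}}_{\langle u\rangle}(\{u=a\})=0$. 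Taking $a=u(x)$ yields $\mu^{\bm{l}}_{\langle u\rangle}(\{x\})=0$.

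Finally, evaluating the self-similar identity on $K^{\bm{l}}_{v}$ for $v\in W^{\bm{l}}_{n}$ produces \eqref{eq:EnergyMeas-cell}: the diagonal term $w=v$ contributes $\mu^{\bm{l}^{n}}_{\langle u\circ F^{\bm{l}}_{v}\rangle}(K^{\bm{l}^{n}})=\mathcal{E}^{\bm{l}^{n}}(u\circ F^{\bm{l}}_{v},u\circ F^{\bm{l}}_{v})$ (taking $f\equiv 1$ in \eqref{eq:EnergyMeas}), while off-diagonal terms $w\ne v$ place mass only on $(F^{\bm{l}}_{w})^{-1}(K^{\bm{l}}_{v}\cap K^{\bm{l}}_{w})$, which by Proposition~\ref{prop:tsisg}-\ref{it:tsisg-boundary} is at most one point of $K^{\bm{l}^{n}}$ and is therefore negligible by the atomless step. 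Uniqueness follows from Dynkin's $\pi$--$\lambda$ theorem applied to the $\pi$-system $\{K^{\bm{l}}_{w}\}_{w\in W^{\bm{l}}_{*}}\cup\{\{x\}:x\in V^{\bm{l}}_{*}\}\cup\{\emptyset\}$, which generates $\mathscr{B}(K^{\bm{l}})$ since level-$n$ cells cover $K^{\bm{l}}$ with shrinking diameter; any competing Borel measure $\nu$ satisfying \eqref{eq:EnergyMeas-cell} has its values on vertex singletons forced by the cell formula together with continuity of measure along nested cell sequences. The main obstacle is the atomless step, as the Dirichlet-form chain rule it invokes is not developed within the present paper and must be imported from \cite{FOT} or \cite{Kig12}.
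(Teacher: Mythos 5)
Your self-similar identity $\mu^{\bm{l}}_{\langle u\rangle}=\sum_{w\in W^{\bm{l}}_{n}}(R^{\bm{l}}_{n})^{-1}(F^{\bm{l}}_{w})_{*}\mu^{\bm{l}^{n}}_{\langle u\circ F^{\bm{l}}_{w}\rangle}$ is correct (it follows from \eqref{eq:tsisg-RF-self-sim-form}, the algebra property of $\mathcal{F}_{\bm{l}}$, density in $\mathcal{C}(K^{\bm{l}})$ and Riesz), and evaluating it on cells, together with a Dynkin-class uniqueness argument, is a legitimate alternative to the paper's existence proof, which instead bounds $\mu^{\bm{l}}_{\langle u\rangle}(K^{\bm{l}}_{w})$ from below using the harmonic cutoffs $f_{k}=h^{\bm{l}}_{n+k}(\one_{K^{\bm{l}}_{w}\cap V^{\bm{l}}_{n+k}})$ and forces equality by summing over $w\in W^{\bm{l}}_{n}$. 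Both routes, however, need the atomlessness statement, and that is where your argument has a genuine gap.

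The claimed identities $\mathcal{E}^{\bm{l}}(v^{+},v^{+})=\mu^{\bm{l}}_{\langle u\rangle}(\{u>a\})$ and $\mathcal{E}^{\bm{l}}(v^{-},v^{-})=\mu^{\bm{l}}_{\langle u\rangle}(\{u<a\})$ are not consequences of ``the standard chain rule'': the chain rule for energy measures in \cite[Section 3.2]{FOT} applies to $C^{1}$ composites, and $t\mapsto t^{+}$ is not $C^{1}$. What strong locality genuinely gives is $\mu^{\bm{l}}_{\langle v^{+}\rangle}=\mu^{\bm{l}}_{\langle v\rangle}$ on the open set $\{v>0\}$ and $\mu^{\bm{l}}_{\langle v^{+}\rangle}=0$ on the open set $\{v<0\}$, with no information on the closed level set $\{v=0\}$; with only this, your computation yields $\mu^{\bm{l}}_{\langle u\rangle}(\{u=a\})=\mu^{\bm{l}}_{\langle v^{+}\rangle}(\{v=0\})+\mu^{\bm{l}}_{\langle v^{-}\rangle}(\{v=0\})$, which is not obviously $0$. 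The versions with strict-inequality sets that you use are equivalent to $\mu^{\bm{l}}_{\langle v^{\pm}\rangle}(\{v=0\})=0$, i.e.\ to an instance of precisely the level-set nullity you are trying to establish, so as written the step is circular. The paper circumvents this by citing the energy image density property for strongly local regular Dirichlet forms, \cite[Theorem 4.3.8]{CF} (applicable because Theorem \ref{thm:tsisg-RF-DF} realizes $(\mathcal{E}^{\bm{l}},\mathcal{F}_{\bm{l}})$ as a strongly local regular Dirichlet form on $L^{2}(K^{\bm{l}},\refmeas_{\bm{l}})$), which gives $\mu^{\bm{l}}_{\langle u\rangle}(u^{-1}(a))=0$ for every $a\in\mathbb{R}$ at once. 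To keep your hands-on route you would need the missing lemma that $\mu^{\bm{l}}_{\langle w\rangle}(\{w=0\})=0$ for $0\leq w\in\mathcal{F}_{\bm{l}}$ --- for instance, $(w-\epsilon)^{+}\to w$ in energy (contraction property plus lower semicontinuity of $\mathcal{E}^{\bm{l}}$), while $\mu^{\bm{l}}_{\langle(w-\epsilon)^{+}\rangle}$ vanishes on the open set $\{w<\epsilon\}\supset\{w=0\}$, and one then passes to the limit using the triangle inequality for energy measures --- applied to $w=v^{\pm}$; or simply import \cite[Theorem 4.3.8]{CF} as the paper does. Also note that in your final step you need atomlessness for $\mu^{\bm{l}^{n}}_{\langle u\circ F^{\bm{l}}_{w}\rangle}$ on $K^{\bm{l}^{n}}$, i.e.\ the statement with $\bm{l}^{n}$ in place of $\bm{l}$, which is harmless once the lemma is proved in the stated generality.
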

\begin{proof}
Since $(\mathcal{E}^{\bm{l}},\mathcal{F}_{\bm{l}})$ is a strongly local regular symmetric
Dirichlet form on $L^{2}(K^{\bm{l}},\refmeas_{\bm{l}})$ by Theorem \ref{thm:tsisg-RF-DF},
the Borel measure $\mu^{\bm{l}}_{\langle u\rangle}(u^{-1}(\cdot))$ on $\mathbb{R}$ is absolutely
continuous with respect to the Lebesgue measure on $\mathbb{R}$ by \cite[Theorem 4.3.8]{CF}, and
therefore $\mu^{\bm{l}}_{\langle u\rangle}(\{x\})\leq\mu^{\bm{l}}_{\langle u\rangle}(u^{-1}(u(x)))=0$
for any $x\in K^{\bm{l}}$.

The uniqueness of a Borel measure on $K^{\bm{l}}$ satisfying \eqref{eq:EnergyMeas-cell}
is immediate from \eqref{eq:tsisg-RF-self-sim-form} and the Dynkin class theorem
(see, e.g., \cite[Appendixes, Theorem 4.2]{EK}). To show that
$\mu^{\bm{l}}_{\langle u\rangle}$ has the property \eqref{eq:EnergyMeas-cell},
let $n,k\in\mathbb{N}\cup\{0\}$, $w\in W^{\bm{l}}_{n}$ and set
$f_{k}:=h^{\bm{l}}_{n+k}(\one_{K^{\bm{l}}_{w}\cap V^{\bm{l}}_{n+k}})$, so that
$\one_{K^{\bm{l}}_{w}}\leq f_{k}\leq
	\one_{K^{\bm{l}}_{w}\cup\bigcup_{q\in F^{\bm{l}}_{w}(V_{0})}B_{\refmet_{\bm{l}}}(q,2/L^{\bm{l}}_{n+k})}$
by Proposition \ref{prop:tsisg-harmonic-Vn} and \eqref{eq:tsisg-metric-diam-cell}.
Then from \eqref{eq:EnergyMeas-exists}, \eqref{eq:tsisg-RF-self-sim-form} and
\eqref{eq:EnergyMeas} we obtain
\begin{align*}
\frac{\mathcal{E}^{\bm{l}^{n}}(u\circ F^{\bm{l}}_{w},u\circ F^{\bm{l}}_{w})}{R^{\bm{l}}_{n}}
	&=\frac{1}{R^{\bm{l}}_{n}}\Bigl(\mathcal{E}^{\bm{l}^{n}}(u\circ F^{\bm{l}}_{w},(f_{k}u)\circ F^{\bm{l}}_{w})
		-\frac{1}{2}\mathcal{E}^{\bm{l}^{n}}((u^{2})\circ F^{\bm{l}}_{w},f_{k}\circ F^{\bm{l}}_{w})\Bigr) \\
&\leq\mathcal{E}^{\bm{l}}(u,f_{k}u)-\frac{1}{2}\mathcal{E}^{\bm{l}}(u^{2},f_{k})
	=\int_{K^{\bm{l}}}f_{k}\,d\mu^{\bm{l}}_{\langle u\rangle} \\
&\leq\mu^{\bm{l}}_{\langle u\rangle}\Bigl(K^{\bm{l}}_{w}\cup\bigcup\nolimits_{q\in F^{\bm{l}}_{w}(V_{0})}B_{\refmet_{\bm{l}}}(q,2/L^{\bm{l}}_{n+k})\Bigr)
	\xrightarrow{k\to\infty}\mu^{\bm{l}}_{\langle u\rangle}(K^{\bm{l}}_{w})
\end{align*}
and hence
$\mathcal{E}^{\bm{l}^{n}}(u\circ F^{\bm{l}}_{w},u\circ F^{\bm{l}}_{w})/R^{\bm{l}}_{n}
	\leq\mu^{\bm{l}}_{\langle u\rangle}(K^{\bm{l}}_{w})$,
where the equality necessarily holds since the sum over
$w\in W^{\bm{l}}_{n}$ of each side of this inequality is equal to
$\mathcal{E}^{\bm{l}}(u,u)=\mu^{\bm{l}}_{\langle u\rangle}(K^{\bm{l}})$
by \eqref{eq:tsisg-RF-self-sim-form}, \eqref{eq:tsisg-boundary},
$\mu^{\bm{l}}_{\langle u\rangle}(V^{\bm{l}}_{n})=0$ and
\eqref{eq:EnergyMeas} with $f=\one_{K^{\bm{l}}}$.
\qed\end{proof}

The purpose of this section is to prove the following theorem.
\begin{thm}\label{thm:tsisg-sing}
$\mu^{\bm{l}}_{\langle u\rangle}\perp \refmeas_{\bm{l}}$ for any $u\in\mathcal{F}_{\bm{l}}$.
\end{thm}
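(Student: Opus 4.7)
My approach is a Kakutani-type martingale argument over the coding probability space $\Omega := \prod_{n=1}^{\infty} S_{l_n}$ with the uniform product measure $\nu := \bigotimes_{n=1}^{\infty}\unif(S_{l_n})$, so that $(\pi_{\bm{l}})_{*}\nu = \refmeas_{\bm{l}}$. I would first reduce to the case $u \in \mathcal{H}_{\bm{l},0}$ (harmonic off $V_0$): the harmonic approximants $u_N := h^{\bm{l}}_N(u|_{V^{\bm{l}}_N}) \in \mathcal{H}_{\bm{l},N}$ converge to $u$ in $\mathcal{E}^{\bm{l}}$-seminorm since
\[
\mathcal{E}^{\bm{l}}(u-u_N,u-u_N) = \mathcal{E}^{\bm{l}}(u,u) - \mathcal{E}^{\bm{l},N}(u|_{V^{\bm{l}}_N},u|_{V^{\bm{l}}_N}) \xrightarrow{N\to\infty} 0
\]
by the construction of $\mathcal{E}^{\bm{l}}$ and Proposition \ref{prop:tsisg-harmonic-Vn}-\ref{it:tsisg-harmonic-Vn-trace}; combined with the Cauchy--Schwarz inequality for energy measures (which makes $u\mapsto\mu^{\bm{l}}_{\langle u\rangle}$ continuous in total variation), this shows that $\{u\in\mathcal{F}_{\bm{l}} : \mu^{\bm{l}}_{\langle u\rangle}\perp\refmeas_{\bm{l}}\}$ is $\mathcal{E}^{\bm{l}}$-closed, so it suffices to treat $u\in\mathcal{H}_{\bm{l},N}$. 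The self-similar decomposition via Propositions \ref{prop:tsisg-harmonic-Vn}-\ref{it:tsisg-harmonic-Vn-self-sim} and \ref{prop:EnergyMeas-cell} then reduces the problem over each level-$N$ cell to the case $N=0$ applied to the shifted sequence $\bm{l}^N$, so the theorem follows if we can show $\mu^{\bm{l}}_{\langle u\rangle}\perp\refmeas_{\bm{l}}$ for every $\bm{l}$ and every $u\in\mathcal{H}_{\bm{l},0}$.

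For such $u$ with $e_0 := \mathcal{E}^0(u|_{V_0}) > 0$ (else $u$ is constant and $\mu^{\bm{l}}_{\langle u\rangle}=0$), set $e_n(w) := \mathcal{E}^0(u|_{F^{\bm{l}}_w(V_0)})$. Since $u\circ F^{\bm{l}}_w$ is $\mathcal{E}^{\bm{l}^n}$-harmonic off $V_0$ on $K^{\bm{l}^n}$, Propositions \ref{prop:tsisg-harmonic-Vn}-\ref{it:tsisg-harmonic-Vn-trace} and \ref{prop:EnergyMeas-cell} give the ratio formula
\[
\frac{\mu^{\bm{l}}_{\langle u\rangle}(K^{\bm{l}}_w)}{\refmeas_{\bm{l}}(K^{\bm{l}}_w)} = T^{\bm{l}}_n e_n(w) \qquad (w\in W^{\bm{l}}_n),
\]
while Lemma \ref{lem:tsisg-rl} applied to $u\circ F^{\bm{l}}_v$ yields the recursion $\sum_{i\in S_{l_{n+1}}}e_{n+1}(vi) = r_{l_{n+1}}e_n(v)$. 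Consequently $X_n(\omega) := T^{\bm{l}}_n e_n(\omega_1\cdots\omega_n)/e_0$ is a non-negative $\nu$-martingale with $\mathbb{E}_\nu[X_n]=1$, converging $\nu$-a.s.\ to some $X_\infty\geq 0$. A Lebesgue-differentiation argument along the nested cells $K^{\bm{l}}_{w_n(\cdot)}$ (a Vitali basis for $\refmeas_{\bm{l}}$ thanks to the volume doubling implied by \hyperlink{fHKE}{$\textup{fHKE}(\Psi_{\bm{l}})$} via Remark \ref{rmk:HK-singular}-\ref{it:HK-singular-assump} and Theorem \ref{thm:tsisg-fHKE}) identifies $X_\infty$, viewed $\nu$-a.s.\ as a function on $K^{\bm{l}}$, with the Radon--Nikodym density of the absolutely continuous part of $\mu^{\bm{l}}_{\langle u\rangle}$; hence $\mu^{\bm{l}}_{\langle u\rangle}\perp\refmeas_{\bm{l}}$ is equivalent to $X_\infty=0$ $\nu$-a.s.

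To conclude, I would invoke Kakutani's product criterion for non-negative martingales: $X_\infty=0$ $\nu$-a.s.\ iff $\prod_{n=1}^{\infty}\mathbb{E}_\nu[\sqrt{X_n/X_{n-1}}\mid\mathcal{F}_{n-1}] = 0$ $\nu$-a.s. Using the recursion and $\#S_{l_n}=r_{l_n}T^{(l_n)}$, the conditional factor is
\[
H_n(v) = \frac{1}{\#S_{l_n}}\sum_{i\in S_{l_n}}\sqrt{\frac{T^{(l_n)}e_n(vi)}{e_{n-1}(v)}} \le 1
\]
by Cauchy--Schwarz, with equality iff the energies $\{e_n(vi)\}_{i\in S_{l_n}}$ are all equal. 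The main obstacle is to establish a quantitative strict inequality strong enough to force the product to vanish for \emph{every} $\bm{l}$: concretely, a constant $c(l)>0$ for each $l\in\mathbb{N}\setminus\{1,2,3,4\}$ such that the above Hellinger sum for any non-constant harmonic function on $K^l$ is at most $1-c(l)$, with $\sum_{n\geq 1}c(l_n)=\infty$ for every admissible $\bm{l}$. Strict positivity of $c(l)$ for each fixed $l$ is a compactness argument on $\mathbb{R}^{V_0}/\mathbb{R}\one_{V_0}$ once one verifies that the three corner sub-cells $i=(0,0),(l-1,0),(0,l-1)$ cannot carry identical $\mathcal{E}^0$-energies when $(v(q_0),v(q_1),v(q_2))$ is non-constant. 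The delicate part, requiring explicit analysis of harmonic extensions on $K^l$, is to prevent $c(l)\to 0$ as $l\to\infty$: since the level-$l$ thin SG becomes increasingly ``one-dimensional'' and the sub-cell energies tend toward uniformity in this limit, showing $\inf_{l}c(l)>0$ demands a careful quantitative exploitation of the structural distinction between corner sub-cells and edge sub-cells that persists uniformly in $l$.
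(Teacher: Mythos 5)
Your overall architecture---reduce to finitely-harmonic functions via harmonic approximation plus stability of singularity under $\mathcal{E}^{\bm{l}}$-convergence, then run a Kakutani/Hellinger dichotomy along the filtration generated by the cells $K^{\bm{l}}_{w}$---coincides with the paper's proof, which uses Hino's criterion (Theorem \ref{thm:prob-meas-filtration-singular}) together with Lemma \ref{lem:prob-meas-filtration-singular-tsisg}; your martingale $X_{n}$ and conditional factor $H_{n}$ are exactly the quantities appearing there and in Proposition \ref{prop:muh-filtration-singular-tsisg}. (One inaccuracy in your reduction: $\mathcal{E}^{\bm{l}}$-convergence does \emph{not} make $u\mapsto\mu^{\bm{l}}_{\langle u\rangle}$ continuous in total variation; what is true, and what the paper uses as Lemma \ref{lem:tsisg-sing-lin-cont} quoting \cite{KM}, is that the class $\{u\in\mathcal{F}_{\bm{l}}\mid\mu^{\bm{l}}_{\langle u\rangle}\perp\refmeas_{\bm{l}}\}$ is closed under $\mathcal{E}^{\bm{l}}$-convergence.)

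The genuine gap is that the decisive quantitative estimate is precisely the step you leave open. You reduce the theorem to finding $c(l)>0$ with $H_{n}\leq 1-c(l_{n})$ for \emph{every} non-constant harmonic datum and $\sum_{n}c(l_{n})=\infty$ for every admissible $\bm{l}$; since $l_{n}$ may grow arbitrarily fast, this forces $\inf_{l}c(l)>0$, and your compactness argument only gives $c(l)>0$ for each fixed $l$. So the proof is incomplete exactly where the theorem becomes non-trivial. The paper never proves (and does not need) a bound uniform over all boundary data. Instead, from the explicit harmonic-extension matrices of Lemma \ref{lem:tsisg-harmonic-V1-values} it observes that whenever a letter $\omega^{x}_{n-1}$ lies in $S_{l_{n-1},1}$ (the $3(l_{n-1}-4)$ non-corner cells), the harmonic function restricted to that cell has boundary values forming an arithmetic progression $\{c-b,c,c+b\}$, and this special configuration propagates in an explicitly computable way to the sub-cells of the next level; for such $n$ the Hellinger sum is evaluated \emph{exactly} as $(4l_{n}-1)/\sqrt{(3l_{n}-3)(6l_{n}+1)}\leq\sqrt{361/372}$, uniformly in $l_{n}$ (Proposition \ref{prop:muh-filtration-singular-tsisg}). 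Since each letter lies in $S_{l,1}$ independently with probability $(3l-12)/(3l-3)\geq\frac{1}{4}$, the second Borel--Cantelli lemma yields infinitely many such $n$ almost surely, which is all that Hino's criterion requires. This restriction to ``good'' letters, rather than a uniform bound over arbitrary non-constant data, is the missing idea; your proposed stronger uniform bound would in any case have to be extracted from the same explicit computation of harmonic extensions, and nothing in the proposal establishes it.
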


The rest of this section is devoted to the proof of Theorem \ref{thm:tsisg-sing}.
First, we observe that the proof is reduced to the case of
$u\in\bigcup_{n=0}^{\infty}\mathcal{H}_{\bm{l},n}$ by the following two lemmas.
\begin{lem}\label{lem:tsisg-harmonic-approx}
Let $u\in\mathcal{F}_{\bm{l}}$ and set $u_{n}:=h^{\bm{l}}_{n}(u|_{V^{\bm{l}}_{n}})$ for each
$n\in\mathbb{N}\cup\{0\}$ \textup{(recall Proposition \ref{prop:tsisg-harmonic-Vn})}. Then
$\mathcal{E}^{\bm{l}}(u-u_{n},u-u_{n})
	=\mathcal{E}^{\bm{l}}(u,u)-\mathcal{E}^{\bm{l},n}(u|_{V^{\bm{l}}_{n}},u|_{V^{\bm{l}}_{n}})$
for any $n\in\mathbb{N}\cup\{0\}$. In particular,
$\lim_{n\to\infty}\mathcal{E}^{\bm{l}}(u-u_{n},u-u_{n})=0$.
\end{lem}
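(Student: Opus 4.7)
The plan is to treat this as the Pythagorean identity for orthogonal projection: $u_n$ is the $\mathcal{E}^{\bm{l}}$-orthogonal projection of $u$ onto the subspace of functions that agree with $u$ on $V^{\bm{l}}_n$ (or equivalently, onto $\mathcal{H}_{\bm{l},n}$ after fixing boundary values), so $u - u_n$ is $\mathcal{E}^{\bm{l}}$-orthogonal to $u_n$.

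Concretely, I would first observe that $u - u_n \in \mathcal{F}_{\bm{l}}$ vanishes on $V^{\bm{l}}_n$, since $u_n|_{V^{\bm{l}}_n} = u|_{V^{\bm{l}}_n}$ by the definition of $h^{\bm{l}}_n$. Applying the second equivalent formulation in \eqref{eq:tsisg-harmonic-Vn} (the $\mathcal{E}^{\bm{l}}$-harmonicity of $u_n$ off $V^{\bm{l}}_n$) with the test function $v := u - u_n$ gives $\mathcal{E}^{\bm{l}}(u_n, u - u_n) = 0$. Expanding
\begin{equation*}
\mathcal{E}^{\bm{l}}(u - u_n, u - u_n) = \mathcal{E}^{\bm{l}}(u,u) - 2\mathcal{E}^{\bm{l}}(u, u_n) + \mathcal{E}^{\bm{l}}(u_n, u_n)
\end{equation*}
and using this orthogonality to rewrite $\mathcal{E}^{\bm{l}}(u, u_n) = \mathcal{E}^{\bm{l}}(u_n, u_n) + \mathcal{E}^{\bm{l}}(u - u_n, u_n) = \mathcal{E}^{\bm{l}}(u_n, u_n)$ collapses the right-hand side to $\mathcal{E}^{\bm{l}}(u,u) - \mathcal{E}^{\bm{l}}(u_n, u_n)$.

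To identify $\mathcal{E}^{\bm{l}}(u_n, u_n)$ with $\mathcal{E}^{\bm{l},n}(u|_{V^{\bm{l}}_n}, u|_{V^{\bm{l}}_n})$, I would invoke the trace identity in Proposition~\ref{prop:tsisg-harmonic-Vn}\ref{it:tsisg-harmonic-Vn-trace}, which says exactly $\mathcal{E}^{\bm{l}}(u_n, u_n) = \mathcal{E}^{\bm{l},n}(u_n|_{V^{\bm{l}}_n}, u_n|_{V^{\bm{l}}_n})$, together with $u_n|_{V^{\bm{l}}_n} = u|_{V^{\bm{l}}_n}$. This yields the claimed equality $\mathcal{E}^{\bm{l}}(u - u_n, u - u_n) = \mathcal{E}^{\bm{l}}(u,u) - \mathcal{E}^{\bm{l},n}(u|_{V^{\bm{l}}_n}, u|_{V^{\bm{l}}_n})$.

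For the limiting statement, the right-hand side tends to $0$ directly from the definition \eqref{eq:tsisg-RF-form} of $\mathcal{E}^{\bm{l}}$ on $\mathcal{F}_{\bm{l}}$, which asserts $\mathcal{E}^{\bm{l}}(u,u) = \lim_{n\to\infty} \mathcal{E}^{\bm{l},n}(u|_{V^{\bm{l}}_n}, u|_{V^{\bm{l}}_n})$. There is no genuine obstacle here; the lemma is just the finite-network approximation combined with the Hilbert-space orthogonality built into Definition~\ref{dfn:tsisg-harmonic-Vn}. The only small point to be careful about is that the identification between $\mathcal{F}_{\bm{l}}$ as a subspace of $\mathbb{R}^{V^{\bm{l}}_*}$ and as a subspace of $\mathcal{C}(K^{\bm{l}})$ (cf.\ Definition~\ref{dfn:tsisg-RF}) makes the restriction $(u - u_n)|_{V^{\bm{l}}_n}$ unambiguous, which is immediate from the uniqueness of the continuous extension.
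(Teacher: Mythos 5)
Your proposal is correct and follows essentially the same route as the paper's proof: the orthogonality $\mathcal{E}^{\bm{l}}(u_{n},u-u_{n})=0$ from the harmonicity of $u_{n}$ off $V^{\bm{l}}_{n}$ via \eqref{eq:tsisg-harmonic-Vn}, the trace identity of Proposition \ref{prop:tsisg-harmonic-Vn}-\ref{it:tsisg-harmonic-Vn-trace} to identify $\mathcal{E}^{\bm{l}}(u_{n},u_{n})=\mathcal{E}^{\bm{l},n}(u|_{V^{\bm{l}}_{n}},u|_{V^{\bm{l}}_{n}})$, and the definition \eqref{eq:tsisg-RF-form} for the convergence to $0$. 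No gaps.
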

\begin{proof}
We follow \cite[Proof of Lemma 3.2.17]{Kig01}. Let $n\in\mathbb{N}\cup\{0\}$. Then
$\mathcal{E}^{\bm{l}}(u_{n},u)
	=\mathcal{E}^{\bm{l}}(u_{n},u_{n})
	=\mathcal{E}^{\bm{l},n}(u|_{V^{\bm{l}}_{n}},u|_{V^{\bm{l}}_{n}})$
by $u_{n}\in\mathcal{H}_{\bm{l},n}$, \eqref{eq:tsisg-harmonic-Vn} and
Proposition \ref{prop:tsisg-harmonic-Vn} and thus
$\mathcal{E}^{\bm{l}}(u-u_{n},u-u_{n})
	=\mathcal{E}^{\bm{l}}(u,u)-\mathcal{E}^{\bm{l},n}(u|_{V^{\bm{l}}_{n}},u|_{V^{\bm{l}}_{n}})$,
which converges to $0$ as $n\to\infty$ by \eqref{eq:tsisg-RF-form}.
\qed\end{proof}
\begin{lem}\label{lem:tsisg-sing-lin-cont}
If a Borel measure $\mu$ on $K^{\bm{l}}$, $\{u_{n}\}_{n=1}^{\infty}\subset\mathcal{F}_{\bm{l}}$
and $u\in\mathcal{F}_{\bm{l}}$ satisfy $\lim_{n\to\infty}\mathcal{E}^{\bm{l}}(u-u_{n},u-u_{n})=0$
and $\mu^{\bm{l}}_{\langle u_{n}\rangle}\perp\mu$ for any $n\in\mathbb{N}$, then
$\mu^{\bm{l}}_{\langle u\rangle}\perp\mu$.
\end{lem}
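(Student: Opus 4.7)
The plan is to use the standard Minkowski-type inequality for energy measures, which says that for any Borel set $A \subset K^{\bm{l}}$ and any $v,w\in\mathcal{F}_{\bm{l}}$,
\begin{equation*}
\sqrt{\mu^{\bm{l}}_{\langle v+w\rangle}(A)}\leq\sqrt{\mu^{\bm{l}}_{\langle v\rangle}(A)}+\sqrt{\mu^{\bm{l}}_{\langle w\rangle}(A)},
\end{equation*}
combined with the trivial total-mass bound $\mu^{\bm{l}}_{\langle v\rangle}(A)\leq\mathcal{E}^{\bm{l}}(v,v)$, which is immediate from \eqref{eq:EnergyMeas} applied with $f=\one_{K^{\bm{l}}}$.

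First, for each $n\in\mathbb{N}$, the hypothesis $\mu^{\bm{l}}_{\langle u_{n}\rangle}\perp\mu$ lets me pick a Borel set $A_{n}\subset K^{\bm{l}}$ with $\mu^{\bm{l}}_{\langle u_{n}\rangle}(A_{n})=0$ and $\mu(K^{\bm{l}}\setminus A_{n})=0$. Set $A:=\bigcap_{n=1}^{\infty}A_{n}$. Countable subadditivity of $\mu$ gives
\begin{equation*}
\mu(K^{\bm{l}}\setminus A)\leq\sum_{n=1}^{\infty}\mu(K^{\bm{l}}\setminus A_{n})=0,
\end{equation*}
so $\mu$ is concentrated on $A$. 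For the other half, since $A\subset A_{n}$, applying the Minkowski-type inequality with $v=u-u_{n}$, $w=u_{n}$ and the set $A$ yields
\begin{equation*}
\sqrt{\mu^{\bm{l}}_{\langle u\rangle}(A)}
\leq\sqrt{\mu^{\bm{l}}_{\langle u_{n}\rangle}(A)}+\sqrt{\mu^{\bm{l}}_{\langle u-u_{n}\rangle}(A)}
\leq 0+\sqrt{\mathcal{E}^{\bm{l}}(u-u_{n},u-u_{n})},
\end{equation*}
and letting $n\to\infty$ gives $\mu^{\bm{l}}_{\langle u\rangle}(A)=0$. Combined with $\mu(K^{\bm{l}}\setminus A)=0$, this proves $\mu^{\bm{l}}_{\langle u\rangle}\perp\mu$.

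The only non-routine ingredient is the Minkowski-type inequality itself. The standard route is to define the mutual energy measure $\mu^{\bm{l}}_{\langle v,w\rangle}$ by polarization of \eqref{eq:EnergyMeas} (which is valid since the right-hand side of \eqref{eq:EnergyMeas} is a symmetric bilinear form in $v,w$ controlled by \eqref{eq:EnergyMeas-exists}) and to establish the Cauchy--Schwarz inequality $|\mu^{\bm{l}}_{\langle v,w\rangle}(A)|\leq\sqrt{\mu^{\bm{l}}_{\langle v\rangle}(A)\mu^{\bm{l}}_{\langle w\rangle}(A)}$ for any Borel $A\subset K^{\bm{l}}$; this is done exactly as in \cite[Lemma 3.2.3 and p.\ 111]{FOT} or \cite[Proposition 4.1.1]{CF} in the general theory of regular symmetric Dirichlet forms, and from it the Minkowski-type inequality follows by expanding $\mu^{\bm{l}}_{\langle v+w\rangle}(A)=\mu^{\bm{l}}_{\langle v\rangle}(A)+2\mu^{\bm{l}}_{\langle v,w\rangle}(A)+\mu^{\bm{l}}_{\langle w\rangle}(A)$. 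Since Theorem~\ref{thm:tsisg-RF-DF} makes $(\mathcal{E}^{\bm{l}},\mathcal{F}_{\bm{l}})$ a regular symmetric Dirichlet form on $L^{2}(K^{\bm{l}},\refmeas_{\bm{l}})$, these general results apply verbatim, so this step is really a citation rather than a genuine obstacle.
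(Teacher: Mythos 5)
Your argument is correct and is in substance the same as the paper's: the paper disposes of this lemma by citing \cite[Lemma 3.7-(b)]{KM}, whose proof is exactly this standard combination of intersecting the carrying sets $A_{n}$ with the triangle inequality for $A\mapsto\sqrt{\mu^{\bm{l}}_{\langle\cdot\rangle}(A)}$ and the total-mass bound $\mu^{\bm{l}}_{\langle v\rangle}(A)\leq\mu^{\bm{l}}_{\langle v\rangle}(K^{\bm{l}})=\mathcal{E}^{\bm{l}}(v,v)$ (the last equality using $\one_{K^{\bm{l}}}\in\mathcal{F}_{\bm{l}}$ and $\mathcal{E}^{\bm{l}}(\cdot,\one_{K^{\bm{l}}})=0$, consistent with the normalization in \eqref{eq:EnergyMeas}). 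The only external ingredient, the Cauchy--Schwarz/Minkowski inequality for (mutual) energy measures, is indeed a standard fact for regular symmetric Dirichlet forms available in \cite{FOT} and \cite{CF} and applies here since every element of $\mathcal{F}_{\bm{l}}\subset\mathcal{C}(K^{\bm{l}})$ is bounded, so your proof is complete.
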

\begin{proof}
This is a special case of \cite[Lemma 3.7-(b)]{KM}, whose proof works for any regular symmetric Dirichlet space.
\qed\end{proof}

To prove that $\mu^{\bm{l}}_{\langle h\rangle}\perp \refmeas_{\bm{l}}$
for any $h\in\bigcup_{n=0}^{\infty}\mathcal{H}_{\bm{l},n}$, noting that
$\mathcal{H}_{\bm{l},0}\subset\mathcal{H}_{\bm{l},1}$ and recalling Proposition \ref{prop:tsisg-harmonic-Vn},
in the following lemma we calculate explicitly the matrix representation of the linear maps
$\mathbb{R}^{V_{0}}\ni u\mapsto h^{\bm{l}}_{0}(u)\circ F^{\bm{l}}_{i}|_{V_{0}}\in\mathbb{R}^{V_{0}}$,
$i\in W^{\bm{l}}_{1}=S_{l_{1}}$, which we identify with the linear maps
$\mathcal{H}_{\bm{l},0}\ni h\mapsto h\circ F^{\bm{l}}_{i}\in\mathcal{H}_{\bm{l}^{1},0}$.%
\begin{lem}\label{lem:tsisg-harmonic-V1-values}
Set $l:=l_{1}$, $a_{l}:=\frac{1}{9}r_{l}=(6l+1)^{-1}$, and for each $i\in S_{l}$
let $A^{l}_{i}$ denote the matrix representation of the linear map
$\mathbb{R}^{V_{0}}\ni u\mapsto h^{\bm{l}}_{0}(u)\circ F^{\bm{l}}_{i}|_{V_{0}}\in\mathbb{R}^{V_{0}}$
with respect to the basis $(\one_{q_{0}},\one_{q_{1}},\one_{q_{2}})$ of $\mathbb{R}^{V_{0}}$.
Then for any $k\in\{2,\ldots,l-3\}$,
\begin{equation}\label{eq:tsisg-harmonic-V1-values}
\begin{split}
A^{l}_{(k,0)}&=
	\begin{pmatrix}
	1-(6k+3)a_{l} & (6k-2)a_{l} & 5a_{l} \\
	1-(6k+9)a_{l} & (6k+4)a_{l} & 5a_{l} \\
	1-(6k+6)a_{l} & (6k+1)a_{l} & 5a_{l}
	\end{pmatrix}, \\
A^{l}_{(0,k)}&=
	\begin{pmatrix}
	1-(6k+3)a_{l} & 5a_{l} & (6k-2)a_{l} \\
	1-(6k+6)a_{l} & 5a_{l} & (6k+1)a_{l} \\
	1-(6k+9)a_{l} & 5a_{l} & (6k+4)a_{l}
	\end{pmatrix}, \\
A^{l}_{(l-1-k,k)}&=
	\begin{pmatrix}
	5a_{l} & 1-(6k+6)a_{l} & (6k+1)a_{l} \\
	5a_{l} & 1-(6k+3)a_{l} & (6k-2)a_{l} \\
	5a_{l} & 1-(6k+9)a_{l} & (6k+4)a_{l}
	\end{pmatrix}.
\end{split}
\end{equation}
\end{lem}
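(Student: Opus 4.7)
The plan is to compute the harmonic extension $h = h^{\bm{l}}_{0}(u)$ for arbitrary boundary data $u = a\one_{q_{0}} + b\one_{q_{1}} + c\one_{q_{2}}$ and to read off the matrix entries as the values of $h$ at $F^{l}_{i}(V_{0})$ for $i \in \{(k,0), (0,k), (l-1-k, k)\}$. First I would invoke the $3$-fold symmetry of $K^{l}$: the three permutations of $\{q_{0}, q_{1}, q_{2}\}$ extend to isometries of $K^{l}$ permuting $S_{l}$ accordingly (sending bottom cells $(k, 0)$ to left-edge cells $(0, k)$ and to hypotenuse cells $(l-1-k, k)$), so by equivariance the three matrices $A^{l}_{(k, 0)}$, $A^{l}_{(0, k)}$, $A^{l}_{(l-1-k, k)}$ are conjugates by the corresponding $3 \times 3$ permutation matrices. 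Direct inspection confirms that \eqref{eq:tsisg-harmonic-V1-values} exhibits this relation, so it suffices to establish $A^{l}_{(k, 0)}$ alone.

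Next I would use Proposition \ref{prop:tsisg-harmonic-Vn}-\ref{it:tsisg-harmonic-Vn-linear-eq} at $n = 0$, $k = 1$, which characterizes $\mathcal{H}_{\bm{l}, 0}$ by the discrete harmonic equations on $V^{\bm{l}}_{1} \setminus V_{0}$ with adjacency given by co-cell membership. A short check against the definition of $S_{l}$ shows that for $k \in \{2, \ldots, l-3\}$ the vertex $q^{l}_{(k, 1)}$ belongs to no cell in $S_{l}$ other than $(k, 0)$ (the candidates $(k-1, 1)$ and $(k, 1)$ fail the boundary condition $i_{1} i_{2}(l-1-i_{1}-i_{2}) = 0$ for such $k$), so its only graph neighbors are $q^{l}_{(k, 0)}$ and $q^{l}_{(k+1, 0)}$, forcing
$$h(q^{l}_{(k, 1)}) = \tfrac{1}{2}\bigl(h(q^{l}_{(k, 0)}) + h(q^{l}_{(k+1, 0)})\bigr).$$
Feeding this into the harmonic equation at $q^{l}_{(k, 0)}$ for $k \in \{3, \ldots, l-3\}$ yields the discrete Laplace equation $h(q^{l}_{(k-1, 0)}) - 2 h(q^{l}_{(k, 0)}) + h(q^{l}_{(k+1, 0)}) = 0$, so $\{h(q^{l}_{(k, 0)})\}_{k=2}^{l-2}$ is an arithmetic progression (large enough to contain all values appearing in $A^{l}_{(k, 0)}$ for $k \in \{2, \ldots, l-3\}$). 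The structure of \eqref{eq:tsisg-harmonic-V1-values} already reflects this: row~$3$ is the average of rows~$1$ and~$2$, and $\mathrm{row}_{2} - \mathrm{row}_{1} = (-6, 6, 0) a_{l}$ is independent of $k$, giving a slope of $6 a_{l}(b - a)$.

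Finally, to determine the intercept and the uniform $c$-coefficient $5 a_{l}$, I would set up and solve the harmonic equations at the finitely many ``corner-area'' vertices near each of $q_{0}, q_{1}, q_{2}$: the near-corner vertices $q^{l}_{(1, 0)}, q^{l}_{(l-1, 0)}$ and their symmetric counterparts on the other two edges, the three ``shared'' vertices $q^{l}_{(1, 1)}, q^{l}_{(l-1, 1)}, q^{l}_{(1, l-1)}$ where pairs of edges meet, and the end-of-edge vertices $q^{l}_{(2, 0)}, q^{l}_{(l-2, 0)}, q^{l}_{(l-2, 1)}$ and their symmetric counterparts where the arithmetic progression on one edge transitions into the shared corner structure. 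By linearity in $(a, b, c)$ and the $q_{1} \leftrightarrow q_{2}$ involution (which preserves the basis datum $u = \one_{q_{0}}$), this reduces to a small explicit linear system solvable by hand, from which $a_{l} = (6l+1)^{-1}$ and the exact entries of \eqref{eq:tsisg-harmonic-V1-values} emerge. The main obstacle will be precisely this corner computation: the corner-area vertices acquire extra graph edges from the neighboring sides of $\triangle$, so their harmonic equations couple all three edges of $K^{l}$. The symmetry reductions and the arithmetic progression established above are what make the coupled system finite and explicit enough to solve in closed form, producing the three matrices.
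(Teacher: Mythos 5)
Your proposal is correct and takes essentially the same route as the paper, whose entire proof consists of solving the linear system for $(h(x))_{x\in V^{l}_{1}\setminus V_{0}}$ given by Proposition~\ref{prop:tsisg-harmonic-Vn}-\ref{it:tsisg-harmonic-Vn-linear-eq} with $(n,k)=(0,1)$ under $h|_{V_{0}}=u$; your symmetry reduction, the averaging identity at the degree-two inner vertices $q^{l}_{(k,1)}$, and the resulting arithmetic progression along each edge are just an efficient organization of that same computation and are consistent with the displayed matrices. The remaining corner system you isolate is indeed finite, linear and explicitly solvable, which is exactly what the paper's one-sentence proof asserts.
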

\begin{proof}
This follows by solving the linear equation in
$(h(x))_{x\in V^{l}_{1}\setminus V_{0}}$ for $h=h^{\bm{l}}_{0}(u)$ from
Proposition \ref{prop:tsisg-harmonic-Vn}-\ref{it:tsisg-harmonic-Vn-linear-eq}
with $(n,k)=(0,1)$ under $h(q)=u(q)$ for $q\in V_{0}$.
\qed\end{proof}

Our proof that $\mu^{\bm{l}}_{\langle h\rangle}\perp \refmeas_{\bm{l}}$ for
$h\in\bigcup_{n=0}^{\infty}\mathcal{H}_{\bm{l},n}$ is based on the following fact.
\begin{thm}[{\cite[Theorem 4.1]{Hin05}}]\label{thm:prob-meas-filtration-singular}
Let $(\Omega,\mathscr{F},\mathbb{P})$ be a probability space and let
$\{\mathscr{F}_{n}\}_{n=0}^{\infty}$ be a non-decreasing sequence of $\sigma$-algebras
in $\Omega$ such that $\bigcup_{n=0}^{\infty}\mathscr{F}_{n}$ generates $\mathscr{F}$.
Let $\widetilde{\mathbb{P}}$ be a probability measure on $(\Omega,\mathscr{F})$ such that
$\widetilde{\mathbb{P}}|_{\mathscr{F}_{n}}\ll\mathbb{P}|_{\mathscr{F}_{n}}$
for any $n\in\mathbb{N}\cup\{0\}$, and for each $n\in\mathbb{N}$ define
$\alpha_{n}\in L^{1}(\Omega,\mathscr{F}_{n},\mathbb{P}|_{\mathscr{F}_{n}})$ by
\begin{equation}\label{eq:prob-meas-filtration-singular-alphan}
\alpha_{n}:=
	\begin{cases}
	\dfrac{d(\widetilde{\mathbb{P}}|_{\mathscr{F}_{n}})/d(\mathbb{P}|_{\mathscr{F}_{n}})}{d(\widetilde{\mathbb{P}}|_{\mathscr{F}_{n-1}})/d(\mathbb{P}|_{\mathscr{F}_{n-1}})}
		& \textrm{on $\{d(\widetilde{\mathbb{P}}|_{\mathscr{F}_{n-1}})/d(\mathbb{P}|_{\mathscr{F}_{n-1}})>0\}$,} \\
	0 & \textrm{on $\{d(\widetilde{\mathbb{P}}|_{\mathscr{F}_{n-1}})/d(\mathbb{P}|_{\mathscr{F}_{n-1}})=0\}$,}
	\end{cases}
\end{equation}
so that $\mathbb{E}[\sqrt{\alpha_{n}}\mid\mathscr{F}_{n-1}]\leq 1$
$\mathbb{P}|_{\mathscr{F}_{n-1}}$-a.s.\ by conditional Jensen's inequality,
where $\mathbb{E}[\cdot\mid\mathscr{F}_{n-1}]$ denotes the conditional
expectation given $\mathscr{F}_{n-1}$ with respect to $\mathbb{P}$. If
\begin{equation}\label{eq:prob-meas-filtration-singular}
\sum_{n=1}^{\infty}(1-\mathbb{E}[\sqrt{\alpha_{n}}\mid\mathscr{F}_{n-1}])=\infty
	\mspace{28mu}\textrm{$\mathbb{P}$-a.s.,}
\end{equation}
then $\widetilde{\mathbb{P}}\perp\mathbb{P}$.
\end{thm}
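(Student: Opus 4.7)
The plan is to reduce the singularity $\widetilde{\mathbb{P}} \perp \mathbb{P}$ to proving $Z_\infty := \lim_{n\to\infty} Z_n = 0$ $\mathbb{P}$-a.s., where $Z_n := d(\widetilde{\mathbb{P}}|_{\mathscr{F}_n})/d(\mathbb{P}|_{\mathscr{F}_n})$, and then to extract this vanishing from hypothesis \eqref{eq:prob-meas-filtration-singular} via a Doob decomposition of the supermartingale $\sqrt{Z_n}$. For the reduction, $\widetilde{\mathbb{P}}(A) = \int_A Z_{n-1}\,d\mathbb{P} = \int_A Z_n\,d\mathbb{P}$ for $A \in \mathscr{F}_{n-1}$ exhibits $\{Z_n\}$ as a non-negative $\mathbb{P}$-martingale with $\mathbb{E}[Z_n]=1$, so Doob's convergence theorem yields a $\mathbb{P}$-a.s.\ limit $Z_\infty \in L^1(\mathbb{P})$; the standard martingale-theoretic Lebesgue decomposition (proved via conditional Fatou on $\bigcup_m \mathscr{F}_m$ together with a Dynkin class extension) then identifies $Z_\infty\cdot\mathbb{P}$ as the absolutely continuous part of $\widetilde{\mathbb{P}}$ with respect to $\mathbb{P}$ on $\mathscr{F}=\sigma(\bigcup_n\mathscr{F}_n)$, so $Z_\infty=0$ $\mathbb{P}$-a.s.\ implies $\widetilde{\mathbb{P}}\perp\mathbb{P}$.

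The central tool is the pointwise identity $\sqrt{Z_n} = \sqrt{Z_{n-1}}\sqrt{\alpha_n}$ $\mathbb{P}$-a.s. This is immediate on $\{Z_{n-1}>0\}$ from $\alpha_n = Z_n/Z_{n-1}$; on $\{Z_{n-1}=0\}$ both sides vanish, since $Z_n \mathbf{1}_{\{Z_{n-1}=0\}} = 0$ $\mathbb{P}$-a.s.\ by the martingale identity $\mathbb{E}[Z_n\mathbf{1}_{\{Z_{n-1}=0\}}] = \mathbb{E}[Z_{n-1}\mathbf{1}_{\{Z_{n-1}=0\}}] = 0$. Conditioning on $\mathscr{F}_{n-1}$ yields $\mathbb{E}[\sqrt{Z_n}\mid\mathscr{F}_{n-1}] = \beta_n\sqrt{Z_{n-1}}$ with $\beta_n := \mathbb{E}[\sqrt{\alpha_n}\mid\mathscr{F}_{n-1}] \in [0,1]$, exhibiting $\{\sqrt{Z_n}\}$ as a non-negative $\mathbb{P}$-supermartingale. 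Its Doob decomposition reads
\begin{equation*}
\sqrt{Z_n} = \sqrt{Z_0} + M_n - A_n, \qquad A_n := \sum_{k=1}^{n} (1-\beta_k)\sqrt{Z_{k-1}},
\end{equation*}
where $M_n := \sqrt{Z_n} + A_n - \sqrt{Z_0}$ (with $M_0 = 0$) is a $\mathbb{P}$-martingale, since $M_n - M_{n-1} = \sqrt{Z_n} - \beta_n\sqrt{Z_{n-1}}$ has vanishing conditional expectation given $\mathscr{F}_{n-1}$.

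Because $\sqrt{Z_n} \geq 0$ and $\sqrt{Z_0} \in L^1(\mathbb{P})$ (by Jensen, $\mathbb{E}[\sqrt{Z_0}]^{2} \leq \mathbb{E}[Z_0] = 1$), we have $M_n \geq -\sqrt{Z_0}$, hence $M_n^- \leq \sqrt{Z_0}$ and, using $\mathbb{E}[M_n] = 0$, $\mathbb{E}[|M_n|] = 2\mathbb{E}[M_n^-] \leq 2$; Doob's convergence theorem then produces a finite $\mathbb{P}$-a.s.\ limit $M_\infty$. Combined with $\sqrt{Z_n} \to \sqrt{Z_\infty}$ $\mathbb{P}$-a.s., this forces $A_n \to A_\infty := M_\infty + \sqrt{Z_0} - \sqrt{Z_\infty} < \infty$ $\mathbb{P}$-a.s., i.e.,
\begin{equation*}
\sum_{k=1}^{\infty} (1-\beta_k)\sqrt{Z_{k-1}} < \infty \quad \mathbb{P}\text{-a.s.}
\end{equation*}
On the event $\{Z_\infty > 0\}$ one has $\liminf_k \sqrt{Z_{k-1}} = \sqrt{Z_\infty} > 0$, so the summability above forces $\sum_k (1-\beta_k) < \infty$ there; by hypothesis \eqref{eq:prob-meas-filtration-singular} this event is $\mathbb{P}$-null, yielding $Z_\infty = 0$ $\mathbb{P}$-a.s.\ as required. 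The only delicate point is the $L^1$-boundedness of $M_n$ even though $A_n$ may diverge; this is handled by the mean-zero identity $\mathbb{E}[|M_n|] = 2\mathbb{E}[M_n^-]$ together with the deterministic lower bound $M_n \geq -\sqrt{Z_0}$.
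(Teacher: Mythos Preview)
The paper does not supply a proof of this theorem; it is quoted verbatim from \cite[Theorem 4.1]{Hin05} and used as a black box in the proof of Theorem \ref{thm:tsisg-sing}. Your argument is correct and is essentially the classical martingale proof: the Radon--Nikodym martingale $Z_{n}$, the Kakutani-type identity $\sqrt{Z_{n}}=\sqrt{Z_{n-1}}\sqrt{\alpha_{n}}$, the Doob decomposition of the supermartingale $\sqrt{Z_{n}}$, and the contradiction on $\{Z_{\infty}>0\}$. The one place worth a second look is the reduction step: you invoke that $Z_{\infty}\,d\mathbb{P}$ is the absolutely continuous part of $\widetilde{\mathbb{P}}$ on $\mathscr{F}=\sigma\bigl(\bigcup_{n}\mathscr{F}_{n}\bigr)$, which is correct but needs the full statement that $\widetilde{\mathbb{P}}$ restricted to $\{Z_{\infty}=0\}$ is singular with respect to $\mathbb{P}$ (equivalently, $\widetilde{\mathbb{P}}(\{Z_{\infty}=0\})=0$ would give absolute continuity, and $Z_{\infty}=0$ $\mathbb{P}$-a.s.\ gives $\widetilde{\mathbb{P}}\ll\mathbb{P}$-part zero); this is the standard Lebesgue decomposition via martingales and is fine as stated. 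Your handling of the $L^{1}$-boundedness of $M_{n}$ through $M_{n}\geq -\sqrt{Z_{0}}$ and $\mathbb{E}[M_{n}]=0$ is clean and avoids any integrability issues with $A_{n}$.
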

We will apply Theorem \ref{thm:prob-meas-filtration-singular} under the setting of
the following lemma with $\mathbb{P}=\refmeas_{\bm{l}}$.%
\begin{lem}\label{lem:prob-meas-filtration-singular-tsisg}
Set $\Omega:=K^{\bm{l}}$, $\mathscr{F}:=\mathscr{B}(K^{\bm{l}})$ and let
$\mathbb{P},\widetilde{\mathbb{P}}$ be probability measures on $(\Omega,\mathscr{F})$
such that $\mathbb{P}(K^{\bm{l}}_{w})>0$ for any $w\in W^{\bm{l}}_{*}$ and
$\mathbb{P}(V^{\bm{l}}_{*})=\widetilde{\mathbb{P}}(V^{\bm{l}}_{*})=0$. Set
$\mathscr{F}_{n}:=\{A\cup\bigcup_{w\in\Lambda}(K^{\bm{l}}_{w}\setminus V^{\bm{l}}_{n})
	\mid\textrm{$\Lambda\subset W^{\bm{l}}_{n}$, $A\subset V^{\bm{l}}_{n}$}\}$
for each $n\in\mathbb{N}\cup\{0\}$, so that $\{\mathscr{F}_{n}\}_{n=0}^{\infty}$
is a non-decreasing sequence of $\sigma$-algebras in $\Omega$ by \eqref{eq:tsisg-boundary},
$\bigcup_{n=0}^{\infty}\mathscr{F}_{n}$ generates $\mathscr{F}$, and
$\widetilde{\mathbb{P}}|_{\mathscr{F}_{n}}\ll\mathbb{P}|_{\mathscr{F}_{n}}$
for any $n\in\mathbb{N}\cup\{0\}$. Let $n\in\mathbb{N}$
and define $\alpha_{n}\in L^{1}(\Omega,\mathscr{F}_{n},\mathbb{P}|_{\mathscr{F}_{n}})$
by \eqref{eq:prob-meas-filtration-singular-alphan}. Then for each $w\in W^{\bm{l}}_{n-1}$,
\begin{equation}\label{eq:prob-meas-filtration-singular-tsisg}
\mathbb{E}[\sqrt{\alpha_{n}}\mid\mathscr{F}_{n-1}]|_{K^{\bm{l}}_{w}\setminus V^{\bm{l}}_{n-1}}=
	\begin{cases}
	\displaystyle\sum_{i\in S_{l_{n}}}\sqrt{\frac{\widetilde{\mathbb{P}}(K^{\bm{l}}_{wi})}{\widetilde{\mathbb{P}}(K^{\bm{l}}_{w})}}
		\sqrt{\frac{\mathbb{P}(K^{\bm{l}}_{wi})}{\mathbb{P}(K^{\bm{l}}_{w})}} & \textrm{if $\widetilde{\mathbb{P}}(K^{\bm{l}}_{w})>0$,} \\
	0 & \textrm{if $\widetilde{\mathbb{P}}(K^{\bm{l}}_{w})=0$.}
	\end{cases}
\end{equation}
\end{lem}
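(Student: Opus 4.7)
My plan is to treat this lemma as a bookkeeping computation in three short steps: first describe the atoms of each $\mathscr{F}_n$ modulo the common null set $V^{\bm{l}}_{\ast}$, then obtain an explicit step-function formula for the Radon--Nikodym density at each level, and finally apply the atom formula for conditional expectation. For the first step, I will observe that by Proposition~\ref{prop:tsisg}-\ref{it:tsisg-boundary},\ref{it:tsisg-map-cells} the family $\{K^{\bm{l}}_{w}\}_{w\in W^{\bm{l}}_{n}}$ covers $K^{\bm{l}}$ with pairwise intersections contained in $V^{\bm{l}}_{n}$, so that $\{K^{\bm{l}}_{w}\setminus V^{\bm{l}}_{n}\}_{w\in W^{\bm{l}}_{n}}$ is a pairwise disjoint family of $\mathscr{F}_{n}$-atoms whose union is $K^{\bm{l}}\setminus V^{\bm{l}}_{n}$; together with the singletons $\{x\}$ for $x\in V^{\bm{l}}_{n}$ these exhaust the atoms of $\mathscr{F}_{n}$. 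Since $\mathbb{P}(V^{\bm{l}}_{\ast})=\widetilde{\mathbb{P}}(V^{\bm{l}}_{\ast})=0$, the density $d(\widetilde{\mathbb{P}}|_{\mathscr{F}_{n}})/d(\mathbb{P}|_{\mathscr{F}_{n}})$ is, $\mathbb{P}$-a.s., the step function taking the value $\widetilde{\mathbb{P}}(K^{\bm{l}}_{w})/\mathbb{P}(K^{\bm{l}}_{w})$ on $K^{\bm{l}}_{w}\setminus V^{\bm{l}}_{n}$, a finite quotient by the standing hypothesis $\mathbb{P}(K^{\bm{l}}_{w})>0$.

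Substituting this cell-wise expression into the definition \eqref{eq:prob-meas-filtration-singular-alphan} with consecutive levels $n$ and $n-1$ yields, on each $K^{\bm{l}}_{wi}\setminus V^{\bm{l}}_{n}$ with $w\in W^{\bm{l}}_{n-1}$ and $i\in S_{l_{n}}$, the equality $\alpha_{n}=(\widetilde{\mathbb{P}}(K^{\bm{l}}_{wi})/\mathbb{P}(K^{\bm{l}}_{wi}))/(\widetilde{\mathbb{P}}(K^{\bm{l}}_{w})/\mathbb{P}(K^{\bm{l}}_{w}))$ when $\widetilde{\mathbb{P}}(K^{\bm{l}}_{w})>0$, and $\alpha_{n}=0$ when $\widetilde{\mathbb{P}}(K^{\bm{l}}_{w})=0$. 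After taking the square root, I will invoke the atom formula $\mathbb{E}[f\mid\mathscr{F}_{n-1}]|_{K^{\bm{l}}_{w}\setminus V^{\bm{l}}_{n-1}}=\mathbb{P}(K^{\bm{l}}_{w})^{-1}\int_{K^{\bm{l}}_{w}}f\,d\mathbb{P}$ with $f=\sqrt{\alpha_{n}}$, decompose $K^{\bm{l}}_{w}$ as the $\mathbb{P}$-essentially disjoint union $\bigcup_{i\in S_{l_{n}}}K^{\bm{l}}_{wi}$ by Proposition~\ref{prop:tsisg}-\ref{it:tsisg-map-cells} and $\mathbb{P}(V^{\bm{l}}_{\ast})=0$, and evaluate the integral cell by cell using the constancy of $\sqrt{\alpha_{n}}$ on each sub-cell; elementary simplification of the resulting sum then produces exactly the first case of \eqref{eq:prob-meas-filtration-singular-tsisg}.

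The only mildly delicate point is the vanishing case $\widetilde{\mathbb{P}}(K^{\bm{l}}_{w})=0$, for which \eqref{eq:prob-meas-filtration-singular-alphan} sets $\alpha_{n}$ to $0$ on all of $K^{\bm{l}}_{w}\setminus V^{\bm{l}}_{n-1}$ since the denominator $d(\widetilde{\mathbb{P}}|_{\mathscr{F}_{n-1}})/d(\mathbb{P}|_{\mathscr{F}_{n-1}})$ vanishes there; this forces the conditional expectation to vanish on that atom, matching the second case of \eqref{eq:prob-meas-filtration-singular-tsisg}. Apart from this consistency check, no genuine obstacle arises: the lemma merely translates the abstract martingale increment $\alpha_{n}$ into the concrete cell-mass ratios that will subsequently be fed into the singularity criterion of Theorem~\ref{thm:prob-meas-filtration-singular}.
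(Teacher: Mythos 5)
Your proposal is correct and follows essentially the same route as the paper, whose proof consists precisely of the ``direct calculations based on \eqref{eq:prob-meas-filtration-singular-alphan} and \eqref{eq:tsisg-boundary}'' that you carry out explicitly: identifying the positive-measure atoms $K^{\bm{l}}_{w}\setminus V^{\bm{l}}_{n}$ of $\mathscr{F}_{n}$, writing the Radon--Nikodym densities as cell-wise constant step functions, and averaging $\sqrt{\alpha_{n}}$ over each atom of $\mathscr{F}_{n-1}$, with the degenerate case $\widetilde{\mathbb{P}}(K^{\bm{l}}_{w})=0$ handled exactly as the definition of $\alpha_{n}$ dictates.
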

\begin{proof}
This follows easily by direct calculations based on
\eqref{eq:prob-meas-filtration-singular-alphan} and \eqref{eq:tsisg-boundary}.
\qed\end{proof}

The following proposition is the key step of the proof of Theorem \ref{thm:tsisg-sing}.
\begin{prop}\label{prop:muh-filtration-singular-tsisg}
Let $k\in\mathbb{N}\cup\{0\}$, $h\in\mathcal{H}_{\bm{l},k}$, $x\in K^{\bm{l}}\setminus V^{\bm{l}}_{*}$, and
let $\omega^{x}=(\omega^{x}_{n})_{n=1}^{\infty}$ be the element of $\prod_{n=1}^{\infty}S_{l_{n}}$, unique by
\eqref{eq:tsisg-boundary}, such that $\{x\}=\bigcap_{n=1}^{\infty}K^{\bm{l}}_{\omega^{x}_{1}\ldots\omega^{x}_{n}}$.
Let $n\in\mathbb{N}\cap[k+2,\infty)$ and assume that
$\mu^{\bm{l}}_{\langle h\rangle}\bigl(K^{\bm{l}}_{\omega^{x}_{1}\ldots\omega^{x}_{n-1}}\bigr)>0$
and that $\omega^{x}_{n-1}\in S_{l_{n-1},1}$, where
$S_{l,1}:=\{(i_{1},i_{2})\in S_{l}\mid i_{1}\vee i_{2}\in\{2,\ldots,l-3\}\}$
for $l\in\mathbb{N}\setminus\{1,2,3,4\}$. Then
\begin{equation}\label{eq:muh-filtration-singular-tsisg}
\sum_{i\in S_{l_{n}}}\sqrt{\frac{\mu^{\bm{l}}_{\langle h\rangle}\bigl(K^{\bm{l}}_{\omega^{x}_{1}\ldots\omega^{x}_{n-1}i}\bigr)}{\mu^{\bm{l}}_{\langle h\rangle}\bigl(K^{\bm{l}}_{\omega^{x}_{1}\ldots\omega^{x}_{n-1}}\bigr)}}
	\sqrt{\frac{\refmeas_{\bm{l}}\bigl(K^{\bm{l}}_{\omega^{x}_{1}\ldots\omega^{x}_{n-1}i}\bigr)}{\refmeas_{\bm{l}}\bigl(K^{\bm{l}}_{\omega^{x}_{1}\ldots\omega^{x}_{n-1}}\bigr)}}
	\leq\sqrt{\frac{361}{372}}.
\end{equation}
\end{prop}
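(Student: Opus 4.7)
The plan is to translate \eqref{eq:muh-filtration-singular-tsisg} into an improved Cauchy--Schwarz bound for the first-scale subcell energies of a canonical harmonic function on $K^{\bm l^{n-1}}$, and then exploit the rigidity of $\tilde h|_{V_0}$ imposed by $\omega^x_{n-1}\in S_{l_{n-1},1}$ to produce the necessary gap of $11/372$.

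\noindent\textbf{Reduction to an energy inequality.}
Set $w:=\omega^x_1\cdots\omega^x_{n-1}$, $\bm m:=\bm l^{n-1}$, $l:=l_n$, and $\tilde h:=h\circ F^{\bm l}_w$. Since $V^{\bm l}_k\subset V^{\bm l}_{n-1}$, the assumption $h\in\mathcal H_{\bm l,k}$ gives $h\in\mathcal H_{\bm l,n-1}$, whence Proposition~\ref{prop:tsisg-harmonic-Vn}-\ref{it:tsisg-harmonic-Vn-self-sim} yields $\tilde h\in\mathcal H_{\bm m,0}\subset\mathcal H_{\bm m,1}$. Set $u:=\tilde h|_{V_0}$ and $E_i:=\mathcal E^{0}(\tilde h\circ F^{\bm m}_i|_{V_0},\tilde h\circ F^{\bm m}_i|_{V_0})$ for $i\in S_l$. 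Proposition~\ref{prop:EnergyMeas-cell} combined with the trace identity in Proposition~\ref{prop:tsisg-harmonic-Vn}-\ref{it:tsisg-harmonic-Vn-trace} applied once at level~$0$ and once at level~$1$ then gives
\begin{equation*}
\frac{\mu^{\bm l}_{\langle h\rangle}(K^{\bm l}_{wi})}{\mu^{\bm l}_{\langle h\rangle}(K^{\bm l}_w)}=\frac{E_i}{\sum_{j\in S_l}E_j},\qquad \sum_{j\in S_l}E_j=r_l\mathcal E^{0}(u,u),
\end{equation*}
while $\refmeas_{\bm l}(K^{\bm l}_{wi})/\refmeas_{\bm l}(K^{\bm l}_w)=1/\#S_l$. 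Squaring \eqref{eq:muh-filtration-singular-tsisg} thus reduces the claim to the improved Cauchy--Schwarz inequality
\begin{equation*}
\Bigl(\sum_{i\in S_l}\sqrt{E_i}\Bigr)^{\!2}\;\le\;\tfrac{361}{372}\,\#S_l\,\sum_{i\in S_l}E_i.\qquad(\ast)
\end{equation*}

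\noindent\textbf{Pinning down the shape of $u$.}
Let $i_0:=\omega^x_{n-1}\in S_{l_{n-1},1}$ and let $v$ denote the $V_0$-restriction of $h\circ F^{\bm l}_{\omega^x_1\cdots\omega^x_{n-2}}$, so that $u=A^{l_{n-1}}_{i_0}v$ by Lemma~\ref{lem:tsisg-harmonic-V1-values}. A direct row-difference computation in each of the three matrix shapes in \eqref{eq:tsisg-harmonic-V1-values} reveals that $(u(q_0),u(q_1),u(q_2))$, modulo the addition of a constant and positive rescaling, is a permutation of $(1,-1,0)$ (the $5a$-column causes one coordinate to decouple, and the remaining $2\times 2$ block has constant difference ratios independent of~$k$). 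Both sides of $(\ast)$ depend only on pairwise differences of $u$ and are homogeneous of degree~$2$ in them, while the multiset $\{E_i\}_{i\in S_l}$ is invariant under the $S_3$-action permuting $(q_0,q_1,q_2)$; we may therefore assume throughout that $u(q_0)=1,\ u(q_1)=-1,\ u(q_2)=0$, whence $\mathcal E^{0}(u,u)=6$ and $\sum_i E_i=6r_l$.

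\noindent\textbf{Solving for $\hat u$ and estimating the $E_i$.}
With $u$ thus normalized, $\hat u:=\tilde h|_{V^{\bm m}_1}$ is determined by the graph-Laplacian system of Proposition~\ref{prop:tsisg-harmonic-Vn}-\ref{it:tsisg-harmonic-Vn-linear-eq}. The reflection exchanging $q_0,q_1$ (and fixing $q_2$) sends $u$ to $-u$ and hence $\hat u$ to $-\hat u$, so $\hat u$ is antisymmetric across the bisector of $q_0q_1$ and vanishes on it, halving the number of unknowns. Solving the reduced system produces every $\hat u$-value, and consequently every $E_i$, as an explicit rational function of~$l$. Partition $S_l$ into the nine corner cells $\{i\in S_l:i_1\vee i_2\in\{0,1,l-2,l-1\}\}$ and the $3(l-4)$ interior-edge cells $S_{l,1}$. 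The decisive asymmetry is that the effective gradient of $\hat u$ along $q_0q_1$ is twice that along either of $q_0q_2$ or $q_1q_2$ (the former being of order $2/l$ and the latter of order $1/l$), so that the interior-edge energies $E_i$ come in two distinct scales differing by a factor of roughly~$4$; applying Cauchy--Schwarz on each of the four resulting groups produces the required strict improvement over the naive Cauchy--Schwarz bound.

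\noindent\textbf{Main obstacle.}
The hard part is turning the qualitative two-scale gap into the specific constant $361/372$ \emph{uniformly} in $l\ge 5$. A computation of the $l\to\infty$ asymptotics (interior energies $6/l^2$ on one edge and $3/(2l^2)$ on the other two, yielding $\sum\sqrt{E_i}\sim 2\sqrt 6(l-4)/l$ and $\#S_l\sum E_i\sim 27(l-4)/l$) shows that the Cauchy--Schwarz ratio tends to $8/9$, while for $l=5$ a direct evaluation with the twelve rational values $E_i=e_i/961$ ($e_i\in\{6,42,54,78,186,216,366\}$) gives a still smaller ratio. The constant $361/372$ is then the least convenient number that dominates the entire $l$-family; the extremal intermediate~$l$ is controlled by combining the group-wise Cauchy--Schwarz with the explicit lower bound on the deficit $E_i-\bar E$ available from the corner cells, for which the rational formulas in the previous step yield uniform estimates.
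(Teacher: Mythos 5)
Your reduction is sound and in fact coincides with the paper's: using $h\in\mathcal{H}_{\bm{l},k}\subset\mathcal{H}_{\bm{l},n-2}$, Proposition \ref{prop:tsisg-harmonic-Vn} and \eqref{eq:EnergyMeas-cell}, the claim becomes $\bigl(\sum_{i\in S_{l_n}}\sqrt{E_i}\bigr)^{2}\leq\frac{361}{372}\,\#S_{l_n}\sum_{i\in S_{l_n}}E_i$ with $E_i=\mathcal{E}^{0}(h\circ F^{\bm{l}}_{wi}|_{V_0},h\circ F^{\bm{l}}_{wi}|_{V_0})$ and $\sum_i E_i=r_{l_n}\mathcal{E}^{0}(u,u)$, and your ``shape pinning'' (that $\omega^x_{n-1}\in S_{l_{n-1},1}$ forces $u$ to be, up to constant and positive scaling, the arithmetic progression $(-1,0,1)$) is exactly the paper's first application of Lemma \ref{lem:tsisg-harmonic-V1-values}, valid because $\mu^{\bm{l}}_{\langle h\rangle}(K^{\bm{l}}_w)>0$ rules out the degenerate constant case. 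Your identification of the two interior-edge energy scales (in your normalization, $216a_{l_n}^2$ on the edge joining the extreme values and $54a_{l_n}^2$ on the other two, i.e.\ your $6/l^2$ vs.\ $3/(2l^2)$ asymptotics) is also correct and is precisely what a second application of Lemma \ref{lem:tsisg-harmonic-V1-values} gives exactly, not just asymptotically.

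The genuine gap is the step you yourself label the ``main obstacle'': you never establish the inequality with the constant $361/372$ uniformly in $l_n\geq 5$. An $l\to\infty$ limit of $8/9$, an unverified list of numerators for $l=5$, and a sketched plan (``solve the reduced system for all $\hat u$-values as rational functions of $l$'', ``group-wise Cauchy--Schwarz'', ``control the extremal intermediate $l$ via the deficit $E_i-\bar E$'') do not together prove the bound for every $l\geq 5$, and the explicit rational formulas for the nine corner-cell energies that your plan requires are never produced. The paper closes exactly this gap without ever computing the corner energies: it takes the interior-edge values exactly from Lemma \ref{lem:tsisg-harmonic-V1-values} (energies $2(l_n-4)$ times $54a_{l_n}^2b^2$ and $(l_n-4)$ times $216a_{l_n}^2b^2$), uses the total-energy identity $\sum_i E_i=6r_{l_n}b^2$ to get the corner total in closed form, and applies plain Cauchy--Schwarz only to the nine corner cells ($\sum\sqrt{E_i}\leq 3\sqrt{\sum E_i}$ there). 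This yields the closed-form estimate $\sum_{i}\sqrt{E_i/(\#S_{l_n}\sum_j E_j)}\leq(4l_n-1)/\sqrt{(3l_n-3)(6l_n+1)}$, whose square is increasing enough in $l_n$ to be maximized at $l_n=5$, where it equals $361/372$ exactly; that is where the constant comes from. Without this (or an equally explicit uniform estimate), your argument does not yet prove \eqref{eq:muh-filtration-singular-tsisg}.
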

\begin{proof}
Set $v:=\omega^{x}_{1}\ldots\omega^{x}_{n-2}$ ($v:=\emptyset$ if $n=2$) and
$w:=\omega^{x}_{1}\ldots\omega^{x}_{n-1}=v\omega^{x}_{n-1}$.
By $h\in\mathcal{H}_{\bm{l},k}\subset\mathcal{H}_{\bm{l},n-2}$,
Proposition \ref{prop:tsisg-harmonic-Vn} and \eqref{eq:EnergyMeas-cell} we have
$h\circ F^{\bm{l}}_{v}\in\mathcal{H}_{\bm{l}^{n-2},0}$,
$h\circ F^{\bm{l}}_{w}
	=(h\circ F^{\bm{l}}_{v})\circ F^{\bm{l}^{n-2}}_{\omega^{x}_{n-1}}
	\in\mathcal{H}_{\bm{l}^{n-1},0}$
and
$\mathcal{E}^{0}(h\circ F^{\bm{l}}_{w}|_{V_{0}},h\circ F^{\bm{l}}_{w}|_{V_{0}})
	=R^{\bm{l}}_{n-1}\mu^{\bm{l}}_{\langle h\rangle}(K^{\bm{l}}_{w})>0$,
and therefore $h\circ F^{\bm{l}}_{w}(V_{0})=\{c-b,c,c+b\}$ for some $b,c\in\mathbb{R}$
with $b>0$ by Lemma \ref{lem:tsisg-harmonic-V1-values} with $\bm{l}^{n-2}$ in place of $\bm{l}$
applied to $i=\omega^{x}_{n-1}\in S_{l_{n-1},1}$ and $u=h\circ F^{\bm{l}}_{v}|_{V_{0}}$.
Then we see from Lemma \ref{lem:tsisg-harmonic-V1-values} with $\bm{l}^{n-1}$ in place of
$\bm{l}$ applied to $i\in S_{l_{n},1}$ and $u=h\circ F^{\bm{l}}_{w}|_{V_{0}}$ that
$h\circ F^{\bm{l}}_{wi}(V_{0})=(h\circ F^{\bm{l}}_{w})\circ F^{\bm{l}^{n-1}}_{i}(V_{0})$
is equal to $\{c_{i}-3a_{l_{n}}b,c_{i},c_{i}+3a_{l_{n}}b\}$
for some $c_{i}\in\mathbb{R}$ for $2(l_{n}-4)$ elements $i$ of $S_{l_{n},1}$ and
to $\{c_{i}-6a_{l_{n}}b,c_{i},c_{i}+6a_{l_{n}}b\}$
for some $c_{i}\in\mathbb{R}$ for the other $l_{n}-4$ elements $i$ of $S_{l_{n},1}$.
It follows by combining this fact with \eqref{eq:tsisg-measure}, \eqref{eq:EnergyMeas-cell},
$h\in\mathcal{H}_{\bm{l},n-1}\subset\mathcal{H}_{\bm{l},n}$, Proposition \ref{prop:tsisg-harmonic-Vn},
\eqref{eq:tsisg-form-0} and \eqref{eq:tsisg-RF-self-sim-form} that
\begin{align*}
&\sum_{i\in S_{l_{n}}}\sqrt{\frac{\mu^{\bm{l}}_{\langle h\rangle}(K^{\bm{l}}_{wi})}{\mu^{\bm{l}}_{\langle h\rangle}(K^{\bm{l}}_{w})}}
	\sqrt{\frac{\refmeas_{\bm{l}}(K^{\bm{l}}_{wi})}{\refmeas_{\bm{l}}(K^{\bm{l}}_{w})}}
	=\sum_{i\in S_{l_{n}}}\sqrt{\frac{\mathcal{E}^{\bm{l}^{n}}(h\circ F^{\bm{l}}_{wi},h\circ F^{\bm{l}}_{wi})}{(r_{l_{n}}\# S_{l_{n}})\mathcal{E}^{\bm{l}^{n-1}}(h\circ F^{\bm{l}}_{w},h\circ F^{\bm{l}}_{w})}} \\
&\leq(l_{n}-4)\frac{2\cdot 3\sqrt{6}a_{l_{n}}b+6\sqrt{6}a_{l_{n}}b}{\sqrt{r_{l_{n}}\# S_{l_{n}}}\cdot\sqrt{6}b}
	+3\sqrt{\frac{\sum_{i\in S_{l_{n}}\setminus S_{l_{n},1}}\mathcal{E}^{\bm{l}^{n}}(h\circ F^{\bm{l}}_{wi},h\circ F^{\bm{l}}_{wi})}{(r_{l_{n}}\# S_{l_{n}})\cdot 6b^{2}}} \\
&=\frac{4(l_{n}-4)}{\sqrt{\# S_{l_{n}}/a_{l_{n}}}}
	+3\sqrt{\frac{r_{l_{n}}\mathcal{E}^{\bm{l}^{n-1}}(h\circ F^{\bm{l}}_{w},h\circ F^{\bm{l}}_{w})-\sum_{i\in S_{l_{n},1}}\mathcal{E}^{\bm{l}^{n}}(h\circ F^{\bm{l}}_{wi},h\circ F^{\bm{l}}_{wi})}{6b^{2}r_{l_{n}}\# S_{l_{n}}}} \\
&=\frac{4(l_{n}-4)}{\sqrt{\# S_{l_{n}}/a_{l_{n}}}}
	+3\sqrt{\frac{6b^{2}r_{l_{n}}-(l_{n}-4)(2\cdot 54a_{l_{n}}^{2}b^{2}+216a_{l_{n}}^{2}b^{2})}{6b^{2}r_{l_{n}}\# S_{l_{n}}}} \\
&=\frac{4(l_{n}-4)}{\sqrt{\# S_{l_{n}}/a_{l_{n}}}}
	+3\sqrt{\frac{a_{l_{n}}^{-1}-6(l_{n}-4)}{\# S_{l_{n}}/a_{l_{n}}}}
	=\frac{4l_{n}-1}{\sqrt{(3l_{n}-3)(6l_{n}+1)}}
	\leq\sqrt{\frac{361}{372}},
\end{align*}
proving \eqref{eq:muh-filtration-singular-tsisg}.
\qed\end{proof}
\begin{proof}[\hspace*{-1.45pt}of Theorem \textup{\ref{thm:tsisg-sing}}]
Let $k\in\mathbb{N}\cup\{0\}$ and $h\in\mathcal{H}_{\bm{l},k}$. In view of
Lemmas \ref{lem:tsisg-harmonic-approx} and \ref{lem:tsisg-sing-lin-cont} it suffices to
prove, for any such $k$ and $h$, that $\mu^{\bm{l}}_{\langle h\rangle}\perp \refmeas_{\bm{l}}$,
which is obvious if $\mu^{\bm{l}}_{\langle h\rangle}(K^{\bm{l}})=0$.
Assume that $\mu^{\bm{l}}_{\langle h\rangle}(K^{\bm{l}})>0$, set
$(\Omega,\mathscr{F},\mathbb{P}):=(K^{\bm{l}},\mathscr{B}(K^{\bm{l}}),\refmeas_{\bm{l}})$,
let $\{\mathscr{F}_{n}\}_{n=0}^{\infty}$ denote the non-decreasing sequence of $\sigma$-algebras
in $\Omega$ with $\bigcup_{n=0}^{\infty}\mathscr{F}_{n}$ generating $\mathscr{F}$
as defined in Lemma \ref{lem:prob-meas-filtration-singular-tsisg}, and set
$\widetilde{\mathbb{P}}:=\mu^{\bm{l}}_{\langle h\rangle}(K^{\bm{l}})^{-1}\mu^{\bm{l}}_{\langle h\rangle}$,
so that $\mathbb{P}(K^{\bm{l}}_{w})>0$ for any $w\in W^{\bm{l}}_{*}$ and
$\mathbb{P}(V^{\bm{l}}_{*})=\widetilde{\mathbb{P}}(V^{\bm{l}}_{*})=0$ by
\eqref{eq:tsisg-measure} and Proposition \ref{prop:EnergyMeas-cell}. In particular,
$\widetilde{\mathbb{P}}|_{\mathscr{F}_{n}}\ll\mathbb{P}|_{\mathscr{F}_{n}}$
for any $n\in\mathbb{N}\cup\{0\}$, and define
$\alpha_{n}\in L^{1}(\Omega,\mathscr{F}_{n},\mathbb{P}|_{\mathscr{F}_{n}})$
by \eqref{eq:prob-meas-filtration-singular-alphan} for each $n\in\mathbb{N}$.
Now let $\omega^{x}=(\omega^{x}_{n})_{n=1}^{\infty}\in\prod_{n=1}^{\infty}S_{l_{n}}$
for $x\in K^{\bm{l}}\setminus V^{\bm{l}}_{*}$ and $S_{l,1}\subset S_{l}$ for
$l\in\mathbb{N}\setminus\{1,2,3,4\}$ be as in Proposition \ref{prop:muh-filtration-singular-tsisg}.
Then by \eqref{eq:tsisg-measure}, the $\mathbb{P}$-a.s.\ defined Borel measurable maps
$K^{\bm{l}}\setminus V^{\bm{l}}_{*}\ni x\mapsto\omega^{x}_{n}\in S_{l_{n}}$,
$n\in\mathbb{N}$, form a sequence of independent random variables on
$(\Omega,\mathscr{F},\mathbb{P})$ and satisfy
\begin{equation*}
\sum_{n=1}^{\infty}\mathbb{P}\bigl(\{x\in K^{\bm{l}}\setminus V^{\bm{l}}_{*}\mid\omega^{x}_{n}\in S_{l_{n},1}\}\bigr)
	=\sum_{n=1}^{\infty}\frac{\# S_{l_{n},1}}{\# S_{l_{n}}}
	=\sum_{n=1}^{\infty}\frac{3l_{n}-12}{3l_{n}-3}
	\geq\sum_{n=1}^{\infty}\frac{1}{4}=\infty,
\end{equation*}
and hence the second Borel--Cantelli lemma implies that
\begin{equation}\label{eq:limsup-Sl1-almost-surely}
\#\{n\in\mathbb{N}\mid\omega^{x}_{n}\in S_{l_{n},1}\}=\infty
	\qquad\textrm{for $\mathbb{P}$-a.e.\ $x\in K^{\bm{l}}\setminus V^{\bm{l}}_{*}$.}
\end{equation}
On the other hand, for each $x\in K^{\bm{l}}\setminus V^{\bm{l}}_{*}$,
Lemma \ref{lem:prob-meas-filtration-singular-tsisg} and
Proposition \ref{prop:muh-filtration-singular-tsisg} imply that
$\mathbb{E}[\sqrt{\alpha_{n}}\mid\mathscr{F}_{n-1}](x)=0$ for any $n\in\mathbb{N}$ with
$\mu^{\bm{l}}_{\langle h\rangle}\bigl(K^{\bm{l}}_{\omega^{x}_{1}\ldots\omega^{x}_{n-1}}\bigr)=0$
and that
\begin{align*}
\mathbb{E}[\sqrt{\alpha_{n}}\mid\mathscr{F}_{n-1}](x)
	=\sum_{i\in S_{l_{n}}}\sqrt{\frac{\mu^{\bm{l}}_{\langle h\rangle}\bigl(K^{\bm{l}}_{\omega^{x}_{1}\ldots\omega^{x}_{n-1}i}\bigr)}{\mu^{\bm{l}}_{\langle h\rangle}\bigl(K^{\bm{l}}_{\omega^{x}_{1}\ldots\omega^{x}_{n-1}}\bigr)}}
		\sqrt{\frac{\refmeas_{\bm{l}}\bigl(K^{\bm{l}}_{\omega^{x}_{1}\ldots\omega^{x}_{n-1}i}\bigr)}{\refmeas_{\bm{l}}\bigl(K^{\bm{l}}_{\omega^{x}_{1}\ldots\omega^{x}_{n-1}}\bigr)}}
	\leq\sqrt{\frac{361}{372}}
\end{align*}
for any $n\in\mathbb{N}\cap[k+2,\infty)$ with
$\mu^{\bm{l}}_{\langle h\rangle}\bigl(K^{\bm{l}}_{\omega^{x}_{1}\ldots\omega^{x}_{n-1}}\bigr)>0$
and $\omega^{x}_{n-1}\in S_{l_{n-1},1}$, whence
\begin{equation}\label{eq:muh-filtration-singular-tsisg-sum}
\sum_{n=1}^{\infty}\bigl(1-\mathbb{E}[\sqrt{\alpha_{n}}\mid\mathscr{F}_{n-1}](x)\bigr)
	\geq\delta\#\{n\in\mathbb{N}\cap[k+2,\infty)\mid\omega^{x}_{n-1}\in S_{l_{n-1},1}\},
\end{equation}
where $\delta:=1-\sqrt{\frac{361}{372}}\in(0,1)$. Combining
\eqref{eq:limsup-Sl1-almost-surely} and \eqref{eq:muh-filtration-singular-tsisg-sum},
we obtain \eqref{eq:prob-meas-filtration-singular}, so that
Theorem \ref{thm:prob-meas-filtration-singular} is applicable and yields
$\widetilde{\mathbb{P}}\perp\mathbb{P}$, namely
$\mu^{\bm{l}}_{\langle h\rangle}\perp \refmeas_{\bm{l}}$.
\qed\end{proof}
%
\section{Realizing arbitrarily slow decay rates of $\Psi(r)/r^{2}$}\label{sec:realize-given-Psi}
In this last section, we show that an arbitrarily slow decay rate of $\Psi(r)/r^{2}$
for a homeomorphism $\Psi\colon[0,\infty)\to[0,\infty)$ satisfying \eqref{eq:Psi-ass}
and \eqref{eq:case-nonGauss} can be realized by $\Psi_{\bm{l}}$ (recall Definition \ref{dfn:tsisg-Psi})
for some $\bm{l}=(l_{n})_{n=1}^{\infty}\in(\mathbb{N}\setminus\{1,2,3,4\})^{\mathbb{N}}$.
We achieve this by providing in Theorem \ref{thm:realize-given-Psi}
a simple sufficient condition for $\Psi$ to be comparable to $\Psi_{\bm{l}}$ for some
$\bm{l}=(l_{n})_{n=1}^{\infty}\in(\mathbb{N}\setminus\{1,2,3,4\})^{\mathbb{N}}$ with
$\sum_{n=1}^{\infty}l_{n}^{-1}<\infty$ and proving in Proposition \ref{prop:tsisg-Psi-arbitrarily-slow}
that the decay rate of $\Psi(r)/r^{2}$ for such $\Psi$ can be arbitrarily slow.
We also give criteria for verifying this sufficient condition for concrete
examples of $\Psi$ in Proposition \ref{prop:realize-given-Psi} and apply them
to the case where $\Psi(r)/r^{2}$ is a multiple composition of the function
$r\mapsto 1/\log(e-1+(r\wedge 1)^{-1})$ in Example \ref{exmp:realize-given-Psi}.
\begin{thm}\label{thm:realize-given-Psi}
Let $\eta\colon[0,1]\to[0,1]$ be a homeomorphism with $\eta(0)=0$ such that
\begin{equation}\label{eq:realize-given-Psi-eta}
\sum_{n=1}^{\infty}\frac{\eta^{-1}(2^{-n})}{\eta^{-1}(2^{1-n})}<\infty,
\end{equation}
and define a homeomorphism $\Psi_{\eta}\colon[0,\infty)\to[0,\infty)$ by
$\Psi_{\eta}(r):=r^{2}\eta(r\wedge 1)$. Then there exists
$\bm{l}=(l_{n})_{n=1}^{\infty}\in(\mathbb{N}\setminus\{1,2,3,4\})^{\mathbb{N}}$
with $\sum_{n=1}^{\infty}l_{n}^{-1}<\infty$ such that
$\Psi_{\eta}(r)/\Psi_{\bm{l}}(r)\in[c^{-1},c]$ for any $r\in(0,\infty)$
for some $c\in[1,\infty)$, and consequently,
$(K^{\bm{l}},\refmet_{\bm{l}},\refmeas_{\bm{l}},\mathcal{E}^{\bm{l}},\mathcal{F}_{\bm{l}})$
satisfies \hyperlink{fHKE}{$\textup{fHKE}(\Psi_{\eta})$}.
\end{thm}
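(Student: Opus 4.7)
The plan is to construct $\bm{l}$ explicitly from the dyadic preimages $s_n := \eta^{-1}(2^{-n})$ of $\eta$ so that the fractal scales $1/L_n^{\bm{l}}$ line up with $s_n$. Hypothesis \eqref{eq:realize-given-Psi-eta} rewrites as $\sum_{n \geq 1} s_n/s_{n-1} < \infty$, which in particular forces $s_n/s_{n-1} \to 0$; fix $n_0 \in \mathbb{N}$ such that $s_{n-1}/s_n \geq 5$ for every $n \geq n_0$, then set $l_n := \lfloor s_{n-1}/s_n \rfloor$ for $n \geq n_0$ and $l_n := 5$ otherwise, so $l_n \geq 5$ throughout. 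Since $l_n \geq (s_{n-1}/s_n) - 1 \geq \tfrac{1}{2}(s_{n-1}/s_n)$ for $n \geq n_0$, one gets $\sum_{n=1}^\infty 1/l_n < \infty$ (in particular $l_n \to \infty$), which is one of the required conclusions.

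The first technical step is to establish $r_n := 1/L_n^{\bm{l}} \asymp s_n$ uniformly in $n$. For $n \geq n_0$, the identity $(L_n s_n)/(L_{n-1} s_{n-1}) = l_n s_n/s_{n-1}$ lies in $[1 - s_n/s_{n-1}, 1]$, and telescoping, with the logarithmic series absolutely convergent by hypothesis, produces positive upper and lower bounds on $L_n s_n$; the finitely many small indices are absorbed into constants. From $r_n \asymp s_n$ and the super-geometric decay $s_n/s_{n-1} \to 0$, for every $n$ large one has $r_n \in [s_{n+1}, s_{n-1}]$, so monotonicity of $\eta$ forces $\eta(r_n) \in [\eta(s_{n+1}), \eta(s_{n-1})] = [\tfrac12 \cdot 2^{-n}, 2 \cdot 2^{-n}]$, yielding $\eta(r_n) \asymp 2^{-n}$ uniformly in $n$.

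With these estimates the ratio at the nodes evaluates, using \eqref{eq:tsisg-Psi}, as
\[
\frac{\Psi_\eta(r_n)}{\Psi_{\bm{l}}(r_n)}
= r_n^2\, \eta(r_n)\, T_n^{\bm{l}}
= \eta(r_n) \prod_{k=1}^n \Bigl(2 - \tfrac{5}{3 l_k} - \tfrac{1}{3 l_k^2}\Bigr)
= \eta(r_n) \cdot 2^n \prod_{k=1}^n (1 - \epsilon_k),
\]
with $\epsilon_k := \tfrac{5}{6 l_k} + \tfrac{1}{6 l_k^2}$; since $\sum \epsilon_k < \infty$, the last product converges to a positive limit, and combining with $\eta(r_n) \asymp 2^{-n}$ gives a uniform two-sided bound on the ratio at every node. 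To interpolate to all $r \in (0,1]$, on each $[r_n, r_{n-1}]$ use \eqref{eq:lem:tsisg-Psi-proof} to bound $\Psi_{\bm{l}}(r)/\Psi_{\bm{l}}(r_n) \asymp (r/r_n)^2$, and note $\Psi_\eta(r)/\Psi_\eta(r_n) = (r/r_n)^2 \cdot \eta(r)/\eta(r_n)$ with $\eta(r)/\eta(r_n) \in [1, \eta(r_{n-1})/\eta(r_n)]$ bounded by the preceding step. For $r \geq 1$, $l_n \to \infty$ together with $\beta_l \downarrow 2$ as $l \to \infty$ gives $\beta_{\bm{l},0} = \inf_n \beta_{l_n} = 2$, so both $\Psi_\eta(r)$ and $\Psi_{\bm{l}}(r)$ equal $r^2$ there. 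The stated \hyperlink{fHKE}{$\textup{fHKE}(\Psi_\eta)$} then follows at once from Theorem \ref{thm:tsisg-fHKE} and Remark \ref{rmk:HK-singular}-\ref{it:fHKE-stable-Psi}.

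The main obstacle is the last assertion in the preceding paragraph's middle step: establishing $\eta(r_n) \asymp 2^{-n}$ uniformly in $n$ without any regularity on $\eta$ beyond being a homeomorphism (e.g.\ no doubling, no quasi-symmetry is assumed). The key leverage is that condition \eqref{eq:realize-given-Psi-eta} is precisely strong enough to force $s_n/s_{n-1} \to 0$, which converts the additive perturbation $r_n/s_n \in [c^{-1}, c]$ into a statement about dyadic neighbours $s_{n\pm 1}$, so that the monotonicity of $\eta$ alone suffices to pin $\eta(r_n)$ within a bounded multiple of $2^{-n}$.
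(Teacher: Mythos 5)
Your proposal is correct, and it follows the same overall architecture as the paper's proof (build $\bm{l}$ so that $1/L^{\bm{l}}_{n}$ tracks $\eta^{-1}(2^{-n})$, get $\sum_{n}l_{n}^{-1}<\infty$, compare $\Psi_{\eta}$ with $\Psi_{\bm{l}}$ at the scales $1/L^{\bm{l}}_{n}$ via $\Psi_{\bm{l}}(1/L^{\bm{l}}_{n})=1/T^{\bm{l}}_{n}$, interpolate with the two-sided quadratic bound from \eqref{eq:lem:tsisg-Psi-proof}, handle $r\geq 1$ through $\beta_{\bm{l},0}=2$, and conclude via Theorem \ref{thm:tsisg-fHKE} and Remark \ref{rmk:HK-singular}-\ref{it:fHKE-stable-Psi}), but it diverges from the paper in the one genuinely delicate step. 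The paper defines $l_{n}$ recursively as in \eqref{eq:realize-given-Psi-ln}, so that $1/L^{\bm{l}}_{n}$ is comparable to the \emph{rescaled} quantity $\eta^{-1}(2^{-n-n_{0}})/\eta^{-1}(2^{-n_{0}})$, and to control $\eta(1/L^{\bm{l}}_{n})$ from above it first derives from \eqref{eq:realize-given-Psi-eta} the power-type doubling bound \eqref{eq:realize-given-Psi-eta-doubling} for $\eta$ and applies it to absorb the fixed multiplicative factor $\frac{6}{5}/\eta^{-1}(2^{-n_{0}})$. You instead take $l_{n}=\lfloor s_{n-1}/s_{n}\rfloor$ (padding the first few entries), show $1/L^{\bm{l}}_{n}\asymp s_{n}$ with constants independent of $n$ by the telescoping/convergent-product argument, and then exploit that $s_{n}/s_{n-1}\to 0$ to squeeze $1/L^{\bm{l}}_{n}\in[s_{n+1},s_{n-1}]$ for all large $n$, so that monotonicity of $\eta$ alone gives $\eta(1/L^{\bm{l}}_{n})\asymp 2^{-n}$; this is a valid and somewhat more elementary substitute for the paper's doubling estimate (which, note, \emph{is} implied by \eqref{eq:realize-given-Psi-eta}, so your remark that no regularity is available is slightly overstated — you simply do not need it). All the remaining computations check out: $T^{\bm{l}}_{n}/(L^{\bm{l}}_{n})^{2}=2^{n}\prod_{k=1}^{n}(1-\epsilon_{k})$ with $\sum_{k}\epsilon_{k}<\infty$ matches \eqref{eq:realize-given-Psi-comparable}, and the interpolation on $[1/L^{\bm{l}}_{n},1/L^{\bm{l}}_{n-1}]$ is the same device as \eqref{eq:tsisg-Psi-compare-square}. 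One small bookkeeping point: when invoking Remark \ref{rmk:HK-singular}-\ref{it:fHKE-stable-Psi} you should also cite Lemma \ref{lem:tsisg-Psi}, since that remark is stated for $\Psi$ satisfying \eqref{eq:Psi-ass}, and it is Lemma \ref{lem:tsisg-Psi} (with $\beta_{\bm{l},0}=2>1$) that supplies this for $\Psi_{\bm{l}}$.
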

\begin{proof}
Set $c_{\eta}:=\inf_{n\in\mathbb{N}}\eta^{-1}(2^{1-n})/\eta^{-1}(2^{-n})$,
so that $c_{\eta}\in(1,\infty)$ since the sequence
$\{\eta^{-1}(2^{1-n})/\eta^{-1}(2^{-n})\}_{n=1}^{\infty}$
is $(1,\infty)$-valued and tends to $\infty$ by \eqref{eq:realize-given-Psi-eta}.
Then for any $r,R\in(0,1]$ with $r\leq R$, taking $j,k\in\mathbb{N}$ such that
$\eta(r)\in(2^{-k},2^{1-k}]$ and $\eta(R)\in(2^{-j},2^{1-j}]$,
we have $j\leq k$, hence
\begin{equation*}
\frac{R}{r}=\frac{\eta^{-1}(\eta(R))}{\eta^{-1}(\eta(r))}
	\geq\frac{\eta^{-1}(2^{-j})}{\eta^{-1}(2^{1-k})}\vee 1
	\geq c_{\eta}^{k-j-1}=2^{(k-j-1)/\beta_{\eta}}
	\geq\biggl(\frac{\eta(R)}{4\eta(r)}\biggr)^{1/\beta_{\eta}}
\end{equation*}
by the definition of $c_{\eta}$, where $\beta_{\eta}:=(\log_{2}c_{\eta})^{-1}\in(0,\infty)$, and therefore
\begin{equation}\label{eq:realize-given-Psi-eta-doubling}
\frac{\eta(R)}{\eta(r)}\leq 4\biggl(\frac{R}{r}\biggr)^{\beta_{\eta}}.
\end{equation}

Recalling that $\lim_{n\to\infty}\eta^{-1}(2^{1-n})/\eta^{-1}(2^{-n})=\infty$
by \eqref{eq:realize-given-Psi-eta}, choose $n_{0}\in\mathbb{N}$ so that
$\eta^{-1}(2^{1-n})/\eta^{-1}(2^{-n})\geq 5$ for any $n\in\mathbb{N}$
with $n\geq n_{0}$, set $l_{0}:=1$, and define
$\bm{l}=(l_{n})_{n=1}^{\infty}\in\mathbb{N}^{\mathbb{N}}$ inductively by
\begin{equation}\label{eq:realize-given-Psi-ln}
l_{n}:=\bigg\lfloor\frac{\eta^{-1}(2^{-n_{0}})}{(l_{0}\cdots l_{n-1})\eta^{-1}(2^{-n-n_{0}})}\biggr\rfloor,
	\qquad n\in\mathbb{N}.
\end{equation}
Then an induction on $n$ based on \eqref{eq:realize-given-Psi-ln}
and the choice of $n_{0}$ immediately shows that
$\bm{l}=(l_{n})_{n=1}^{\infty}\in(\mathbb{N}\setminus\{1,2,3,4\})^{\mathbb{N}}$
and that for any $n\in\mathbb{N}\cup\{0\}$,
\begin{equation}\label{eq:realize-given-Psi-ln-bounds}
\frac{\eta^{-1}(2^{-n-n_{0}})}{\eta^{-1}(2^{-n_{0}})}
	\leq\frac{1}{L^{\bm{l}}_{n}}
	\leq\frac{6}{5}\frac{\eta^{-1}(2^{-n-n_{0}})}{\eta^{-1}(2^{-n_{0}})},
\end{equation}
which together with \eqref{eq:realize-given-Psi-eta} implies in particular that
\begin{equation}\label{eq:realize-given-Psi-ln-ell1}
\sum_{n=1}^{\infty}l_{n}^{-1}
	=\sum_{n=1}^{\infty}\frac{1/L^{\bm{l}}_{n}}{1/L^{\bm{l}}_{n-1}}
	\leq\frac{6}{5}\sum_{n=1}^{\infty}\frac{\eta^{-1}(2^{-n-n_{0}})}{\eta^{-1}(2^{1-n-n_{0}})}
	<\infty.
\end{equation}

We claim that $\Psi_{\eta}(r)/\Psi_{\bm{l}}(r)\in[c^{-1},c]$ for any $r\in(0,\infty)$
for some $c\in[1,\infty)$. Indeed, recalling Definition \ref{dfn:tsisg-Psi}, we have
$\beta_{\bm{l},0}=\inf_{n\in\mathbb{N}}\beta_{l_{n}}=2$ by \eqref{eq:realize-given-Psi-ln-ell1},
hence $\Psi_{\eta}(r)/\Psi_{\bm{l}}(r)=r^{2}/r^{\beta_{\bm{l},0}}=1$ for any $r\in[1,\infty)$,
and also see for any $n\in\mathbb{N}$ that
\begin{equation}\label{eq:realize-given-Psi-comparable-eta}
2^{-n-n_{0}}\leq\eta\biggl(\frac{\eta^{-1}(2^{-n-n_{0}})}{\eta^{-1}(2^{-n_{0}})}\biggr)
	\leq\eta(1/L^{\bm{l}}_{n})
	\leq\eta\biggl(\frac{6}{5}\frac{\eta^{-1}(2^{-n-n_{0}})}{\eta^{-1}(2^{-n_{0}})}\biggr)
	\leq c2^{-n}
\end{equation}
by \eqref{eq:realize-given-Psi-ln-bounds} and \eqref{eq:realize-given-Psi-eta-doubling},
where $c:=2^{2-n_{0}}\bigl(\frac{6}{5}/\eta^{-1}(2^{-n_{0}})\bigr)^{\beta_{\eta}}$, and thus that
\begin{equation}\label{eq:realize-given-Psi-comparable}
\frac{\Psi_{\eta}(1/L^{\bm{l}}_{n})}{\Psi_{\bm{l}}(1/L^{\bm{l}}_{n})}
	=\frac{T^{\bm{l}}_{n}\eta(1/L^{\bm{l}}_{n})}{(L^{\bm{l}}_{n})^{2}}
	=2^{n}\eta(1/L^{\bm{l}}_{n})\prod_{k=1}^{n}\biggl(1-\frac{5}{6}l_{k}^{-1}-\frac{1}{6}l_{k}^{-2}\biggr)
	\in[c',c],
\end{equation}
where $c':=2^{-n_{0}}\prod_{k=1}^{\infty}(1-\frac{5}{6}l_{k}^{-1}-\frac{1}{6}l_{k}^{-2})\in(0,1)$
by \eqref{eq:realize-given-Psi-ln-ell1}. Now for any $n\in\mathbb{N}$
and any $s\in[1,l_{n}]$, by \eqref{eq:tsisg-Psi} we have
\begin{equation}\label{eq:tsisg-Psi-compare-square}
\frac{\Psi_{\bm{l}}(s/L^{\bm{l}}_{n})}{s^{2}\Psi_{\bm{l}}(1/L^{\bm{l}}_{n})}
	=\frac{1}{s^{2}}\biggl(1+\frac{3l_{n}-4}{l_{n}-1}(s-1)\biggr)
	\biggl(1+\frac{\frac{2}{3}l_{n}-\frac{8}{9}}{l_{n}-1}(s-1)\biggr)
	\in[1,2),
\end{equation}
and it follows from
$\eta(s/L^{\bm{l}}_{n})\in\bigl[\eta(1/L^{\bm{l}}_{n}),\eta(1/L^{\bm{l}}_{n-1})\bigr]$,
\eqref{eq:realize-given-Psi-comparable-eta}, \eqref{eq:tsisg-Psi-compare-square}
and \eqref{eq:realize-given-Psi-comparable} that
\begin{align*}
\frac{\Psi_{\eta}(s/L^{\bm{l}}_{n})}{\Psi_{\bm{l}}(s/L^{\bm{l}}_{n})}
	&=\frac{\Psi_{\eta}(s/L^{\bm{l}}_{n})}{\Psi_{\eta}(1/L^{\bm{l}}_{n})}
		\frac{\Psi_{\bm{l}}(1/L^{\bm{l}}_{n})}{\Psi_{\bm{l}}(s/L^{\bm{l}}_{n})}
		\frac{\Psi_{\eta}(1/L^{\bm{l}}_{n})}{\Psi_{\bm{l}}(1/L^{\bm{l}}_{n})} \\
&=\frac{\eta(s/L^{\bm{l}}_{n})}{\eta(1/L^{\bm{l}}_{n})}
	\frac{s^{2}\Psi_{\bm{l}}(1/L^{\bm{l}}_{n})}{\Psi_{\bm{l}}(s/L^{\bm{l}}_{n})}
	\frac{\Psi_{\eta}(1/L^{\bm{l}}_{n})}{\Psi_{\bm{l}}(1/L^{\bm{l}}_{n})}
	\in[c'/2,(c^{2}\vee c)2^{n_{0}+1}],
\end{align*}
proving that $\Psi_{\eta}(r)/\Psi_{\bm{l}}(r)\in[c'/2,(c^{2}\vee c)2^{n_{0}+1}]$
for any $r\in(0,\infty)$. Lastly, combining this result with Lemma \ref{lem:tsisg-Psi},
Theorem \ref{thm:tsisg-fHKE} and Remark \ref{rmk:HK-singular}-\ref{it:fHKE-stable-Psi} shows that
$(K^{\bm{l}},\refmet_{\bm{l}},\refmeas_{\bm{l}},\mathcal{E}^{\bm{l}},\mathcal{F}_{\bm{l}})$
satisfies \hyperlink{fHKE}{$\textup{fHKE}(\Psi_{\eta})$}.
\qed\end{proof}

The decay rate of $\Psi_{\eta}(r)/r^{2}=\eta(r\wedge 1)$ for $\eta$ as in Theorem \ref{thm:realize-given-Psi}
can be arbitrarily slow in the sense stated in the following proposition.
\begin{prop}\label{prop:tsisg-Psi-arbitrarily-slow}
Let $\Psi\colon[0,\infty)\to[0,\infty)$ be a homeomorphism satisfying \eqref{eq:case-nonGauss}.
Then there exists a homeomorphism $\eta\colon[0,1]\to[0,1]$ with the properties
$\eta(0)=0$ and \eqref{eq:realize-given-Psi-eta} such that
$\eta(r)\geq c\Psi(r)/r^{2}$ for any $r\in(0,1]$ for some $c\in(0,\infty)$.
\end{prop}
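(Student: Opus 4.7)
The plan is to construct $\eta$ piecewise-linearly on a rapidly decreasing sequence of nodes $a_{0}=1>a_{1}>a_{2}>\cdots\downarrow 0$ by prescribing $\eta(a_{n}):=2^{-n}$, which simultaneously addresses the two competing requirements: the ratios $a_{n}/a_{n-1}=\eta^{-1}(2^{-n})/\eta^{-1}(2^{1-n})$ must be summable, while on each $[a_{n},a_{n-1}]$ the floor $\eta(r)\geq 2^{-n}$ produced by the interpolation must dominate a constant multiple of $\Psi(r)/r^{2}$.

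My first step would be to introduce the nondecreasing envelope $\phi(t):=\sup_{r\in(0,t]}\Psi(r)/r^{2}$, which by hypothesis \eqref{eq:case-nonGauss} tends to $0$ as $t\downarrow 0$. For each $n\in\mathbb{N}$ I can thus pick $\tau_{n}\in(0,1]$ with $\phi(\tau_{n})\leq 2^{-n-1}$. I would then set $a_{0}:=1$ and recursively $a_{n}:=\min(\tau_{n},a_{n-1}/2^{n})$ for $n\geq 1$; this guarantees both $a_{n}/a_{n-1}\leq 2^{-n}$ (making \eqref{eq:realize-given-Psi-eta} immediate) and $\phi(a_{n})\leq 2^{-n-1}$ for every $n\geq 1$. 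Defining $\eta(0):=0$, $\eta(a_{n}):=2^{-n}$ and interpolating linearly between consecutive nodes produces a continuous strictly increasing surjection $[0,1]\to[0,1]$, i.e., a homeomorphism with $\eta(0)=0$.

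To verify the domination, I would fix $r\in(0,1]$ and let $n\geq 1$ be the unique index with $r\in(a_{n},a_{n-1}]$. The interpolation gives $\eta(r)\geq 2^{-n}$; for $n\geq 2$ one has $r\leq a_{n-1}\leq\tau_{n-1}$, hence $\Psi(r)/r^{2}\leq\phi(r)\leq\phi(a_{n-1})\leq 2^{-n}\leq\eta(r)$. The interval $(a_{1},1]$ is the only one not handled automatically, since $\phi(1)$ need not be small; there I would use the crude bounds $\Psi(r)/r^{2}\leq\phi(1)$ and $\eta(r)\geq 2^{-1}$ to obtain $\eta(r)\geq(2\phi(1))^{-1}\Psi(r)/r^{2}$. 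Choosing $c:=\min\bigl(1,(2\phi(1))^{-1}\bigr)$ then yields $\eta(r)\geq c\Psi(r)/r^{2}$ throughout $(0,1]$.

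I do not foresee any genuine obstacle in this argument. The only mildly delicate point, and the reason $c$ must depend on $\Psi$, is handling the interval near $r=1$ where $\phi$ may be large; but the finite value $\phi(1)$ can simply be absorbed into $c$, which the statement of the proposition explicitly allows.
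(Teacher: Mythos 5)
Your proposal is correct and takes essentially the same approach as the paper: both introduce the non-decreasing envelope $r\mapsto\sup_{s\in(0,r]}\Psi(s)/s^{2}$ and define $\eta$ by piecewise-linear interpolation with dyadic values $2^{-n}$ at nodes chosen to decay geometrically fast (ensuring \eqref{eq:realize-given-Psi-eta}) while lying below points where the envelope is at most a comparable dyadic level (ensuring the domination, with the first interval absorbed into $c$). The paper realizes this with nodes $2^{-n^{2}}s_{n}$, where $s_{n}$ is the largest preimage of $2^{-n}\eta_{0}(1)$ under the envelope; your recursion $a_{n}=\min(\tau_{n},a_{n-1}/2^{n})$ is just a minor technical variant of the same device.
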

\begin{proof}
Noting \eqref{eq:case-nonGauss}, define $\eta_{0}\colon[0,1]\to[0,\infty)$
by $\eta_{0}(r):=\sup_{s\in(0,r]}\Psi(s)/s^{2}$ ($\eta_{0}(0):=0$), so that
$\eta_{0}$ is continuous and non-decreasing and $\eta_{0}((0,1])\subset(0,\infty)$,
and set $s_{n}:=\max\eta_{0}^{-1}(2^{-n}\eta_{0}(1))$ for $n\in\mathbb{N}\cup\{0\}$,
so that $s_{0}=1$, $0<s_{n}<s_{n-1}$ for any $n\in\mathbb{N}$ and $\lim_{n\to\infty}s_{n}=0$.
Define a homeomorphism $\eta\colon[0,1]\to[0,1]$ by
\begin{equation}\label{eq:tsisg-Psi-arbitrarily-slow}
\eta(r):=\biggl(1+\frac{r-2^{-n^{2}}s_{n}}{2^{-(n-1)^{2}}s_{n-1}-2^{-n^{2}}s_{n}}\biggr)2^{-n}
\end{equation}
for $n\in\mathbb{N}$ and $r\in[2^{-n^{2}}s_{n},2^{-(n-1)^{2}}s_{n-1}]$ and $\eta(0):=0$.
Then since $\eta^{-1}(2^{1-n})=2^{-(n-1)^{2}}s_{n-1}$ and $0<s_{n}<s_{n-1}$ for any $n\in\mathbb{N}$,
\begin{equation*}
\sum_{n=1}^{\infty}\frac{\eta^{-1}(2^{-n})}{\eta^{-1}(2^{1-n})}
	=\sum_{n=1}^{\infty}\frac{2^{-n^{2}}s_{n}}{2^{-(n-1)^{2}}s_{n-1}}
	\leq\sum_{n=1}^{\infty}2^{1-2n}=\frac{2}{3}<\infty,
\end{equation*}
namely $\eta$ satisfies \eqref{eq:realize-given-Psi-eta},
and for any $n\in\mathbb{N}$ and any $r\in[s_{n},s_{n-1}]$ we have
\begin{equation*}
\eta(r)\geq\eta(s_{n})\geq\eta(2^{-n^{2}}s_{n})=2^{-n}=\frac{\eta_{0}(s_{n-1})}{2\eta_{0}(1)}
	\geq\frac{\eta_{0}(r)}{2\eta_{0}(1)}\geq\frac{\Psi(r)/r^{2}}{2\eta_{0}(1)},
\end{equation*}
i.e., $\eta(r)\geq c\Psi(r)/r^{2}$ with $c:=(2\eta_{0}(1))^{-1}\in(0,\infty)$ for any $r\in(0,1]$.
\qed\end{proof}

We conclude this paper with the following proposition, which gives criteria for verifying
\eqref{eq:realize-given-Psi-eta} for concrete homeomorphisms $\eta\colon[0,1]\to[0,1]$
with $\eta(0)=0$, and some applications of it to $\eta(r)=1/\log(e-1+r^{-1})$
in Example \ref{exmp:realize-given-Psi} below.
\begin{prop}\label{prop:realize-given-Psi}
Let $\eta\colon[0,1]\to[0,1]$ be a homeomorphism with $\eta(0)=0$, let $\delta\in[0,\infty)$,
$\alpha,\beta\in(0,\infty)$ and assume that there exists $c\in(0,\infty)$ such that
\begin{equation}\label{eq:realize-given-Psi-eta-verify}
\frac{\eta(R)}{\eta(r)}\leq 1+\delta+\frac{c(R/r)^{\beta}}{(\log(e-1+R^{-1}))^{\alpha}}
	\mspace{22mu}\textrm{for any $r,R\in(0,1]$ with $r\leq R$.}
\end{equation}
\begin{enumerate}[label=\textup{(\arabic*)},align=left,leftmargin=*]
\item\label{it:realize-given-Psi-eta-verify}If $\delta<1$ and $\beta<\alpha$,
	then $\eta$ satisfies \eqref{eq:realize-given-Psi-eta}.
\item\label{it:realize-given-Psi-eta-composition}Let $\widetilde{\eta}\colon[0,1]\to[0,1]$
	be a homeomorphism with $\widetilde{\eta}(0)=0$, let $\widetilde{\delta}\in[0,1)$
	and assume that there exist $\widetilde{\alpha},\widetilde{c}\in(0,\infty)$
	such that $\widetilde{\eta}$ satisfies \eqref{eq:realize-given-Psi-eta-verify}
	with $\widetilde{\delta},\widetilde{\alpha},1,\widetilde{c}$
	in place of $\delta,\alpha,\beta,c$. Then $\widetilde{\eta}\circ\eta$ satisfies
	\eqref{eq:realize-given-Psi-eta-verify} with $\frac{1}{2}(1+\widetilde{\delta}),c'$
	in place of $\delta,c$ for some $c'\in(0,\infty)$. In particular, if $\beta<\alpha$,
	then $\widetilde{\eta}\circ\eta$ satisfies \eqref{eq:realize-given-Psi-eta}.
\end{enumerate}
\end{prop}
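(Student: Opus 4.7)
For part \ref{it:realize-given-Psi-eta-verify}, the plan is to substitute $r=\eta^{-1}(2^{-n})$ and $R=\eta^{-1}(2^{1-n})$ into \eqref{eq:realize-given-Psi-eta-verify} so that the left-hand side equals $2$. Since $\delta<1$, rearranging yields the recursion $u_{n+1}\geq K\,u_{n}\bigl(\log(e-1+u_{n})\bigr)^{\alpha/\beta}$ for $u_{n}:=\eta^{-1}(2^{1-n})^{-1}$ with $K:=((1-\delta)/c)^{1/\beta}\in(0,\infty)$. Because $\eta^{-1}$ is continuous at $0$, the sequence $u_{n}$ tends to $\infty$, so eventually $K(\log(e-1+u_{n}))^{\alpha/\beta}\geq 2$, whence $u_{n}\gtrsim 2^{n}$ for all large $n$. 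Feeding this back into the reciprocal of the recursion gives $\eta^{-1}(2^{-n})/\eta^{-1}(2^{1-n})\leq K^{-1}(\log(e-1+u_{n}))^{-\alpha/\beta}\lesssim n^{-\alpha/\beta}$, which is summable precisely because $\alpha>\beta$.

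For part \ref{it:realize-given-Psi-eta-composition}, I would first substitute the bound on $\eta(R)/\eta(r)$ into the hypothesized inequality for $\tilde\eta$, producing
\[
\frac{\tilde\eta(\eta(R))}{\tilde\eta(\eta(r))}\leq 1+\tilde\delta+\tilde c\,\frac{1+\delta+c(R/r)^{\beta}(\log(e-1+R^{-1}))^{-\alpha}}{(\log(e-1+\eta(R)^{-1}))^{\tilde\alpha}},
\]
and then split on the size of $R$. Choose $R_{0}\in(0,1]$ small enough that $(\log(e-1+\eta(R)^{-1}))^{\tilde\alpha}\geq 2\tilde c(1+\delta)/(1-\tilde\delta)$ for every $R\in(0,R_{0}]$; this is possible since $\eta(R)\to 0$ as $R\downarrow 0$. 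On this range the constant piece $\tilde c(1+\delta)/(\log\cdots)^{\tilde\alpha}$ is dominated by $(1-\tilde\delta)/2=\frac{1+\tilde\delta}{2}-\tilde\delta$, while bounding the same denominator below by $1$ on the $(R/r)^{\beta}$ piece produces the desired inequality with $c'=\tilde c c$. This is the heart of the argument: the slack $(1-\tilde\delta)/2$ between $\tilde\delta$ and $\frac{1+\tilde\delta}{2}$ is exactly what absorbs the residual $\tilde c(1+\delta)$ term generated by the substitution.

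For the remaining regime $R\in(R_{0},1]$, the plan is to use crude global bounds: specializing the given inequalities to $R=1$ yields $\eta(r)^{-1}\lesssim 1+r^{-\beta}$ and $\tilde\eta(s)^{-1}\lesssim 1+s^{-1}$, hence $\tilde\eta(\eta(r))^{-1}\leq A+Br^{-\beta}$ for some constants $A,B\in(0,\infty)$. Meanwhile $(R/r)^{\beta}(\log(e-1+R^{-1}))^{-\alpha}\geq R_{0}^{\beta}(\log(e-1+R_{0}^{-1}))^{-\alpha}r^{-\beta}$ throughout $R\in[R_{0},1]$, so enlarging $c'$ beyond $(A+B)(\log(e-1+R_{0}^{-1}))^{\alpha}R_{0}^{-\beta}$ forces the right-hand side of the target inequality to dominate $\tilde\eta(\eta(R))/\tilde\eta(\eta(r))\leq A+Br^{-\beta}$ in this regime too (using also $r^{-\beta}\geq 1$). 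Taking $c'$ to be the maximum of the two thresholds completes the main inequality, and the ``in particular'' clause follows at once from applying part \ref{it:realize-given-Psi-eta-verify} to $\tilde\eta\circ\eta$, whose parameters $\bigl(\frac{1+\tilde\delta}{2},\alpha,\beta,c'\bigr)$ meet the hypotheses $\delta<1$ and $\beta<\alpha$. The only delicate step I anticipate is bookkeeping the constants in the case split cleanly; everything else is substitution and monotonicity.
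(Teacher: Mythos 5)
Your proposal is correct and follows essentially the same route as the paper's proof: for part (1) the same dyadic sequence $s_{n}=\eta^{-1}(2^{-n})$ with the ratio bound $s_{n}/s_{n-1}\lesssim n^{-\alpha/\beta}$ summable because $\alpha/\beta>1$, and for part (2) the same composition of the two hypotheses followed by a split at a threshold radius, using the slack $\tfrac{1}{2}(1-\widetilde{\delta})$ to absorb the residual constant term for small $R$ and enlarging $c'$ for large $R$. The only differences are cosmetic: the paper gets the logarithmic growth in (1) from the hypothesis applied with $R=1$ rather than from the eventual doubling of your recursion, and in the large-$R$ regime of (2) it absorbs the constant via $1\leq\bigl(\log(e-1+\widetilde{r}^{-1})/\log(e-1+R^{-1})\bigr)^{\alpha}(R/r)^{\beta}$ instead of your crude global bounds obtained at $R=1$ — both are equally valid.
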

\begin{proof}
\begin{enumerate}[label=\textup{(\arabic*)},align=left,leftmargin=*]
\item Set $s_{n}:=\eta^{-1}(2^{-n})$ for $n\in\mathbb{N}\cup\{0\}$.
	For each $n\in\mathbb{N}$, we see from $\eta(s_{n-1})/\eta(s_{n})=2$,
	\eqref{eq:realize-given-Psi-eta-verify} with $(r,R)=(r_{n},r_{n-1})$ and $\delta<1$ that
	\begin{equation}\label{eq:realize-given-Psi-eta-verify-proof1}
	\frac{s_{n}}{s_{n-1}}\leq\frac{c^{1/\beta}(1-\delta)^{-1/\beta}}{(\log(e-1+s_{n-1}^{-1}))^{\alpha/\beta}},
	\end{equation}
	and from $\eta(1)/\eta(s_{n-1})=2^{n-1}$, \eqref{eq:realize-given-Psi-eta-verify}
	with $(r,R)=(r_{n-1},1)$ and $\delta<1$ that
	$2^{n-1}\leq 1+\delta+cs_{n-1}^{-\beta}\leq(2+c)s_{n-1}^{-\beta}$,
	whence, provided $n\geq 2+2\log_{2}(2+c)$,
	\begin{equation}\label{eq:realize-given-Psi-eta-verify-proof2}
	\log(e-1+s_{n-1}^{-1})\geq\log(s_{n-1}^{-1})
		\geq\frac{\log 2}{\beta}(n-1-\log_{2}(2+c))
		\geq\frac{\log 2}{2\beta}n.
	\end{equation}
	It follows from \eqref{eq:realize-given-Psi-eta-verify-proof1},
	\eqref{eq:realize-given-Psi-eta-verify-proof2} and $\alpha/\beta>1$ that
	\begin{equation*}
	\sum_{n=1}^{\infty}\frac{\eta^{-1}(2^{-n})}{\eta^{-1}(2^{1-n})}
		=\sum_{n=1}^{\infty}\frac{s_{n}}{s_{n-1}}
		\leq\sum_{n=1}^{n_{c}-1}\frac{s_{n}}{s_{n-1}}
			+\sum_{n=n_{c}}^{\infty}\frac{c^{1/\beta}(2\beta/\log 2)^{\alpha/\beta}}{(1-\delta)^{1/\beta}n^{\alpha/\beta}}
		<\infty,
	\end{equation*}
	where $n_{c}:=3+\lfloor 2\log_{2}(2+c)\rfloor$, proving \eqref{eq:realize-given-Psi-eta}.
\item Set $\widetilde{r}:=\eta^{-1}\bigl(\exp\bigl(-(2\widetilde{c}(1+\delta)/(1-\widetilde{\delta}))^{1/\widetilde{\alpha}}\bigr)\bigr)$
	and let $r,R\in(0,1]$ satisfy $r\leq R$. By \eqref{eq:realize-given-Psi-eta-verify} for
	$\widetilde{\eta}$ and $\eta$ and $(\log(e-1+\eta(R)^{-1}))^{-\widetilde{\alpha}}\leq 1$ we have
	\begin{equation}\label{eq:realize-given-Psi-eta-composition}
	\begin{split}
	\frac{\widetilde{\eta}\circ\eta(R)}{\widetilde{\eta}\circ\eta(r)}
		&\leq 1+\widetilde{\delta}+\frac{\widetilde{c}}{(\log(e-1+\eta(R)^{-1}))^{\widetilde{\alpha}}}\frac{\eta(R)}{\eta(r)} \\
	&\leq 1+\widetilde{\delta}+\frac{\widetilde{c}(1+\delta)}{(\log(e-1+\eta(R)^{-1}))^{\widetilde{\alpha}}}
		+\frac{\widetilde{c}c(R/r)^{\beta}}{(\log(e-1+R^{-1}))^{\alpha}}.
	\end{split}
	\end{equation}
	If $R\leq\widetilde{r}$, then
	$\widetilde{c}(1+\delta)/(\log(e-1+\eta(R)^{-1}))^{\widetilde{\alpha}}\leq\frac{1}{2}(1-\widetilde{\delta})$
	by the definition of $\widetilde{r}$ and hence \eqref{eq:realize-given-Psi-eta-composition}
	yields \eqref{eq:realize-given-Psi-eta-verify} with
	$\widetilde{\eta}\circ\eta,\frac{1}{2}(1+\widetilde{\delta}),\widetilde{c}c$
	in place of $\eta,\delta,c$, whereas if $R>\widetilde{r}$, then we see from
	\eqref{eq:realize-given-Psi-eta-composition}, $\widetilde{\delta}<1$ and
	$(\log(e-1+\eta(R)^{-1}))^{-\widetilde{\alpha}}\leq 1
		\leq(\log(e-1+\widetilde{r}^{-1})/\log(e-1+R^{-1}))^{\alpha}\wedge(R/r)^{\beta}$
	that \eqref{eq:realize-given-Psi-eta-verify} with
	$\widetilde{\eta}\circ\eta,\frac{1}{2}(1+\widetilde{\delta}),c'$ in place of $\eta,\delta,c$ holds,
	where $c':=\widetilde{c}(1+\delta)(\log(e-1+\widetilde{r}^{-1}))^{\alpha}+\widetilde{c}c$.
	In particular, if $\beta<\alpha$, then $\widetilde{\eta}\circ\eta$ satisfies
	\eqref{eq:realize-given-Psi-eta} by $\frac{1}{2}(1+\widetilde{\delta})<1$ and
	\ref{it:realize-given-Psi-eta-verify}.
\qed\end{enumerate}
\end{proof}
\begin{exmp}\label{exmp:realize-given-Psi}
Define homeomorphisms $\eta_{k}\colon[0,1]\to[0,1]$, $k\in\mathbb{N}$, inductively by
\begin{equation}\label{eq:realize-given-Psi-log}
\eta_{1}(r):=\frac{1}{\log(e-1+r^{-1})} \mspace{10mu} \textrm{($\eta_{1}(0):=0$)}
	\quad\textrm{and}\quad \eta_{k+1}:=\eta_{1}\circ\eta_{k}, \mspace{10mu} k\in\mathbb{N}.
\end{equation}\vspace*{-1pt}%
Then $\eta_{k}$ satisfies \eqref{eq:realize-given-Psi-eta-verify} with $\delta=\frac{1}{2}$
and $\alpha=1$ for some $c\in(0,\infty)$ for \emph{any} $\beta\in(0,\infty)$ and any
$k\in\mathbb{N}$. Indeed, this follows by a straightforward induction on $k$ based on
Proposition \ref{prop:realize-given-Psi}-\ref{it:realize-given-Psi-eta-composition},
which is applicable with $\eta=\eta_{k}$ and $\widetilde{\eta}=\eta_{1}$ since
$\eta_{1}$ is easily seen to satisfy \eqref{eq:realize-given-Psi-eta-verify} with
$\delta=0$, $\alpha=1$ and $c=(e\beta)^{-1}$ for \emph{any} $\beta\in(0,\infty)$
as follows: for any $r,R\in(0,1]$ with $r\leq R$,
\begin{equation}\label{eq:realize-given-Psi-log-proof}
\begin{split}
\frac{\eta_{1}(R)}{\eta_{1}(r)}&=1+\frac{\log\dfrac{e-1+r^{-1}}{e-1+R^{-1}}}{\log(e-1+R^{-1})}
	=1+\frac{\log\dfrac{R}{r}+\log\dfrac{1+(e-1)r}{1+(e-1)R}}{\log(e-1+R^{-1})} \\
&\leq 1+\frac{\beta^{-1}\log((R/r)^{\beta})}{\log(e-1+R^{-1})}
	\leq 1+\frac{(e\beta)^{-1}(R/r)^{\beta}}{\log(e-1+R^{-1})}.
\end{split}
\end{equation}
As a consequence, for each $k\in\mathbb{N}$, recalling that
$\Psi_{\eta_{k}}\colon[0,\infty)\to[0,\infty)$ is defined by
$\Psi_{\eta_{k}}(r):=r^{2}\eta_{k}(r\wedge 1)$, we conclude from
Proposition \ref{prop:realize-given-Psi}-\ref{it:realize-given-Psi-eta-verify}
that $\eta_{k}$ satisfies \eqref{eq:realize-given-Psi-eta},
thus from Theorem \ref{thm:realize-given-Psi} that there exists
$\bm{l}_{k}=(l_{k,n})_{n=1}^{\infty}\in(\mathbb{N}\setminus\{1,2,3,4\})^{\mathbb{N}}$
with $\sum_{n=1}^{\infty}l_{k,n}^{-1}<\infty$ such that
$\Psi_{\eta_{k}}(r)/\Psi_{\bm{l}_{k}}(r)\in[c_{k}^{-1},c_{k}]$ for any $r\in(0,\infty)$
for some $c_{k}\in[1,\infty)$, and thereby that
$(K^{\bm{l}_{k}},\refmet_{\bm{l}_{k}},\refmeas_{\bm{l}_{k}},\mathcal{E}^{\bm{l}_{k}},\mathcal{F}_{\bm{l}_{k}})$
satisfies \hyperlink{fHKE}{$\textup{fHKE}(\Psi_{\eta_{k}})$}.
\end{exmp}

\begin{thebibliography}{99}
	
	\bibitem{AB} S.\ Andres and M.\ T.\ Barlow,
	Energy inequalities for cutoff-functions and some applications,
	\emph{J.\ Reine Angew.\ Math.}\ \textbf{699} (2015), 183--215. \mr{3305925}
	
	\bibitem{Bar98} M.\ T.\ Barlow, Diffusions on fractals, in:
	\emph{Lectures on Probability Theory and Statistics (Saint-Flour, 1995)},
	Lecture Notes in Math., vol.\ 1690, Springer-Verlag, Berlin, 1998, pp.\ 1--121. \mr{1668115}
	
	
	\bibitem{Bar19} M.\ T.\ Barlow, personal communication, July 17, 2019.
	
	
	\bibitem{BB92} M.\ T.\ Barlow and R.\ F.\ Bass,
	Transition densities for Brownian motion on the Sierpinski carpet,
	\emph{Probab.\ Theory Related Fields} \textbf{91} (1992), no.\ 3--4, 307--330. \mr{1151799}
	
	\bibitem{BB99} M.\ T.\ Barlow and R.\ F.\ Bass,
	Brownian motion and harmonic analysis on Sierpi\'{n}ski carpets,
	\emph{Canad.\ J.\ Math.}\ \textbf{51} (1999), no.\ 4, 673--744. \mr{1701339}
	
	\bibitem{BB04}
	M.\ T.\ Barlow and R.\ F.\ Bass, Stability of parabolic Harnack inequalities,
	\emph{Trans.\ Amer.\ Math.\ Soc.}\ \textbf{356} (2004), no.\ 4, 1501--1533. \mr{2034316}
	
	\bibitem{BBK}
	M.\ T.\ Barlow, R.\ F.\ Bass and T.\ Kumagai. 
	Stability of parabolic Harnack inequalities on metric measure spaces,
	\emph{J.\ Math.\ Soc.\ Japan} (2) \textbf{58} (2006), 485--519. \mr{2228569}
	
	
	\bibitem{BGK} M.\ T.\ Barlow, A.\ Grigor'yan and T.\ Kumagai,
	On the equivalence of parabolic Harnack inequalities and heat kernel estimates,
	\emph{J.\ Math.\ Soc.\ Japan} \textbf{64} (2012), no.\ 4, 1091--1146. \mr{2998918}
	
	\bibitem{BH} M.\ T.\ Barlow and B.\ M.\ Hambly,
	Transition density estimates for Brownian motion on scale irregular Sierpinski gaskets,
	\emph{Ann.\ Inst.\ H.\ Poincar\'{e} Probab.\ Statist.}\ \textbf{33} (1997), no.\ 5, 531--557. \mr{1473565}
	
	\bibitem{BP} M.\ T.\ Barlow and E.\ A.\ Perkins,
	Brownian motion on the Sierpinski gasket,
	\emph{Probab.\ Theory Related Fields} \textbf{79} (1988), no.\ 4, 543--623. \mr{0966175}
	
	\bibitem{BST} O.\ Ben-Bassat, R.\ S.\ Strichartz and A.\ Teplyaev,
	What is not in the domain of the Laplacian on Sierpinski gasket type fractals,
	\emph{J.\ Funct.\ Anal.}\ \textbf{166} (1999), no.\ 2, 197--217. \mr{1707752}
	
	
	\bibitem{BBI} D.\ Burago, Y.\ Burago and S.\ Ivanov,
	\emph{A Course in Metric Geometry},
	Graduate Studies in Mathematics, vol.\ 33,
	American Mathematical Society, Providence, RI, 2001. \mr{1835418}
	
	\bibitem{CF} Z.-Q.\ Chen and M.\ Fukushima,
	\emph{Symmetric Markov Processes, Time Change, and Boundary Theory},
	London Mathematical Society Monographs Series, vol.\ 35,
	Princeton University Press, Princeton, NJ, 2012. \mr{2849840}
	
	\bibitem{Dud} R.\ M.\ Dudley,
	\emph{Real Analysis and Probability}, Revised reprint of the 1989 original,
	Cambridge Stud.\ Adv.\ Math., vol.\ 74, Cambridge University Press,
	Cambridge, 2002. \mr{1932358}
	
	\bibitem{EK} S.\ N.\ Ethier and T.\ G.\ Kurtz,
	\emph{Markov Processes: Characterization and Convergence},
	Wiley Series in Probability and Mathematical Statistics,
	John Wiley \& Sons, Inc., New York, 1986. \mr{0838085}
	
	\bibitem{FHK} P.\ J.\ Fitzsimmons, B.\ M.\ Hambly and T.\ Kumagai,
	Transition density estimates for Brownian motion on affine nested fractals,
	\emph{Comm.\ Math.\ Phys.}\ \textbf{165} (1994), no.\ 3, 595--620. \mr{1301625}
	
	\bibitem{FOT} M.\ Fukushima, Y.\ Oshima, and M.\ Takeda,
	\emph{Dirichlet Forms and Symmetric Markov Processes},
	Second revised and extended edition, de Gruyter Studies in Mathematics, vol.\ 19,
	Walter de Gruyter \& Co., Berlin, 2011. \mr{2778606}
	
	\bibitem{Gri} A.\ Grigor'yan,
	\emph{Heat Kernel and Analysis on Manifolds},
	AMS/IP Studies in Advanced Mathematics, vol.\ 47,
	American Mathematical Society, Providence, RI;
	International Press, Boston, MA, 2009. \mr{2569498}
	
	
	\bibitem{GHL} A.\ Grigor'yan, J.\ Hu and K.-S.\ Lau,
	Generalized capacity, Harnack inequality and heat kernels of Dirichlet forms on metric spaces,
	\emph{J.\ Math.\ Soc.\ Japan} \textbf{67} (2015), 1485--1549. \mr{3417504}
	
	\bibitem{GK} A.\ Grigor'yan and N.\ Kajino,
	Localized upper bounds of heat kernels for diffusions via a multiple Dynkin--Hunt formula,
	\emph{Trans.\ Amer.\ Math.\ Soc.}\ \textbf{369} (2017),  no.\ 2, 1025--1060. \mr{3572263}
	
	
	\bibitem{Ham92} B.\ M.\ Hambly,
	Brownian motion on a homogeneous random fractal,
	{\em Probab.\ Theory Related Fields} {\bf 94} (1992), no.\ 1, 1--38. \mr{1189083}
	
	\bibitem{Ham00} B.\ M.\ Hambly,
	Heat kernels and spectral asymptotics for some random Sierpinski gaskets,
	in: \emph{Fractal Geometry and Stochastics II} (C.\ Bandt et al., eds.),
	Progr.\ Probab., vol.\ 46, Birkh\"{a}user, 2000, pp.\ 239--267. \mr{1786351}
	
	\bibitem{Hin05} M.\ Hino,
	On singularity of energy measures on self-similar sets,
	\emph{Probab.\ Theory Related Fields} \textbf{132} (2005), no.\ 2, 265--290. \mr{2199293}
	
	
	\bibitem{HN} M.\ Hino and K.\ Nakahara,
	On singularity of energy measures on self-similar sets. II,
	\emph{Bull.\ London Math.\ Soc.}\ \textbf{38} (2006), no.\ 6, 1019--1032. \mr{2285256}
	
	\bibitem{HY} M.\ Hino and M.\ Yasui,
	Singularity of energy measures on a class of inhomogeneous Sierpinski gaskets,
	preprint, 2021. \arxiv{2106.06705}
	
	\bibitem{KM} N.\ Kajino and M.\ Murugan,
	On singularity of energy measures for symmetric diffusions with full off-diagonal heat
	kernel estimates, \emph{Ann.\ Probab.}\ \textbf{48} (2020), no.\ 6, 2920--2951. \mr{4164457}
	
	\bibitem{Kig01} J.\ Kigami,
	\emph{Analysis on Fractals}, Cambridge Tracts in Math., vol.\ 143,
	Cambridge University Press, Cambridge, 2001. \mr{1840042}
	
	\bibitem{Kig12} J.\ Kigami,
	Resistance forms, quasisymmetric maps and heat kernel estimates.
	\emph{Mem.\ Amer.\ Math.\ Soc.}\ \textbf{216} (2012), no.\ 1015. \mr{2919892}
	
	\bibitem{Kum} T.\ Kumagai,
	Estimates of transition densities for Brownian motion on nested fractals,
	\emph{Probab.\ Theory Related Fields} \textbf{96} (1993), no.\ 2, 205--224. \mr{1227032}
	
	\bibitem{Kus89} S.\ Kusuoka, 
	Dirichlet forms on fractals and products of random matrices,
	\emph{Publ.\ Res.\ Inst.\ Math.\ Sci.}\ \textbf{25} (1989), no.\ 4, 659--680. \mr{1025071}
	
	\bibitem{Kus93} S.\ Kusuoka,
	Lecture on diffusion processes on nested fractals,
	in: \emph{Statistical Mechanics and Fractals},
	Lecture Notes in Math., vol.\ 1567, Springer-Verlag, 1993, pp.\ 39--98. \mr{1295841}
	
	
	\bibitem{Lie} J.\ Lierl,
	Scale-invariant boundary Harnack principle on inner uniform domains in fractal-type spaces,
	\emph{Potential Anal.}\ \textbf{43} (2015), no.\ 4, 717--747. \mr{3432457}
	
	
	\bibitem{Mur} M.\ Murugan,
	On the length of chains in a metric space,
	\emph{J.\ Funct.\ Anal.}\ \textbf{279} (2020), no.\ 6, 108627. \mr{4099475}
	
	\bibitem{Rud} W.\ Rudin, \emph{Real and Complex Analysis}, Third edition,
	McGraw-Hill Book Co., New York, 1987. \mr{0924157}
	
	\bibitem{SC} L.\ Saloff-Coste,
	\emph{Aspects of Sobolev-type Inequalities},
	London Mathematical Society Lecture Note Series, vol.\ 289,
	Cambridge University Press, Cambridge, 2002. \mr{1872526}
	
	\bibitem{Stu95a} K.-T.\ Sturm, 
	Analysis on local Dirichlet spaces --- II.\ Upper Gaussian estimates for the
	fundamental solutions of parabolic equations,
	\emph{Osaka J.\ Math.}\ \textbf{32} (1995), no.\ 2, 275--312. \mr{1355744}
	
	
	\bibitem{Stu96} K.-T.\ Sturm,
	Analysis on local Dirichlet spaces --- III. The parabolic Harnack inequality,
	\emph{J.\ Math.\ Pures Appl.}\ (9) \textbf{75} (1996), no.\ 3, 273--297. \mr{1387522}
	
	
\end{thebibliography}
\end{document}